\newtheorem{theorem}{Theorem}
\newtheorem{corollary}[theorem]{Corollary}
\newtheorem{proposition}[theorem]{Proposition}
\newtheorem{lemma}[theorem]{Lemma}
\newtheorem{definition}[theorem]{Definition}
\newtheorem{remark}[theorem]{Remark}
\numberwithin{theorem}{section}
\numberwithin{equation}{section}
\numberwithin{figure}{section}
\newcommand{\ve}{\varepsilon}
\newcommand{\ind}{\mathbbm{1}}
\newcommand{\ul}{\underline}
\newcommand{\ol}{\overline}
\DeclareMathOperator{\sgn}{sgn}
\DeclareMathOperator{\diam}{diam}
\DeclareMathOperator{\inter}{int}
\newcommand{\calB}{\mathcal{B}}
\newcommand{\calS}{\mathcal{S}}
\newcommand{\NN}{\mathbb{N}}
\newcommand{\ZZ}{\mathbb{Z}}
\newcommand{\RR}{\mathbb{R}}
\newcommand{\CC}{\mathbb{C}}
\newcommand{\absset}{\mathfrak{S}} 
\newcommand{\PP}{\mathbb{P}}
\newcommand{\EE}{\mathbb{E}}
\newcommand{\TT}{\mathbb{T}} 
\newcommand{\Ball}{B} 
\newcommand{\Ann}{A} 
\newcommand{\din}{\partial^{\textrm{in}}} 
\newcommand{\dout}{\partial^{\textrm{out}}} 
\newcommand{\cluster}{\mathcal{C}}
\newcommand{\Cinf}{\cluster_\infty} 
\newcommand{\lclus}{\cluster^{\textrm{max}}} 
\newcommand{\hole}{\mathcal{H}} 
\newcommand{\holeF}{\bar{\mathcal{H}}} 
\newcommand{\holeT}{\holeF^{(\TT)}} 
\newcommand{\Ch}{\mathcal{C}_H} 
\newcommand{\Cv}{\mathcal{C}_V} 
\newcommand{\circuit}{\mathcal{C}} 
\newcommand{\cout}{\mathcal{C}^{\textrm{out}}} 
\newcommand{\circuitevent}{\mathcal{O}} 
\newcommand{\circuitinside}{\mathcal{I}} 
\newcommand{\colorseq}{\mathfrak{S}} 
\newcommand{\arm}{\mathcal{A}} 
\newcommand{\frozen}{\mathcal{F}}
\newcommand{\net}{\mathcal{N}} 
\newcommand{\Dint}[2]{{#1}^{\textrm{int}(#2)}}
\newcommand{\Di}[1]{{#1}^{(\textrm{int})}}
\newcommand{\Dext}[2]{{#1}^{\textrm{ext}(#2)}}
\newcommand{\chifin}{\chi^{\textrm{fin}}}
\newcommand{\chicov}{\chi^{\textrm{cov}}}
\newcommand{\Cov}{\textrm{Cov}}
\newcommand{\Var}{\textrm{Var}}
\newcommand{\nextN}{\psi_N}
\newcommand{\distvol}{\mu^{\textrm{vol}}}
\newcommand{\grid}{\mathcal{N}}
\newcommand{\gridn}{\mathcal{N}^{\textrm{nice}}}
\newcommand{\Lra}[1]{\stackrel[#1]{}{\longrightarrow}}
\newcommand{\lra}[1]{\stackrel[]{#1}{\leftrightarrow}}
\newcommand{\nlra}[1]{\stackrel[]{#1}{\nleftrightarrow}}
\newcommand{\new}{\textrm{new}}
\newcommand{\calV}{\mathcal{V}}
\DeclareFontFamily{U}{mathx}{\hyphenchar\font45}
\DeclareFontShape{U}{mathx}{m}{n}{
      <5> <6> <7> <8> <9> <10>
      <10.95> <12> <14.4> <17.28> <20.74> <24.88>
      mathx10
      }{}
\DeclareSymbolFont{mathx}{U}{mathx}{m}{n}
\DeclareMathAccent{\wideparen}{0}{mathx}{"75}
\begin{document}

\title{Two-dimensional volume-frozen percolation:\\ deconcentration and prevalence of mesoscopic clusters}

\author{Jacob van den Berg\footnote{CWI and VU University Amsterdam}, Demeter Kiss\footnote{University of Cambridge and AIMR Tohoku University}, Pierre Nolin\footnote{ETH Z\"urich}}

\date{}


\maketitle

\begin{abstract}
Frozen percolation on the binary tree was introduced by Aldous \cite{Aldous2000} around fifteen years ago, inspired by sol-gel transitions. We investigate a version of the model on the triangular lattice, where connected components stop growing (``freeze'') as soon as they contain at least $N$ vertices, for some parameter $N \geq 1$.

This process has a substantially different behavior from the diameter-frozen process, studied in \cite{BLN2012, Kiss2015}: in particular, we show that many (more and more as $N \to \infty$) frozen clusters surrounding the origin appear successively, each new cluster having a diameter much smaller than the previous one. This separation of scales is instrumental, and it helps to approximate the process in sufficiently large (but not too large), as a function of $N$, finite domains by a Markov chain. This allows us to establish a deconcentration property for the sizes of the holes of the frozen clusters around the origin.

For the full-plane process, we then show that it can be compared to the process in large finite domains, so that the deconcentration property also holds in this case. In particular, we obtain that with high probability (as $N \to \infty$), the origin does not belong to a frozen cluster in the final configuration.

This work requires new properties for near-critical percolation, which we develop along the way, and which are interesting in their own right: in particular, an asymptotic formula involving the percolation probability $\theta(p)$ as $p \searrow p_c$, and regularity properties for large holes in the infinite cluster. Volume-frozen percolation also gives insight into forest-fire processes, where lightning hits independently each tree with a small rate, and burns its entire connected component immediately.

\bigskip

\textit{Key words and phrases: frozen percolation, near-critical percolation, deconcentration inequalities, sol-gel transitions, pattern formation, self-organized criticality.}
\end{abstract}

\newpage

\tableofcontents

\newpage

\section{Introduction}

\subsection{Frozen percolation} \label{sec:intro_fp}

Frozen percolation is a growth process which was first introduced by Aldous \cite{Aldous2000} on the binary tree, motivated by sol-gel transitions \cite{Stockmayer}. Let us first describe it informally, on an infinite simple graph $G = (V, E)$, where the vertices may be interpreted as particles. We start with all edges closed (i.e. all particles are isolated), and we try to turn them open independently of each other: at some random time $\tau_e$ uniformly distributed between $0$ and $1$, the edge $e \in E$ becomes open if and only if it connects two finite open connected components (otherwise it just stays closed). In other words, a connected components grows until it becomes infinite (i.e. it gelates), at which time it just stops growing: we say that it freezes, which explains the name of the process. Apart from sol-gel transitions, one may think of other interpretations, e.g. population dynamics (group formation), and pattern formation in general. There are, somewhat surprisingly at first sight, also interesting connections with (and potential applications to) forest-fire models (at least in the two-dimensional setting, studied in this paper).

The existence of the frozen percolation process is not clear at all. In \cite{Aldous2000}, Aldous studies the case when $G$ is the infinite $3$-regular tree, as well as the case of the planted binary tree (where all vertices have degree $3$, except the root vertex which has degree $1$): using the tree structure, which allows for explicit computations, he shows that the frozen percolation process does exist in these two cases (and that it exhibits a fascinating form of self-organized critical behavior). However, Benjamini and Schramm noticed soon after Aldous' paper that such a process does not exist on the square lattice $\mathbb{Z}^2$ (see also Remark (i) after Theorem 1 in \cite{Berg2001}).

In order to circumvent this non-existence issue, a ``truncated'' process was introduced in \cite{BLN2012} by de Lima and two of the authors, where a connected component stops growing when it reaches a certain ``size''  $N$, where $N \geq 1$ is some parameter of the process. Formally, the original frozen percolation process corresponds to $N = \infty$, and one would like to understand what happens as $N \to \infty$, in view of the non-existence result.

When $N$ is finite, ``size'' can have various meanings, and in \cite{BLN2012}, the size of a cluster is measured by its diameter. This diameter-frozen process was then further studied by the second author in \cite{Kiss2015}, who established a precise description as $N \to \infty$, which, roughly speaking, can be summarized as follows. Let us fix some $K > 1$, and look at a square of side length $KN$ (centered at $0$): only finitely many frozen clusters appear (the probability that there are more than $k$ such clusters decays exponentially in $k$, uniformly in $N$), and they all freeze in a near-critical window around the percolation threshold $p_c$. In particular, it is shown that the frozen clusters all look like near-critical percolation clusters, with total density converging to $0$ as $N \to \infty$, and with high probability the origin does not belong to a frozen cluster: in the final configuration, a typical point is on a macroscopic non-frozen cluster, i.e. a cluster with diameter of order $N$, but smaller than $N$.

The truncated process on a binary tree is studied in \cite{BKN2012}, where it is shown that the final configuration is completely different: a typical point is either on a frozen cluster (i.e. with diameter $\geq N$), or on a microscopic one (with diameter $O(1)$), but one observes neither macroscopic non-frozen clusters, nor mesoscopic ones. Moreover, the way of measuring the size of a cluster does not really matter in this case: under suitable hypotheses, the process converges (in some weak sense) to Aldous' process as $N \to \infty$.

In the present paper, we go back to the case of a two-dimensional lattice, where we now measure the size of a cluster by the number of vertices that it contains. Throughout the paper, we work with a site version of frozen percolation, on the (planar) triangular lattice $\TT$ (we do this because site percolation on $\TT$ is the planar percolation process for which the most precise results are known, as discussed below). This lattice has vertex set
$$V(\TT) = \{ x+y e^{\pi i/3} \in \mathbb{C} \: : \: x, y \in \ZZ \},$$
and edge set $E(\TT)$ obtained by connecting all pairs $u, v \in V(\TT)$ for which $\| u-v \|_2=1$. If $u, v \in V$ are connected by an edge, i.e. $(u,v)\in E(\TT)$, we say that $u$ and $v$ are neighbors, and we write $u \sim v$.

The independent site percolation process on $\TT$ can be described as follows. We consider a family $(\tau_v)_{v \in V(\TT)}$ of i.i.d random variables, with uniform distribution on $[0,1]$. For $p \in [0,1]$, we say that a vertex $v$ is $p$-black (resp. $p$-white) if $\tau_v \leq p$ (resp. $\tau_v > p$). Then, $p$-black and $p$-white vertices are distributed according to independent site percolation with parameter $p$, where vertices are independently black or white, with respective probabilities $p$ and $1-p$: we denote by $\PP_p$ the corresponding probability measure. Vertices can be grouped into maximal connected components (clusters) of $p$-black sites and $p$-white sites, which defines a partition of $V(\TT)$. It is a celebrated result \cite{Kesten1980} that for all $p \leq p_c := 1/2$, there is a.s. no infinite $p$-black cluster, while for $p > p_c$, there exists a.s. a unique infinite $p$-black cluster. We refer the reader to \cite{Grimmett1999} for an introduction to percolation theory.

We can then define the volume-frozen percolation process itself, based on the same collection $(\tau_v)_{v \in V(\TT)}$. For a subset $A \subseteq V(\TT)$, its volume is the number of vertices that it contains, denoted by $|A|$. Let $G=(V,E)$ be a subgraph of $\TT$, and $N \geq 1$ be a fixed parameter. At time $t=0$, we set all the vertices in $V$ to be white, and as time $t$ evolves from $0$ to $1$, each vertex $v \in V$ can become black at time $t = \tau_v$ only: it is allowed to do so if and only if all the black clusters touching $v$ have a volume strictly smaller than $N$ (otherwise, $v$ stays white until the end, i.e time $t=1$). That is, black clusters are allowed to grow until their volume is larger than or equal to $N$, when their growth is stopped: such a cluster is then said to be frozen. We say that a black vertex is frozen (at a given time) if (at that time) it belongs to a frozen cluster. We use the notation $\PP_N^{(G)}$ for the corresponding probability measure, and we omit the graph $G$ used when it is clear from the context. Note that this process is well-defined: it can be seen as a finite range interacting particle system, thus general theory \cite{Liggett2005} provides existence. Also note that the particular choice of the \emph{uniform} distribution for the $\tau_v$'s is immaterial: any other continuous distribution produces the same process, up to a time change. In fact, when the graph $G$ is finite, the process is essentially as follows. Choose uniformly at random a permutation of the vertices of $G$: one by one, in this chosen order, each vertex is turned black, unless at least one of its neighbors is already contained in a black cluster with volume $\geq N$.

One of our main results shows that for the triangular lattice, the fraction of frozen sites vanishes as $N \to \infty$.

\begin{theorem} \label{thm:full_plane}
For the volume-frozen percolation process on $\TT$ with parameter $N \geq 1$,
\begin{equation}
\PP_N^{(\TT)}(\text{$0$ is frozen at time $1$}) \Lra{N \to \infty} 0.
\end{equation}
\end{theorem}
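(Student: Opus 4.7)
The plan is to deduce the theorem from two ingredients announced in the abstract: the deconcentration property for the sizes of the holes around $0$ in large but finite domains, and a comparison between such a finite-domain process and the full-plane process. In outline: (i) show that in a suitably chosen finite box many concentric frozen clusters form around $0$ with hole sizes spread out on a log scale; (ii) conclude that the innermost hole around $0$ is so small that the cluster of $0$ cannot have reached volume $N$; (iii) couple the full-plane process with the finite-box process inside some outer frozen circuit, so that the conclusion of (ii) transfers.

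\textbf{Step 1 (finite-domain reduction).} Fix $M = M(N)$ large enough that on the box $\Lambda_M$ of side length $M$ the restricted process exhibits the separation of scales described in the introduction: successive frozen clusters surrounding $0$, each of much smaller diameter than the previous one, whose appearance is well-approximated by the Markov chain built in the paper. The choice of $M$ should be polynomial in $N$, large enough to comfortably contain several generations of frozen circuits but not so large that boundary arguments fail.

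\textbf{Step 2 (deconcentration forces $0$ not to freeze).} Let $D_0 > D_1 > \cdots > D_k$ denote the diameters of the successive holes around $0$ in the $\Lambda_M$-process. The deconcentration statement gives that these are spread out on a logarithmic scale, so that for any fixed $\eta > 0$, the innermost diameter $D_k$ satisfies $D_k \leq N^{1-\eta}$ with probability tending to $1$. The black cluster of $0$ at time $1$ is contained in this innermost hole (the surrounding frozen circuit at time $1$ blocks all black growth), hence has volume at most $O(N^{2(1-\eta)}) = o(N)$, so it never froze. Consequently, the origin is not frozen in the finite-domain process with probability tending to $1$.

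\textbf{Step 3 (transfer to the full plane).} Choose an intermediate scale $M' \ll M$ and show, using the near-critical estimates and hole-regularity results announced in the abstract, that with high probability a frozen circuit surrounding $0$ forms inside the annulus $\Lambda_M \setminus \Lambda_{M'}$ at the same time and on the same vertex set under both the $\Lambda_M$-dynamics and the full-plane dynamics. The point is that as soon as a frozen circuit surrounding $0$ has formed in either process, all information from the exterior is shielded off, so the two processes agree inside this common circuit under an obvious coupling. Combined with Step 2 applied inside the circuit, this yields that $0$ is not frozen in the full-plane process with probability $\to 1$.

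The main obstacle is Step 3: one must rule out both spurious frozen clusters caused by the boundary of $\Lambda_M$ (which would distort the near-$0$ behavior in the finite box) and giant full-plane frozen clusters not captured by the box. Handling this requires the sharp asymptotics on $\theta(p)$ as $p \searrow p_c$ and the regularity of large holes in the near-critical infinite cluster that are developed earlier; any slack there would destroy the coupling between the two processes at the moment an outer frozen circuit is about to form.
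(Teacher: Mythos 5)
Your overall architecture (deconcentration in a finite domain, then a transfer to the full plane) matches the paper's at a high level, but both of your key steps contain genuine gaps. In Step 2, the mechanism is not what the deconcentration result delivers, and the arithmetic does not close: deconcentration (Corollary \ref{cor:dec_p*}) says that after $k$ freezings the scale $L(p^{**}_k)$ avoids multiplicative windows $(\kappa^{-1}m_i,\kappa m_i)$ around the exceptional scales; it does \emph{not} push the innermost hole below $N^{1-\eta}$, and even if it did, a hole of diameter $N^{1-\eta}$ has volume of order $N^{2-2\eta}$, which is $o(N)$ only when $\eta>1/2$ — for small $\eta$ such a hole can easily host a further frozen cluster, so "the cluster of $0$ never froze" does not follow. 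What the paper actually does is show that the hole after $k$ freezings lies strictly between two consecutive exceptional scales and far from both, and then invokes the finite-box dichotomy of \cite{BN15} (Proposition \ref{prop:exceptional_scales2}, via the circuit events $\tilde\Gamma_N$) to conclude that the origin does not freeze in the remaining evolution inside that hole; some input of this kind is unavoidable, because the process keeps running after step $k$.

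In Step 3 you assert the hard statement rather than prove it: that a frozen circuit around $0$ forms "at the same time and on the same vertex set" under the $\Lambda_M$-dynamics and the full-plane dynamics. Before such a circuit exists the two processes are not coupled in any obvious way (exterior freezings influence the full-plane process near $0$, while the box boundary distorts the box process), so this is precisely the content that needs an argument. The paper takes a different route that avoids comparing with a deterministic box altogether: it runs the full-plane process and proves (Proposition \ref{prop:coupling_full_plane}, a long argument using nets, $(p,C)$-nice circuits, quantile comparisons and Bernstein's inequality, together with the refined $\theta(p)$ asymptotics) that at some random time $p^{\#}$ the hole $\holeT(p^{\#})$ of the origin is a simply connected, approximable stopping set of size comparable to $L(p^{\#})$ lying between prescribed exceptional scales; the finite-domain result is then applied \emph{to this random hole}. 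This is also why the finite-domain statement must be proved for general approximable domains (Theorem \ref{thm:large_k_strong}), not just boxes: your Steps 1--2, even if repaired, would only cover boxes and could not be fed the random hole produced by the full-plane process. As written, the proposal therefore does not constitute a proof.
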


In fact, the proof of Theorem \ref{thm:full_plane} provides a stronger result (namely a deconcentration property), which shows a substantial difference with the diameter-frozen model, as well as with Aldous' model on the tree: consider two independent realizations of the cluster of $0$ at time $1$ in the frozen percolation process, and denote the larger one by $C_1$, and the smaller one by $C_2$. Then $\frac{|C_1|}{|C_2|} \to + \infty$ in probability as $N \to \infty$. Note that this property immediately implies Theorem \ref{thm:full_plane}, since the ratio of the volumes of two frozen clusters is between $\frac{1}{2}$ and $2$. It also shows that we only observe mesoscopic clusters: for every $M > 1$,
$$\PP_N^{(\TT)} \bigg( M < |\cluster_1(0)| < \frac{N}{M} \bigg) \stackrel[N \to \infty]{}{\longrightarrow} 1.$$
We can also see from the proof of Theorem \ref{thm:full_plane} that as $N \to \infty$, the number of frozen clusters surrounding the origin tends to $\infty$ in probability (another important difference with the diameter-frozen model).

Indirectly, our work relies on the conformal invariance property of critical percolation \cite{Smirnov} and the SLE (Schramm-Loewner Evolution) technology \cite{LSW_2001a, LSW_2001b} (see also \cite{Werner_PC}). A key ingredient for the more refined results about the percolation phase transition is the construction of the scaling limit of near-critical percolation \cite{GPS13b}. Note that our site version of frozen percolation (described above) is the exact analogue of the bond version on $\ZZ^2$: if the above-mentioned ingredients were available in the latter case, all our proofs would be applicable as well.


\subsection{Exceptional scales in volume-frozen percolation}


In \cite{BN15}, we showed the existence of a sequence of exceptional scales $m_k = m_k(N)$, $k \geq 1$, with $m_1(N) = \sqrt{N}$ and $m_{k+1}(N) \gg m_k(N)$ (as $N \to \infty$) for all $k \geq 1$.

Let us denote by $\Ball_n := [-n,n]^2$ the ball of radius $n$ around the origin in the $L^\infty$ norm. The scales $(m_k(N))_{k \geq 1}$ are exceptional in the sense that if we consider the volume-frozen percolation process in $\Ball_m$, for some $m = m(N)$, we get two very different behaviors according to whether $m$ stays close to one of these scales or not. More precisely, we proved in \cite{BN15} that the following dichotomy holds.
\begin{itemize}
\item If $m_k \ll m \ll m_{k+1}$ as $N \to \infty$ for some $k \geq 1$ (i.e. we start between two exceptional scales but far from them), then (w.h.p.) $k$ successive frozen clusters appear around $0$, at (random) times $p_k < p_{k-1} < \ldots < p_1$ (all strictly larger than $p_c = \frac{1}{2}$) such that $m_{i-1} \ll L(p_i) \ll m_i$ (where $L(p)$ is the characteristic length at $p$: see \eqref{eq:def_Lp} below for a precise definition). Moreover, the cluster $\cluster_1(0)$ of the origin at time $1$ satisfies $1 \ll |\cluster_1(0)| \ll N$: in other words, we only see mesoscopic clusters.

\item On the other hand, if $m \asymp m_k$ as $N \to \infty$ (for a given $k \geq 1$), then (w.h.p.) one of the following three situations occurs, each having a probability bounded away from $0$: either there are $k-1$ successive freezings, and $|\cluster_1(0)| < N$ but is $\asymp N$, or there are $k$ successive freezings, and either $|\cluster_1(0)| \geq N$, or $|\cluster_1(0)| \asymp 1$. That is, we only observe macroscopic (frozen and non-frozen) and microscopic clusters.
\end{itemize}
Another significant difference is that in the first case all the frozen clusters appear close to $p_c$, while in the second case freezing can occur on the whole time interval $(p_c,1)$ (as on the binary tree, but note that there are no macroscopic non-frozen clusters on the tree).

These exceptional scales clearly highlight the non-monotonicity of the process, which makes it quite challenging to study: we need to develop specific tools and ideas to study its dynamics. The existence of these exceptional scales also constitutes a big difference with diameter-frozen percolation \cite{Kiss2015}. For the diameter process, there is essentially one characteristic scale ($N$), and frozen clusters typically leave holes which are too small for new frozen clusters to emerge, while for the volume process, most frozen clusters leave holes where new clusters can freeze.

Heuristically, we expect the resulting configuration in the full-plane process to correspond to the first case in the dichotomy, i.e. $m_k \ll m \ll m_{k+1}$. However, we proceed in a different way: we first prove that even if we start close to $m_k$, the successive freezings create enough ``deconcentration'' if $k$ is very large, so that with high probability we end up far away from $m_1$. This yields in particular the following result.

\begin{theorem} \label{thm:large_k}
For all $\ve > 0$, there exists $l \geq 1$ such that for all $k \geq l$, the following holds: if $m(N) \in [m_k(N), m_{k+1}(N)]$ for all sufficiently large $N$,
then
$$\limsup_{N \to \infty} \PP_N^{(\Ball_{m(N)})}(\text{$0$ is frozen at time $1$}) \leq \ve.$$
\end{theorem}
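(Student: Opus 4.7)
The plan is to proceed by an induction on the scale index $k$ via a recursive bound on the quantity
\begin{equation*}
r_k := \limsup_{N\to\infty} \sup_{m \in [m_k(N), m_{k+1}(N)]} \PP_N^{(\Ball_m)}(0 \text{ is frozen at time } 1),
\end{equation*}
with the convention $r_k = 0$ for $k \le 0$, since $m \ll m_1 = \sqrt{N}$ makes it impossible to fit a cluster of volume $N$ inside $\Ball_m$. The theorem is then equivalent to showing $r_k \to 0$. First I would condition on the outermost frozen cluster $\circuit^{(1)}$ that disconnects $0$ from $\partial \Ball_m$, which must exist on the event $\{0 \text{ freezes}\}$. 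Splitting according to whether $0 \in \circuit^{(1)}$ or $0$ lies in the hole $\hole^{(1)}$ enclosed by $\circuit^{(1)}$, and using that inside $\hole^{(1)}$ the dynamics is again a volume-frozen process in a simply connected domain (modulo a standard comparison with a ball of comparable diameter), I obtain a recursion
\begin{equation*}
r_k \;\le\; \alpha_k \;+\; \sum_{j=0}^{k-1} \beta_{k,j}\, r_j,
\end{equation*}
where $\alpha_k$ bounds the worst-case probability that $0 \in \circuit^{(1)}$ and $\beta_{k,j}$ is the probability that $\hole^{(1)}$ has diameter in $[m_j, m_{j+1}]$.

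Next I would bound $\alpha_k \to 0$: the first freezing occurs at a near-critical time $p^{(1)} \searrow p_c$ with $L(p^{(1)})$ of order $m$, the cluster $\circuit^{(1)}$ is a near-critical cluster carrying $\asymp N$ vertices, and the probability that a given point inside a ball of its characteristic scale lies in $\circuit^{(1)}$ rather than in a surrounding hole decays by standard one-arm estimates as the scale index grows. The second and crucial ingredient is a deconcentration bound on $\beta_{k,j}$: uniformly in the starting scale $m \in [m_k, m_{k+1}]$, no single $\beta_{k,j}$ carries more than a fraction $\gamma < 1$ of the mass, and the total mass on scales $j \ge k - L$ can be made arbitrarily small by taking $L$ large. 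Iterating the recursion and splitting the sum into a ``bottom'' piece (where $r_j$ is small by the induction hypothesis) and a ``top'' piece (controlled by $\alpha_k$ together with the geometric factor $\gamma$) then yields $r_k \to 0$.

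The main obstacle is establishing the deconcentration of $\beta_{k,j}$ at the \emph{exceptional} starting scales $m \asymp m_k$. The dichotomy of \cite{BN15} a priori allows $\hole^{(1)}$ to remain of the same order as $m$ in that case, so a naive ``scale must strictly decrease'' induction fails. Overcoming this requires the Markov-chain approximation of the frozen percolation dynamics developed in the body of the present paper: it gives a non-degenerate rescaled distribution for the freezing time $p^{(1)}$, which in turn spreads $L(p^{(1)})$, and hence $\diam(\hole^{(1)})$, across an entire scale interval, ensuring that no single $\beta_{k,j}$ can carry too much mass. Making this spreading quantitative is the technical heart of the argument; once available, it feeds both the deconcentration property advertised in the abstract and the induction closing the proof of the present theorem.
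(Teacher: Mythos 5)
There is a genuine gap here: the one-step recursion cannot deliver the theorem, because neither of its two inputs holds. First, the claimed single-step deconcentration of the $\beta_{k,j}$ is false. Given the starting domain, the first freezing time is \emph{concentrated}: it is pinned between the times $p^{\pm}$ appearing in the proof of Lemma \ref{lem:iteration_FP} (determined by $\theta(p)\cdot|\text{domain}| \approx N$ up to $1\pm\delta$ factors), so $L(p^{(1)})$ lies within a bounded multiplicative factor of $\nextN(m)$ — which, incidentally, is $\ll m$, not of order $m$ — and by the exponential tail bounds of Lemmas \ref{lem:apriori_hole} and \ref{lem:bounds_hole} the hole's diameter is within a bounded factor of $L(p^{(1)})$ with probability close to $1$. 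Since $m_k/m_{k-1}=N^{\delta_k-\delta_{k-1}+o(1)}\to\infty$, for $m$ in the bulk of $[m_k,m_{k+1}]$ essentially all of the mass of $\beta_{k,\cdot}$ sits on the single index $j=k-1$, and for $m\asymp m_k$ the hole stays within a bounded factor of $m_{k-1}$: exceptionality propagates, and no per-step factor $\gamma<1$ exists. Second, even granting bounded $\beta$'s, the induction has no base: your $r_j$ are \emph{not} small for small $j$ (the supremum over $[m_j,m_{j+1}]$ contains $m\asymp m_j$ and $m\asymp m_{j+1}$, where Theorem \ref{thm:exceptional_scales1} gives a freezing probability bounded away from $0$), and the convention $r_k=0$ for $k\le 0$ is itself unjustified since $|\Ball_m|$ can exceed $N$ for $m$ of order $m_1$. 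With the $\beta$-mass at $j=k-1$ the recursion only yields $r_k\lesssim\alpha_k+r_{k-1}$, which does not decay; the danger is not diluted step by step, it is pushed down to the bottom scales where it is genuinely of order one.

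The mechanism that works — and the one the paper uses — is cumulative rather than per-step, and the randomness driving it is not the freezing time (tight given the domain, as above) but the hole-volume ratio $|\hole(p)|/L(p)^2$. One fixes $k$ large \emph{first}, follows the $k$ successive holes via Lemma \ref{lem:iteration_FP} (which also requires controlling the regularity, i.e.\ approximability, of the holes and not just their diameter — this is why Theorem \ref{thm:large_k_strong} is stated for approximable domains rather than balls), compares with the Markov chain $(\tilde p_i)$ of \eqref{eq:def_ptilde} via Lemma \ref{lem:ptilde_p*}, and applies the abstract deconcentration Lemma \ref{lem:deconcentration} to the product representation \eqref{eq:iteration_ptilde}: flipping one of $\asymp k$ independent inputs moves $L(\tilde p_k)$ by a definite factor, so the probability that $L(p^{**}_k)$ lands in any fixed multiplicative window is of order $(\ln\lambda+1)/\sqrt{k}$ (Proposition \ref{prop:dec_ptilde}, Corollary \ref{cor:dec_p*}). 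Only after these $k$ steps is the hole, with high probability, between $m_1$ and $m_6$ and at least a factor $\kappa$ away from every exceptional scale, and only then is Proposition \ref{prop:exceptional_scales2} (the non-exceptional-scale result of \cite{BN15}) invoked, once, to conclude; for fixed $k$ the per-step approximation errors vanish as $N\to\infty$, so taking $k$ large first costs nothing. In short, there is no useful one-step deconcentration to feed an induction over scale intervals; the spreading is a $\sqrt{k}$-type effect accumulated over many freezings, and your argument would need to be restructured around that.
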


This result is interesting in itself, but it is also an intermediate step to prove Theorem \ref{thm:full_plane}: for that, we ``connect'' the full-plane frozen percolation process with the process in large enough (as a function of $N$) domains. We actually need a more uniform result than Theorem \ref{thm:large_k}, where boxes can be replaced by domains which are ``sufficiently regular'' (see Theorem \ref{thm:large_k_strong} in Section \ref{sec:proof_large_k} for a precise statement).

\subsection{Organization of the paper}
In the first three sections (Sections \ref{sec:near_critical} to \ref{sec:volume}), we collect and develop all the tools from independent percolation which are used in our proofs of Theorems \ref{thm:full_plane} and \ref{thm:large_k}. More specifically, we need results about the near-critical regime, close to the percolation threshold $p_c$.

In Section \ref{sec:near_critical}, we first discuss classical results, and we derive some consequences of these results. We then prove more involved properties, for which the scaling limit of near-critical percolation (see \cite{GPS13b}) is needed. In particular, we establish a formula for the asymptotic behavior of the density $\theta(p)$ of the infinite cluster as $p \searrow p_c$, which is a refinement of one of the central results of Kesten's celebrated paper \cite{Kesten1987}. This improved formula is crucial for enabling us to follow the dynamics of the process.

A central object in our reasonings is the hole of the origin in the infinite cluster (in the supercritical regime $p > p_c$), and we study it further in Section \ref{sec:approximable}, proving continuity (with respect to $p$) and regularity properties which are interesting in themselves. In particular, one of the difficulties is to rule out the existence of certain bottlenecks, which could perturb the future evolution of the process.

In Section \ref{sec:volume}, we discuss and extend several estimates (from \cite{BCKS01}) on the volume of the largest connected component in a finite domain. These estimates are used repeatedly in our proofs, to obtain a good control on the successive freezing times.

We then turn to the frozen percolation process itself. We first study it in finite domains, before analyzing the full-plane process in Section \ref{sec:full_plane}.

In Section \ref{sec:deconcentration}, we discuss the exceptional scales, and we introduce several chains associated with the frozen percolation process in a finite box. One of these chains is an exact Markov chain, and we prove a deconcentration property for it, using an abstract lemma obtained in Section \ref{sec:abstract_deconcentration}.

This deconcentration property is then used in Section \ref{sec:finite_box} to prove Theorem \ref{thm:large_k}. Roughly speaking, we need to know that the number of frozen clusters surrounding the origin is sufficiently large: for instance, we can start with a box with side length between $m_k(N)$ and $m_{k+1}(N)$, for $k$ large enough.

We then establish Theorem \ref{thm:full_plane} in Section \ref{sec:full_plane}, i.e. the asymptotic absence of dust in the full-plane process. For that, we explain how to couple the process in $\TT$ with the process in finite, large enough (as a function of $N$) domains, which allows us to use the results from the previous section. Finally, in Section \ref{sec:related_proc}, we briefly discuss the potential connection with two other natural processes.

\section{Near-critical percolation} \label{sec:near_critical}

Our proofs rely heavily on a precise description of independent percolation near criticality, i.e. on how this model behaves through its phase transition. Before turning to frozen percolation itself in later sections, we first collect all the results that are needed. After fixing notations in Section \ref{sec:notations}, we present properties which have by now become classical, in Section \ref{sec:classical_results}, and we derive a few consequences of these properties in Section \ref{sec:add_lemmas}. We then turn to more specific technical results, in Sections \ref{sec:pi_sigma}, \ref{sec:theta_L} and \ref{sec:proofs_theta_L}. Their proofs are more involved, relying on recent breakthroughs by Garban, Pete, and Schramm \cite{Garban2010, GPS13b}, and (so far) they only ``work'' for site percolation on the triangular lattice.

\subsection{Notations} \label{sec:notations}

In what follows, a \emph{path} is a sequence of vertices, where any two consecutive vertices are neighbors. Two vertices $u$ and $v$ are said to be connected, which we denote by $u \lra{} v$, if there exists a path from $u$ to $v$ on $\TT$ that consists of black sites only (we also consider white connections, but in this case, we always mention explicitly the color). Two subsets $A, B \subseteq V(\TT)$ are said to be connected if there exist $u \in A$ and $v \in B$ which are connected, and we write $A \lra{} B$. For $p > p_c$, the unique infinite $p$-black cluster is denoted by $\Cinf = \Cinf(p)$. We also write $v \lra{} \infty$ for the event $v \in \Cinf$, and we use the notation
$$\theta(p) = \PP_p(0 \lra{} \infty)$$
for the density of $\Cinf$.

For $A \subseteq \TT$, we consider its inner boundary $\din A$, which consists of all the sites in $A$ that are neighbor with a site in $A^c$, and its outer boundary $\dout A =\din (A^c)$, which consists of all the sites in $A^c$ neighbor to a site in $A$. Note that if $A$ is a black cluster, then $\din A$ and $\dout A$ consist of black and white sites, respectively.

For a rectangle $R = [x_1,x_2] \times [y_1,y_2]$ ($x_1 < x_2$, $y_1 < y_2$), we denote by $\Ch(R)$ (resp. $\Cv(R)$) the event that there exists a black path in $R$ that connects the two vertical (resp. horizontal) sides of $R$. We write $\Ch^*(R)$ and $\Cv^*(R)$ for the analogous events with white paths.

For $0 < m < n$, we define the annulus
$$\Ann_{m,n} := \Ball_n \setminus \Ball_m.$$
For $z \in \CC$, we use the short-hand notations $\Ball_r(z) = z + \Ball_r$ and $\Ann_{m,n}(z) = z + \Ann_{m,n}$. For notational convenience, we also allow the value $n = \infty$, writing $\Ann_{m,\infty}(z) = \CC \setminus \Ball_m(z)$. For $A = \Ann_{m,n}(z)$, the event that there exists a black (resp. white) circuit in $A$, i.e. surrounding $\Ball_m(z)$, is denoted by $\circuitevent(A)$ (resp. $\circuitevent^*(A)$), and we often use the outermost such black circuit in $A$, which we denote by $\cout_A$ (we take $\cout_A = \emptyset$ when such a circuit does not exist).

As often when studying near-critical percolation, the so-called arm events play a central role in our proofs. For $k \geq 1$, $\sigma \in \colorseq_k := \{b,w\}^k$ (where we write $b$ and $w$ for black and white, respectively), and an annulus $A$ as above, we define the event $\arm_\sigma(A)$ that there exist $k$ disjoint paths $\gamma_i$ ($1 \leq i \leq k$) in $A$, in counter-clockwise order, each connecting $\Ball_m(z)$ to $\partial \Ball_n(z)$, and such that $\gamma_i$ has color $\sigma_i$ for each $i$. We denote
\begin{equation}
\pi_\sigma(m,n) = \PP_{p_c}\left( \arm_\sigma\left( \Ann_{m,n} \right) \right),
\end{equation}
and we simply write $\pi_\sigma(n)$ for $\pi_\sigma(0,n)$ (for paths starting from a neighbor of the origin). We write $\arm_1$, $\arm^*_1$, $\arm_4$, and $\arm_6$ in the cases when $\sigma$ is $(b)$, $(w)$, $(bwbw)$, and $(bwwbww)$, respectively (and similarly for $\pi_1$, $\pi^*_1$, $\pi_4$, and $\pi_6$). Note that for an annulus $A$, $\circuitevent(A) = (\arm_1^*(A))^c$. For notational convenience, we also write
\begin{equation}
\circuitevent(z;m,n) = \circuitevent( \Ann_{m,n}(z) ) \quad \text{and} \quad \arm_\sigma(z;m,n) = \arm_\sigma( \Ann_{m,n}(z) ),
\end{equation}
where $z$ is assumed to be $0$ when it is omitted.  

We define the characteristic length by
\begin{equation} \label{eq:def_Lp}
L(p) = \max \left\{n>0 \: : \: \PP_p \left( \Ch \left([0,n] \times [0,2n]\right) \right) \geq 0.01 \right\}
\end{equation}
for $p<1/2$, and by $L(p) = L(1-p)$ for $p>1/2$. From the definition above, it is clear that $L(p)$ is piecewise constant, so not continuous, and non-decreasing (resp. non-increasing) on $[0,p_c)$ (resp. $(p_c,1]$). We thus use a slightly different function $\tilde L$ defined as follows. For each discontinuity point $p \neq p_c$ of $L$, we set $\tilde L(p) = L(p)$, and then we extend $\tilde L$ to $(0,1)\setminus\{p_c\}$ by linear interpolation. The function $\tilde L$ has similar properties as $L$, with the additional benefit of being continuous and strictly monotone on $[0,p_c)$ and $(p_c,1]$, which will come handy later. With a slight abuse of notation, in the following we write $L$ for $\tilde L$.

\subsection{Classical results} \label{sec:classical_results}

Here we collect some classical results in near-critical percolation which will be used throughout the paper.
\begin{enumerate}
\item[(i)] \emph{Russo-Seymour-Welsh (RSW) bounds.} For each $k \geq 1$, there exists a constant $\delta_k > 0$ such that
\begin{equation} \label{eq:RSW}
\PP_p \left( \Ch \left([0,kn] \times [0,n]\right) \right) \geq \delta_k \quad \text{and} \quad \PP_p \left( \Ch^* \left([0,kn] \times [0,n]\right) \right) \geq \delta_k
\end{equation}
for all $p \in (0,1)$ and $n \leq L(p)$.

\item[(ii)] \emph{Exponential decay with respect to $L(p)$.} There exist universal constants $c_i, c'_i > 0$ ($i = 1, 2$) such that
\begin{equation} \label{eq:exp_decay}
\PP_p\left(\Cv\left([0,2n] \times [0,n]\right)\right) \leq c_1 e^{-c_2 \frac{n}{L(p)}} \quad \text{and} \quad \PP_p\left(\Ch\left([0,2n] \times [0,n]\right)\right) \geq c'_1 e^{-c'_2 \frac{n}{L(p)}}
\end{equation}
for all $p<1/2$ and $n \geq 1$ (see Lemma 39 in \cite{Nolin2008}). Using standard arguments, it follows from \eqref{eq:exp_decay} that for some $c_3 > 0$: for all $p>1/2$ and $n \geq L(p)$,
\begin{equation} \label{eq:connection_expdecay}
\PP_{p}(\partial \Ball_n \lra{} \infty) \geq 1 - e^{-c_3 \frac{n}{L(p)}}.
\end{equation}
In particular, this implies (see Corollary 41 in \cite{Nolin2008}) that for $p>1/2$,
\begin{equation} \label{eq:equiv_expdecay}
\PP_{p}(0 \lra{} \partial \Ball_{L(p)}) \asymp \theta(p).
\end{equation}

\item[(iii)] \emph{Extendability and quasi-multiplicativity of arm events at criticality.} For all $k \geq 1$ and $\sigma \in \colorseq_k$, there are constants $c_1, c_2 > 0$ (depending on $\sigma$ only) such that
\begin{align} \label{eq:ext}
  c_1 \pi_\sigma(2n_1,n_2)\leq \pi_\sigma(n_1,n_2)\leq c_2 \pi_\sigma(n_1,2n_2)
\end{align}
and
\begin{align} \label{eq:qmult}
  c_1 \pi_\sigma(n_1,n_3) \leq\pi_\sigma(n_1,n_2) \pi_\sigma(n_2,n_3) \leq c_2 \pi_\sigma(n_1,n_3)
\end{align}
for all $0 \leq n_1 \leq n_2 \leq n_3$ (see Propositions 16 and 17 in \cite{Nolin2008}, respectively).

\item[(iv)] \emph{Arm events in near-critical regime.} For every $k \geq 1$, $\sigma \in \colorseq_k$, let $\arm^{p,p'}_\sigma$ denote the modification of the event $\arm_\sigma$ where the black arms are $p$-black, and the white ones are $p'$-white. Then for all $C \geq 1$, there exist constants $c_1, c_2 > 0$ (depending on $k$, $\sigma$, and $C$) such that
\begin{equation} \label{eq:Kesten1}
  c_1 \pi_\sigma(m,n) \leq \PP_p(\arm^{p,p'}_\sigma(m,n)) \leq c_2\pi_\sigma(m,n)
\end{equation}
for all $p,p'\in(0,1)$, and all $m, n \leq C (L(p)\wedge L(p'))$ (see Lemma 6.3 in \cite{Damron2009}, or Lemma 8.4 in \cite{GPS13b}).

\item[(v)] \emph{Lower and upper bounds on the $1$-arm exponent.} There exist universal constants $c_1, c_2, \eta > 0$ such that
\begin{equation} \label{eq:1arm_bound}
c_1 \left(\frac{m}{n}\right)^{1/2} \leq \PP_p(\arm_1(m,n)) \leq c_2 \left(\frac{m}{n}\right)^{\eta}
\end{equation}
for all $m, n > 0$ with $m < n < L(p)$. This implies that for all $k \geq 1$, $\sigma \in \colorseq_k$, and $C \geq 1$, there exist universal constants $c_3, \alpha>0$ such that
\begin{equation} \label{eq:gps}
\PP(\arm^{p,p'}_\sigma(m,n)) \leq c_3 \left(\frac{m}{n}\right)^\alpha
\end{equation}
for all $p,p'$, and $m, n \leq C (L(p)\wedge L(p'))$.

\item[(vi)] \emph{Lower bound on the $4$-arm exponent.} There exist universal constants $c, \delta > 0$ such that
\begin{equation} \label{eq:4arm_bound}
\PP_p(\arm_4(m,n)) \geq c \left(\frac{m}{n}\right)^{2-\delta}
\end{equation}
for all $m, n > 0$ with $m < n < L(p)$ (this is a consequence of Theorem 24 (3) in \cite{Nolin2008}). In particular, there is a universal constant $C'$ such that
\begin{equation} \label{eq:sum_4arm_bound}
\sum_{k=1}^n 2^{2k} \pi_4(2^k) \leq C' 2^{2n} \pi_4(2^n)
\end{equation}
for all $n \geq 1$.

\item[(vii)] \emph{Upper bound on the $6$-arm exponent.} There exist universal constants $c, \delta > 0$ such that
\begin{equation} \label{eq:6arm_bound}
\PP_p(\arm_6(m,n)) \leq c \left(\frac{m}{n}\right)^{2+\delta}
\end{equation}
for all $m, n > 0$ with $m < n < L(p)$ (we refer the reader to Theorem 24 (3) in \cite{Nolin2008}).

\item[(viii)] \emph{Asymptotic equivalences.} The following are central results in \cite{Kesten1987}:
\begin{equation} \label{eq:Kesten_theta_pi}
\theta(p) \asymp \pi_1(L(p))
\end{equation}
as $p \searrow p_c$ (see Theorem 2 of \cite{Kesten1987}, or (7.25) in \cite{Nolin2008}), and
\begin{equation} \label{eq:Kesten_L}
|p-p_c| L(p)^2 \pi_4(L(p)) \asymp 1
\end{equation}
as $p \to p_c$ (see (4.5) in \cite{Kesten1987}, or Proposition 34 of \cite{Nolin2008}).
\end{enumerate}

\subsection{Additional properties} \label{sec:add_lemmas}

%

Let us first give a definition.

\begin{definition} \label{def:net}
For $0 < a < b$, we consider all the horizontal and vertical rectangles of the form
$$\Ball_a(2ax) \cup \Ball_a(2ax'), \quad \text{with } x, x' \in \Ball_{\lceil b / 2a \rceil + 1}, \: x \sim x'$$
(covering the ball $\Ball_{b+2a}$), and we denote by $\net_p(a,b)$ the event that in each of these rectangles, there exists a $p$-black crossing in the long direction.
\end{definition}

Note that $\net_p(a,b)$ implies the existence of a $p$-black cluster $\net$ which ensures that all the $p$-black clusters and all the $p$-white clusters that intersect $\Ball_b$, except $\net$ itself, have a diameter at most $4a$. In the following, such a cluster $\net$ is called a \emph{net}.

\begin{lemma} \label{lem:net}
There exist universal constants $c_1, c_2 > 0$ such that: for all $0 < a < b$ and $p > p_c$,
\begin{equation} \label{eq:net}
\PP( \net_p(a,b) ) \geq 1 - c_1 \left( \frac{b}{a} \right)^2 e^{-c_2 \frac{a}{L(p)}}.
\end{equation}
\end{lemma}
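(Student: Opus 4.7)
The plan is to apply the exponential decay estimate \eqref{eq:exp_decay} to each individual rectangle appearing in the definition of $\net_p(a,b)$, and then control the total failure probability by a union bound, since the number of rectangles grows only polynomially in $b/a$ while the failure probability per rectangle decays exponentially in $a/L(p)$.

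First I would verify the geometry: each rectangle $\Ball_a(2ax) \cup \Ball_a(2ax')$ with $x \sim x'$ is a translate of either $[0,4a] \times [0,2a]$ or $[0,2a] \times [0,4a]$, i.e.\ a $2{:}1$ rectangle with short side $2a$, and the required crossing is in the long direction. Applying \eqref{eq:exp_decay} with $n = 2a$ gives, for $p < 1/2$,
\begin{equation*}
\PP_p\bigl( \Cv([0,4a] \times [0,2a]) \bigr) \leq c_1 \, e^{-2 c_2 \, a/L(p)}.
\end{equation*}
For $p > p_c = 1/2$, by the self-matching property of site percolation on $\TT$, the existence of a $p$-black crossing of the long side of such a rectangle is the complement of the existence of a $p$-white crossing of its short side, and $\PP_p(\Cv^*(\cdot)) = \PP_{1-p}(\Cv(\cdot))$. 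Since $L(1-p) = L(p)$ and $1-p < 1/2$, the above estimate yields that the probability of having the required $p$-black crossing is at least $1 - c_1 e^{-2 c_2 \, a/L(p)}$, uniformly in $p > p_c$.

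Next I would count the rectangles: the indices $x,x'$ range over pairs of neighbors in $\Ball_{\lceil b/2a \rceil + 1} \cap \ZZ^2$, so the number of such pairs is bounded by $C (b/a)^2$ for some universal $C$. A union bound then gives
\begin{equation*}
\PP\bigl( \net_p(a,b)^c \bigr) \leq C \Bigl( \frac{b}{a} \Bigr)^2 \cdot c_1 \, e^{-2 c_2 \, a/L(p)},
\end{equation*}
which is the desired bound after renaming constants.

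The proof is essentially a direct combination of \eqref{eq:exp_decay} with duality and a union bound, so there is no serious obstacle; the only mild subtlety is checking that the supercritical-side exponential decay \eqref{eq:connection_expdecay}-style estimate for horizontal crossings (rather than for connections to infinity) follows cleanly from the subcritical statement \eqref{eq:exp_decay} via the self-matching of site percolation on $\TT$ together with $L(p) = L(1-p)$.
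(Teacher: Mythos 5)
Your proof is correct and is exactly the argument the paper has in mind: the paper's proof of Lemma \ref{lem:net} simply says it "follows immediately from the exponential decay property \eqref{eq:exp_decay}", and your spelled-out version (duality/self-matching to transfer the supercritical black long-direction crossing to a subcritical estimate, then a union bound over the $O((b/a)^2)$ rectangles) is precisely what that shorthand stands for.
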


\begin{proof}[Proof of Lemma \ref{lem:net}]
This follows immediately from the exponential decay property \eqref{eq:exp_decay}.
\end{proof}

%
%

We also derive the following lower bound, which is used in the proof of Proposition \ref{prop:coupling_full_plane}.

\begin{lemma} \label{lem:diff_theta_like}
For all $\kappa, \kappa' \geq 1$, there exists a constant $c = c(\kappa, \kappa') > 0$ such that: for all $p, p' > p_c$ with $p < p'$ and $L(p') \geq \kappa'^{-1} L(p)$, all $n \geq \kappa^{-1} L(p)$,
\begin{equation} \label{eq:diff_theta_like}
\PP(0 \lra{p'} \infty, \: 0 \nlra{p} \partial \Ball_n) \geq c \frac{|p' - p|}{|p - p_c|} \theta(p).
\end{equation}
\end{lemma}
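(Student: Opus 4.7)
The plan is to establish the bound by a pivotal-type construction, in the spirit of Kesten's derivation of the scaling relations \cite{Kesten1987}. First, fix a large constant $\kappa_0 = \kappa_0(\kappa,\kappa')$, and set $r := L(p)/\kappa_0$. The hypotheses $n \geq \kappa^{-1} L(p)$ and $L(p') \geq \kappa'^{-1} L(p)$ guarantee, for $\kappa_0$ large enough, that $4r \leq n$ and $4r \leq L(p')$. In particular, the near-critical arm estimates from Section~\ref{sec:classical_results} (notably \eqref{eq:Kesten1} and \eqref{eq:qmult}) apply uniformly on scales $\leq 4r$ under both $\PP_p$ and $\PP_{p'}$.

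For each vertex $u \in \Ann_{r,2r}$, introduce the event $E_u$, defined as the conjunction of: (i) $\tau_u \in (p,p']$, so that $u$ is $p'$-black but $p$-white; (ii) at level $p$, the $4$-arm event at $u$ on scale $r/4$, with the two black arms arranged so that one points toward $0$ (inward) and the other away from $0$ (outward); (iii) at level $p$, a $p$-black extension of the inner arm from $\partial \Ball_{r/4}(u)$ to $0$; (iv) at level $p$, a $p$-black extension of the outer arm to $\partial \Ball_{2r}$, together with $\partial \Ball_{2r} \lra{p'} \infty$; (v) $0 \nlra{p} \partial \Ball_n$. On $E_u$, items (i)--(iv) together produce a $p'$-black path from $0$ through $u$ to infinity, while (v) yields the $p$-disconnection, so $E_u \subseteq \{0 \lra{p'} \infty\} \cap \{0 \nlra{p} \partial \Ball_n\}$.

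To lower-bound $\PP(E_u)$, organize the conditions on the (roughly) disjoint regions $\Ball_{r/4}(u)$, $\Ball_{2r} \setminus \Ball_{r/4}(u)$, and $\Ball_{2r}^c$, and combine FKG with \eqref{eq:Kesten1}, \eqref{eq:qmult}, RSW~\eqref{eq:RSW}, and \eqref{eq:connection_expdecay}. The probabilistic ingredients are a factor $(p'-p)$ from (i), a factor $\pi_4(r) \asymp \pi_4(L(p))$ from (ii), a factor $\pi_1(r) \asymp \pi_1(L(p)) \asymp \theta(p)$ from (iii) (using \eqref{eq:Kesten_theta_pi}), and factors of order $1$ for the remaining items: for (iv), by RSW at $p$, then RSW at $p'$ up to $L(p')$ combined with \eqref{eq:connection_expdecay} beyond; and for (v), conditionally on (i)--(iv), via the polynomial-decay estimate on $\pi_1(2r, n)$ from \eqref{eq:1arm_bound} and the scale separation $n/r \geq \kappa_0/\kappa$. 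This gives $\PP(E_u) \gtrsim (p'-p)\, \pi_4(L(p))\, \pi_1(L(p))$. Summing over the $\asymp L(p)^2$ vertices $u \in \Ann_{r,2r}$ and invoking Kesten's relations \eqref{eq:Kesten_L} and \eqref{eq:Kesten_theta_pi} yields
$$\EE\Big[ \sum_{u \in \Ann_{r,2r}} \ind_{E_u} \Big] \asymp L(p)^2 (p'-p)\, \pi_4(L(p))\, \pi_1(L(p)) \asymp \frac{p'-p}{p-p_c}\, \theta(p).$$
A second-moment argument, bounding $\PP(E_u \cap E_{u'})$ via quasi-multiplicativity and gluing-type joint arm-event estimates, then converts this first-moment bound into a lower bound of the same order on $\PP(\bigcup_u E_u)$, and hence on the quantity in~\eqref{eq:diff_theta_like}.

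I expect the two main obstacles to be: (a) establishing item (v) with positive conditional probability given (i)--(iv), since the inner-arm extension (iii) already forces the $p$-cluster of $0$ out to distance $\asymp r$, so one must show it still fails to reach $\partial \Ball_n$ with constant probability --- this relies on \eqref{eq:1arm_bound} and on choosing $\kappa_0$ large relative to $\kappa$; and (b) the second-moment estimate for pairs $(u, u')$ at small distance, where the $4$-arm neighborhoods overlap and one needs to apply gluing-type arm estimates carefully to show that $\EE[N^2] \leq C\, \EE[N]$ for $N := \sum_{u \in \Ann_{r,2r}} \ind_{E_u}$.
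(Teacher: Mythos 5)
Your overall strategy (a pivotal-site construction combined with Kesten's relations \eqref{eq:Kesten_theta_pi} and \eqref{eq:Kesten_L}) is the right one, and your first/second-moment bookkeeping would indeed produce the correct order $|p'-p|\,L(p)^2\pi_4(L(p))\,\pi_1(L(p)) \asymp \frac{|p'-p|}{|p-p_c|}\theta(p)$, with the hypothesis $L(p')\geq \kappa'^{-1}L(p)$ entering (via \eqref{eq:Kesten_L}) to keep $\frac{|p'-p|}{|p-p_c|}$ bounded so that $\EE[N^2]\lesssim \EE[N]$. But there is a genuine gap in the definition of $E_u$ itself, and it is not the obstacle you flag in (a). First, as written, (i)--(iv) do not imply $0\lra{p'}\infty$: the event $\partial \Ball_{2r}\lra{p'}\infty$ need not attach to your specific outer arm, so the inclusion $E_u\subseteq\{0\lra{p'}\infty\}$ fails; you must glue the outer arm to infinity (e.g.\ via a black circuit in an annulus beyond $\Ball_{2r}$). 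Second, and more seriously, once that gluing is in place the disconnection (v) cannot be ``added on'' with constant conditional probability by a radial one-arm estimate and a large $\kappa_0$. On $E_u$ the $p$-cluster of $0$ reaches a neighbour of $u$, while the outer arm (another neighbour of $u$) must carry $p$-black structure far enough out to be connected to infinity at time $p'$ \emph{without extra flipped sites}; hence any $p$-white separation certifying $0\nlra{p}\partial\Ball_n$ is forced to thread between the two black arms, i.e.\ through $u$ and the two white arms of the four-arm event. If instead you try to realize (v) by a white circuit lying outside $\Ball_{2r}$ (the only mechanism your sketch leaves room for), that circuit also blocks the $p'$-connection from the outer arm to infinity except at an additional cost of order $|p'-p|L(p)^2\pi_4(L(p))$, and the construction loses a full factor $\frac{|p'-p|}{|p-p_c|}$. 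So the real work is to lower-bound, at cost comparable to $(p'-p)\pi_4(r)\pi_1(r)$, the joint event that the four arms at $u$ extend so that the two white arms close into a macroscopic white circuit around $0$ (giving (v)) while the inner black arm reaches $0$ and the outer black arm reaches a structure connected to infinity at time $p$; this needs arm-separation/gluing arguments, not \eqref{eq:1arm_bound}.

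For comparison, the paper avoids this difficulty by reversing the construction: after reducing to $n=\kappa^{-1}L(p)$ by monotonicity, it first lays down, at constant cost via RSW and FKG, a $p$-black circuit around $0$ connected to $0$, a $p$-black circuit connected to $\infty$, and a $p$-white circuit in the annulus between them, and then shows by a second-moment argument (at level $p$ only, in the fresh region between the two black circuits) that with probability bounded below there are $\asymp L(p)^2\pi_4(L(p))$ white vertices with four arms bridging the two black circuits. The white circuit gives $0\nlra{p}\partial\Ball_n$ for free, and the probability that at least one of these candidate vertices has $\tau\in(p,p']$ is $\asymp|p'-p|L(p)^2\pi_4(L(p))\asymp\frac{|p'-p|}{|p-p_c|}$ (bounded thanks to $L(p')\geq\kappa'^{-1}L(p)$), so no second moment over the flips and no per-site separation/gluing at a prescribed pivotal are needed. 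If you want to salvage your single-site version, the missing ingredient is precisely a lower bound for ``four arms at $u$ whose white arms close into a separating circuit and whose black arms connect to $0$ and to infinity,'' which is essentially the paper's event localized at $u$.
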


\begin{proof}[Proof of Lemma \ref{lem:diff_theta_like}] 
Since the left-hand side of \eqref{eq:diff_theta_like} is increasing in $n$, we can assume that $n=\kappa^{-1}L(p)$. We construct a sub-event of $\{0 \lra{p'} \infty, \: 0 \nlra{p} \partial \Ball_n\}$ for which the desired lower bound holds, as follows. We start with the events 
\begin{align*}
E_1 & := \{ \text{there is a } p \text{-black circuit } \circuit_1 \text{ in } \Ann_{\kappa^{-1} L(p)/4, \kappa^{-1} L(p)/2} \text{ s.t. } 0 \lra{p} \circuit_1\},\\[1mm]
\text{and} \quad E_2 & := \{ \text{there is a } p \text{-black circuit } \circuit_2 \text{ in } \Ann_{\kappa^{-1} L(p), 2 \kappa^{-1} L(p)} \text{ s.t. } \circuit_2 \lra{p} \infty\}.
\end{align*}
These two events are independent, and RSW \eqref{eq:RSW} implies that
$$\PP(E_1) \PP(E_2) \geq c_1 \theta(p)$$
for some constant $c_1 = c_1(\kappa) > 0$. 

If we also introduce
$$\mathcal{W}_p := \{ v \in \Ann_{\kappa^{-1} L(p)/2, \kappa^{-1} L(p)} \: : \: v \text{ is } p \text{-white}, \: \partial v \lra{p} \partial \Ball_{\kappa^{-1} L(p)/4}, \: \partial v \lra{p} \partial \Ball_{2 \kappa^{-1} L(p)} \}$$
(where we denote by $\partial v$ the set of neighbors of $v$), then there exist constants $c_2, c_3 > 0$ (depending only on $\kappa$) such that the event
$$E_3 := \{ \text{there is a } p \text{-white circuit in } \Ann_{\kappa^{-1} L(p)/2, \kappa^{-1} L(p)}\} \cap \{ | \mathcal{W}_p | \geq c_2 L(p)^2 \pi_4(L(p)) \}$$
satisfies: for all $p > p_c$, $\PP(E_3) \geq c_3$. This property follows from standard arguments, and we sketch a proof on Figure \ref{fig:many_pivotals}.

\begin{figure}[t]
\begin{center}
\includegraphics[width=9cm]{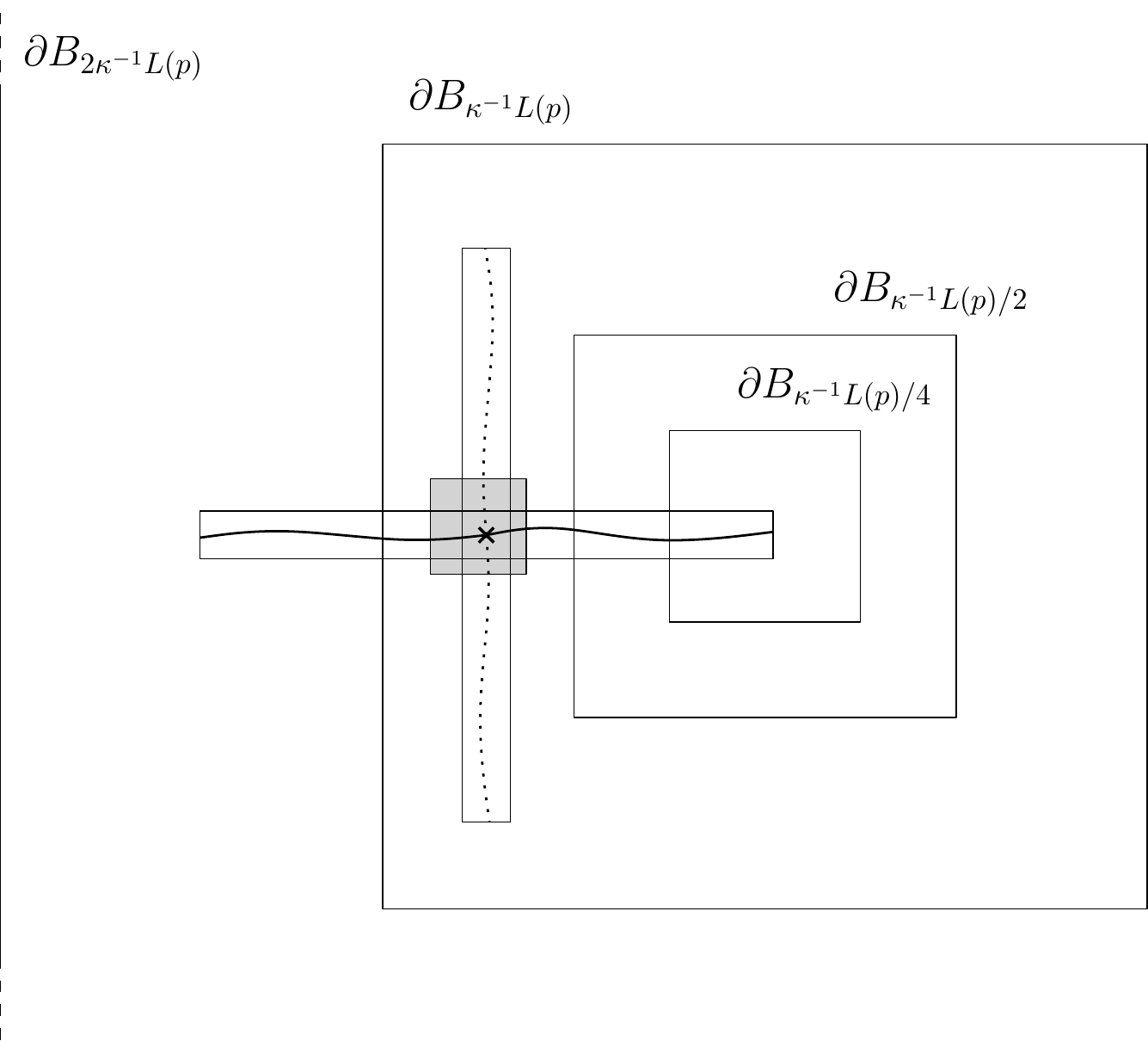}
\caption{\label{fig:many_pivotals} It follows from a second moment argument that with a probability $\geq c'_3 > 0$, there exist $\asymp L(p)^2 \pi_4(L(p))$ vertices in the gray region with four well-separated arms, as depicted. By using RSW \eqref{eq:RSW}, we can then extend the two $p$-white arms into a $p$-white circuit in $\Ann_{\kappa^{-1} L(p)/2, \kappa^{-1} L(p)}$.}
\end{center}
\end{figure}

We now restrict ourselves to the event $E_1 \cap E_2$, we let $\circuit_1$ and $\circuit_2$ be the inner- and outermost circuits appearing in the events $E_1$ and $E_2$, respectively, and we condition on the circuits $\circuit_1$ and $\circuit_2$, as well as on the configuration inside $\circuit_1$ and outside $\circuit_2$. The configuration between $\circuit_1$ and $\circuit_2$ is thus fresh, and we obtain
\begin{equation} \label{eq:lower_bound_E1E2E3}
\PP(E_1 \cap E_2 \cap E_3) = \sum_{C_1, C_2} \PP(E_3 \: | \: \circuit_1 = C_1, \: \circuit_2 = C_2) \PP(\circuit_1 = C_1, \: \circuit_2 = C_2) \geq c_1 \theta(p) \cdot c_3. 
\end{equation}
Using the pivotal vertices produced by the event $E_3$, we deduce that for some $c_4 > 0$,
\begin{align*}
\PP(0 \lra{p'} \infty, \: 0 \nlra{p} \partial \Ball_{\kappa^{-1} L(p)}) & \geq c_4 |p'-p| L(p)^2 \pi_4(L(p)) \PP(E_1 \cap E_2 \cap E_3)\\
& \geq c_1 c_3 c_4 |p'-p| L(p)^2 \pi_4(L(p)) \theta(p)
\end{align*}
(here, the first inequality uses the fact that $|p'-p| L(p)^2 \pi_4(L(p)) \leq c_5 \frac{|p' - p|}{|p - p_c|} \leq c_6$ for some universal $c_5 > 0$, and $c_6 = c_6(\kappa') > 0$, from \eqref{eq:Kesten_L} and the hypothesis on $p$ and $p'$, and the second inequality uses \eqref{eq:lower_bound_E1E2E3}), which completes the proof of Lemma \ref{lem:diff_theta_like} (by applying again \eqref{eq:Kesten_L}).
\end{proof}

Note that Lemma \ref{lem:diff_theta_like} implies in particular the following: there exists a constant $c = c(\kappa') > 0$ such that for all $p, p' > p_c$ with $p < p'$ and $L(p') \geq \kappa'^{-1} L(p)$,
$$\theta(p') - \theta(p) \geq c \frac{|p' - p|}{|p - p_c|} \theta(p)$$
(by fixing one value of $\kappa$, e.g. $\kappa = 1$, and letting $n \to \infty$ in the right-hand side of \eqref{eq:diff_theta_like}). It is also possible to derive a similar upper bound on $\theta(p') - \theta(p)$.

\begin{lemma} \label{lem:diff_theta}
For all $\kappa > 1$, there exists a constant $C = C(\kappa) > 0$ such that: for all $p, p' > p_c$ with $p < p'$ and $L(p') \geq \kappa^{-1} L(p)$, we have
\begin{equation} \label{eq:diff_theta}
\theta(p') - \theta(p) \leq C \frac{|p' - p|}{|p - p_c|} \theta(p).
\end{equation}
\end{lemma}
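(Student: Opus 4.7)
The plan is to prove the upper bound as the first-moment counterpart of Lemma \ref{lem:diff_theta_like}. Work in the monotone coupling through $(\tau_v)_{v \in V(\TT)}$, so that
\[
\theta(p') - \theta(p) = \PP\bigl(0 \lra{p'} \infty,\ 0 \nlra{p} \infty\bigr).
\]
On this event, every $p'$-black self-avoiding path from $0$ to $\infty$ must contain at least one vertex $v$ with $\tau_v \in (p, p']$, and one can always select such a $v$ that is \emph{pivotal} for $\{0 \lra{p'} \infty\}$ (e.g.\ the innermost one along a fixed exploration). A union bound then gives
\[
\theta(p') - \theta(p) \leq \sum_{v \in V(\TT)} \PP\bigl(v \text{ pivotal for } 0 \lra{p'} \infty,\ \tau_v \in (p, p']\bigr).
\]

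For the summand, observe that pivotality of $v$ forces the four-arm event at $v$ at scale $|v|$ in the $p'$-configuration (two $p'$-black arms extending respectively to $0$ and to $\infty$, two $p'$-white arms separating them out to infinity), while $\{\tau_v \in (p, p']\}$ is independent of the remaining configuration and has probability $p'-p$. Combining BK, standard arm separation, and the near-critical arm equivalence \eqref{eq:Kesten1} together with quasi-multiplicativity \eqref{eq:qmult} (all valid since the hypothesis $L(p') \geq \kappa^{-1} L(p)$ forces $L(p) \asymp L(p')$), I expect to obtain
\[
\PP\bigl(v \text{ pivotal},\ \tau_v \in (p, p']\bigr) \leq C(p' - p) \, \pi_4\bigl(|v| \wedge L(p)\bigr) \, \theta(p'),
\]
where the factor $\theta(p')$ absorbs the extension of one $p'$-black arm out to infinity. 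For $v \notin \Ball_{L(p)}$, the additional $p'$-connection to $\infty$ beyond the correlation length can be controlled by exponential decay \eqref{eq:exp_decay}, \eqref{eq:connection_expdecay}, so the corresponding tail is geometrically summable and negligible.

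Summing over $v \in \Ball_{L(p)}$ by dyadic shells and invoking \eqref{eq:sum_4arm_bound},
\[
\sum_{v \in \Ball_{L(p)}} \pi_4(|v|) \leq C' L(p)^2 \pi_4(L(p)).
\]
Plugging in and applying Kesten's scaling relations \eqref{eq:Kesten_theta_pi} (which, together with $L(p') \asymp L(p)$ and RSW, gives $\theta(p') \asymp \theta(p)$) and \eqref{eq:Kesten_L} (which gives $L(p)^2 \pi_4(L(p)) \asymp 1/|p - p_c|$), one concludes
\[
\theta(p') - \theta(p) \leq C''(p' - p) \, L(p)^2 \pi_4(L(p)) \, \theta(p) \leq C''' \frac{|p' - p|}{|p - p_c|} \, \theta(p),
\]
as desired.

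The delicate ingredient is the pivotal estimate in the second paragraph. One must cleanly separate, in a two-parameter coupled configuration, the four alternating arms living inside $\Ball_{|v|}(v)$ from the one-arm extension to $\infty$ outside $\Ball_{|v|}(v)$, while simultaneously extracting the independent factor $(p' - p)$ coming from the randomness of $\tau_v$ alone. This is by now a standard application of the arm-separation technology underlying \eqref{eq:qmult} and \eqref{eq:Kesten1}, but it must be deployed carefully so that no scale-dependent errors leak through the coupling. Once that bound is in hand, the rest of the argument is essentially bookkeeping with the scaling relations already collected in Section \ref{sec:classical_results}.
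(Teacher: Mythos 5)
Your overall skeleton (write the difference as $\PP(0 \lra{p'} \infty,\ 0 \nlra{p} \infty)$, locate a $p$-white/$p'$-black vertex carrying a four-arm event, sum over dyadic shells up to $L(p)$, then invoke \eqref{eq:sum_4arm_bound}, \eqref{eq:Kesten_theta_pi} and \eqref{eq:Kesten_L}) is exactly the paper's (Appendix \ref{sec:app_diff_theta}), and your per-vertex bound $(p'-p)\,\pi_4(|v|\wedge L(p))\,\theta(p')$ matches theirs. But the step you lean on to start the union bound is false: on the event $\{0 \lra{p'} \infty,\ 0 \nlra{p} \infty\}$ there need \emph{not} exist a vertex with $\tau_v \in (p,p']$ that is pivotal for $\{0 \lra{p'} \infty\}$. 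Take two vertex-disjoint $p'$-black paths from $0$ to $\infty$, each containing exactly one vertex with $\tau \in (p,p']$, these two vertices being distinct, and the rest of the configuration arranged so that $0 \nlra{p} \infty$; this has positive probability, and flipping either new vertex to white leaves the other path intact, so no newly-black vertex is pivotal at level $p'$. No choice of "innermost along a fixed exploration" can rescue this, because the object you want simply may not exist.

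The repair is precisely what the paper does: since $0 \nlra{p} \infty$, there is a $p$-white circuit surrounding $0$, and any $p'$-black path from $0$ to $\infty$ must cross it; choosing the crossing vertex $v$ closest to the origin gives a vertex that is $p$-white, $p'$-black, and carries the \emph{mixed} four-arm event $\arm_4^{p',p}$ (two $p'$-black arms along the path, two $p$-white arms along the circuit) rather than pivotality in the $p'$-configuration. This mixed event is exactly what \eqref{eq:Kesten1} controls, and your subsequent bookkeeping then goes through verbatim; alternatively, you could integrate Russo's formula over $u \in [p,p']$ for the finite-volume events $\{0 \lra{} \partial \Ball_n\}$ and let $n \to \infty$, which legitimately reintroduces pivotality. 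Two smaller points: the far regime $|v| > L(p)$ is killed not by the $p'$-black connection to infinity (which is \emph{likely} in the supercritical regime) but by the exponential decay of the $p$-white arms/circuit beyond $L(p)$, as in the bound on $\calB_2$ in the paper; and the factor $\theta(p')$ in your per-vertex estimate should be justified by quasi-multiplicativity (the $\pi_1(|v|)$ cost of connecting to $0$ cancels against the $\pi_1(|v|, L(p'))$ cost of extending to infinity), not by "absorbing" the infinite arm alone.
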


\begin{proof}[Proof of Lemma \ref{lem:diff_theta}]
Since this result is not used later in the paper, we postpone the proof to Appendix \ref{sec:app_diff_theta}.
\end{proof}

\subsection{Asymptotics of $\pi_\sigma$} \label{sec:pi_sigma}

We now recall some results on the large scale behavior of arm events at criticality. We first remind that their probabilities are described asymptotically by critical exponents, whose values are known (except in the so-called monochromatic case, for $k \geq 2$ arms of the same color). The following result is due to Smirnov and Werner \cite{SmW} (except for the case $k=1$ \cite{LSW_2002}, and for the existence of $\alpha_\sigma$ in the $k \geq 2$ monochromatic case \cite{BeffaraNolin}). Its proof relies on the connection between critical percolation and SLE (Schramm-Loewner Evolution) processes with parameter $6$, which uses the conformal invariance property of critical percolation (in the scaling limit) \cite{Smirnov} and properties of SLE processes \cite{LSW_2001a, LSW_2001b}.

\begin{lemma} \label{lem:arm_exp}
For all $k \geq 1$ and $\sigma \in \colorseq_k$,
$$\pi_\sigma(k,n) = n^{-\alpha_\sigma +o(1)} \quad \text{as $n \to \infty$},$$
for some constant $\alpha_\sigma > 0$. Furthermore,
\begin{itemize}
\item $\alpha_\sigma = \frac{5}{48}$ for $k = 1$,

\item and $\alpha_\sigma = \frac{k^2-1}{12}$ for all $k \geq 2$ and $\sigma \in \colorseq_k$ containing both colors.
\end{itemize}
\end{lemma}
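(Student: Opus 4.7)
The plan is to split the argument into (a) existence of an exponent $\alpha_\sigma \in (0, \infty)$ with $\pi_\sigma(k, n) = n^{-\alpha_\sigma + o(1)}$, and (b) identification of the explicit values $5/48$ and $(k^2-1)/12$ in the cases listed.

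For existence, quasi-multiplicativity \eqref{eq:qmult} tells us that $a_n := -\log \pi_\sigma(k, 2^n)$ satisfies $a_{n+m} = a_n + a_m + O(1)$. A standard subadditivity argument (Fekete's lemma, adjusted for bounded additive corrections) then yields the existence of $\alpha_\sigma := \lim_{n \to \infty} a_n / (n \log 2)$ in $[0, \infty]$. The polynomial upper bound \eqref{eq:gps} gives $\alpha_\sigma > 0$, while an RSW-type lower bound built from \eqref{eq:RSW}, \eqref{eq:ext} and \eqref{eq:qmult} (glue $\sigma$-arm configurations across dyadic annuli using colored crossings with uniformly positive probability) gives $\alpha_\sigma < \infty$. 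The statement $\pi_\sigma(k, n) = n^{-\alpha_\sigma + o(1)}$ is then immediate from \eqref{eq:qmult} and the definition of $\alpha_\sigma$ via dyadic scales, together with \eqref{eq:ext} to interpolate between powers of $2$.

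For the explicit values when $\sigma$ is polychromatic ($k \geq 2$ with both colours present), I would invoke Smirnov's conformal invariance theorem \cite{Smirnov}, which identifies the scaling limit of critical site-percolation interfaces on $\TT$ with SLE$_6$. In this regime the $\arm_\sigma$-event can be rephrased, up to the usual boundary-condition bookkeeping near the small disk, as the event that $k$ alternating SLE$_6$-type interfaces connect the inner disk to the outer circle without annihilating. A radial SLE$_6$ / Bessel-process computation of the Girsanov-martingale type developed in \cite{LSW_2001a, LSW_2001b}, carried out by Smirnov and Werner \cite{SmW}, then gives the Cardy-type value $\alpha_\sigma = (k^2 - 1)/12$. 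The one-arm case $k=1$ must be treated separately, since no alternating interface is available; here I would reproduce the radial SLE$_6$ argument of Lawler, Schramm and Werner \cite{LSW_2002}, whose endpoint is $\alpha_1 = 5/48$.

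The main obstacle is that the entire second part depends on Smirnov's conformal invariance, which is precisely why the statement is restricted to site percolation on $\TT$: without it, there is no route from the discrete arm events to SLE$_6$ quantities, and the SLE computations are exactly what produce the rational exponents. A secondary subtlety is the monochromatic case ($k \geq 2$ with all arms of the same colour), where the alternating-interface encoding breaks down; even existence of $\alpha_\sigma > 0$ is non-obvious, and one has to invoke the separation-of-arms analysis of Beffara-Nolin \cite{BeffaraNolin}, while no closed form for the exponent is currently known.
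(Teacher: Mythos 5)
The paper does not prove this lemma at all: it is quoted from the literature (Smirnov--Werner \cite{SmW} for the polychromatic values, \cite{LSW_2002} for $k=1$, and \cite{BeffaraNolin} for existence in the monochromatic case), and your second half --- conformal invariance \cite{Smirnov}, the encoding of polychromatic arm events by SLE$_6$ interfaces, and the radial SLE$_6$ computations of \cite{LSW_2001a, LSW_2001b, SmW, LSW_2002} --- is exactly the route those references take, so that part is fine as a citation-level account.

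The genuine gap is in your first paragraph, the claimed ``soft'' existence proof of $\alpha_\sigma$. Quasi-multiplicativity \eqref{eq:qmult} gives $\pi_\sigma(k,2^{n+m}) \asymp \pi_\sigma(k,2^n)\,\pi_\sigma(2^n,2^{n+m})$, but to conclude $a_{n+m}=a_n+a_m+O(1)$ for $a_n=-\log\pi_\sigma(k,2^n)$ you would additionally need $\pi_\sigma(2^n,2^{n+m}) \asymp \pi_\sigma(k,2^m)$ with constants uniform in $n$ and $m$. The lattice is not scale invariant, and neither \eqref{eq:RSW}, \eqref{eq:ext}, \eqref{eq:qmult} nor \eqref{eq:gps} gives such a comparison across scales; this comparability is essentially equivalent to the existence of the exponent itself (it is the content of ratio-limit statements such as Lemma \ref{lem:ratio_lim}, whose proofs in \cite{Garban2010, GPS13b} rely on the scaling limit). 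So Fekete's lemma never gets a subadditive sequence to act on. Your own closing remark exposes the inconsistency: if the subadditivity argument worked, existence of the monochromatic exponents would be elementary, whereas it in fact required the scaling-limit analysis of \cite{BeffaraNolin}. The correct structure is that existence and identification come together: for $k=1$ and polychromatic $\sigma$ the SLE$_6$ computations give the continuum annulus-crossing asymptotics, which are then transferred to the discrete statement $\pi_\sigma(k,n)=n^{-\alpha_\sigma+o(1)}$ via quasi-multiplicativity and separation of arms, and for monochromatic $\sigma$ one must invoke \cite{BeffaraNolin} for existence (with no closed-form value).
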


This has the following consequence, known as a \emph{ratio-limit theorem}.

\begin{lemma}[Proposition 4.9 of \cite{Garban2010}] \label{lem:ratio_lim}
For all $k \geq 1$, $\sigma \in \colorseq_k$ and $\lambda > 1$,
$$\lim_{n \to \infty} \frac{\pi_\sigma(k, \lambda n)}{\pi_\sigma(k, n)} = \lambda^{-\alpha_\sigma},$$
where $\alpha_\sigma$ is as in Lemma \ref{lem:arm_exp}.
\end{lemma}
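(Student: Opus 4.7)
The plan is to leverage the scaling limit of critical percolation on $\TT$ to promote the power-law bound of Lemma \ref{lem:arm_exp} into an exact ratio limit. The proof naturally splits into three parts: establishing a scale-invariant limit for annulus arm probabilities at fixed aspect ratio, deriving a functional equation that pins down the exponent, and finally factoring off the ``inner'' contribution at radius $k$ to obtain the statement.

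First I would show that for any fixed $\rho \in (0,1)$ and $\sigma \in \colorseq_k$, the annulus probability $\pi_\sigma(\lfloor \rho n \rfloor, n)$ converges as $n \to \infty$ to a positive limit $F_\sigma(1/\rho)$. This uses the conformal invariance of critical percolation \cite{Smirnov} together with the convergence of arm events established by Garban--Pete--Schramm \cite{Garban2010, GPS13b}: after rescaling by $1/n$, the event $\arm_\sigma(\Ann_{\rho n, n})$ becomes a measurable event of a conformally invariant continuum object in the standard annulus $\{\rho < |z| < 1\}$, and by scale invariance the probability depends only on the modular ratio $1/\rho$.

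Second, I would promote the up-to-constants quasi-multiplicativity \eqref{eq:qmult} to an exact multiplicative identity $F_\sigma(\lambda \mu) = F_\sigma(\lambda) F_\sigma(\mu)$ for $\lambda, \mu > 1$. This rests on the separation-of-arms technology going back to Kesten: once $k$ disjoint arms of the prescribed colors reach an intermediate scale $\rho n$, they land well-separated along $\partial \Ball_{\rho n}$ with conditional probability tending to $1$, so the arm configurations in the two sub-annuli become asymptotically independent in the scaling limit and the gluing constants in \eqref{eq:qmult} tend to $1$. Combined with monotonicity of $F_\sigma$ and the $n^{-\alpha_\sigma + o(1)}$ asymptotics of Lemma \ref{lem:arm_exp}, this functional equation forces $F_\sigma(\lambda) = \lambda^{-\alpha_\sigma}$.

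Finally, to extract the stated ratio limit, I would fix an intermediate scale $M$ large compared to $k$ and use quasi-multiplicativity together with separation of arms at scale $M$ to write, for any $\ve > 0$, $M = M(\ve)$ sufficiently large, and all $n \geq M$,
\[
\frac{\pi_\sigma(k, \lambda n)}{\pi_\sigma(k, n)} = (1 + O(\ve)) \cdot \frac{\pi_\sigma(M, \lambda n)}{\pi_\sigma(M, n)},
\]
so that the common inner factor $\pi_\sigma(k, M)$ cancels. Applying the first step at fixed ratios $\rho_1 = M/n$ and $\rho_2 = M/(\lambda n)$, the right-hand side converges to $F_\sigma(\lambda n/M)/F_\sigma(n/M) = \lambda^{-\alpha_\sigma}$ by the multiplicativity of $F_\sigma$; letting $\ve \to 0$ then yields the claim. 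The main obstacle is the second step: upgrading the constant-loss quasi-multiplicativity \eqref{eq:qmult} to an \emph{exact} multiplicative relation in the scaling limit requires careful control on the separation-of-arms events and on the decoupling of the continuum limit across disjoint annuli, which is the technical heart of the Garban--Pete--Schramm approach and the reason this result is currently available only for site percolation on $\TT$.
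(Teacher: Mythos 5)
You should first note that the paper itself does not prove Lemma \ref{lem:ratio_lim}: it is imported verbatim as Proposition 4.9 of \cite{Garban2010}, so there is no internal argument to measure your proposal against. What you have written is, in substance, a reconstruction of the strategy used in that reference: convergence of discrete arm probabilities in annuli of fixed modulus to scale-invariant continuum quantities, a strong-separation/coupling argument upgrading the quasi-multiplicativity \eqref{eq:qmult} to an exact factorization with $1+o(1)$ errors, cancellation of the common inner factor $\pi_\sigma(k,M)$, and identification of the exponent from Lemma \ref{lem:arm_exp}. So the roadmap is the right one.

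Two caveats keep this from being a proof rather than a plan. First, the heavy lifting -- the coupling and strong arm-separation statements showing that the gluing constants in \eqref{eq:qmult} tend to $1$, and the fact that the discrete arm events converge to measurable continuum events of positive probability -- is precisely the technical content of \cite{Garban2010, GPS13b}; you flag it correctly, but your argument assumes it rather than establishes it, and this is the entire difficulty (it is also why the statement is only available on $\TT$). Second, your final step is stated imprecisely: you invoke the fixed-ratio limit of Step 1 ``at fixed ratios $\rho_1 = M/n$'', but with $M$ fixed and $n \to \infty$ this ratio tends to $0$, so Step 1 does not apply as written. The correct bookkeeping is to cut at a scale proportional to $n$, say $\delta n$, write $\pi_\sigma(M,n) = (1+o(1))\,\pi_\sigma(M,\delta n)\,\pi_\sigma(\delta n, n)$ and the analogous identity for $\lambda n$ (this again uses the exact-factorization input), and then let the fixed-ratio limits $F_\sigma(1/\delta)$ and $F_\sigma(\lambda/\delta)$ together with multiplicativity produce $\lambda^{-\alpha_\sigma}$. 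With that repair, and granting the separation/scaling-limit inputs, your outline coincides with the proof in the cited reference.
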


Actually, we make use of a slightly stronger version of this result: the above point-wise convergence holds locally uniformly in $\lambda$.

\begin{lemma} \label{lem:Rob_ratio_lim}
For all $k \geq 1$, $\sigma \in \colorseq_k$, $\bar{\lambda} > 1$ and $\ve > 0$, there exists $K \geq 1$ such that: for all $n' > n \geq K$ with $\frac{n'}{n} \leq \bar{\lambda}$, we have
$$\frac{\pi_\sigma(k, n')}{\pi_\sigma(k, n)} \cdot \left(\frac{n'}{n}\right)^{\alpha_\sigma} \in (1 - \ve, 1 + \ve),$$
where $\alpha_\sigma$ is as in Lemma \ref{lem:arm_exp}.
\end{lemma}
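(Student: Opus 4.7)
The plan is to upgrade the pointwise convergence in Lemma \ref{lem:ratio_lim} to uniform convergence on the compact interval $[1,\bar\lambda]$ via a standard finite-grid argument, exploiting the monotonicity of $m \mapsto \pi_\sigma(k,m)$ together with the continuity of $\lambda \mapsto \lambda^{\alpha_\sigma}$. Note that $\pi_\sigma(k,\cdot)$ is indeed non-increasing, since for $m \leq m'$ the event $\arm_\sigma(\Ann_{k,m'})$ is contained in $\arm_\sigma(\Ann_{k,m})$ (by truncating each arm at $\partial \Ball_m$).

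Given $\ve > 0$ and $\bar\lambda > 1$, I would first pick $\delta > 0$ small enough that $(1+\delta)^{\alpha_\sigma} \leq 1 + \ve/4$, and subdivide $[1,\bar\lambda]$ by a finite grid $1 = \lambda_0 < \lambda_1 < \cdots < \lambda_M = \bar\lambda$ with $\lambda_i/\lambda_{i-1} \leq 1+\delta$ for every $i$. Lemma \ref{lem:ratio_lim} then furnishes, for each $i$, an integer $K_i$ such that
$$\lambda_i^{\alpha_\sigma} \, \frac{\pi_\sigma(k, \lambda_i n)}{\pi_\sigma(k, n)} \in \bigl(1 - \ve/4, \: 1 + \ve/4\bigr) \quad \text{for every } n \geq K_i;$$
set $K := \max_{0 \leq i \leq M} K_i$. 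Now take any $n' > n \geq K$ with $\lambda := n'/n \in (1,\bar\lambda]$, and pick $i$ so that $\lambda \in [\lambda_{i-1}, \lambda_i]$. Monotonicity yields
$$\pi_\sigma(k, \lambda_i n) \leq \pi_\sigma(k, n') \leq \pi_\sigma(k, \lambda_{i-1} n),$$
and writing $\lambda^{\alpha_\sigma} = \lambda_i^{\alpha_\sigma} (\lambda/\lambda_i)^{\alpha_\sigma}$ and $\lambda^{\alpha_\sigma} = \lambda_{i-1}^{\alpha_\sigma} (\lambda/\lambda_{i-1})^{\alpha_\sigma}$, together with the bounds $(\lambda_i/\lambda)^{\alpha_\sigma}, \, (\lambda/\lambda_{i-1})^{\alpha_\sigma} \leq (1+\delta)^{\alpha_\sigma} \leq 1 + \ve/4$, I would then deduce
$$\lambda^{\alpha_\sigma} \, \frac{\pi_\sigma(k, n')}{\pi_\sigma(k, n)} \in \bigl((1 - \ve/4)^2, \, (1 + \ve/4)^2\bigr) \subseteq (1 - \ve, \, 1 + \ve)$$
(for $\ve$ small enough), which is exactly the uniform estimate claimed.

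The only minor technicality is that $\lambda_i n$ may fail to be an integer; this is handled by the rounding convention already implicit in Lemma \ref{lem:ratio_lim} (one reads $\pi_\sigma(k, \lambda_i n)$ as $\pi_\sigma(k, \lfloor \lambda_i n \rfloor)$, and applies monotonicity to this rounded value). I do not anticipate any serious obstacle here: the entire argument reduces to compactness of $[1, \bar\lambda]$ combined with monotonicity of $\pi_\sigma(k, \cdot)$, both of which are essentially immediate.
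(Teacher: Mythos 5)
Your proposal is correct and follows essentially the same route as the paper's proof: reduce to a finite (geometric) grid of ratios in $[1,\bar\lambda]$, apply the pointwise ratio-limit theorem (Lemma \ref{lem:ratio_lim}) at the grid points, and sandwich $\pi_\sigma(k,n')$ using the monotonicity of $\pi_\sigma$ in its second argument, with the grid mesh chosen so the $(1+\delta)^{\alpha_\sigma}$ factors are absorbed into $\ve$. The only cosmetic difference is your explicit treatment of integer rounding, which the paper leaves implicit.
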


\begin{proof}[Proof of Lemma \ref{lem:Rob_ratio_lim}]
This is a rather immediate consequence of Lemma \ref{lem:ratio_lim}, and the fact that $\pi_\sigma$ is decreasing in its second argument. Indeed, let us write $\bar{\lambda} = (1 + \alpha)^m$, for some $\alpha > 0$ very small, depending on $\ve$ (a precise choice is made later). Lemma \ref{lem:ratio_lim} immediately gives that there exists $K \geq 1$ large enough so that for all $n \geq K$ and all $\lambda \in \mathcal{L}_m := \{ (1 + \alpha)^i \: : \: 1 \leq i \leq m\}$,
\begin{equation} \label{eq:ratio_limit_pw}
\frac{\pi_\sigma(k, \lambda n)}{\pi_\sigma(k, n)} \cdot \lambda^{\alpha_\sigma} \in \bigg(1 - \frac{\ve}{2}, 1 + \frac{\ve}{2} \bigg).
\end{equation}
Now, let us consider any $n \geq K$ and $n' \in (n, \bar{\lambda} n]$: there exists some $i \in \{1, \ldots, m\}$ for which $(1 + \alpha)^i \leq \frac{n'}{n} \leq (1 + \alpha)^{i+1}$, and the monotonicity of $\pi_\sigma$ implies
$$\frac{\pi_\sigma(k, (1 + \alpha)^{i+1} n)}{\pi_\sigma(k, n)} \leq \frac{\pi_\sigma(k, n')}{\pi_\sigma(k, n)} \leq \frac{\pi_\sigma(k, (1 + \alpha)^i n)}{\pi_\sigma(k, n)}.$$
By combining this with \eqref{eq:ratio_limit_pw}, we obtain
$$\bigg(1 - \frac{\ve}{2} \bigg) \big( (1 + \alpha)^{i+1} \big)^{- \alpha_\sigma} \leq \frac{\pi_\sigma(k, n')}{\pi_\sigma(k, n)} \leq \bigg(1 + \frac{\ve}{2} \bigg) \big( (1 + \alpha)^i \big)^{- \alpha_\sigma},$$
and so
$$\bigg(1 - \frac{\ve}{2} \bigg) (1 + \alpha)^{- \alpha_\sigma} \bigg( \frac{n'}{n} \bigg)^{- \alpha_\sigma} \leq \frac{\pi_\sigma(k, n')}{\pi_\sigma(k, n)} \leq \bigg(1 + \frac{\ve}{2} \bigg) (1 + \alpha)^{\alpha_\sigma} \bigg( \frac{n'}{n} \bigg)^{- \alpha_\sigma}.$$
This yields the desired conclusion, by choosing $\alpha = \alpha(\ve)$ small enough.
\end{proof}

\subsection{Near-critical behavior of $\theta(p)$ and $L(p)$} \label{sec:theta_L}

In this section, we state two more specific properties of near-critical percolation. To our knowledge, these results are new, and we believe that they are interesting in themselves. Their proofs are more involved, since they rely on the scaling limit of near-critical percolation \cite{GPS13b} constructed by Garban, Pete and Schramm. However, the results and tools from \cite{GPS13b} are not used elsewhere in the paper, only via the following Proposition \ref{prop:theta_over_pi} and Lemma \ref{lem:str_Kesten}. Therefore, we dedicate a separate section for the proofs of these two results (Section \ref{sec:proofs_theta_L}), which can be skipped at first reading.

Our analysis of volume-frozen percolation relies on locating precisely the successive freezing times, for which we need to closely keep track of the value of $\theta$. It turns out that the classical relation \eqref{eq:Kesten_theta_pi} is not good enough for that purpose, and we make use of the stronger version below.

\begin{proposition} \label{prop:theta_over_pi}
There exists a constant $c_{\theta} \in (0,\infty)$ such that
$$\frac{\theta(p)}{\pi_1(L(p))} \Lra{p \searrow p_c} c_{\theta}.$$
\end{proposition}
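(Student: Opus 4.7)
The plan is to combine a two-scale decomposition with the near-critical scaling limit of \cite{GPS13b}. Using \eqref{eq:equiv_expdecay}, I would split
$$\theta(p) = h(p) \cdot \PP_p \bigl( 0 \lra{} \partial \Ball_{L(p)} \bigr), \qquad h(p) := \PP_p \bigl( 0 \lra{} \infty \,\big|\, 0 \lra{} \partial \Ball_{L(p)} \bigr),$$
noting that $h(p)$ is bounded in $[c, 1]$ for some $c > 0$ (this is the quantitative content behind \eqref{eq:equiv_expdecay}). It then suffices to establish the two convergences
$$\frac{\PP_p \bigl( 0 \lra{} \partial \Ball_{L(p)} \bigr)}{\pi_1(L(p))} \Lra{p \searrow p_c} c_A \in (0,\infty) \quad \text{and} \quad h(p) \Lra{p \searrow p_c} c_B \in (0,1],$$
and set $c_{\theta} := c_A c_B$.

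For the first convergence, I would work with a small fixed $\delta > 0$. By quasi-multiplicativity with separation of arms (the standard refinement of \eqref{eq:qmult}),
$$\PP_p \bigl( 0 \lra{} \partial \Ball_{L(p)} \bigr) = \bigl( 1 + o_p(1) \bigr) \cdot \PP_p \bigl( 0 \lra{} \partial \Ball_{\delta L(p)} \bigr) \cdot F_p(\delta),$$
where $F_p(\delta)$ is a macroscopic ``extension'' probability in $\Ann_{\delta L(p), L(p)}$ with well-separated endpoints on the inner boundary. At the microscopic scale, $\PP_p ( 0 \lra{} \partial \Ball_{\delta L(p)}) / \pi_1(\delta L(p)) \to 1$ for each fixed $\delta > 0$, since $\delta L(p) \ll L(p)$ means that $p$ ``looks like $p_c$'' at this scale; a Russo-type differentiation bound combined with \eqref{eq:Kesten_L} makes this precise. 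The macroscopic factor $F_p(\delta)$, after rescaling by $L(p)$, becomes an event in the fixed annulus $\Ann_{\delta, 1}$, and by convergence of the near-critical ensemble \cite{GPS13b}, $F_p(\delta) \to F_\infty(\delta)$. Combining with the ratio-limit theorem $\pi_1(\delta L(p))/\pi_1(L(p)) \to \delta^{-5/48}$ (Lemma \ref{lem:ratio_lim}), one obtains
$$\frac{\PP_p \bigl( 0 \lra{} \partial \Ball_{L(p)} \bigr)}{\pi_1(L(p))} \Lra{p \searrow p_c} \delta^{-5/48} F_\infty(\delta).$$
Since the left-hand side does not depend on $\delta$, the right-hand side must be constant in $\delta$ for all small $\delta$, and this common value is $c_A$; its strict positivity follows from \eqref{eq:Kesten_theta_pi}.

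The convergence $h(p) \to c_B$ is handled by a parallel argument at macroscopic scales: one decomposes $\{0 \lra{} \infty\}$ by the progression across the dyadic annuli $\Ann_{2^k L(p), 2^{k+1} L(p)}$ ($k \geq 0$), uses \eqref{eq:connection_expdecay} to truncate the tail uniformly in $p$, and applies scaling-limit convergence of \cite{GPS13b} on each bounded level. The hardest part is the twofold use of \cite{GPS13b}: one must verify that the configuration at $p$ (rescaled by $L(p)$) converges to a specific limit $\omega_{\tilde\lambda}$, where $\tilde\lambda$ is pinned down by the normalization in \eqref{eq:def_Lp} via $|p - p_c| L(p)^2 \pi_4(L(p)) \asymp 1$ and the (strict) monotonicity of the limit crossing probability in $\lambda$; and one needs the extension events $F_p(\delta)$ and their macroscopic analogues to be continuity points of this limit, so that weak convergence yields convergence of probabilities. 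Making these two ingredients rigorous is the main technical burden, and is precisely the kind of computation that the framework of \cite{GPS13b} is designed to support.
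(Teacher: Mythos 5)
Your overall architecture---compare near-critical to critical at a small scale via a Russo-type pivotal bound, handle the macroscopic scales via the near-critical scaling limit of \cite{GPS13b} together with ratio-limit theorems, pass from scale $L(p)$ to infinity via exponential decay, and identify the constant by a consistency argument in an auxiliary parameter---is essentially the strategy of the paper (Lemmas \ref{lem:crit_ncrit_arm}--\ref{lem:ncrit_theta} and Proposition \ref{prop:ratio_lim_gen}). However, two of your intermediate claims are false or unsupported as stated. First, $\PP_p ( 0 \lra{} \partial \Ball_{\delta L(p)} ) / \pi_1(\delta L(p))$ does \emph{not} tend to $1$ as $p \searrow p_c$ for fixed $\delta$: the scale $\delta L(p)$ is a constant multiple of $L(p)$, not $\ll L(p)$, and the Russo/four-arm computation you invoke, combined with \eqref{eq:Kesten_L}, yields a relative error of order $|p-p_c| (\delta L(p))^2 \pi_4(\delta L(p)) \asymp \delta^2 \big( \pi_4(\delta L(p), L(p)) \big)^{-1} \asymp \delta^{3/4 + o(1)}$, which is uniform in $p$ but does \emph{not} vanish as $p \searrow p_c$. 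This is exactly the content of Lemma \ref{lem:crit_ncrit_arm}; your scheme survives only because this error is small in $\delta$ and you send $\delta \to 0$ at the very end, so it must be tracked as a uniform $O(\delta^{3/4})$ multiplicative factor rather than an $o_p(1)$.

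Second, the factorization $\PP_p(0 \lra{} \partial \Ball_{L(p)}) = (1+o_p(1)) \, \PP_p(0 \lra{} \partial \Ball_{\delta L(p)}) \, F_p(\delta)$ does not follow from quasi-multiplicativity with separation of arms: \eqref{eq:qmult} and its separation refinements give two-sided bounds with universal constants, never an asymptotic factorization with constant $1$. To make this step honest you should take $F_p(\delta)$ to be the exact ratio $\PP_p(0 \lra{} \partial \Ball_{L(p)}) / \PP_p(0 \lra{} \partial \Ball_{\delta L(p)})$ and prove that it converges as $p \searrow p_c$---which is precisely the near-critical ratio-limit theorem the paper imports and extends (Proposition \ref{prop:ratio_lim_gen}, via \cite{Garban2010} and \eqref{eq:cv_pi_eta_lambda}); note that the quad-crossing scaling limit does not by itself give convergence of a ratio of two \emph{vanishing} one-arm probabilities emanating from a single site, which is also why your treatment of $h(p)$ (``scaling-limit convergence on each dyadic level'') is not yet an argument. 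The efficient route there is that $\theta(p)/\PP_p(0 \lra{} \partial \Ball_{K L(p)}) \to 1$ as $K \to \infty$ uniformly in $p$, by \eqref{eq:connection_expdecay} and FKG, combined with the near-critical ratio-limit at fixed $K$ and a Cauchy argument, exactly as in Lemma \ref{lem:ncrit_theta}. Finally, the identification of the near-critical parameter (your $\tilde\lambda$; in the paper, $\lambda(p) \to 1$) is handled by the convergence and scale covariance of $L^\eta(\lambda)$ (Theorems \ref{thm:ncrit_scaling_lim} and \ref{thm:scale_invar}), so that part of your worry can be settled by quoting those results rather than reproving them.
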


We are also interested in the quantity $|p-p_c| L(p)^2 \pi_4(L(p))$ as $p \to p_c$. We already know from \eqref{eq:Kesten_L} that it is $\asymp 1$, but we need that it has actually a limit.

\begin{lemma} \label{lem:str_Kesten}
There exists a constant $c > 0$ such that
\begin{equation} \label{eq:str_Kesten}
|p-p_c| L(p)^2 \pi_4(L(p)) \Lra{p \searrow p_c} c.
\end{equation}
\end{lemma}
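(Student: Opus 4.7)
The plan is to extract the limiting value from the near-critical scaling limit of \cite{GPS13b}. For each integer $n \geq 1$ and $\lambda \geq 0$, introduce the rescaled crossing probability
\[
g_n(\lambda) := \PP_{p_c + \lambda / (n^2 \pi_4(n))}\bigl( \Cv([0,n] \times [0,2n]) \bigr),
\]
and write $\lambda_n(p) := (p - p_c)\, n^2 \pi_4(n)$, so that the quantity to be analyzed is $\lambda_{L(p)}(p)$. Using the complementarity $\PP_p(\Cv(R)) = 1 - \PP_{1-p}(\Ch(R))$ on $\TT$, together with $L(p) = L(1-p)$ and the definition \eqref{eq:def_Lp}, one checks that for every $p > p_c$ with $L(p) = n$,
\[
g_n\bigl(\lambda_n(p)\bigr) \leq 0.99 < g_{n+1}\bigl(\lambda_{n+1}(p)\bigr).
\]
Thus Lemma \ref{lem:str_Kesten} reduces to showing that these sandwich inequalities pin $\lambda_n(p)$ down to a single value as $n = L(p) \to \infty$.

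The first ingredient is that $g_n \to g_\infty$ locally uniformly on $[0,\infty)$, for some continuous function $g_\infty$. This is exactly the content of the near-critical scaling limit of \cite{GPS13b}: with their scaling (mesh $1/n$ and parameter shift $\lambda/(n^2 \pi_4(n))$) the percolation configuration in a fixed macroscopic domain converges in law, and the crossing probability of a fixed quad is a continuity point of this limiting law, depending continuously on $\lambda$. The second ingredient is that $g_\infty$ is \emph{strictly} increasing. Russo's formula gives
\[
\frac{d}{dp}\PP_p\bigl(\Cv([0,n]\times[0,2n])\bigr) = \sum_v \PP_p\bigl(v \textrm{ pivotal}\bigr),
\]
and a standard arm-separation argument (comparing with critical arm events via \eqref{eq:Kesten1} in the near-critical window, and handling boundary contributions using \eqref{eq:ext}--\eqref{eq:qmult}) shows that this sum is $\asymp n^2 \pi_4(n)$, uniformly over $\lambda$ in any bounded subset of $[0,\infty)$. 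Consequently $g_n'(\lambda) \asymp 1$ uniformly on compacts, so passing to the limit yields $g_\infty(\lambda_2) - g_\infty(\lambda_1) \geq c\,(\lambda_2 - \lambda_1)$ for some universal $c > 0$. Since by RSW \eqref{eq:RSW} applied to the complementary easy-direction crossing one has $g_\infty(0) \leq 1 - \delta < 0.99$, and since $g_\infty(\lambda) \to 1$ as $\lambda \to \infty$ (by \eqref{eq:connection_expdecay}, since for large $\lambda$ one has $L(p) \ll n$ so the $n\times 2n$ rectangle is traversed by the infinite cluster with probability close to $1$, uniformly in $n$), the equation $g_\infty(\lambda^*) = 0.99$ has a unique root $\lambda^* \in (0,\infty)$.

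To conclude, Lemma \ref{lem:Rob_ratio_lim} applied with $\sigma = (bwbw)$ (so $\alpha_\sigma = 5/4$) gives $\pi_4(n+1)/\pi_4(n) \to 1$, whence $\lambda_{n+1}(p)/\lambda_n(p) \to 1$ as $n \to \infty$. Combined with the locally uniform convergence $g_n \to g_\infty$ and the strict monotonicity of $g_\infty$, the sandwich inequality then forces $\lambda_n(p) \to \lambda^*$ as $n = L(p) \to \infty$, which is precisely Lemma \ref{lem:str_Kesten} with $c = \lambda^*$. The main obstacle in this plan is clearly the first ingredient: justifying the locally uniform convergence $g_n \to g_\infty$ requires invoking the full near-critical scaling limit machinery of \cite{GPS13b}, including its quantitative stability of the pivotal set under small perturbations of $p$ and the identification of the limit as a conformally covariant object for which macroscopic crossing events are continuity points.
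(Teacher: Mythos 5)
Your argument is sound and reaches the right conclusion, but it takes a genuinely different route from the paper's. The paper parametrizes $p=p_\lambda(\eta)$ as in \eqref{eq:near_crit_param}, writes $|p-p_c|L(p)^2\pi_4(L(p)) = \bigl(\tfrac{L(p_\lambda(\eta))}{\eta^{-1}}\bigr)^2\cdot\tfrac{\pi_4(L(p_\lambda(\eta)))}{\pi_4(\eta^{-1})}\cdot\lambda$ and lets $\eta\to0$ using Theorem \ref{thm:ncrit_scaling_lim}(i) (convergence of the rescaled correlation length $L^\eta(\lambda)\to L^0(\lambda)$) together with the ratio-limit theorem, obtaining $(L^0(\lambda))^{3/4}\lambda$; this is seen to be independent of $\lambda$ via the exact covariance $L^0(\lambda)=|\lambda|^{-4/3}L^0(1)$ of Theorem \ref{thm:scale_invar}, and a short reparametrization as $p\searrow p_c$ finishes. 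You bypass both structural inputs (the convergence of $L^\eta(\lambda)$ and its $4/3$-covariance) and instead use the defining $0.01$-crossing characterization of $L$ to sandwich $\lambda_{L(p)}(p)$ at the $0.99$-level of near-critical crossing functions at scales $L(p)$ and $L(p)+1$, identify the constant as the unique root of $g_\infty=0.99$, and supply the needed strict monotonicity and equicontinuity by hand via Russo's formula and Kesten-type pivotal estimates (legitimate uniformly over bounded $\lambda$, since \eqref{eq:Kesten_L} gives $L(p)\gtrsim n$ there). The paper's route buys brevity and the explicit value $(L^0(1))^{3/4}$; yours only needs the fixed-$\lambda$ convergence of a single quad-crossing probability from \cite{GPS13b}, at the cost of re-deriving part of the correlation-length control that Theorem \ref{thm:ncrit_scaling_lim}(i) already packages. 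Three details to tighten, none fatal: (a) your normalization uses $\pi_4(n)=\pi_4(0,n)$, whereas \eqref{eq:near_crit_param} uses $\pi_4^\eta(\eta,1)$, i.e.\ $\pi_4(1,n)$ at mesh $\eta=n^{-1}$, and the ratio-limit statements are phrased with a fixed inner radius; you therefore need convergence of the inner-radius ratio (a standard arm-separation/coupling fact, or run the whole argument in the GPS normalization and convert at the end) — the same caveat underlies your step $\pi_4(n+1)/\pi_4(n)\to1$; (b) the RSW constant in \eqref{eq:RSW} is not quantitatively bigger than $0.01$, so to see $g_\infty(0)<0.99$ bound the hard crossing of $[0,n]\times[0,2n]$ at $p_c$ by the crossing probability of an $n\times n$ square, which is $1/2+o(1)$ by self-duality and colour symmetry (or quote Cardy); (c) the paper works with the interpolated $\tilde L$ rather than the step function $L$ whose $0.01$-characterization you use, so prove the limit first for $L$, deduce from it (applied on both sides of a jump) that consecutive attained values of $L$ have ratio tending to $1$, hence $\tilde L/L\to1$, and conclude; note also that the a priori bound \eqref{eq:Kesten_L} keeps all subsequential limits of $\lambda_{L(p)}(p)$ in a compact subset of $(0,\infty)$, which streamlines the endgame.
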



\subsection{Proofs of Proposition \ref{prop:theta_over_pi} and Lemma \ref{lem:str_Kesten}} \label{sec:proofs_theta_L}

Before we dive into the proof, we extend our notations to accommodate the triangular lattice at different mesh sizes, as in \cite{Garban2010, GPS13b}. These new notations are used only in this section.

For $\eta>0$, let $\TT^\eta$ be the triangular lattice with mesh size $\eta$ (i.e. the lattice obtained by scaling $\TT$ with a factor $\eta$). For all the quantities defined so far, we add a superscript $\eta$ to indicate the dependence on the mesh size. In particular, $\PP^\eta_p$ refers to site percolation on $\TT^\eta$ with parameter $p$. Note that
$$L^{\eta}(p) = \eta L(p), \quad \text{and} \quad \pi_j^\eta(a,b) = \pi_j(a\eta^{-1}, b\eta^{-1})$$
for all $j \in \{1, 4\}$ and $0 < a < b$.

We make use of the following near-critical parameter scale: for $\lambda \in \RR$, we set 
\begin{equation} \label{eq:near_crit_param}
p_\lambda(\eta) := p_c + \lambda \frac{\eta^2}{\pi^\eta_4(\eta,1)}.
\end{equation}
We use the short-hand $\PP^{\eta,\lambda} := \PP_{p_\lambda(\eta)}^\eta$, and we extend the notation $\pi_j^{\eta}(a,b)$ by
$$\pi_j^{\eta,\lambda}(a,b) := \PP^{\eta,\lambda}(\arm_j(a,b))$$
for $j \in \{1,4\}$ and $0 < a < b$. Finally, we set (with a slight abuse of notation)
\begin{equation} \label{eq:def_Llambda}
L^\eta(\lambda) := L^\eta(p_\lambda(\eta)).
\end{equation}

First, let us recall some results from \cite{Garban2010} and \cite{GPS13b} (where we restrict to the cases that we need, namely $j = 1$ or $4$, although we believe it to hold for other cases as well).

\begin{theorem} \label{thm:ncrit_scaling_lim}
The quantities $L^\eta(\lambda)$ and $\pi_j^{\eta,\lambda}(a,b)$ converge to some continuum analogs as $\eta \to 0$:
\begin{itemize}
\item[(i)] for any $\lambda \in \RR$,
\begin{equation}
L^\eta(p_{\lambda}(\eta)) = L^\eta(\lambda) \stackrel[\eta \to 0]{}{\longrightarrow} L^0(\lambda),
\end{equation}

\item[(ii)] and for all $j \in \{1,4\}$ and $0 < a < b$,
\begin{equation} \label{eq:cv_pi_eta_lambda}
\pi_j^{\eta,\lambda}(a,b) \stackrel[\eta \to 0]{}{\longrightarrow} \pi_j^{0,\lambda}(a,b).
\end{equation}
\end{itemize}
\end{theorem}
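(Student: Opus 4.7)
My plan is to reduce both parts of Theorem \ref{thm:ncrit_scaling_lim} to the construction of the near-critical scaling limit in \cite{GPS13b}: for each $\lambda \in \RR$, that paper constructs the scaling limit of the ensemble under $\PP^{\eta,\lambda}$ as $\eta \to 0$, encoded in the quad-crossing topology together with the pivotal measure, and it is from this continuum object that the limits $\pi^{0,\lambda}_j$ and $L^0(\lambda)$ are to be read off.

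For part (ii), I would show that the arm event $\arm_\sigma(\Ann_{a,b})$ is, up to a boundary issue at $\partial \Ball_a$ and $\partial \Ball_b$, a continuity event for the near-critical scaling limit at parameter $\lambda$. At $\lambda = 0$ this is classical, combining the convergence to SLE$(6)$ of \cite{Smirnov} with the standard reduction of arm events to interface-visiting events; for $\lambda \neq 0$ the same reduction goes through for the near-critical limit of \cite{GPS13b}. The boundary issue is controlled by a thickening argument, replacing $a,b$ by $a \pm \delta$, $b \mp \delta$ and using near-critical quasi-multiplicativity in the form \eqref{eq:qmult}, \eqref{eq:Kesten1} (which are available as long as the scales stay below a constant times $L^\eta(\lambda)$; for fixed $a,b$ this holds once $\eta$ is small enough). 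Sending $\delta \to 0$ yields \eqref{eq:cv_pi_eta_lambda} and defines $\pi^{0,\lambda}_j(a,b)$ as the corresponding continuum probability.

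For part (i), I would introduce the auxiliary function
\begin{equation*}
f^\eta(\lambda, s) := \PP^{\eta,\lambda}\bigl( \Ch([0,s] \times [0,2s]) \bigr),
\end{equation*}
a crossing probability in $\TT^\eta$ of a rectangle of fixed continuum size $s$. Crossings of fixed continuum rectangles are continuity events for the quad-crossing scaling limit (this is part of its very construction), so $f^\eta(\lambda, s) \to f^0(\lambda, s)$ as $\eta \to 0$. Moreover, $f^0(\lambda, \cdot)$ is continuous and strictly monotone (decreasing for $\lambda > 0$, increasing for $\lambda < 0$), a property that follows from the continuum Russo-type formula driven by the pivotal measure of \cite{GPS13b}. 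Defining $L^0(\lambda)$ as the unique $s$ satisfying $f^0(\lambda, s) = 0.01$ for $\lambda \neq 0$ (and $L^0(0) = +\infty$), continuity and strict monotonicity of $f^0(\lambda, \cdot)$ upgrade the pointwise convergence of $f^\eta$ to convergence of the defining thresholds, i.e.\ $L^\eta(\lambda) \to L^0(\lambda)$. The linear-interpolation smoothing used in Section \ref{sec:notations} (making $\tilde L = L$) ensures that no jumps in the discrete threshold spoil this argument.

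The main obstacle is establishing the continuity-event property of arm events and crossing events for the near-critical ensemble, which is precisely what the pivotal/quad-crossing framework of \cite{GPS13b} is designed to deliver; in particular, it requires ruling out that the limiting measure places atoms on such events even after the order-one perturbation by $\lambda$. Once this is granted, the rest of the proof reduces to routine soft analysis combined with the monotonicity of $f^0(\lambda,\cdot)$.
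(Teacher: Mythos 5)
Your overall reduction is the same as the paper's: the paper disposes of this theorem by direct citation, taking part (i) from Theorem 9.4 of \cite{GPS13b} and part (ii) from that theorem combined with Lemma 2.9 of \cite{Garban2010} (which is exactly the thickening/quasi-multiplicativity argument you sketch for making arm events continuity events of the quad-crossing limit). So part (ii) of your plan is in line with what the paper actually invokes.

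For part (i), however, your reconstruction has a concrete problem as written. You define $f^\eta(\lambda,s)=\PP^{\eta,\lambda}(\Ch([0,s]\times[0,2s]))$ and set $L^0(\lambda)$ to be the solution of $f^0(\lambda,s)=0.01$. Recall that $L(p)$ is defined through the $0.01$-threshold only for $p<p_c$, and for $p>p_c$ one sets $L(p)=L(1-p)$; equivalently one must use white (dual) hard-way crossings on the supercritical side. For $\lambda>0$ your $f^0(\lambda,\cdot)$ is bounded below by the critical (Cardy-type) value of the hard-way crossing probability and increases to $1$, so the equation $f^0(\lambda,s)=0.01$ has no solution and your definition of $L^0(\lambda)$ is vacuous for $\lambda>0$; the parenthetical ``decreasing for $\lambda>0$, increasing for $\lambda<0$'' has the monotonicity in $s$ reversed and suggests this duality convention got lost. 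Relatedly, strict monotonicity and continuity of $f^0(\lambda,\cdot)$ \emph{in the scale variable $s$} do not follow from the continuum Russo-type formula, which differentiates crossing probabilities in $\lambda$ at fixed quad; to transfer $\lambda$-monotonicity into $s$-monotonicity you need the scale covariance of the near-critical limit ($L^0(\lambda)=|\lambda|^{-4/3}L^0(1)$ and the associated covariance of crossing probabilities, i.e.\ Theorem 10.3 and Corollary 10.5 of \cite{GPS13b}, quoted later in the paper), or some substitute argument. With the dual-color definition for $\lambda>0$ and the scale-covariance input your threshold argument can be repaired, but as stated these two steps fail; the paper avoids them entirely by citing the correlation-length convergence of \cite{GPS13b} directly.
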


This result comes from the fact that for any fixed $\lambda \in \RR$, the percolation model on $\TT^\eta$ with parameter $p_\lambda(\eta)$ converges in distribution to the continuum near-critical percolation model as $\eta \to 0$ (in the quad-crossing topology). More precisely, (i) follows from Theorem 9.4 of \cite{GPS13b}, while (ii) follows from the same theorem combined with Lemma 2.9 of \cite{Garban2010}.


\begin{theorem}[Theorem 10.3 and Corollary 10.5 of \cite{GPS13b}] \label{thm:scale_invar}
For all $\lambda \in \RR \setminus \{0\}$, $j \in \{1,4\}$ and $0 < a < b$,
\begin{align*}
L^0(\lambda) & = |\lambda|^{-4/3} L^0(1) \in (0,\infty),\\[1mm]
\text{and} \quad \pi^{0,\lambda}_j(a,b) & = \pi^{0, \sgn(\lambda)}_j(|\lambda|^{4/3} a, |\lambda|^{4/3} b).
\end{align*}
In particular,
$$\pi^{0,1}_j(a L^0(1), b L^0(1)) = \pi^{0,\lambda}_j(a L^0(\lambda), b L^0(\lambda))$$
for all $\lambda > 0$ and $0 < a < b$.
\end{theorem}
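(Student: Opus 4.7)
The plan is to derive both identities from a trivial discrete pre-scale-invariance: a percolation configuration on $\TT^\eta$ can be reinterpreted as one on $\TT^{\mu \eta}$ (for any $\mu > 0$) by rescaling all distances by $\mu$, which changes neither the parameter nor the occurrence of any translation-invariant event. Hence, for any $\lambda \neq 0$, $\mu > 0$, and $0 < a < b$,
\begin{equation} \label{eq:plan_pre_scale}
\pi_j^{\eta, \lambda}(a, b) = \pi_j^{\mu \eta, \lambda_\eta(\mu)}(\mu a, \mu b) \quad \text{and} \quad L^\eta(\lambda) = \mu^{-1} L^{\mu \eta}(\lambda_\eta(\mu)),
\end{equation}
where $\lambda_\eta(\mu)$ is determined by the requirement $p_{\lambda_\eta(\mu)}(\mu \eta) = p_\lambda(\eta)$. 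From \eqref{eq:near_crit_param} and the identity $\pi^\eta_4(\eta, 1) = \pi_4(1, \eta^{-1})$, this yields
\[\lambda_\eta(\mu) = \lambda \mu^{-2} \cdot \frac{\pi_4(1, (\mu\eta)^{-1})}{\pi_4(1, \eta^{-1})},\]
and Lemma \ref{lem:ratio_lim} (with exponent $\alpha_4 = 5/4$) gives $\lambda_\eta(\mu) \to \lambda \mu^{-3/4}$ as $\eta \to 0$. The choice $\mu := |\lambda|^{4/3}$ thus makes this limit equal to $\sgn(\lambda)$.

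With this $\mu$ fixed, I would pass to the limit $\eta \to 0$ in \eqref{eq:plan_pre_scale}. The left-hand sides converge to $\pi_j^{0, \lambda}(a, b)$ and $L^0(\lambda)$ by Theorem \ref{thm:ncrit_scaling_lim}. The substance of the proof is to show that the right-hand sides converge to $\pi_j^{0, \sgn(\lambda)}(|\lambda|^{4/3} a, |\lambda|^{4/3} b)$ and $|\lambda|^{-4/3} L^0(\sgn(\lambda))$, respectively. The two stated scaling identities follow, with $L^0(-1) = L^0(1)$ coming from the color-swap symmetry $L(p) = L(1-p)$ (which flips the sign of $\lambda$ on the near-critical scale), and positivity/finiteness of $L^0(1)$ following from the definition of $L$ combined with Kesten's relation \eqref{eq:Kesten_L}.

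The ``in particular'' statement is then a direct algebraic consequence: applying the arm identity once,
\[\pi^{0, \lambda}_j(a L^0(\lambda), b L^0(\lambda)) = \pi^{0, 1}_j\bigl(|\lambda|^{4/3} a L^0(\lambda), |\lambda|^{4/3} b L^0(\lambda)\bigr) = \pi^{0, 1}_j(a L^0(1), b L^0(1)),\]
where the second equality uses the length identity in the form $|\lambda|^{4/3} L^0(\lambda) = L^0(1)$.

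The main obstacle is the passage to the limit on the right-hand sides of \eqref{eq:plan_pre_scale}: the parameter $\lambda_\eta(\mu)$ varies with $\eta$ rather than being fixed, while Theorem \ref{thm:ncrit_scaling_lim} only provides pointwise-in-$\lambda$ convergence. Upgrading this to convergence along the sequence $\lambda_\eta(\mu) \to \sgn(\lambda)$ requires a form of joint continuity of the limiting quantities in the near-critical parameter. This is precisely what the scaling-limit construction of \cite{GPS13b} delivers: by encoding the near-critical process via a Poissonian pivotal-point measure obtained from the critical scaling limit, one obtains a coupling across parameters (and across mesh sizes) sufficiently regular for this continuity. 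Given that input, both identities of Theorem \ref{thm:scale_invar} reduce to the elementary pre-scale-invariance \eqref{eq:plan_pre_scale} combined with the asymptotics of $\pi_4$ encoded in Lemma \ref{lem:ratio_lim}.
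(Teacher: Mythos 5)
The first thing to say is that the paper itself does not prove this statement: Theorem \ref{thm:scale_invar} is imported verbatim from \cite{GPS13b} (Theorem 10.3 and Corollary 10.5), so there is no internal proof to compare against. Within that caveat, the computational part of your sketch is correct and is indeed the standard heuristic behind the result: the lattice rescaling identity $\pi_j^{\eta,\lambda}(a,b)=\pi_j^{\mu\eta,\lambda_\eta(\mu)}(\mu a,\mu b)$, $L^\eta(\lambda)=\mu^{-1}L^{\mu\eta}(\lambda_\eta(\mu))$ is exact, the formula for $\lambda_\eta(\mu)$ from \eqref{eq:near_crit_param} is right, the ratio-limit theorem with $\alpha_4=5/4$ gives $\lambda_\eta(\mu)\to\lambda\mu^{-3/4}$, the choice $\mu=|\lambda|^{4/3}$ produces $\sgn(\lambda)$, the color-swap argument for $L^0(-\lambda)=L^0(\lambda)$ is fine, and the ``in particular'' statement is indeed pure algebra once the two identities are in hand.

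The genuine gap is at the step you yourself flag as the main obstacle. Theorem \ref{thm:ncrit_scaling_lim} only gives convergence at a \emph{fixed} $\lambda$, while your right-hand sides involve the mesh-dependent parameter $\lambda_\eta(\mu)\to\sgn(\lambda)$; upgrading to convergence along this drifting sequence is precisely the nontrivial content of the theorem, and writing ``this is what the scaling-limit construction of \cite{GPS13b} delivers'' without a precise statement reduces the proposal to a restatement of the citation. It is also not how \cite{GPS13b} actually argue: their proof of Theorem 10.3 works directly in the continuum, deducing the scaling covariance of the near-critical ensemble from the scaling covariance of the pivotal measure in the Poissonian construction, rather than by a discrete limit with a drifting parameter. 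To make your route self-contained you would need a uniform-in-$\eta$ equicontinuity estimate in the near-critical parameter — for instance a two-parameter analogue of the pivotal-counting bound in Lemma \ref{lem:crit_ncrit_arm}, comparing the configurations at $p_{\lambda_\eta(\mu)}(\mu\eta)$ and $p_{\sgn(\lambda)}(\mu\eta)$ on the events $\arm_j(\mu a,\mu b)$ and on the crossing events defining $L$, uniformly in the mesh — and then combine it with Theorem \ref{thm:ncrit_scaling_lim}; the positivity and finiteness of $L^0(1)$ can indeed be extracted from \eqref{eq:Kesten_L} as you indicate. Without an estimate of this kind the central step of the proposal is asserted rather than proved.
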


Let us also remind that conformal invariance of critical percolation in the scaling limit (see Theorem 7 of \cite{Camia2006}) implies that
\begin{equation} \label{eq:crit_cinvar}
\pi^{0}_j(a, b) = \pi^{0}_j(s a, s b)  
\end{equation}
for all $j \in \{1,4\}$, $0 < a < b$ and $s > 0$ (where we write $\pi^{0}_j$ for $\pi^{0,0}_j$).

The results above rely on the following ratio-limit theorem.

\begin{proposition} \label{prop:ratio_lim_gen}
For any fixed $a, r > 0$ and $\lambda \in \RR$, there exists a constant $c_\lambda(r, a) > 0$ such that
$$\lim_{\eta \to 0} \frac{\pi^{\eta, \lambda}_j (\eta, r)}{\pi^{\eta, \lambda}_j (\eta, a)} = \lim_{\ve \to 0} \frac{\pi^{0, \lambda}_j (\ve, r)}{\pi^{0, \lambda}_j (\ve, a)} = c_\lambda(r, a)$$
for all $j \in \{1, 4\}$. In the case where $\lambda = 0$, one has $c_0(r, a) = (\frac{r}{a})^{-\alpha_j}$, with $\alpha_1 = \frac{5}{48}$ and $\alpha_4 = \frac{5}{4}$.
\end{proposition}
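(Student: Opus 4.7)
The plan is to treat the critical case $\lambda = 0$ (where $p_{\lambda}(\eta) = p_c$) and the near-critical case $\lambda \neq 0$ separately, since only the former reduces directly to Lemma \ref{lem:ratio_lim}.

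For $\lambda = 0$, we have the identity $\pi^{\eta,0}_j(\eta, r) = \pi_j(1, r/\eta)$, so
$$\frac{\pi^{\eta,0}_j(\eta, r)}{\pi^{\eta,0}_j(\eta, a)} = \frac{\pi_j(1, (r/a)(a/\eta))}{\pi_j(1, a/\eta)} \Lra{\eta \to 0} \left( \frac{r}{a} \right)^{-\alpha_j}$$
by (a straightforward extension to ratios in $(0,\infty)$ of) Lemma \ref{lem:ratio_lim}, applied with $n = a/\eta \to \infty$. For the continuum analogue, conformal invariance at criticality \eqref{eq:crit_cinvar} gives $\pi^{0}_j(\ve, r)/\pi^{0}_j(\ve, a) = \pi^{0}_j(1, r/\ve)/\pi^{0}_j(1, a/\ve)$, and passing to the limit $\ve \to 0$ by identifying each $\pi^{0}_j(1, \rho)$ with the corresponding discrete limit via Theorem \ref{thm:ncrit_scaling_lim}(ii) recovers the same constant $(r/a)^{-\alpha_j}$.

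For $\lambda \neq 0$, I would fix an intermediate scale $\ve > 0$ with $\ve < \min(a, r)$ and decompose both arm events via quasi-multiplicativity at scale $\ve$. Since $L^\eta(\lambda) \to L^0(\lambda) \in (0, \infty)$ by Theorem \ref{thm:ncrit_scaling_lim}(i), all scales in $[\ve, \max(a,r)]$ are $\leq C L^\eta(\lambda)$ for small $\eta$, so combining \eqref{eq:qmult} with \eqref{eq:Kesten1} yields
$$\pi^{\eta,\lambda}_j(\eta, r) \asymp \pi^{\eta,\lambda}_j(\eta, \ve) \cdot \pi^{\eta,\lambda}_j(\ve, r),$$
with constants independent of $\eta$ (but depending on $\ve, r, \lambda, j$); the analogous bound holds with $a$ in place of $r$. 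Dividing the two and applying Theorem \ref{thm:ncrit_scaling_lim}(ii) at the fixed inner radius $\ve$, we obtain
$$\frac{\pi^{\eta,\lambda}_j(\eta, r)}{\pi^{\eta,\lambda}_j(\eta, a)} \asymp \frac{\pi^{\eta,\lambda}_j(\ve, r)}{\pi^{\eta,\lambda}_j(\ve, a)} \Lra{\eta \to 0} \frac{\pi^{0,\lambda}_j(\ve, r)}{\pi^{0,\lambda}_j(\ve, a)}.$$

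The main obstacle is to upgrade the $\asymp$ above into a genuine limit, by showing that the multiplicative quasi-multiplicativity corrections appearing in the numerator and denominator cancel to leading order as first $\eta \to 0$ and then $\ve \to 0$. The standard route is a \emph{sharp} separation-of-arms argument, in the spirit of \cite{Kesten1987} and refined in the scaling-limit setting of \cite{GPS13b}: conditioning on a well-separated realization of the arms crossing a small annulus around scale $\ve$, the inner event $\arm_j(\eta, \ve)$ becomes asymptotically independent of the outer configuration, and its conditional probability, normalized by $\pi^{\eta,\lambda}_j(\eta, \ve)$, converges to a universal constant that does not depend on whether the outer radius is $r$ or $a$. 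Transferring the same reasoning to the continuum model yields the existence of $\lim_{\ve \to 0} \pi^{0,\lambda}_j(\ve, r)/\pi^{0,\lambda}_j(\ve, a)$, and the equality of the two limits in the statement then follows by comparing the two derivations. Establishing this sharp separation, in which the heavy machinery of \cite{GPS13b} is used, is the hard technical step.
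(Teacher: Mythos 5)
Your proposal takes essentially the same route as the paper: the paper's proof is simply to note that the $\lambda=0$ case is Proposition 4.9 of \cite{Garban2010} (i.e.\ Lemma \ref{lem:ratio_lim}), and that the $\lambda\neq 0$ case follows by combining the \emph{proof} of that proposition with the convergence \eqref{eq:cv_pi_eta_lambda} at fixed macroscopic scales, which is exactly the quasi-multiplicativity-plus-separation-of-arms scheme you outline. The ``hard technical step'' you defer (the sharp separation/coupling argument upgrading $\asymp$ to a genuine limit) is likewise not carried out in the paper but delegated to the cited proof, so your sketch is at the same level of detail and uses the same ingredients.
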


\begin{proof}[Proof of Proposition \ref{prop:ratio_lim_gen}] The case $\lambda = 0$ coincides with Proposition 4.9 of \cite{Garban2010}. The case $\lambda \neq 0$ follows from a combination of the proof of that proposition, and \eqref{eq:cv_pi_eta_lambda}.
\end{proof}

The following result is a key lemma in \cite{GPS13b}.

\begin{lemma}[Lemma 8.4 of \cite{GPS13b}] \label{lem:ncrit_stab}
For all $\lambda \in \RR$ and $j \in \{1,4\}$, there exist constants $0 < c < C < \infty$ and $\eta_0$ (depending on $\lambda$ and $j$) such that: for all $\eta \leq \eta_0$,
$$c \leq \frac{\pi^{\eta, \lambda}_j (\eta, 1)}{\pi^{\eta}_j (\eta,1 )} \leq C.$$
\end{lemma}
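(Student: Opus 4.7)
\textbf{Proof plan for Lemma \ref{lem:ncrit_stab}.} The strategy is to reduce the claim to Kesten's stability of arm events in the near-critical window, i.e. \eqref{eq:Kesten1}, by verifying that $p_\lambda(\eta)$ is tailored precisely so that its characteristic length on the original lattice $\TT$ is of order $1/\eta$.

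First I would unscale everything to $\TT$ using $\pi^\eta_j(a,b) = \pi_j(a/\eta, b/\eta)$, so that $\pi^\eta_j(\eta,1) = \pi_j(1,1/\eta)$ and $\pi^{\eta,\lambda}_j(\eta,1) = \PP_{p_\lambda(\eta)}(\arm_j(1,1/\eta))$. The conclusion is then equivalent to
$$\PP_{p_\lambda(\eta)}\bigl(\arm_j(1,1/\eta)\bigr) \asymp \pi_j(1,1/\eta) \qquad \text{as } \eta \to 0,$$
with implicit constants depending on $\lambda$ and $j$. The case $\lambda = 0$ is immediate, so assume $\lambda \neq 0$.

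The core step is to show $L(p_\lambda(\eta)) \asymp 1/\eta$ (with constants depending on $\lambda$). By the definition \eqref{eq:near_crit_param} of $p_\lambda(\eta)$ and quasi-multiplicativity \eqref{eq:qmult} (using $\pi_4(0,1) = \Theta(1)$), one has $|p_\lambda(\eta) - p_c| \asymp |\lambda|\, \eta^2 / \pi_4(1/\eta)$. Plugging this into Kesten's relation \eqref{eq:Kesten_L} yields
$$L(p_\lambda(\eta))^2\, \pi_4\bigl(L(p_\lambda(\eta))\bigr) \asymp |\lambda|^{-1}\, (1/\eta)^2\, \pi_4(1/\eta).$$
Let $f(n) := n^2 \pi_4(n)$. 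Combining the lower bound \eqref{eq:4arm_bound} with \eqref{eq:qmult} across scales $m \leq n$ at criticality gives $\pi_4(n)/\pi_4(m) \geq c (m/n)^{2-\delta}$, hence $f(n)/f(m) \geq c (n/m)^{\delta}$ whenever $n \geq m$; on the other hand the monotonicity $\pi_4(n) \leq \pi_4(m)$ gives $f(n)/f(m) \leq (n/m)^2$. So $f$ is polynomially bi-bounded, and the relation $f(L(p_\lambda(\eta))) \asymp_\lambda f(1/\eta)$ forces $L(p_\lambda(\eta)) \asymp_\lambda 1/\eta$, as required.

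Once this is in hand, I would apply \eqref{eq:Kesten1} with $p = p' = p_\lambda(\eta)$: for a suitable $C = C(\lambda)$ and all sufficiently small $\eta$, $1/\eta \leq C\, L(p_\lambda(\eta))$, so that $\PP_{p_\lambda(\eta)}(\arm_j(1,1/\eta)) \asymp \pi_j(1,1/\eta)$, which is exactly the desired two-sided bound. The main obstacle in this plan is the polynomial-growth control of $f(n) = n^2 \pi_4(n)$ needed to invert Kesten's relation and pin down $L(p_\lambda(\eta))$; everything else is essentially bookkeeping from \eqref{eq:qmult}, \eqref{eq:Kesten_L}, and \eqref{eq:Kesten1}.
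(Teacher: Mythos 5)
Your proposal is mathematically sound, but be aware that the paper itself offers no proof to compare against: Lemma \ref{lem:ncrit_stab} is imported verbatim as Lemma 8.4 of \cite{GPS13b}. What you have done is give an independent derivation from pre-GPS inputs: unscaling to $\TT$, reading off $|p_\lambda(\eta)-p_c| \asymp |\lambda|\,\eta^2/\pi_4(1,\eta^{-1})$ from \eqref{eq:near_crit_param} and \eqref{eq:qmult}, inverting Kesten's relation \eqref{eq:Kesten_L} via the polynomial bi-boundedness of $f(n)=n^2\pi_4(n)$ (lower growth from \eqref{eq:4arm_bound} plus \eqref{eq:qmult}, upper from monotonicity) to get $L(p_\lambda(\eta)) \asymp_\lambda \eta^{-1}$, and then invoking \eqref{eq:Kesten1} with $p=p'=p_\lambda(\eta)$. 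The steps check out, including the implicit point that $p_\lambda(\eta)\to p_c$ (so that \eqref{eq:Kesten_L} applies), which follows from $\pi_4(1,\eta^{-1})\geq c\,\eta^{2-\delta}$. The one caution is a sourcing issue rather than a mathematical one: the paper attributes \eqref{eq:Kesten1} to Lemma 6.3 of \cite{Damron2009} \emph{or} to Lemma 8.4 of \cite{GPS13b}, and the latter is exactly the statement you are proving; to keep your argument non-circular you must take the near-critical stability input from the Kesten--Damron line (which is legitimate, as it predates \cite{GPS13b}) rather than from GPS. With that proviso, your route buys a self-contained, "classical" proof that exhibits the lemma as Kesten stability plus the identification $L(p_\lambda(\eta))\asymp_\lambda \eta^{-1}$, whereas the paper's citation simply keeps all near-critical scaling facts bundled with the other results it already imports from \cite{GPS13b}.
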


Before we proceed to the proof of Proposition \ref{prop:theta_over_pi}, we need a few more results.

\begin{lemma} \label{lem:crit_ncrit_arm}
For all $\lambda \in \RR$ and $j \in \{1, 4\}$, there exists a constant $C = C(\lambda, j) > 0$ such that for all $a \in (0, 1)$, we have:
\begin{equation} \label{eq:crit_ncrit_arm}
\Bigg| \frac{\pi^{\eta, \lambda}_j (\eta, a)}{\pi^{\eta}_j (\eta, a)} - 1 \Bigg| \leq C a^2 \big( \pi^{\eta}_4 (a, 1) \big)^{-1}
\end{equation}
for all $\eta \leq \eta_0 = \eta_0(\lambda, j, a)$.
\end{lemma}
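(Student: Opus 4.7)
The plan is to proceed via Russo's formula, bounding the change of the arm probability by an integral of expected pivotal counts. Writing $q := p_\lambda(\eta)$, Russo's formula gives
\begin{equation*}
\bigl|\pi_j^{\eta,\lambda}(\eta,a) - \pi_j^\eta(\eta,a)\bigr| \;\leq\; \int_{p_c \wedge q}^{p_c \vee q} \sum_{v \in \Ann_{\eta,a}} \PP_p\bigl(v \text{ is pivotal for } \arm_j(\eta,a)\bigr)\, dp,
\end{equation*}
with $|q - p_c| = |\lambda|\eta^2 / \pi_4^\eta(\eta,1)$. By the definition of the near-critical scale $p_\lambda(\eta)$ together with \eqref{eq:Kesten_L}, one has $L^\eta(q) \asymp 1$ (uniformly in $\eta$, depending only on $\lambda$), and $L^\eta(p_c) = \infty$. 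Since $L^\eta$ is monotone, $L^\eta(p) \geq c(\lambda) > 0$ for every $p$ in the integration range and every $\eta$ small. As $a < 1$, the arm-stability estimate \eqref{eq:Kesten1} applied to the alternating $4$-arm events that govern pivotality shows that the pivotal probability at any such $p$ is comparable to its value at $p_c$, up to a multiplicative constant depending on $\lambda$ and $j$.

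The next step is to estimate the critical pivotal sum. A site $v \in \Ann_{\eta,a}$ is pivotal for $\arm_j(\eta,a)$ only when four arms of alternating colors emanate from $v$, with at least one of each color reaching $\partial \Ball_\eta$ and one of each color reaching $\partial \Ball_a$. By standard arm-separation arguments combined with quasi-multiplicativity \eqref{eq:qmult}, for $v$ at distance $r \in [\eta,a]$ from the origin, the probability of this event factorises, up to constants, as
\begin{equation*}
\pi_j^\eta(\eta,a)\,\cdot\, \frac{\pi_4^\eta(\eta,r)\, \pi_4^\eta(r,a)}{\pi_4^\eta(\eta,a)}.
\end{equation*}
Summing over $v$ along dyadic shells $r \sim 2^k$ with $\eta \leq 2^k \leq a$ yields a sum of the form $\sum_k (2^k)^2 \pi_4^\eta(2^k,a)$ (after using quasi-multiplicativity to simplify), which by the dyadic bound \eqref{eq:sum_4arm_bound} is dominated by its outermost term $a^2 \pi_4^\eta(a,a) = a^2$, hence
\begin{equation*}
\sum_{v \in \Ann_{\eta,a}} \PP_{p_c}\bigl(v \text{ pivotal for } \arm_j(\eta,a)\bigr) \;\leq\; C\, \pi_j^\eta(\eta,a)\cdot \frac{a^2}{\eta^2}\,\pi_4^\eta(\eta,a).
\end{equation*}

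Plugging this back into the Russo estimate and invoking the quasi-multiplicativity identity $\pi_4^\eta(\eta,1) \asymp \pi_4^\eta(\eta,a)\, \pi_4^\eta(a,1)$, the overall bound becomes
\begin{equation*}
\bigl| \pi_j^{\eta,\lambda}(\eta,a) - \pi_j^\eta(\eta,a) \bigr| \;\leq\; C\,|\lambda|\, a^2 \cdot \frac{\pi_j^\eta(\eta,a)}{\pi_4^\eta(a,1)},
\end{equation*}
and dividing through by $\pi_j^\eta(\eta,a)$ gives \eqref{eq:crit_ncrit_arm}. The main obstacle will be the critical pivotal bound itself: in both the monochromatic case $j=1$ and the polychromatic case $j=4$, pivotality of $v$ is equivalent to an alternating $4$-arm configuration at $v$ with a prescribed assignment of its four extensions to the two annular boundaries $\partial \Ball_\eta$ and $\partial \Ball_a$, and extracting the sharp $a^2 \pi_4^\eta(\eta,a)/\eta^2$ factor requires carefully combining arm-separation lemmas with \eqref{eq:sum_4arm_bound} to verify that the sum is indeed driven by sites near the outer boundary.
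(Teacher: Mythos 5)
Your overall strategy is essentially the paper's own, recast in integral form: the paper bounds $\pi_j^{\eta,\lambda}(\eta,a)-\pi_j^\eta(\eta,a)$ by locating the vertex $v$ that is $p_c$-white but $p_\lambda(\eta)$-black and carries a local four-arm configuration, and then sums $|p_\lambda(\eta)-p_c|$ times four-arm probabilities over dyadic shells, using \eqref{eq:Kesten1}, \eqref{eq:sum_4arm_bound} and \eqref{eq:qmult} exactly as you do after Russo's formula (which, note, you must invoke in its general Margulis--Russo form, since the alternating event $\arm_4$ is not monotone; the pivotal upper bound does survive this). However, one intermediate step of yours is wrong as written: your claimed factorisation of the pivotal probability, $\pi_j^\eta(\eta,a)\,\pi_4^\eta(\eta,r)\,\pi_4^\eta(r,a)\,/\,\pi_4^\eta(\eta,a)$, is by quasi-multiplicativity \eqref{eq:qmult} comparable to $\pi_j^\eta(\eta,a)$ uniformly in $r$; it carries no decay in $r$ and would give a pivotal sum of order $(a/\eta)^2\pi_j^\eta(\eta,a)$, from which your dyadic sum does not follow. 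The correct bulk estimate, which is what the paper proves for its event $\calB_1$, is $\PP_p(v \text{ pivotal}) \lesssim \pi_j^\eta(\eta,a)\,\pi_4^\eta\big(\eta, c\, d(0,v)\big)$ (a four-arm event around $v$ up to scale comparable to $d(0,v)$, times the global $j$-arm structure), after which $\sum_r (r/\eta)^2\pi_4^\eta(\eta,r)\lesssim (a/\eta)^2\pi_4^\eta(\eta,a)$ by \eqref{eq:sum_4arm_bound} does yield the bound you display.

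The more serious issue is the boundary layer, which you explicitly defer (``the main obstacle'') rather than resolve, and which is a genuine gap rather than a routine verification. For pivotal sites $v$ within distance $o(a)$ of $\partial \Ball_a$, the interior four-arm event around $v$ only reaches scale $d(v,\partial\Ball_a)$, and $\pi_4^\eta\big(\eta, d(v,\partial\Ball_a)\big)$ is much larger than $\pi_4^\eta(\eta,a)$, so the bulk computation does not dominate this region; indexing the sum only by the distance to the origin, as you do, cannot capture it. The paper devotes the second half of its proof (the event $\calB_2$) to precisely this: near $\partial\Ball_a$ the missing long four-arm event is replaced by three arms in a half-plane (and two arms in a quarter-plane near corners), and it is only because the corresponding exponents equal $2$, strictly larger than $\alpha_4=5/4$, that the boundary contribution is negligible. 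Without this input, your argument establishes \eqref{eq:crit_ncrit_arm} only for the contribution of the bulk of the annulus.
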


\begin{proof}[Proof of Lemma \ref{lem:crit_ncrit_arm}] We suppose that $\lambda > 0$ and $j = 1$, since the cases when $\lambda < 0$ or $j = 4$ can be treated in a similar way. We note that
$$\pi^{\eta, \lambda}_j (\eta, a) - \pi^{\eta}_j (\eta, a) = \PP^\eta ( \calB ),$$
where $\calB$ is the event that there exists a $p_\lambda(\eta)$-black arm in $\Ann_{\eta,a}$, but no $p_c$-black arm. If $\calB$ occurs, there exists a vertex which lies on a $p_c$-white circuit in $\Ann_{\eta,a}$, as well as on a $p_\lambda(\eta)$-black arm crossing that annulus. Among the vertices having this property, let $v$ be the one which is closest to the origin (if there are multiple choices, we pick one by using some deterministic procedure). This vertex $v$ is then $p_c$-white and $p_\lambda(\eta)$-black, and we see four disjoint arms around $v$: two $p_\lambda(\eta)$-black and two $p_c$-white arms, starting from $v$ and reaching a distance $d = d(v, \{0\} \cup \partial \Ball_a)$.

In order to obtain an upper bound on $\PP^\eta ( \calB )$, we distinguish two cases, depending on the distance from $v$ to the origin: we introduce the two sub-events
$$\calB_1 := \Big\{ d(0,v) \leq \frac{a}{2} \Big\} \subseteq \calB \quad \text{and} \quad \calB_2:=\calB \setminus \calB_1.$$

We start by bounding the probability of $\calB_1$. Let $i_{\textrm{max}} := \big\lceil \log_2 \big( \frac{a}{2\eta} \big) \big\rceil$. By dividing the annulus $\Ann_{\eta,a}$ into the annuli $A_i = \Ann_{2^{i-1} \eta, 2^i \eta}$ ($1 \leq i \leq i_{\textrm{max}}$), we obtain
\begin{align*}
\PP^\eta (\calB_1) & \leq \sum_{i=1}^{i_{\textrm{max}}} \PP^\eta(v \in A_i)\\
& \leq | p_\lambda(\eta) - p_c | \sum_{i=1}^{i_{\textrm{max}}} |A_i| \cdot \PP^\eta_{p_\lambda(\eta)} \big( \arm_1(\eta, 2^{i-2} \eta) \big) \cdot \PP^\eta \big( \arm_4^{p_\lambda(\eta), p_c}(\eta, 2^{i-1} \eta) \big) \cdot \PP^\eta_{p_\lambda(\eta)} \big( \arm_1(2^{i+1} \eta, a) \big)\\
& \leq C_1 | p_\lambda(\eta) - p_c | \pi_1^{\eta}(\eta, a) \sum_{i=1}^{i_{\textrm{max}}} 2^{2i+2} \pi_4^{\eta}(\eta, 2^{i-1} \eta),
\end{align*}
for some constant $C_1 = C_1(\lambda) > 0$ (using \eqref{eq:Kesten1}, \eqref{eq:ext} and \eqref{eq:qmult}). Hence,
$$\PP^\eta (\calB_1) \leq C_2 | p_\lambda(\eta) - p_c | \pi_1^{\eta}(\eta, a) \Big( \frac{a}{\eta} \Big)^2 \pi_4^{\eta}(\eta,a) = C_2 \pi_1^{\eta}(\eta,a) a^2 \frac{\pi_4^{\eta}(\eta,a)}{\pi_4^{\eta}(\eta,1)} \lambda$$
for some $C_2 > 0$, where the inequality uses \eqref{eq:sum_4arm_bound}, and the equality uses the definition of $p_\lambda(\eta)$ \eqref{eq:near_crit_param}. Using finally \eqref{eq:qmult}, we obtain that for some $C_3 = C_3(\lambda) > 0$,
\begin{equation} \label{eq:bound_calB_1}
\PP^\eta (\calB_1) \leq C_3 \pi_1^{\eta}(\eta,a) a^2 \big( \pi_4^{\eta}(a,1) \big)^{-1}.
\end{equation}

We can bound the probability of $\calB_2$ in a similar way, and obtain that
\begin{equation} \label{eq:bound_calB_2} 
\PP^\eta (\calB_2) \leq C_4 \pi_1^{\eta}(\eta,a) a^2 \big( \pi_4^{\eta}(a,1) \big)^{-1},
\end{equation}
where $C_4 = C_4(\lambda) > 0$. The computation in this case is slightly more complicated: when $v$ is close to $\partial \Ball_a$, we only have $4$ short arms, but we get $3$ long arms in a half plane, unless $v$ is close to a corner, in which case we have $2$ long arms in a quarter plane. Since the corresponding exponents $\alpha^{\textrm{hp}}_3 = 2$ ($3$ arms in a half plane) and $\alpha^{\textrm{qp}}_2 = 2$ ($2$ arms in a quarter plane) are larger than the $4$-arm exponent $\alpha_4$, the summations above can be adapted to this case. A combination of \eqref{eq:bound_calB_1} and \eqref{eq:bound_calB_2} then finishes the proof of Lemma \ref{lem:crit_ncrit_arm}.
\end{proof}

We can now obtain the following result.

\begin{lemma} \label{lem:ncrit_div_crit_arm}
For all $\lambda \in \RR$ and $r > 0$, there exists a constant $c = c(r, \lambda) > 0$ such that
\begin{equation} \label{eq:ncrit_div_crit_arm}
\lim_{\eta \to 0} \frac{\pi^{\eta, \lambda}_1 (\eta, r)}{\pi^{\eta}_1 (\eta, r)} = c.
\end{equation}
\end{lemma}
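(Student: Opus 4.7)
The plan is to prove the limit exists via a squeeze argument in which an auxiliary intermediate scale $a \in (0, r)$ serves as a free parameter. Setting $f(\eta) := \pi^{\eta, \lambda}_1(\eta, r)/\pi^\eta_1(\eta, r)$, I would decompose $f(\eta) = A(\eta, a) \cdot B(\eta, a)$, with
$$A(\eta, a) \;:=\; \frac{\pi^{\eta, \lambda}_1(\eta, a)}{\pi^\eta_1(\eta, a)}, \qquad B(\eta, a) \;:=\; \frac{\pi^{\eta, \lambda}_1(\eta, r)/\pi^{\eta, \lambda}_1(\eta, a)}{\pi^\eta_1(\eta, r)/\pi^\eta_1(\eta, a)}.$$
The key idea is that $A(\eta, a)$ becomes close to $1$ as $a \to 0$ by Lemma \ref{lem:crit_ncrit_arm}, while for each fixed $a$ the pure-ratio factor $B(\eta, a)$ converges as $\eta \to 0$ by Proposition \ref{prop:ratio_lim_gen}. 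The crucial piece of leverage is that $f$ itself does not depend on $a$.

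First I would dispatch the trivial case $\lambda = 0$ (where $p_\lambda(\eta) = p_c$ and $f \equiv 1$), and then check that for $\lambda \neq 0$, $f(\eta)$ lies in some fixed interval $[c_1, c_2] \subset (0, \infty)$ for all sufficiently small $\eta$: since $L^\eta(\lambda) \to L^0(\lambda) \in (0, \infty)$ by Theorems \ref{thm:ncrit_scaling_lim}--\ref{thm:scale_invar}, one has $r \leq C L^\eta(\lambda)$ for $\eta$ small, so \eqref{eq:Kesten1} applied with $p = p_c$ and $p' = p_\lambda(\eta)$ yields this two-sided bound. Next, Proposition \ref{prop:ratio_lim_gen} applied both in the critical and near-critical regimes gives
$$B(\eta, a) \Lra{\eta \to 0} b(a) \;:=\; c_\lambda(r, a) \cdot (r/a)^{\alpha_1}$$
for every fixed $a$. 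Finally, combining Lemma \ref{lem:crit_ncrit_arm} with the convergence $\pi^\eta_4(a, 1) \to \pi^0_4(a, 1) > 0$ from \eqref{eq:cv_pi_eta_lambda}, I would obtain $\limsup_{\eta \to 0} | A(\eta, a) - 1 | \leq \ve(a) := C a^2 ( \pi^0_4(a, 1) )^{-1}$; the polynomial $4$-arm lower bound \eqref{eq:4arm_bound} forces $\pi^0_4(a, 1) \geq c\, a^{2 - \delta_0}$ for some $\delta_0 > 0$, so $\ve(a) \to 0$ as $a \to 0$.

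Putting the three ingredients together, for every sufficiently small $a > 0$,
$$(1 - \ve(a)) \, b(a) \;\leq\; \liminf_{\eta \to 0} f(\eta) \;\leq\; \limsup_{\eta \to 0} f(\eta) \;\leq\; (1 + \ve(a)) \, b(a).$$
Since $\liminf_{\eta \to 0} f(\eta) \geq c_1 > 0$, one may divide the outer inequalities to get $\limsup f / \liminf f \leq (1 + \ve(a))/(1 - \ve(a))$; sending $a \to 0$ forces $\liminf f = \limsup f$, so $c := \lim_{\eta \to 0} f(\eta)$ exists and satisfies $c \geq c_1 > 0$. The only subtlety---and the main conceptual point to keep straight---is the order of limits: $a$ must be fixed first and $\eta \to 0$ taken, and only then is the $a$-independence of $f$ exploited to pinch the two extremes together.
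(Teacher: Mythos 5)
Your proposal is correct and follows essentially the same route as the paper: the same three-factor decomposition through an intermediate scale, Lemma \ref{lem:crit_ncrit_arm} to make the small-scale near-critical/critical factor close to $1$ (with the $4$-arm lower bound giving the $O(a^{\delta_0})$ error), and Proposition \ref{prop:ratio_lim_gen} for the pure ratio factors. The only difference is cosmetic: the paper phrases the final step as showing the ratio is Cauchy in $\eta$, while you pinch $\liminf$ and $\limsup$ directly, which is the same argument.
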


\begin{proof}[Proof of Lemma \ref{lem:ncrit_div_crit_arm}]
By Lemma \ref{lem:crit_ncrit_arm}, the ratio in the left-hand side of \eqref{eq:ncrit_div_crit_arm} is bounded. We can rewrite it as
$$\frac{\pi^{\eta, \lambda}_1 (\eta, r)}{\pi^{\eta, \lambda}_1 (\eta, \delta)} \cdot \frac{\pi^{\eta, \lambda}_1 (\eta, \delta)}{\pi^{\eta}_1 (\eta, \delta)} \cdot \bigg( \frac{\pi^{\eta}_1 (\eta, r)}{\pi^{\eta}_1 (\eta, \delta)} \bigg)^{-1}$$
for some $\delta \in (0, r \wedge 1)$. By Proposition \ref{prop:ratio_lim_gen}, the first and third terms above converge to $c_\lambda (r, \delta)$ and $c_0 (r, \delta)$ as $\eta \to 0$, respectively. Furthermore, it follows from Lemma \ref{lem:crit_ncrit_arm} that the middle term is $1 + O(\delta^{\alpha})$, uniformly in $\eta$. Hence, the ratio in the left-hand side of \eqref{eq:ncrit_div_crit_arm} is a Cauchy sequence in $\eta$, which finishes the proof of the lemma.
\end{proof}

\begin{lemma} \label{lem:cont_S}
The function
\begin{equation} \label{def_S}
S_1(r) := \lim_{\eta \to 0} \frac{\pi^{\eta, 1}_1 (\eta, r)}{\pi^{\eta}_1 (\eta, r)}
\end{equation} 
is continuous on $(0,\infty)$.
\end{lemma}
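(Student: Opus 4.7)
The plan is to compare $S_1(r')$ with $S_1(r)$ for $r'$ close to $r$ through the ratio-limit theorem of Proposition \ref{prop:ratio_lim_gen}. Decomposing,
\begin{equation*}
\frac{\pi^{\eta,1}_1(\eta, r')}{\pi^\eta_1(\eta, r')} = \frac{\pi^{\eta,1}_1(\eta, r')}{\pi^{\eta,1}_1(\eta, r)} \cdot \frac{\pi^\eta_1(\eta, r)}{\pi^\eta_1(\eta, r')} \cdot \frac{\pi^{\eta,1}_1(\eta, r)}{\pi^\eta_1(\eta, r)},
\end{equation*}
and letting $\eta \to 0$, Proposition \ref{prop:ratio_lim_gen} and the definition of $S_1(r)$ yield
\begin{equation*}
S_1(r') = \frac{c_1(r', r)}{c_0(r', r)} \, S_1(r),
\end{equation*}
with $c_0(r', r) = (r'/r)^{-5/48}$. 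Since $c_0(r', r) \to 1$ as $r' \to r$, continuity of $S_1$ at $r$ reduces to showing $c_1(r', r) \to 1$ as $r' \to r$.

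To show this, assume without loss of generality $r' < r$. Quasi-multiplicativity in the near-critical regime (which extends \eqref{eq:qmult} uniformly in $\eta$ small via \eqref{eq:Kesten1}) yields
\begin{equation*}
\pi^{\eta,1}_1(\eta, r) \asymp \pi^{\eta,1}_1(\eta, r') \cdot \pi^{\eta,1}_1(r', r),
\end{equation*}
and letting $\eta \to 0$, using Theorem \ref{thm:ncrit_scaling_lim}(ii) for the second factor (whose endpoints are fixed and positive), one obtains $c_1(r', r)^{-1} \asymp \pi^{0,1}_1(r', r)$. Since by RSW \eqref{eq:RSW} (valid in the near-critical regime via \eqref{eq:Kesten1}) one has $\pi^{0,1}_1(r', r) \to 1$ as $r' \to r$, this already provides that $c_1(r', r)$ stays bounded between constants both approaching $1$.

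To upgrade this bounded-ratio statement to $c_1(r', r) \to 1$, I would establish a near-critical analogue of Lemma \ref{lem:Rob_ratio_lim}: for all $\bar\lambda > 1$ and $\varepsilon > 0$, there exists $K$ such that for $n' > n \geq K$ with $n'/n \leq \bar\lambda$ (and the scales remaining in a fixed compact range where the near-critical regime is meaningful),
\begin{equation*}
\frac{\pi^{\eta,1}_1(\eta, n'\eta)}{\pi^{\eta,1}_1(\eta, n\eta)} \cdot \bigg(\frac{n'}{n}\bigg)^{5/48} \in (1-\varepsilon, 1+\varepsilon),
\end{equation*}
uniformly in $\eta$ small. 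Passing to the limit $\eta \to 0$ with $n\eta \to r$ and $n'\eta \to r'$ yields $c_1(r', r) \cdot (r'/r)^{5/48} \in [1-\varepsilon, 1+\varepsilon]$, hence $c_1(r', r) \to 1$ as $r' \to r$. The near-critical analogue of Lemma \ref{lem:Rob_ratio_lim} is obtained by following its proof verbatim: the ingredients needed are the pointwise ratio-limit provided by Proposition \ref{prop:ratio_lim_gen} and the monotonicity of $\pi^{\eta,1}_1(\eta, \cdot)$ in its second argument, both of which hold in the near-critical setting.

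The main obstacle is this last step, namely upgrading the qualitative bound $c_1(r', r) = O(1)$ into the sharp statement $c_1(r', r) = 1 + o(1)$ as $r' \to r$. The near-critical quasi-multiplicativity is routine given \eqref{eq:Kesten1}, but the uniform-in-$\eta$ version of the precise ratio-limit theorem requires extra care: one must ensure that the dyadic-scale comparison argument underlying Lemma \ref{lem:Rob_ratio_lim} works uniformly across mesh scales, which is where the scale-invariance relations of Theorem \ref{thm:scale_invar} and the continuity of $(a,b) \mapsto \pi^{0,1}_1(a,b)$ (coming from the construction of the near-critical scaling limit in \cite{GPS13b}) are used.
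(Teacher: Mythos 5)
Your reduction is sound as far as it goes: the three-factor decomposition together with Proposition \ref{prop:ratio_lim_gen} does give $S_1(r') = c_1(r',r)\,c_0(r',r)^{-1}S_1(r)$, so everything hinges on proving $c_1(r',r)\to 1$ as $r'\to r$ — and that is where your argument has a genuine gap. The quasi-multiplicativity/RSW step only shows $c_1(r',r)$ lies between two \emph{fixed} constants (the constants in \eqref{eq:qmult} and \eqref{eq:Kesten1} do not tend to $1$), so it gives boundedness, not the claim that the bounds approach $1$. The proposed upgrade — a near-critical analogue of Lemma \ref{lem:Rob_ratio_lim} asserting $\frac{\pi^{\eta,1}_1(\eta,n'\eta)}{\pi^{\eta,1}_1(\eta,n\eta)}\big(\frac{n'}{n}\big)^{5/48}\in(1-\ve,1+\ve)$ uniformly in small $\eta$ — is false at the macroscopic radii where you need it: passing to the limit as you describe would force $c_1(r',r)=(r'/r)^{-5/48}$ identically, hence $S_1$ constant; but $S_1$ is not constant (by Lemma \ref{lem:crit_ncrit_arm} one has $S_1(r)\to 1$ as $r\to 0$, while the computation in the proof of Lemma \ref{lem:ncrit_theta} shows $S_1(K L^0(1))/S_1(L^0(1))\cdot K^{-5/48}$ tends to a constant only as $K\to\infty$, so $S_1$ grows like $r^{5/48}$ at large $r$). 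The point is that under $\PP^{\eta,1}$ the one-arm probability stops following the critical power law at scales comparable to or beyond $L^0(1)$, and $r,r'$ here are fixed macroscopic radii. Moreover, the proof of Lemma \ref{lem:Rob_ratio_lim} cannot be followed ``verbatim'': it relies on the limit being the \emph{explicit continuous} function $\lambda^{-\alpha_\sigma}$; in the near-critical case the limit is $c_1(\cdot,r)$ itself, whose continuity is exactly what is being proved, so the dyadic sandwich argument is circular. Your final fallback — continuity of $(a,b)\mapsto\pi^{0,1}_1(a,b)$ ``from the construction in \cite{GPS13b}'' — is not among the results the paper imports (Theorem \ref{thm:ncrit_scaling_lim} gives convergence at fixed $(a,b)$ only), and even granting it you would still need a uniformity-in-$\eta$ argument to transfer it to $c_1(\cdot,r)$.

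The missing ingredient is precisely what the paper proves instead: a quantitative modulus of continuity, uniform in $\eta$, for the arm probabilities themselves, namely $\pi^{\eta}_1(\eta,r(1+\delta))=\pi^{\eta}_1(\eta,r)\big(1+O(\delta)\big)$, and similarly for $\pi^{\eta,1}_1$. The paper gets this by a direct geometric estimate: the difference of the two probabilities is the probability of a black arm in $\Ann_{\eta,r}$ together with a white circuit in $\Ann_{\eta,r(1+\delta)}$; this forces (independently) a black arm to radius $r/4$, of probability $\asymp\pi^\eta_1(\eta,r)$, and the existence of one among $O(\delta^{-1})$ boxes of side $\frac{\delta r}{2}$ in the thin annulus $\Ann_{r,r(1+\delta)}$ carrying three arms (two white, one black) to distance $\frac{r}{2}$ in a half-plane geometry, an event of probability $O(\delta^{2})$ per box since $\alpha^{\textrm{hp}}_3=2$. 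Summing gives the $O(\delta)$ bound, which passes to the limit $\eta\to0$ and yields continuity of $S_1$. Some uniform-in-$\eta$ input of this kind (a thin-annulus, three-arm half-plane estimate or an equivalent equicontinuity statement) is unavoidable, and your proposal does not supply it.
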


\begin{proof}[Proof of Lemma \ref{lem:cont_S}]
Note that it is sufficient to prove the following property: for any fixed $r > 0$,
\begin{equation} \label{eq:continuity_pi1}
\pi^{\eta}_1 \big( \eta, r(1+ \delta) \big) = \pi^{\eta}_1 (\eta, r) \big( 1 + O(\delta) \big) \quad \text{as $\delta \to 0$},
\end{equation}
uniformly in $\eta > 0$, and similarly with $\pi^{\eta, 1}_1$. We now prove this property. To fix ideas, we consider $\pi^{\eta}_1$ and $\delta > 0$. We have
$$\pi^{\eta}_1 (\eta, r) - \pi^{\eta}_1 \big( \eta, r(1+ \delta) \big) = \PP^\eta ( \calB ),$$
where $\calB$ is the event that there exist a $p_c$-black arm in $\Ann_{\eta,r}$ and a $p_c$-white circuit in $\Ann_{\eta,r(1+\delta)}$. Note that $\calB \subseteq \calB_1 \cap \calB_2$, where
\begin{itemize}
\item $\calB_1 := \{$there exists a $p_c$-black arm in $\Ann_{\eta,\frac{r}{4}} \}$,

\item and $\calB_2 := \{$there is a box of the form $z + \Ball_{\frac{\delta r}{2}} \subseteq \Ann_{r, r(1+ \delta)}$ which is connected to distance $\frac{r}{2}$ by three arms (two $p_c$-white, and one $p_c$-black) staying in $\Ann_{\eta,r(1+\delta)} \}$ (in particular, these three arms are contained in a ``half plane'').
\end{itemize}
Since $\calB_1$ and $\calB_2$ are independent, we can then obtain \eqref{eq:continuity_pi1} by using the exponent $\alpha^{\textrm{hp}}_3 = 2$ for the existence of $3$ arms in a half plane (we only need to consider of order $\delta^{-1}$ boxes in the event $\calB_2$).
\end{proof}

\begin{corollary} \label{cor:conv_S}
For all $\lambda, r > 0$,
$$\lim_{\eta \to 0} \frac{\pi^{\eta, \lambda}_1 (\eta, r)}{\pi^{\eta}_1 (\eta, r)} = S_1 \bigg( \frac{L^0(1)}{L^0(\lambda)} r \bigg) = S_1 \big( \lambda^{4/3} r \big).$$
\end{corollary}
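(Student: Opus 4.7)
My plan for the proof of Corollary \ref{cor:conv_S} is to combine the tautological scale-invariance of lattice site percolation with a uniform continuity-in-parameter estimate for near-critical arm probabilities. Motivated by Theorem \ref{thm:scale_invar}, I choose the rescaling factor $s := \lambda^{4/3}$ and set $\eta' := s\eta$. Since a site percolation configuration on $\TT^\eta$ at any fixed parameter $p$ coincides, after rescaling the plane by $s$, with a configuration on $\TT^{s\eta}$ at the same parameter $p$ (with every annulus rescaled by $s$), I immediately obtain
\begin{equation*}
\pi^{\eta, \lambda}_1(\eta, r) = \PP^{\eta'}_{p_\lambda(\eta)}\bigl( \arm_1(\eta', \lambda^{4/3} r) \bigr) \quad \text{and} \quad \pi^{\eta}_1(\eta, r) = \pi^{\eta'}_1(\eta', \lambda^{4/3} r).
\end{equation*}

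Next I rewrite the parameter $p_\lambda(\eta)$ as $p_{\mu(\eta)}(\eta')$ for some $\mu(\eta)$. Matching \eqref{eq:near_crit_param} at the two mesh sizes yields
\begin{equation*}
\mu(\eta) \; = \; \lambda \cdot s^{-2} \cdot \frac{\pi^{\eta'}_4(\eta', 1)}{\pi^\eta_4(\eta, 1)} \; = \; \lambda^{-5/3} \cdot \frac{\pi^\eta_4(\eta, \lambda^{-4/3})}{\pi^\eta_4(\eta, 1)},
\end{equation*}
where the second equality uses lattice scale-invariance at criticality. By Proposition \ref{prop:ratio_lim_gen} applied at $j = 4$ and $\lambda = 0$, the ratio on the right tends to $(\lambda^{-4/3})^{-5/4} = \lambda^{5/3}$, and therefore $\mu(\eta) \to 1$ as $\eta \to 0$. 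Setting $R := \lambda^{4/3} r$ and combining the previous identities, the ratio in the statement becomes
\begin{equation*}
\frac{\pi^{\eta, \lambda}_1(\eta, r)}{\pi^{\eta}_1(\eta, r)} \; = \; \frac{\pi^{\eta', \mu(\eta)}_1(\eta', R)}{\pi^{\eta'}_1(\eta', R)}.
\end{equation*}

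I now split the right side as $\pi^{\eta', 1}_1(\eta', R)/\pi^{\eta'}_1(\eta', R)$ plus a correction term. The first piece converges to $S_1(R) = S_1(\lambda^{4/3} r)$ by the definition \eqref{def_S}, so everything comes down to showing that the correction vanishes. The main obstacle is to establish the uniform continuity-in-parameter estimate
\begin{equation*}
\bigl| \pi^{\eta', \mu}_1(\eta', R) - \pi^{\eta', 1}_1(\eta', R) \bigr| \; \leq \; C \, |\mu - 1| \cdot \pi^{\eta'}_1(\eta', R),
\end{equation*}
with $C$ independent of (small) $\eta'$ and of $\mu$ in a fixed neighborhood of $1$. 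I plan to derive this by adapting the pivotal four-arm argument from the proof of Lemma \ref{lem:crit_ncrit_arm}: rather than comparing $p_c$ and $p_\lambda(\eta)$, I run the same computation between the two near-critical parameters $p_1(\eta')$ and $p_\mu(\eta')$, using \eqref{eq:Kesten1} to control the four-arm probabilities at both parameters. The prefactor $R^2\, \pi^{\eta'}_4(\eta', R)/\pi^{\eta'}_4(\eta', 1)$ that arises in the computation is bounded uniformly in $\eta'$ small by another application of Proposition \ref{prop:ratio_lim_gen} (case $\lambda = 0$, $j = 4$). Since $|\mu(\eta) - 1| \to 0$, the correction vanishes, yielding the claimed limit $S_1(\lambda^{4/3} r)$.
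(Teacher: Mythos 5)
Your argument is correct, but it takes a genuinely different route from the paper. The paper's proof is a one-line identification in the continuum: existence of the limit comes from Lemma \ref{lem:ncrit_div_crit_arm}, and its value is read off by combining the scale covariance of the near-critical scaling limit (Theorem \ref{thm:scale_invar}) with scale invariance of the critical limit \eqref{eq:crit_cinvar}, which together say that the continuum $\lambda$-model is the continuum $1$-model rescaled by $\lambda^{4/3}$. You instead stay entirely on the lattice: the exact mesh-rescaling identity converts $\pi^{\eta,\lambda}_1(\eta,r)$ into $\pi^{\eta',\mu(\eta)}_1(\eta',\lambda^{4/3}r)$ with $\eta'=\lambda^{4/3}\eta$; the critical four-arm ratio-limit (Proposition \ref{prop:ratio_lim_gen} with $\alpha_4=5/4$) gives $\mu(\eta)\to 1$ --- this is where the exponent $4/3$ is generated, mirroring how \cite{GPS13b} derive Theorem \ref{thm:scale_invar} --- and you then need a uniform-in-mesh continuity estimate in the near-critical parameter, which is not stated in the paper but does follow by rerunning the pivotal four-arm computation of Lemma \ref{lem:crit_ncrit_arm} between $p_1(\eta')$ and $p_\mu(\eta')$: the use of \eqref{eq:Kesten1} is legitimate because for $\mu$ in a fixed neighbourhood of $1$ the correlation lengths $L^{\eta'}(1)$ and $L^{\eta'}(\mu)$ stay bounded away from $0$ as $\eta'\to 0$, and the prefactor $R^2\,\pi_4^{\eta'}(\eta',R)/\pi_4^{\eta'}(\eta',1)$ is bounded for fixed $R$ (trivially if $R\geq 1$, by the ratio-limit theorem if $R<1$). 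What your route buys is independence from the continuum scale-covariance input: the substantive conclusion (limit $=S_1(\lambda^{4/3}r)$) uses only the critical ratio-limit theorem and Kesten-type near-critical stability, at the price of proving one extra, routine but not free, continuity lemma; the paper's route buys brevity, since Theorem \ref{thm:scale_invar} is imported from \cite{GPS13b} in any case. Two cosmetic remarks: the first equality in the statement, $S_1(\frac{L^0(1)}{L^0(\lambda)}r)=S_1(\lambda^{4/3}r)$, still requires the relation $L^0(\lambda)=\lambda^{-4/3}L^0(1)$ from Theorem \ref{thm:scale_invar}; and your identity $\pi^{\eta'}_4(\eta',1)=\pi^{\eta}_4(\eta,\lambda^{-4/3})$ is again the tautological change of mesh, not an invariance property of the critical measure, though the identity itself is correct.
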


\begin{proof}[Proof of Corollary \ref{cor:conv_S}]
This follows by combining Theorem \ref{thm:scale_invar}, \eqref{eq:crit_cinvar} and Lemma \ref{lem:ncrit_div_crit_arm}.
\end{proof}

\begin{lemma} \label{lem:ncrit_theta}
There exists a universal constant $c_2 > 0$ such that: for all $\lambda > 0$,
\begin{equation} \label{eq:ncrit_theta}
\lim_{\eta \to 0} \frac{\theta (p_\lambda(\eta))}{\pi^{\eta, \lambda}_1(\eta, L^{\eta}(\lambda))} = c_2.
\end{equation}
\end{lemma}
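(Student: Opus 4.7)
The plan is to interpolate between $\theta(p_\lambda(\eta))$ and $\pi^{\eta,\lambda}_1(\eta, L^\eta(\lambda))$ using the one-arm probability at the larger scale $K L^\eta(\lambda)$, and then invoke the scale invariance of the near-critical scaling limit (Theorem~\ref{thm:scale_invar}) to see that the resulting ratio depends only on $K$, not on $\lambda$.

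\emph{Exponential-decay step.} First I would show that, uniformly in $\eta>0$ small,
$$0 \le \pi^{\eta,\lambda}_1(\eta, K L^\eta(\lambda)) - \theta(p_\lambda(\eta)) \le C e^{-cK}\, \pi^{\eta,\lambda}_1(\eta, K L^\eta(\lambda)).$$
On the difference event, the cluster of $0$ is finite yet reaches $\partial \Ball_{K L^\eta(\lambda)}$, so the innermost $p_\lambda(\eta)$-white circuit around $0$ lies in some annulus $\Ann_{2^k K L^\eta(\lambda),\, 2^{k+1} K L^\eta(\lambda)}$ with $k \ge 0$. Summing over $k$, using FKG on the increasing one-arm event and the decreasing circuit event, and bounding the circuit probabilities via the supercritical exponential decay~\eqref{eq:exp_decay}, yields the estimate.

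\emph{Ratio step.} Next I would compute $\lim_{\eta \to 0} \pi^{\eta,\lambda}_1(\eta, K L^\eta(\lambda))/\pi^{\eta,\lambda}_1(\eta, L^\eta(\lambda))$. Since $L^\eta(\lambda) \to L^0(\lambda) \in (0,\infty)$ by Theorem~\ref{thm:ncrit_scaling_lim}(i), a sandwich argument exploiting the monotonicity of $\pi^{\eta,\lambda}_1(\eta, \cdot)$ in its second argument reduces the problem to Proposition~\ref{prop:ratio_lim_gen} applied at \emph{fixed} radii arbitrarily close to $L^0(\lambda)$ and $K L^0(\lambda)$. Closing the sandwich uses continuity in $r$ of $c_\lambda(r,1) := \lim_{\eta \to 0} \pi^{\eta,\lambda}_1(\eta, r)/\pi^{\eta,\lambda}_1(\eta, 1)$, proved by the same three-arms-in-a-half-plane reasoning as in Lemma~\ref{lem:cont_S}. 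The multiplicativity $c_\lambda(r_1, r_3) = c_\lambda(r_1, r_2)\, c_\lambda(r_2, r_3)$ (immediate from the definition) then identifies the limit as $c_\lambda(K L^0(\lambda), L^0(\lambda))$. Finally, the change of variables $\varepsilon' = \lambda^{4/3} \varepsilon$ in the definition of $c_\lambda$, combined with Theorem~\ref{thm:scale_invar}, gives $c_\lambda(r, a) = c_1(\lambda^{4/3} r, \lambda^{4/3} a)$; since $L^0(\lambda) = \lambda^{-4/3} L^0(1)$, the limit becomes $c_1(K L^0(1), L^0(1)) =: \rho(K)$, independent of $\lambda$.

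\emph{Conclusion and main obstacle.} Combining the two steps, for every $\lambda > 0$ and $K > 1$,
$$\limsup_{\eta \to 0} \frac{\theta(p_\lambda(\eta))}{\pi^{\eta,\lambda}_1(\eta, L^\eta(\lambda))} - \liminf_{\eta \to 0} \frac{\theta(p_\lambda(\eta))}{\pi^{\eta,\lambda}_1(\eta, L^\eta(\lambda))} \le 2 C e^{-cK}\, \rho(K).$$
The classical equivalence $\theta(p) \asymp \pi_1(L(p))$ in~\eqref{eq:Kesten_theta_pi} makes $\rho(K)$ uniformly bounded, so sending $K \to \infty$ forces the limit to exist and to equal $c_2 := \lim_{K \to \infty} \rho(K)$, which is independent of $\lambda$. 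The main technical obstacle is the continuity/sandwich step, since Proposition~\ref{prop:ratio_lim_gen} is only stated at fixed radii, whereas $L^\eta(\lambda)$ genuinely depends on $\eta$ (though it converges to a positive limit), so the continuity of $c_\lambda(\cdot, 1)$ must be established carefully before the sandwich can be closed.
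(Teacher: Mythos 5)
Your proposal follows essentially the same route as the paper: write the ratio through the intermediate scale $K L^{\eta}(\lambda)$, use the supercritical exponential decay \eqref{eq:exp_decay} to make the factor $\theta(p_\lambda(\eta))/\pi^{\eta,\lambda}_1(\eta, K L^{\eta}(\lambda))$ equal to $1+o(1)$ uniformly in small $\eta$ as $K \to \infty$, and use the ratio-limit theorem (Proposition \ref{prop:ratio_lim_gen}) together with the scale covariance of the near-critical scaling limit to show that the remaining factor converges to a $\lambda$-independent quantity, which forces existence of the limit and its independence of $\lambda$. The only real difference is cosmetic: you obtain the $\lambda$-independence by applying Theorem \ref{thm:scale_invar} directly to the continuum ratio $c_\lambda$, whereas the paper routes it through $S_1$ via Corollary \ref{cor:conv_S} and Lemma \ref{lem:cont_S}, and your explicit sandwich-plus-continuity treatment of the $\eta$-dependence of $L^{\eta}(\lambda)$ just makes precise a step the paper handles implicitly.
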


\begin{proof}[Proof of Lemma \ref{lem:ncrit_theta}]
First, it follows from \eqref{eq:equiv_expdecay} and \eqref{eq:def_Llambda} that the ratio in the left-hand side of \eqref{eq:ncrit_theta} is bounded away from $0$ and $\infty$. We rewrite it as
$$\frac{\theta (p_\lambda(\eta))}{\pi^{\eta, \lambda}_1 (\eta, L^{\eta}(\lambda))} = \frac{\theta (p_\lambda(\eta))}{\pi^{\eta, \lambda}_1 (\eta, K L^{\eta}(\lambda))} \cdot \frac{\pi^{\eta, \lambda}_1 (\eta, K L^{\eta}(\lambda))}{\pi^{\eta, \lambda}_1 (\eta, L^{\eta}(\lambda))}$$
for some $K > 1$. It then follows from \eqref{eq:exp_decay} that the first term above is $1 + o(1)$ as $K \to \infty$, uniformly for small $\eta$, while for every fixed $K$, the second term converges to $c(\lambda, K)$ as $\eta \to 0$ (by Proposition \ref{prop:ratio_lim_gen}). This implies the existence of the limit in the left-hand side of \eqref{eq:ncrit_theta}.

By combining Corollary \ref{cor:conv_S}, Lemma \ref{lem:cont_S}, Proposition \ref{prop:ratio_lim_gen} and Theorem \ref{thm:ncrit_scaling_lim}, we then obtain
$$\lim_{\eta \to 0} \frac{\pi^{\eta, \lambda}_1 (\eta, K L^{\eta} (\lambda))}{\pi^{\eta, \lambda}_1 (\eta, L^{\eta} (\lambda))} = \frac{S_1 (K L^0 (1))}{S_1 (L^0(1))} \cdot K^{-5/48}.$$
Hence, the limit above is independent of $\lambda$, which completes the proof of Lemma \ref{lem:ncrit_theta}.
\end{proof}

We are now ready to prove Proposition \ref{prop:theta_over_pi}.

\begin{proof}[Proof of Proposition \ref{prop:theta_over_pi}]
Let $p > p_c$, and set $\eta = \eta(p) = \frac{L^0(1)}{L(p)}$. Note that $\eta(p) \to 0$ as $p\searrow p_c$. Further, we set $\lambda = \lambda(p)$ such that $p_{\lambda(p)}(\eta(p)) = p$. Hence $L^\eta(\lambda(p)) = L^0(1)$. Theorems \ref{thm:ncrit_scaling_lim} and \ref{thm:scale_invar} imply that $\lambda(p) \to 1$ as $p \searrow p_c$. Let $\ve>0$ and $p_0 > p_c$ such that $|\lambda(p) - 1| < \ve$ for all $p \in (p_c, p_0)$. Then 
$$\frac{\theta(p_{1-\ve} (\eta))}{\pi^{\eta,1+\ve}_1 (\eta, L^{\eta}(1+\ve))} \leq \frac{\theta(p)}{\pi^{\eta, \lambda(p)}_1 (\eta, L^{\eta}(\lambda(p)))} \leq \frac{\theta(p_{1+\ve}(\eta))}{\pi^{\eta, 1-\ve}_1 (\eta, L^{\eta}(1-\ve))}.$$
If we take the limit as $\eta \to 0$, Lemma \ref{lem:ncrit_theta} (combined with Corollary \ref{cor:conv_S}, Lemma \ref{lem:cont_S}, Theorem \ref{thm:ncrit_scaling_lim} and Lemma \ref{lem:ratio_lim}) implies that the lower and the upper bounds become $c_2 \big( \frac{1-\ve}{1+\ve} \big)^{5/36}$ and $c_2 \big( \frac{1+\ve}{1-\ve} \big)^{5/36}$, respectively, where $c_2$ is as in Lemma \ref{lem:ncrit_theta}.
Since $\ve > 0$ was arbitrary, we obtain
\begin{equation} \label{eq:theta_over_pi_1}
\lim_{p \searrow p_c}\frac{\theta (p)}{\pi^{\eta, \lambda(p)}_1(\eta, L^{\eta} (\lambda(p)))} = c_2.
\end{equation}
In a similar way, we have 
\begin{equation} \label{eq:theta_over_pi_2}
\lim_{p \searrow p_c} \frac{\pi^{\eta, \lambda(p)}_1(\eta, L^0 (1))}{\pi^{\eta,1 }_1 (\eta, L^0(1))} = 1.
\end{equation}
Now, recall that $L^\eta (\lambda(p)) = L^0 (1)$, $L(p) = L^1 (p) = \eta^{-1} L^\eta (p)$ and $\pi(a, b) = \pi^{\eta}(\eta a, \eta b)$. We can thus complete the proof of Proposition \ref{prop:theta_over_pi} by combining \eqref{eq:theta_over_pi_1}, \eqref{eq:theta_over_pi_2} and Lemma \ref{lem:ncrit_div_crit_arm}.
\end{proof}

We now turn to the proof of Lemma \ref{lem:str_Kesten}.

\begin{proof}[Proof of Lemma \ref{lem:str_Kesten}]
We still use the same notations, in particular we consider $\lambda, \eta > 0$ and $p_\lambda (\eta)$ as in \eqref{eq:near_crit_param}. For $p = p_\lambda (\eta)$, the left-hand side of \eqref{eq:str_Kesten} is equal to
$$|p_\lambda(\eta) - p_c| L(p_\lambda(\eta))^2 \pi_4( L(p_\lambda(\eta))) = \bigg( \frac{L(p_\lambda (\eta))}{\eta^{-1}} \bigg)^2 \cdot \frac{\pi_4 (L (p_\lambda(\eta)))}{\pi_4 (\eta^{-1})} \cdot \lambda.$$
Let us now take the limits as $\eta \to 0$: the first factor in the right-hand side converges to $(L^0(\lambda))^2$ by Theorem \ref{thm:ncrit_scaling_lim}, and the second factor converges to $L^0(\lambda)^{-5/4}$, by the same theorem combined with Lemma \ref{lem:ratio_lim} (ratio-limit theorem for $\pi_4$). Hence, 
$$\lim_{\eta \searrow 0} |p_\lambda(\eta) - p_c| L(p_\lambda(\eta))^2 \pi_4( L(p_\lambda(\eta))) = (L^0(\lambda))^{3/4} \lambda,$$
which is constant in $\lambda > 0$ by Theorem \ref{thm:scale_invar}. From this, Lemma \ref{lem:str_Kesten} follows easily.
\end{proof}

In particular, we can compare precisely $L(p)$ and $L(p')$ when $p$ and $p'$ are close to $p_c$.

\begin{lemma} \label{lem:p-L(p)}
For all $\ve>0$ and $\bar{\lambda} > 1$, there exists $p_0 > p_c$ such that: for all $p, p' \in (p_c, p_0)$ with $\frac{L(p)}{L(p')} \in (\bar{\lambda}^{-1}, \bar{\lambda})$, we have
$$\left(\frac{L(p)}{L(p')}\right)^{3/4} \cdot \frac{p-p_c}{p'-p_c} \in (1 - \ve, 1 + \ve).$$
\end{lemma}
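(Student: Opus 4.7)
The plan is to combine Lemma \ref{lem:str_Kesten} with the uniform ratio-limit theorem (Lemma \ref{lem:Rob_ratio_lim}) applied to $\pi_4$, for which the critical exponent is $\alpha_4 = \frac{5}{4}$ (from Lemma \ref{lem:arm_exp}, with $k=4$ and $\sigma$ bichromatic).

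The starting point is the algebraic identity
$$\frac{p-p_c}{p'-p_c} = \frac{(p-p_c) L(p)^2 \pi_4(L(p))}{(p'-p_c) L(p')^2 \pi_4(L(p'))} \cdot \left(\frac{L(p')}{L(p)}\right)^2 \cdot \frac{\pi_4(L(p'))}{\pi_4(L(p))}.$$
By Lemma \ref{lem:str_Kesten}, both $(p-p_c) L(p)^2 \pi_4(L(p))$ and $(p'-p_c) L(p')^2 \pi_4(L(p'))$ converge to the same positive constant $c$ as $p, p' \searrow p_c$, so the first factor on the right tends to $1$ uniformly as $p_0 \searrow p_c$.

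To handle the remaining factor, I would apply Lemma \ref{lem:Rob_ratio_lim} with $k = 4$, $\sigma = (bwbw)$, and threshold $\bar{\lambda}$ (the given bound on $L(p)/L(p')$): there exists $K \geq 1$ such that for all $n' > n \geq K$ with $n'/n \leq \bar{\lambda}$,
$$\frac{\pi_4(n')}{\pi_4(n)} \cdot \left(\frac{n'}{n}\right)^{5/4} \in (1 - \ve', 1 + \ve')$$
for a small $\ve'$ chosen in terms of $\ve$. Since $L(p), L(p') \to \infty$ as $p, p' \searrow p_c$, by taking $p_0$ close enough to $p_c$ we have $L(p), L(p') \geq K$; and since $L(p)/L(p') \in (\bar{\lambda}^{-1}, \bar{\lambda})$, applying the lemma with $(n,n') = (L(p), L(p'))$ or $(L(p'), L(p))$ (depending on which is larger) gives
$$\left(\frac{L(p')}{L(p)}\right)^2 \cdot \frac{\pi_4(L(p'))}{\pi_4(L(p))} = \left(\frac{L(p')}{L(p)}\right)^{2 - 5/4} (1 + O(\ve')) = \left(\frac{L(p')}{L(p)}\right)^{3/4} (1 + O(\ve')).$$

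Combining these two estimates,
$$\left(\frac{L(p)}{L(p')}\right)^{3/4} \cdot \frac{p-p_c}{p'-p_c} = (1 + o(1))(1 + O(\ve'))$$
as $p_0 \searrow p_c$, and choosing $\ve'$ small enough (and then $p_0$ close enough to $p_c$) yields the desired conclusion. There is no real obstacle here: the statement is essentially a bookkeeping consequence of Lemma \ref{lem:str_Kesten}, and the only mild care is to invoke Lemma \ref{lem:Rob_ratio_lim} in its uniform form so that the estimate for $\pi_4$ holds uniformly over the allowed range of $L(p)/L(p')$ rather than only at each fixed ratio.
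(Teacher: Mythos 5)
Your proposal is correct and follows exactly the route the paper takes: its proof of Lemma \ref{lem:p-L(p)} consists of the single line ``combine Lemma \ref{lem:str_Kesten} with Lemma \ref{lem:Rob_ratio_lim}'', and your decomposition of $\frac{p-p_c}{p'-p_c}$, the use of the limit in \eqref{eq:str_Kesten}, and the uniform ratio-limit estimate for $\pi_4$ with exponent $\alpha_4=\frac{5}{4}$ (so that $2-\frac{5}{4}=\frac{3}{4}$) are precisely the intended bookkeeping. Your remark that the uniform version (Lemma \ref{lem:Rob_ratio_lim}) rather than the pointwise one is needed, since $L(p)/L(p')$ ranges over $(\bar{\lambda}^{-1},\bar{\lambda})$, is exactly the point of citing that lemma.
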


\begin{proof}[Proof of Lemma \ref{lem:p-L(p)}]
This follows by combining Lemma \ref{lem:str_Kesten} with Lemma \ref{lem:Rob_ratio_lim}.
\end{proof}

\section{Holes in supercritical percolation} \label{sec:approximable}

\subsection{Definition and a-priori estimates} \label{sec:holes_estimates}

In the supercritical regime $p > p_c$, the unique infinite black cluster $\Cinf(p)$ either contains the origin, or surrounds it (a.s.). In the latter case, the origin lies in a ``hole'': this geometric object plays an important role to study the successive freezings. In this section, we prove estimates on this hole, which is defined formally as follows.

\begin{definition} \label{def:hole_cinf}
We call \emph{hole of the origin} at time $p > p_c$, denoted by $\hole(p)$, the connected component of $0$ in $\TT \setminus (\Cinf(p) \cup \dout \Cinf(p))$, i.e. when we remove the vertices which are neither in the infinite black cluster, nor neighbor of it. By convention, we take $\hole(p) = \emptyset$ if $0$ belongs to $\Cinf(p) \cup \dout \Cinf(p)$.
\end{definition}

This is a natural object to define, in the following sense. Given $\Cinf(p)$, the region outside this cluster and its boundary is exactly the region where we have no information on the colors of the vertices. So in other words, $\hole(p)$ is the connected component of $0$ in this ``terra incognita''. Note that clearly, $\hole(p) \supseteq \hole(p')$ for $p_c < p < p'$. The reason why we remove an extra layer of white sites along the boundary of $\Cinf(p)$ also comes from the connection with frozen percolation, where white vertices along the boundary of a frozen cluster are not allowed to become black at a later time (see Definition \ref{def:hole_fp} below).

\begin{figure}
\begin{center}
\includegraphics[width=12cm]{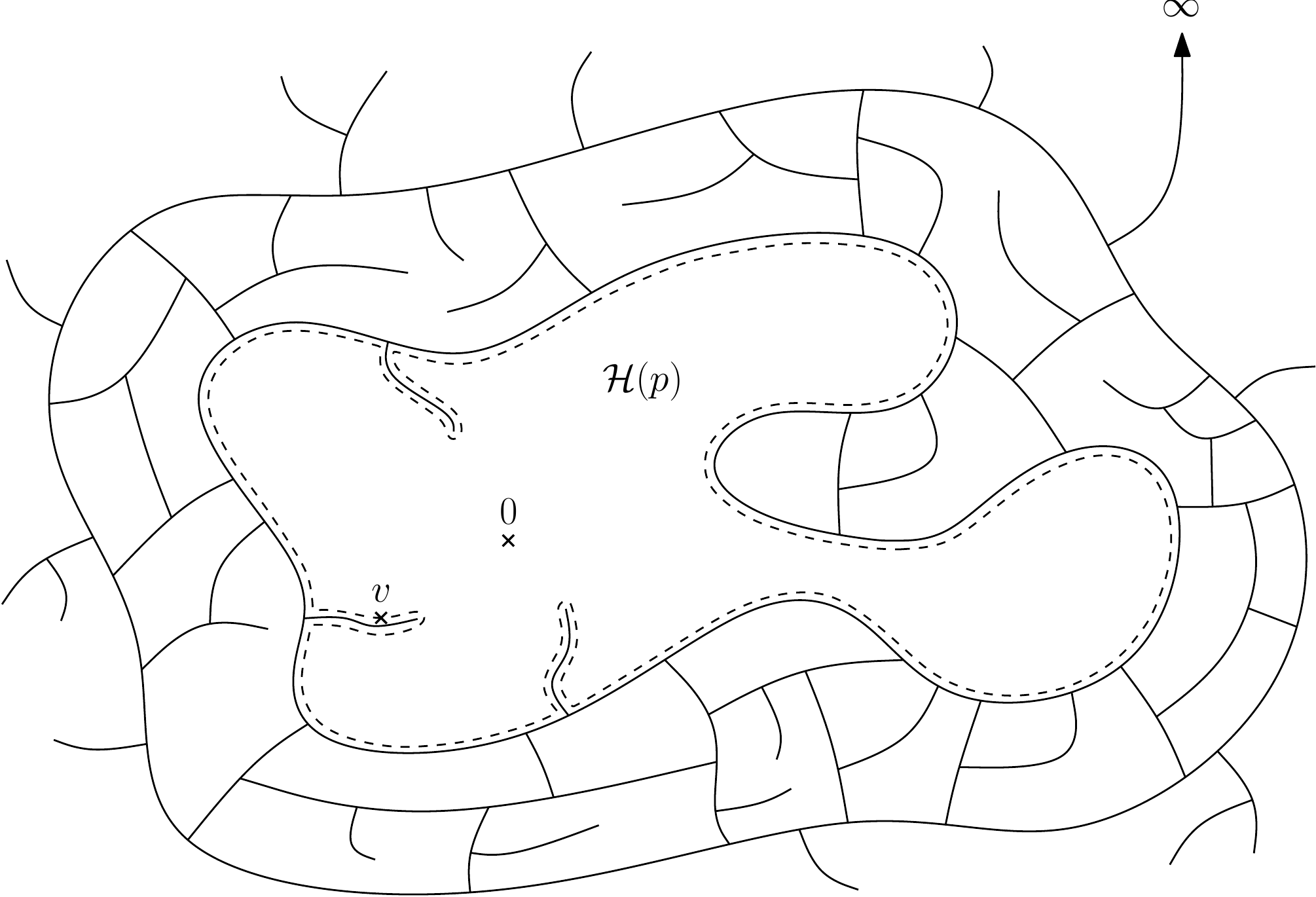}
\caption{\label{fig:hole_cinf} The hole of the origin $\hole(p)$ at time $p$. Its outer boundary $\dout \hole(p)$ is a circuit of white sites (in dashed lines), and each vertex $v$ on it has a neighbor in $\Cinf(p)$, i.e. is connected to $\infty$ by a black path (in solid lines).}
\end{center}
\end{figure}

\begin{remark} \label{rem:three_arms}
For future use, let us observe that if $v \in \dout \hole(p)$ and $\hole(p)$ has diameter $l$, then we can find three disjoint arms from neighbors of $v$ to $\partial \Ball_{l/2}(v)$: two white arms, by following $\dout \hole(p)$ in two directions, and one black arm, since by definition, each vertex of $\dout \hole(p)$ has a neighbor in $\Cinf(p)$ (we refer the reader to Figure \ref{fig:hole_cinf} for an illustration). Also, note that $\hole(p)$ is simply connected.
\end{remark}

We start with some easy a-priori estimates on $\hole(p)$.

\begin{lemma} \label{lem:apriori_hole}
The following lower bounds hold for all $p > p_c$, where $\alpha, c > 0$ are some universal constants.
\begin{itemize}
\item[(i)] For all $\lambda \geq 1$,
\begin{equation} \label{eq:apriori_hole1}
\PP \left( \frac{\hole(p)}{L(p)} \subseteq \Ball_\lambda \right) \geq 1 - e^{-\alpha \lambda}.
\end{equation}

\item[(ii)] For all $\lambda \leq 1$,
\begin{equation} \label{eq:apriori_hole2}
\PP \left( \frac{\hole(p)}{L(p)} \supseteq \Ball_\lambda \right) \geq 1 - c \pi_1 \left( \lambda L(p), L(p) \right).
\end{equation}
\end{itemize}
\end{lemma}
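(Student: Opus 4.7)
Both estimates come down to understanding how $\Cinf(p)$ sits relative to a neighborhood of the origin at the scale $L(p)$: for (i) the task is to exhibit a circuit around $0$ inside $\Ball_{\lambda L(p)}$ that lies in $\Cinf(p)$, and for (ii) to keep $\Cinf(p)$ out of $\Ball_{\lambda L(p)}$.

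For (i), I would set $r = \lambda L(p)$ and, for each $k \geq 0$, introduce the event $G_k$ that inside $\Ann_{2^{k-1} r, 2^{k+1} r}$ one finds $p$-black long-way crossings of $O(1)$ rectangles of aspect ratio $\geq 2$ and short side of order $2^k r$, which together form a connected $p$-black set containing both a circuit around $0$ in $\Ann_{2^{k-1} r, 2^k r}$ and a radial path reaching $\partial \Ball_{2^{k+1} r}$. By the dual form of \eqref{eq:exp_decay} valid for $p > p_c$, namely
\[
\PP_p(\Ch([0,2n]\times[0,n])) \geq 1 - c_1 e^{-c_2 n/L(p)},
\]
each required long-way crossing fails with probability at most $c_1 e^{-c_2 2^k \lambda}$, hence $\PP(G_k^c) \leq C e^{-c' 2^k \lambda}$ by a union bound. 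A geometric sum in $k$ then gives
\[
\PP\Bigl( \bigcap_{k \geq 0} G_k \Bigr) \geq 1 - C'' e^{-c' \lambda},
\]
which rewrites as $\geq 1 - e^{-\alpha \lambda}$ after decreasing $\alpha$ suitably (and invoking a universal positive lower bound from RSW to handle the bounded regime $\lambda = O(1)$, where the exponential factor alone is not sharp). On $\bigcap_k G_k$, the radial path supplied by $G_k$ is topologically forced to cross the circuit supplied by $G_{k+1}$, so all the black clusters provided by these events coincide; the resulting cluster reaches arbitrarily large scales and therefore equals $\Cinf(p)$. The circuit from $G_0$ then lies in $\Cinf(p)$ and cuts $\hole(p)$ off inside $\Ball_{\lambda L(p)}$.

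For (ii), the event $\{\hole(p) \not\supseteq \Ball_{\lambda L(p)}\}$ forces $(\Cinf(p) \cup \dout \Cinf(p)) \cap \Ball_{\lambda L(p)} \neq \emptyset$, which in turn forces $\Cinf(p) \cap \Ball_{\lambda L(p)+1} \neq \emptyset$, and hence (provided $\lambda L(p) + 1 < L(p)$) the $p$-black one-arm event in $\Ann_{\lambda L(p)+1, L(p)}$. By \eqref{eq:Kesten1} together with the extendability estimate \eqref{eq:ext}, this event has probability at most $c\, \pi_1(\lambda L(p), L(p))$. For the remaining values of $\lambda$ close to $1$, RSW gives $\pi_1(\lambda L(p), L(p)) \geq \delta > 0$ universally, so \eqref{eq:apriori_hole2} is trivial after enlarging $c$.

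The delicate point is part (i): producing a circuit around $0$ inside a ball of radius $\Theta(L(p))$ is routine, but certifying it as a piece of $\Cinf(p)$ (rather than of some large finite cluster) requires simultaneous control at every larger scale. The multiscale construction succeeds because the failure probabilities decay both in $\lambda$ and geometrically in $2^k$, yielding a summable tail; and the topological pairing of radial arms at scale $2^k r$ with circuits at scale $2^{k+1} r$ identifies all these clusters in a single step, without any further gluing argument.
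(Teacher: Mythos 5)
Your proposal is correct, and in substance it follows the same route as the paper. For (ii) the two arguments are essentially identical: the paper lower-bounds the containment probability by the probability of a $p$-white circuit in $\Ann_{\lambda L(p),2L(p)}$, whose complement is precisely the $p$-black one-arm event across that annulus, and then applies \eqref{eq:Kesten1} and \eqref{eq:ext}; your contrapositive (failure of containment forces $\Cinf(p)$ into $\Ball_{\lambda L(p)+1}$, hence a $p$-black arm out to $\partial\Ball_{L(p)}$) is the same estimate, and your treatment of $\lambda$ near $1$ by enlarging $c$ matches the implicit role of RSW there. For (i) the paper is terser: it takes the single event $\circuitevent\big(\tfrac{\lambda}{2}L(p),\lambda L(p)\big)\cap\arm_1\big(\tfrac{\lambda}{2}L(p),\infty\big)$, applies FKG, and quotes \eqref{eq:connection_expdecay} for the arm-to-infinity probability, whereas you rebuild the needed input from the duality form of \eqref{eq:exp_decay} by gluing circuits and radial crossings at dyadic scales. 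This is not really a new route --- \eqref{eq:connection_expdecay} is itself obtained by exactly such a multiscale block argument --- so you have reproved a quoted ingredient rather than replaced it; the gluing step (a radial arm at scale $2^k r$ must cross the circuit at scale $2^{k+1}r$, and an unbounded black connected set equals $\Cinf(p)$ by uniqueness) is sound. One small payoff of your version: since every rectangle crossing fails with probability at most $c_1e^{-c_2 2^k\lambda}$, the circuit at scale $\lambda L(p)$ is itself exponentially likely, so the bound $1-e^{-\alpha\lambda}$ comes out directly; in the paper's displayed chain the circuit probability is only bounded below by a constant, and for large $\lambda$ one must additionally use (as you in effect do) that this circuit probability is exponentially close to $1$, again via \eqref{eq:exp_decay}, to reach the stated exponential form.
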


\begin{proof}[Proof of Lemma \ref{lem:apriori_hole}]
(i) We have
$$\PP \left( \frac{\hole(p)}{L(p)} \subseteq \Ball_\lambda \right) \geq \PP_p  \bigg( \circuitevent \bigg( \frac{\lambda}{2} L(p), \lambda L(p) \bigg) \cap \arm_1 \bigg( \frac{\lambda}{2} L(p), \infty \bigg) \bigg) \geq c' \PP_p  \bigg( \arm_1 \bigg( \frac{\lambda}{2} L(p), \infty \bigg) \bigg)$$
for some universal constant $c' > 0$ (using the FKG inequality, and then RSW \eqref{eq:RSW}). The desired lower bound then follows from \eqref{eq:connection_expdecay}.

(ii) We can write
\begin{align*}
\PP \left( \frac{\hole(p)}{L(p)} \supseteq \Ball_\lambda \right) \geq \PP_p  \big( \circuitevent^* ( \lambda L(p), 2 L(p) ) \big) = 1 - \PP_p  \big( \arm_1 ( \lambda L(p), 2 L(p) ) \big).
\end{align*}
Finally, we use that $\PP_p  \big( \arm_1 ( \lambda L(p), 2 L(p) ) \big) \asymp \pi_1(\lambda L(p), L(p))$ (from \eqref{eq:Kesten1} and \eqref{eq:ext}).
\end{proof}

Based on the scaling limit of near-critical percolation, it is natural to expect that $\frac{\hole(p)}{L(p)}$ converges in distribution as $p \searrow p_c$ (in a suitable topology), and so its volume $\frac{|\hole(p)|}{L(p)^2}$ as well. However, the precise knowledge of the scaling limit is not needed for the proofs of Theorems \ref{thm:full_plane} and \ref{thm:large_k}: we only need to know that $\frac{|\hole(p)|}{L(p)^2}$ does not fluctuate too much as $p \searrow p_c$. We use the following estimates, which are weaker and can be proved using classical results.

\begin{lemma} \label{lem:bounds_hole}
There exist universal constants $\alpha_1, \alpha_2, c_1 > 0$ such that the following bounds hold for all $p > p_c$.
\begin{itemize}
\item[(i)] For all $\lambda \geq 1$,
\begin{align}
  e^{-\alpha_1 \sqrt{\lambda}} & \leq \PP \left( \frac{|\hole(p)|}{L(p)^2} \geq \lambda \right) \leq e^{-\alpha_2 \sqrt{\lambda}},  \label{eq:bounds_hole1}\\
\text{and} \quad e^{-\alpha_1 \lambda} & \leq \PP \left( \frac{\hole(p)}{L(p)} \supseteq \Ball_\lambda \right) \leq e^{-\alpha_2 \lambda}.\label{eq:bounds_hole_diam1}
\end{align}

\item[(ii)] For all $\lambda \leq 1$,
\begin{align} 
  c_1 \pi_1 \left( \sqrt{\lambda} L(p), 2 L(p) \right) & \leq \PP \left( \frac{|\hole(p)|}{L(p)^2} \leq \lambda \right) \leq \pi_1 \left( \sqrt{\lambda} L(p), 2 L(p) \right), \label{eq:bounds_hole2}\\
\text{and} \quad c_1 \pi_1 \left( \lambda L(p), 2 L(p) \right) &\leq \PP \left( \frac{\hole(p)}{L(p)} \subseteq \Ball_\lambda \right) \leq \pi_1 \left( \lambda L(p), 2 L(p) \right). \label{eq:bounds_hole_diam2}
 \end{align}
\end{itemize}
\end{lemma}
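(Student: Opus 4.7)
The four bounds split naturally along the dichotomy $\lambda\geq 1$ versus $\lambda\leq 1$, and within each regime the volume statement can be obtained from the diameter statement via elementary geometric observations about the simply connected set $\hole(p)\ni 0$ (Remark~\ref{rem:three_arms}). I will therefore first establish the two diameter bounds \eqref{eq:bounds_hole_diam1} and \eqref{eq:bounds_hole_diam2}.

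For \eqref{eq:bounds_hole_diam1} (the regime $\lambda\geq 1$), the upper bound is an immediate consequence of \eqref{eq:connection_expdecay}: $\hole(p)\supseteq\Ball_{\lambda L(p)}$ forces $\Ball_{\lambda L(p)}\cap\Cinf=\emptyset$, and in particular $\partial\Ball_{\lambda L(p)/2}\nlra{p}\infty$, which has probability $\leq e^{-c_3\lambda/2}$. For the matching lower bound I assemble a $p$-white circuit in $\Ann_{2\lambda L(p),4\lambda L(p)}$ out of a bounded number of $p$-white long-direction crossings of rectangles of short side $\asymp\lambda L(p)$; by the second inequality of \eqref{eq:exp_decay} each such crossing has probability $\geq e^{-c\lambda}$, and FKG combines them to give an $e^{-c'\lambda}$ lower bound on the circuit. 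Any such circuit encloses a region intersecting neither $\Cinf$ nor $\dout\Cinf$, forcing $\Ball_{\lambda L(p)}\subseteq\hole(p)$.

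For \eqref{eq:bounds_hole_diam2} (the regime $\lambda\leq 1$), on the event $\hole(p)\subseteq\Ball_{\lambda L(p)}$ the outer boundary $\dout\hole(p)\subseteq\Cinf\cup\dout\Cinf$ lies inside $\Ball_{\lambda L(p)+1}$, so some vertex of $\Cinf$ lies in $\Ball_{\lambda L(p)+2}$; this vertex is connected to $\infty$ by a $p$-black path, hence the event $\arm^{p,p}_1(\lambda L(p)+2,2L(p))$ holds, with probability $\lesssim\pi_1(\lambda L(p),2L(p))$ by \eqref{eq:Kesten1} and \eqref{eq:ext}. For the lower bound I combine, via FKG, (i) a $p$-black circuit in $\Ann_{\lambda L(p)/4,\lambda L(p)/2}$ (probability $\geq c$ by RSW \eqref{eq:RSW}), (ii) a $p$-black crossing of $\Ann_{\lambda L(p)/2,2L(p)}$ (probability $\asymp\pi_1(\lambda L(p),2L(p))$ by \eqref{eq:Kesten1} and \eqref{eq:ext}), and (iii) the connection $\partial\Ball_{2L(p)}\lra{p}\infty$ (probability $\geq c$ by \eqref{eq:connection_expdecay}). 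On the intersection, the circuit sits inside $\Cinf$ and encloses $\hole(p)$ inside $\Ball_{\lambda L(p)/2}\subseteq\Ball_{\lambda L(p)}$.

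The volume bounds now follow from two elementary geometric observations about the simply connected set $\hole(p)\ni 0$: (a) a connected subset of $\TT$ of volume $V$ containing $0$ has diameter $\geq c\sqrt V$, hence cannot sit inside $\Ball_{c\sqrt V/2-1}$; and (b) letting $r$ denote the minimum distance from $0$ to $\dout\hole(p)$, the ball $\Ball_{r-1}$ is contained in $\hole(p)$, so $|\hole(p)|\geq c r^2$. Applying (a) with $V=\lambda L(p)^2$, together with \eqref{eq:apriori_hole1}, gives the upper bound in \eqref{eq:bounds_hole1}; applying (b) gives the upper bound in \eqref{eq:bounds_hole2} via the same one-arm estimate as above. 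The matching lower bounds are immediate: $\hole(p)\supseteq\Ball_{\sqrt\lambda L(p)}$ implies $|\hole(p)|\geq c\lambda L(p)^2$ and $\hole(p)\subseteq\Ball_{c'\sqrt\lambda L(p)}$ implies $|\hole(p)|\leq c''\lambda L(p)^2$, so the lower bounds in \eqref{eq:bounds_hole1}--\eqref{eq:bounds_hole2} follow from those in \eqref{eq:bounds_hole_diam1}--\eqref{eq:bounds_hole_diam2} after absorbing constants into $\lambda$. The one delicate point is the FKG/Kesten combination in the lower bound of \eqref{eq:bounds_hole_diam2}, where quasi-multiplicativity \eqref{eq:qmult} and the near-critical comparison \eqref{eq:Kesten1} are needed to retain the exact $\pi_1(\lambda L(p),2L(p))$ factor; everything else is a routine packaging of RSW, exponential decay, and the two geometric facts above.
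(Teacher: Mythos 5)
Your overall strategy---prove the two diameter bounds first and transfer them to the volume bounds via the elementary geometric facts (a) and (b)---is sound and runs on exactly the same ingredients as the paper's proof: exponential decay \eqref{eq:exp_decay}--\eqref{eq:connection_expdecay}, RSW \eqref{eq:RSW}, FKG, and the near-critical arm comparison \eqref{eq:Kesten1} together with \eqref{eq:ext}. The paper simply proves the volume bounds directly and remarks that the diameter versions follow in the same way, so your reorganization is cosmetic rather than a different route; your use of \eqref{eq:apriori_hole1} (rather than your own diameter bound, which controls a different event) for the upper bound in \eqref{eq:bounds_hole1} is correct, and the transfer steps (a) and (b) are fine.

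There is, however, a genuine gap in your lower bound for \eqref{eq:bounds_hole_diam2} (and hence for the lower bound in \eqref{eq:bounds_hole2}, which you derive from it). The three increasing events you combine by FKG---a black circuit in $\Ann_{\lambda L(p)/4,\lambda L(p)/2}$, a black crossing of $\Ann_{\lambda L(p)/2,2L(p)}$, and $\partial\Ball_{2L(p)}\lra{}\infty$---do not imply that ``the circuit sits inside $\Cinf$'': the crossing starts on $\partial\Ball_{\lambda L(p)/2}$ and therefore need not touch the circuit, which may lie strictly inside that radius, and the crossing's endpoint on $\partial\Ball_{2L(p)}$ need not be a vertex that is connected to infinity. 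Without this connectivity the circuit need not belong to the infinite cluster, and nothing confines $\hole(p)$ to $\Ball_{\lambda L(p)}$. The paper's construction is arranged precisely to force the gluing: the long arm is taken to span $\Ann_{\frac{\sqrt{\lambda}}{4}L(p),2L(p)}$, i.e.\ to start at the \emph{inner} radius of the circuit's annulus, so it must cross the circuit, and a second black circuit in $\Ann_{L(p),2L(p)}$ is added, which both the long arm and a further arm $\arm_1(L(p),\infty)$ must cross; the four pieces are then necessarily connected, the inner circuit lies in $\Cinf$, and the hole is trapped. Your argument goes through once you adjust the annuli in this way, with unchanged probability estimates (RSW for the two circuits, \eqref{eq:Kesten1} and \eqref{eq:ext} for the long arm, \eqref{eq:connection_expdecay} for the arm to infinity). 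A minor further point: your upper bounds in part (ii) carry multiplicative constants coming from \eqref{eq:Kesten1}, rather than the constant-free form displayed in \eqref{eq:bounds_hole2}--\eqref{eq:bounds_hole_diam2}; the paper's own proof has the same feature, and since the lemma is only ever used up to constants this is harmless.
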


\begin{proposition} \label{prop:vol_hole}
There exists $\beta > 1$ such that: for all $\bar{\lambda}>1$, for all $p, p' > p_c$ sufficiently close to $p_c$ (depending on $\bar{\lambda}$), and all $\lambda \in [\bar{\lambda}^{-1},\bar{\lambda}]$, one has
\begin{equation} \label{eq:vol_hole}
  \PP \left( \frac{|\hole(p')|}{L(p')^2} \geq \lambda \right) \geq \PP \left( \frac{|\hole(p)|}{L(p)^2} \geq \beta \lambda \right).
\end{equation}
\end{proposition}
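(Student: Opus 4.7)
The proposition is a continuity-type estimate for the law of $Y_p := |\hole(p)|/L(p)^2$ as $p \searrow p_c$. My plan is to use the near-critical scaling limit of Garban--Pete--Schramm (Theorem \ref{thm:ncrit_scaling_lim}) to establish convergence in distribution $Y_p \Rightarrow Y$ to a non-degenerate random variable $Y$ supported on $(0,\infty)$, and then to deduce the bound from regularity of the limit law. Note that a direct coupling $p < p'$ is not enough: although in the standard coupling $\hole(p) \supseteq \hole(p')$ and $L(p) \geq L(p')$, the quantity $|\hole(\cdot)|/L(\cdot)^2$ is not monotone, and the ratio $L(p)/L(p')$ is not controlled by $|p - p'|$ alone, so one really needs information at the level of distributions.

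For the scaling setup, following Section \ref{sec:proofs_theta_L}, I would write $p = p_{\lambda(p)}(\eta(p))$ with $\eta(p) := L^0(1)/L(p)$. Lemma \ref{lem:str_Kesten} combined with Theorem \ref{thm:scale_invar} gives $\lambda(p) \to 1$ and $\eta(p) \to 0$ as $p \searrow p_c$. The rescaled hole $L(p)^{-1}\hole(p)$ is a functional of the percolation configuration on $\TT^{\eta(p)}$, which converges in the quad-crossing topology to the near-critical scaling limit at parameter $1$. Combining this with the exponential tightness from Lemma \ref{lem:apriori_hole}(i), and the observation that the boundary of the infinite cluster near the origin converges to a continuous loop (a near-critical CLE$_6$-type object) whose enclosed area is a continuous functional of the quad-crossing data, one obtains $Y_p \Rightarrow Y$ for some $Y$ on $(0,\infty)$.

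Next, I would verify that $Y$ is non-atomic and has full support on $(0,\infty)$. Non-atomicity follows since the enclosed area of the limiting loop is an absolutely continuous real-valued random variable (no special area value is attained with positive probability). Full support comes from the scale covariance of the near-critical scaling limit (Theorem \ref{thm:scale_invar}) combined with the two-sided bounds of Lemmas \ref{lem:apriori_hole} and \ref{lem:bounds_hole}, which show that both small and large values of $Y_p$ are attained with probability bounded away from $0$, uniformly in $p$.

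Finally, I would conclude by fixing any $\beta > 1$ (e.g.\ $\beta = 2$). Given $\bar\lambda > 1$, strict monotonicity and continuity of the CDF of $Y$ on the compact interval $[\bar\lambda^{-1}, \beta\bar\lambda]$ provide $\delta = \delta(\beta,\bar\lambda) > 0$ such that
\[
\PP(Y \geq \lambda) - \PP(Y \geq \beta\lambda) \geq \delta \quad \text{for all } \lambda \in [\bar\lambda^{-1}, \bar\lambda].
\]
By uniform convergence of CDFs on compact intervals avoiding atoms, for $p, p'$ sufficiently close to $p_c$ (depending on $\bar\lambda$) we have $|\PP(Y_q \geq \mu) - \PP(Y \geq \mu)| < \delta/2$ uniformly in $\mu \in [\bar\lambda^{-1}, \beta\bar\lambda]$ for $q \in \{p,p'\}$, and hence
\[
\PP(Y_{p'} \geq \lambda) \geq \PP(Y \geq \lambda) - \tfrac{\delta}{2} \geq \PP(Y \geq \beta\lambda) + \tfrac{\delta}{2} \geq \PP(Y_p \geq \beta\lambda).
\]

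The main obstacle is the step of promoting quad-crossing convergence of the configuration to convergence of the hole's volume: area is not a priori a continuous functional of a quad-crossing configuration, so one must verify that the boundary of the infinite cluster near $0$ is regular enough in the limit (a proper Jordan curve, with no pinch-off collapsing the enclosed area) for the area functional to be almost surely continuous at the limiting configuration. This is where the a-priori three-arm structure around boundary vertices (Remark \ref{rem:three_arms}) and the exponential size bounds of Lemma \ref{lem:apriori_hole} enter, ruling out pathological geometry of the hole's boundary.
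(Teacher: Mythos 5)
Your plan rests on a distributional convergence $|\hole(p)|/L(p)^2 \Rightarrow Y$ that you never actually establish, and this is precisely the step the paper deliberately avoids. The results you invoke from Section \ref{sec:proofs_theta_L} (Theorem \ref{thm:ncrit_scaling_lim}, Theorem \ref{thm:scale_invar}, Lemma \ref{lem:str_Kesten}) give convergence of arm probabilities and of the correlation length along the near-critical parameter scale; they do not give convergence of the area of the hole of the origin. Passing from quad-crossing convergence of the configuration to convergence of the \emph{volume} of $\hole(p)$, together with non-atomicity of the limit law, is a substantial piece of work (regularity of the boundary interface, absence of pinch-offs, area as an a.s.\ continuous functional), and you acknowledge it only as ``the main obstacle'' without resolving it -- citing Remark \ref{rem:three_arms} and Lemma \ref{lem:apriori_hole} is not enough, since those give discrete three-arm structure and tail tightness, not continuity of the area functional at the limiting configuration. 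Likewise, your final step needs $\inf_{\lambda\in[\bar\lambda^{-1},\bar\lambda]}\PP\left(Y\in[\lambda,\beta\lambda)\right)>0$ for every $\bar\lambda$, i.e.\ essentially full support (and non-atomicity) of $Y$; the two-sided bounds of Lemmas \ref{lem:apriori_hole} and \ref{lem:bounds_hole} are \emph{tail} bounds and do not show that every interval $[\lambda,\beta\lambda)$ is charged, nor that no atom exists. So as written the argument has genuine gaps at both ends: the convergence itself and the regularity of the limit law.

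For contrast, the paper's proof stays entirely at the level of the estimates you already have: it proves the inequality directly for $\lambda\geq 1$ by playing the upper and lower exponential bounds of \eqref{eq:bounds_hole1} against each other (which is why $\beta$ comes out as $\alpha_1^2/\alpha_2^2$, a large universal constant rather than anything sharp), proves it for small $\lambda$ using \eqref{eq:bounds_hole2} together with quasi-multiplicativity, \eqref{eq:Kesten1}, \eqref{eq:1arm_bound} and the ratio-limit theorem (Lemma \ref{lem:ratio_lim}), and then covers the intermediate range $\lambda\in[\bar\lambda^{-1},\bar\lambda]$ by a chaining argument over a fixed geometric grid $\lambda_i=(\bar\lambda_2/\bar\lambda_1)^{i-1}\bar\lambda_1$, absorbing the grid ratio into $\beta$. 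This is why the statement only claims a (possibly very large) $\beta>1$: it is exactly the price of avoiding the scaling-limit route. If you want to pursue your approach, you would need to prove, not assert, (a) convergence in law of the rescaled hole area and (b) non-atomicity plus an interval-charging (full support) statement for the limit, none of which follows from the ingredients you cite.
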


We first prove Lemma \ref{lem:bounds_hole}.
\begin{proof}[Proof of Lemma \ref{lem:bounds_hole}] We only prove \eqref{eq:bounds_hole1} and \eqref{eq:bounds_hole2}, since \eqref{eq:bounds_hole_diam1} and \eqref{eq:bounds_hole_diam2} follow in similar ways.

(i) Let us consider $\lambda \geq 1$. For the lower bound in \eqref{eq:bounds_hole1}, we note that
$$\PP \big( |\hole(p)| \geq \PP_p \bigg( \circuitevent^* \bigg( \frac{\sqrt{\lambda}}{2} L(p), \sqrt{\lambda} L(p) \bigg) \bigg),$$
which is at least $e^{-\alpha_1 \sqrt{\lambda}}$, from the lower bound in \eqref{eq:exp_decay} (and the FKG inequality).

Let us now turn to the upper bound in \eqref{eq:bounds_hole1}. If $\hole(p)$ has a volume at least $\lambda L(p)^2$, then $\dout \hole(p)$, which is a white circuit surrounding $0$, must contain one site at a distance at least $\frac{\sqrt{\lambda}}{4} L(p)$ from $0$. This implies
$$\PP \big( |\hole(p)| \geq \lambda L(p)^2 \big) \leq \PP_p \bigg( \arm^*_1 \bigg( \frac{\sqrt{\lambda}}{8} L(p), \frac{\sqrt{\lambda}}{4} L(p) \bigg) \bigg) + \PP_p \bigg( \circuitevent^* \bigg( \frac{\sqrt{\lambda}}{8} L(p),\infty \bigg) \bigg),$$
which is at most $e^{-\alpha_1 \sqrt{\lambda}}$, using once again \eqref{eq:exp_decay}.

(ii) We now consider $\lambda \leq 1$. The upper bound in \eqref{eq:bounds_hole2} follows from the observation that if $|\hole(p)| \leq \lambda L(p)^2$, then the infinite black cluster intersects $\Ball_{\sqrt{\lambda} L(p)}$, so
$$\PP \big( |\hole(p)| \leq \lambda L(p)^2 \big) \leq \PP_p \big( \arm_1 \big( \sqrt{\lambda} L(p), 2 L(p) \big) \big).$$
For the lower bound in \eqref{eq:bounds_hole2}, we note that
\begin{align*}
\PP \big( |\hole & (p)| \leq \lambda L(p)^2 \big)\\
& \geq \PP_p \bigg( \circuitevent \bigg( \frac{\sqrt{\lambda}}{4} L(p), \frac{\sqrt{\lambda}}{2} L(p) \bigg) \cap \arm_1 \bigg( \frac{\sqrt{\lambda}}{4} L(p), 2 L(p) \bigg) \cap \circuitevent \big( L(p), 2 L(p) \big) \cap \arm_1 \big( L(p), \infty \big) \bigg)\\
& \geq \tilde{c}_1 \PP_p \bigg( \arm_1 \bigg( \frac{\sqrt{\lambda}}{4} L(p), 2 L(p) \bigg) \bigg) \PP_p \big( \arm_1 \big( L(p), \infty \big) \big)
\end{align*}
for some universal constant $\tilde{c}_1 > 0$ (using the FKG inequality, and then RSW \eqref{eq:RSW}). We know from \eqref{eq:connection_expdecay} that there exists $\tilde{c}_2 > 0$ such that $\PP_p \left( \arm_1 \left( L(p), \infty \right) \right) \geq \tilde{c}_2$, which allows us to conclude (with \eqref{eq:ext}).
\end{proof}

These bounds can now be used to prove Proposition \ref{prop:vol_hole}.

\begin{proof}[Proof of Proposition \ref{prop:vol_hole}]
We first prove the following claim: there exist $\tilde{\beta} > 1$, and $0 < \bar{\lambda}_1 < \bar{\lambda}_2$, such that for all $\lambda > 0$ with $\lambda \notin (\bar{\lambda}_1,\bar{\lambda}_2)$, for all $p, p' > p_c$ sufficiently close to $p_c$ (depending on $\lambda$),
\begin{equation}
\PP \left( \frac{|\hole(p')|}{L(p')^2} \geq \lambda \right) \geq \PP \left( \frac{|\hole(p)|}{L(p)^2} \geq \tilde{\beta} \lambda \right).
\end{equation}
For $\lambda \geq 1$, applying successively the two bounds of \eqref{eq:bounds_hole1} yields
\begin{equation}
\PP \left( \frac{|\hole(p')|}{L(p')^2} \geq \lambda \right) \geq e^{-\alpha_1 \sqrt{\lambda}} = e^{-\alpha_2 \frac{\alpha_1}{\alpha_2} \sqrt{\lambda}} \geq \PP\left( \frac{|\hole(p)|}{L(p)^2} \geq \frac{\alpha_1^2}{\alpha_2^2} \lambda \right),
\end{equation}
which proves the claim for $\lambda \geq \bar{\lambda}_2$, with $\bar{\lambda}_2 = 1$ and $\tilde{\beta} = \frac{\alpha_1^2}{\alpha_2^2}$. Let us now turn to small values of $\lambda$. Using the lower bound provided by \eqref{eq:bounds_hole2}, we get: for $x \leq 1$,
\begin{equation} \label{eq:hole_small_lambda}
\PP \left( \frac{|\hole(p)|}{L(p)^2} < x \right) \geq c_1 \pi_1 \left( \sqrt{\frac{x}{2}} L(p), 2 L(p) \right) \geq c_1 \tilde{c}_1 \pi_1 \left( \sqrt{\frac{x}{2}} L(p'), 2 L(p') \right).
\end{equation}
Here, we used \eqref{eq:qmult} and the ratio-limit theorem (Lemma \ref{lem:ratio_lim}): we need $L(p)$ and $L(p')$ to be sufficiently large, i.e. $p$ and $p'$ to be close enough to $p_c$, depending on $x$. Using \eqref{eq:Kesten1}, the right-hand side of \eqref{eq:hole_small_lambda} is at least $c_1 \tilde{c}_1 \tilde{c}_2 \PP_{p'} \left( \arm_1 \left( \sqrt{\frac{x}{2}} L(p'), 2 L(p') \right) \right)$. Moreover, for $\ve > 0$ small enough,
$$c_1 \tilde{c}_1 \tilde{c}_2 \PP_{p'} \left( \arm_1 \left( \sqrt{\frac{x}{2}} L(p'), 2 L(p') \right) \right) \geq \PP_{p'} \left( \arm_1 \left( \sqrt{\ve x} L(p'), 2 L(p') \right) \right) \geq \PP \left( \frac{|\hole(p')|}{L(p')^2} < \ve x \right),$$
where the first inequality uses \eqref{eq:qmult}, \eqref{eq:Kesten1} and \eqref{eq:1arm_bound}, and the second one uses the upper bound in \eqref{eq:bounds_hole2}. This gives the claim for $\lambda \leq \bar{\lambda}_1 = \ve$, with $\tilde{\beta} = \frac{1}{\ve}$.

\bigskip

We now use the claim to prove the proposition itself. Let us define
$$\lambda_i = \left( \frac{\bar{\lambda}_2}{\bar{\lambda}_1} \right)^{i-1} \bar{\lambda}_1, \quad i \in \ZZ$$
(so that $\lambda_i = \bar{\lambda}_i$ for $i =1, 2$). Noting that for every $i \in \ZZ$, $\lambda_i \notin (\bar{\lambda}_1,\bar{\lambda}_2)$, we deduce from the claim that for all $p, p' > p_c$ close enough to $p_c$ (depending on $i$),
\begin{equation} \label{eq:pf_vol_hole}
\PP \left( \frac{|\hole(p')|}{L(p')^2} \geq \lambda_i \right) \geq \PP \left( \frac{|\hole(p)|}{L(p)^2} \geq \tilde{\beta} \lambda_i \right).
\end{equation}
Moreover, the same conclusion holds for all $p, p' > p_c$ close enough to $p_c$ (depending on $\bar{\lambda}$) and all $i \in \ZZ$ such that $\lambda_i \in \left[ \bar{\lambda}^{-1}, \frac{\bar{\lambda}_2}{\bar{\lambda}_1} \bar{\lambda} \right]$ simultaneously. Indeed, there are only finitely many such values of $i$, since $\lambda_i \to 0$ and $+ \infty$ as $i \to + \infty$ and $- \infty$, respectively. Now, let us consider $\lambda \in [\bar{\lambda}^{-1},\bar{\lambda}]$: there exists $i \in \ZZ$ such that $\lambda \in [\lambda_i, \lambda_{i+1}]$, and for all $p, p' > p_c$ close enough to $p_c$ (depending on $\bar{\lambda}$),
$$\PP \left( \frac{|\hole(p')|}{L(p')^2} \geq \lambda \right) \geq \PP \left( \frac{|\hole(p')|}{L(p')^2} \geq \lambda_{i+1} \right) \geq \PP \left( \frac{|\hole(p)|}{L(p)^2} \geq \tilde{\beta} \lambda_{i+1} \right),$$
using $\lambda \leq \lambda_{i+1}$, and then \eqref{eq:pf_vol_hole} (note that $\lambda_{i+1} \in \left[ \bar{\lambda}^{-1}, \frac{\bar{\lambda}_2}{\bar{\lambda}_1} \bar{\lambda} \right]$). We can now write
$$\PP \left( \frac{|\hole(p)|}{L(p)^2} \geq \tilde{\beta} \lambda_{i+1} \right) = \PP \left( \frac{|\hole(p)|}{L(p)^2} \geq \tilde{\beta} \frac{\lambda_{i+1}}{\lambda} \lambda \right) \geq \PP \left( \frac{|\hole(p)|}{L(p)^2} \geq \tilde{\beta} \frac{\bar{\lambda}_2}{\bar{\lambda}_1} \lambda \right),$$
which completes the proof of Proposition \ref{prop:vol_hole}, with $\beta = \tilde{\beta} \frac{\bar{\lambda}_2}{\bar{\lambda}_1}$.
\end{proof}

\subsection{Approximable domains}

In this section, we introduce a regularity property for domains, which plays an important role when studying successive (nested) frozen clusters, allowing one to describe the frozen percolation process in an iterative manner. Roughly speaking, this property says that the domain can be approximated by a union of small squares, and we first prove in Section \ref{sec:approx_hole} that it is satisfied by percolation holes. We then use it to establish a continuity property for $|\hole(p)|$ (Section \ref{sec:approx_cont}). We explain later, in Section \ref{sec:approx_vol}, that it helps to predict the volume of the largest connected component.

Let us now give a formal definition. First, we need to introduce some notation. For every $l > 0$, we consider a partition of the plane into squares of side length $l$, such that $0$ is the center of one of these squares:
$$\CC = \bigsqcup_{k_1,k_2 \in \ZZ} \big( (k_1 l, k_2 l) + b_l \big),$$
with $b_l = \big[ - \frac{l}{2}, \frac{l}{2} \big)^2$. These squares are called \emph{$l$-blocks}. Each $l$-block has four neighbors, and this notion of adjacency gives rise to connected components of $l$-blocks.

\begin{figure}
\begin{center}
\includegraphics[width=12cm]{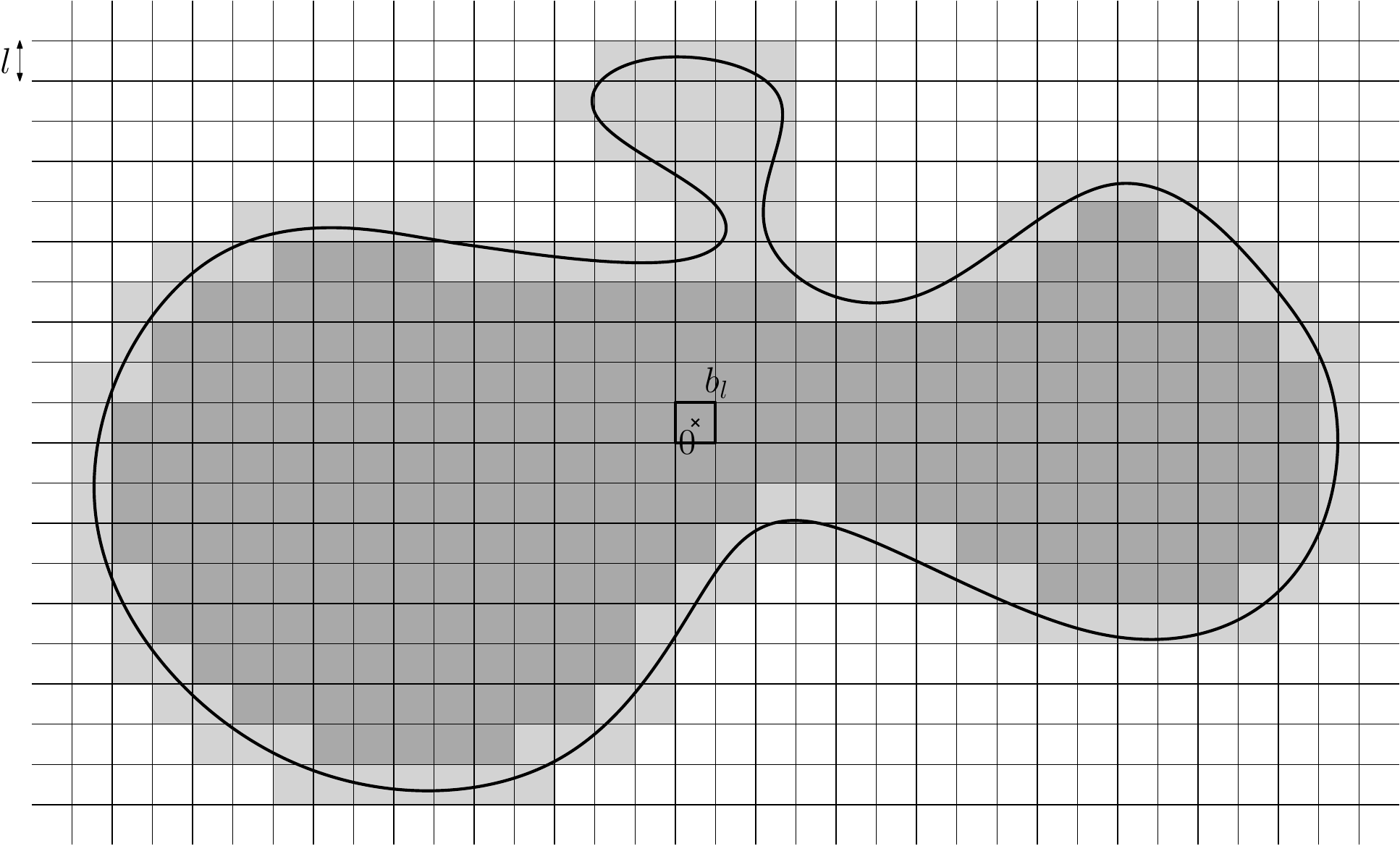}
\caption{\label{fig:approx} The inner and outer approximations of a domain $\Lambda$: $\Dint{\Lambda}{l}$ and $\Dext{\Lambda}{l} \setminus \Dint{\Lambda}{l}$ are in dark and light gray, respectively.}
\end{center}
\end{figure}

For a connected domain $\Lambda \subseteq \CC$, we introduce the following inner and outer approximations by $l$-blocks (see Figure \ref{fig:approx}).
\begin{itemize}
\item We consider the collection of $l$-blocks which are entirely contained in $\Lambda$. These $l$-blocks can be grouped into connected components, and we denote by $\Dint{\Lambda}{l}$ the union of all $l$-blocks in the connected component of $b_l$. By convention, we take $\Dint{\Lambda}{l} = \emptyset$ if $b_l$ is not contained in $\Lambda$.

\item We denote by $\Dext{\Lambda}{l}$ the union of all $l$-blocks that intersect $\Lambda$.
\end{itemize}

\begin{definition}
Let $\Lambda \subseteq \CC$ be a bounded and simply connected domain. For $l > 0$ and $\eta \in (0,1)$, we say that $\Lambda$ is \emph{$(l,\eta)$-approximable} if
\begin{itemize}
\item[(i)] $b_l \subseteq \Lambda$,

\item[(ii)] and $\big| \Dext{\Lambda}{l} \setminus \Dint{\Lambda}{l} \big| < \eta | \Lambda |$.
\end{itemize}
\end{definition}

Clearly, $\Dint{\Lambda}{l} \subseteq \Lambda \subseteq \Dext{\Lambda}{l}$, so this property implies in particular that
$$|\Dint{\Lambda}{l}| > (1 - \eta) |\Lambda| \quad \text{and} \quad |\Dext{\Lambda}{l}| < (1 + \eta) |\Lambda|.$$

We also define the \emph{$t$-shrinking} of a domain $\Lambda$ (for $t > 0$) as
\begin{equation}
\big \{ z : d(z, \CC \setminus \Lambda) \geq t \big \},
\end{equation}
where $d$ is the distance induced by the $\infty$ norm on $\CC$. In other words, it is the complement of the $t$-neighborhood (for the distance $d$) of $\CC \setminus \Lambda$. This notion is used in the particular case when $\Lambda$ is a union of $l$-blocks, as depicted on Figure \ref{fig:shrinking}. In such a situation, for $\ve \in (0,1)$, we denote by $\Lambda^l_{(\ve)}$ the $(\ve l)$-shrinking of $\Lambda$. The value of $l$ is most often clear from the context, in which case we drop the superscript for notational convenience. For future use, let us note that for all $\ve \in (0, \frac{1}{4})$ (and uniformly in $l$),
\begin{equation} \label{eq:shrinking_vol}
(1 - 4\ve) |\Lambda| \leq |\Lambda_{(\ve)}^l| \leq |\Lambda|.
\end{equation}

\begin{figure}[t]
\begin{center}
\includegraphics[width=12cm]{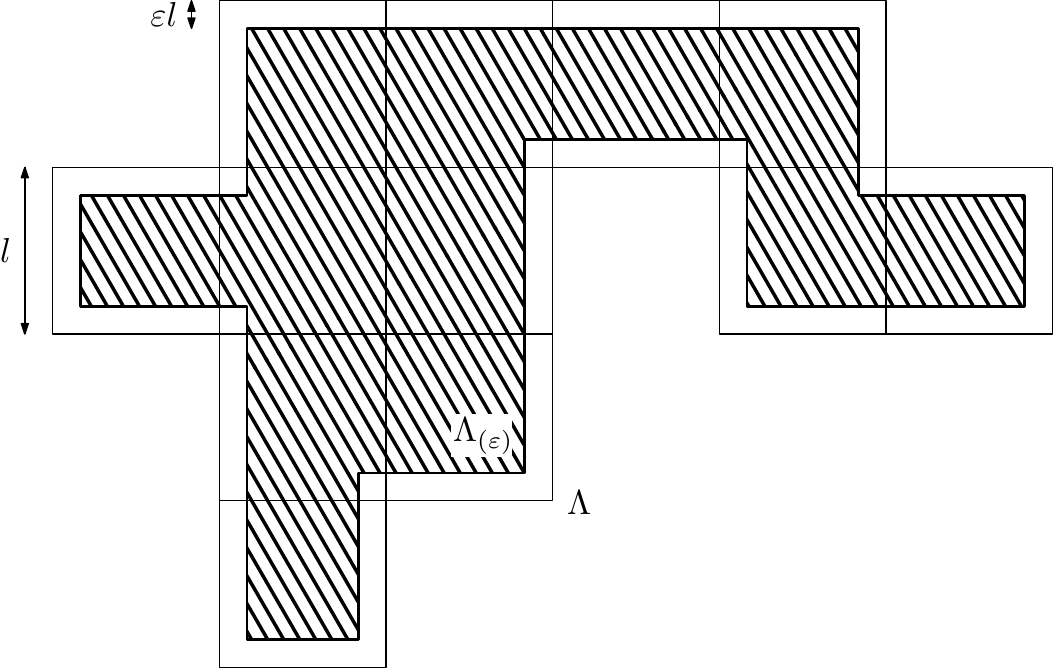}
\caption{\label{fig:shrinking} This figure depicts a union of $l$-blocks $\Lambda$, and its $(\ve l)$-shrinking $\Lambda^l_{(\ve)} = \Lambda_{(\ve)}$.}
\end{center}
\end{figure}

\subsection{Approximability of $\hole(p)$} \label{sec:approx_hole}

We now prove that for $\alpha$ small enough, with high probability, $\hole(p)$ can be approximated by using squares of side length $\alpha L(p)$.

\begin{lemma} \label{lem:approx_H}
For all $\ve, \eta > 0$, there exist $\delta = \delta(\ve) > 0$ and $\alpha = \alpha(\ve, \eta) > 0$ such that: for all $p \in (p_c, p_c + \delta)$,
\begin{equation}
\PP \big( \text{$\hole(p)$ is $(\alpha L(p), \eta)$-approximable} \big) > 1 - \ve.
\end{equation}
\end{lemma}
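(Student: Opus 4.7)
The plan is to verify the two conditions of approximability separately: condition (i) follows directly from the a priori lower bound on $\hole(p)$ in Lemma \ref{lem:apriori_hole}(ii), while the main work goes into condition (ii), which I bound in expectation via polychromatic arm estimates and then convert to a probability bound via Markov. Fix $\ve, \eta > 0$. Using Lemma \ref{lem:apriori_hole} and \eqref{eq:1arm_bound}, choose $\lambda = \lambda(\ve) \in (0,1)$ and $\Lambda = \Lambda(\ve) > 0$ such that $\PP(\Ball_{\lambda L(p)} \subseteq \hole(p) \subseteq \Ball_{\Lambda L(p)}) \geq 1 - \ve/3$ uniformly in $p > p_c$; using Lemma \ref{lem:bounds_hole}(i), also fix $\rho = \rho(\ve) > 0$ with $\PP(|\hole(p)| \geq \rho L(p)^2) \geq 1 - \ve/3$, and let $\mathcal{G}$ be the intersection of these two events. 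Requiring $\alpha < 2\lambda$ ensures that on $\mathcal{G}$ the central block $b_{\alpha L(p)}$ is contained in $\Ball_{\lambda L(p)} \subseteq \hole(p)$, so condition (i) holds automatically.

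Write $l := \alpha L(p)$ and split the boundary layer $\mathcal{B}_l := \Dext{\hole(p)}{l} \setminus \Dint{\hole(p)}{l}$ into two pieces: the union $\mathcal{U}_1$ of $l$-blocks that intersect both $\hole(p)$ and $\hole(p)^c$ (``true boundary'' blocks), and the union $\mathcal{U}_2$ of $l$-blocks contained in $\hole(p)$ but lying in a connected component of the interior-block graph disjoint from $b_l$ (``pocket'' blocks). Every true boundary block $Q$ contains, within distance $l$, a vertex $v \in \dout \hole(p)$; on $\mathcal{G}$, Remark \ref{rem:three_arms} together with $\hole(p) \supseteq \Ball_{\lambda L(p)}$ produces three disjoint polychromatic arms (two white along $\dout \hole(p)$, one black reaching $\Cinf$) from a neighborhood of $v$ out to a scale comparable to $\lambda L(p)$. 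Using \eqref{eq:Kesten1}, \eqref{eq:ext}, \eqref{eq:qmult} and the polychromatic three-arm estimate \eqref{eq:gps}, the probability that a given $Q$ belongs to $\mathcal{U}_1$ on $\mathcal{G}$ is bounded by $C \pi_3(l, \lambda L(p)) \leq C' \alpha^\beta$ for some universal $\beta > 0$, and summing over the $O((\Lambda/\alpha)^2)$ candidate blocks in $\Ball_{(\Lambda + 1) L(p)}$ yields $\EE[|\mathcal{U}_1|\,\ind_\mathcal{G}] \leq C_1 \Lambda^2 \alpha^\beta L(p)^2$.

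The pocket contribution $\mathcal{U}_2$ is the more delicate one: a pocket block $Q$ can occur only if $\hole(p)$ has a neck of width at most $2l$ separating $Q$ from $b_l$, and at such a neck one exhibits four disjoint polychromatic arms (two white along the two pinching sides of $\dout \hole(p)$, two black reaching $\Cinf$ on either side) at a scale determined by the neck's geometry. A careful decomposition over possible neck scales and locations, together with the polychromatic four-arm exponent $\alpha_4 = 5/4 > 0$ given by Lemma \ref{lem:arm_exp} (and \eqref{eq:gps} for uniformity), leads to a bound $\EE[|\mathcal{U}_2|\,\ind_\mathcal{G}] \leq C_2 \alpha^{\beta'} L(p)^2$ for some $\beta' > 0$. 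Setting $\gamma := \min(\beta, \beta')$ and combining, $\EE[|\mathcal{B}_l|\,\ind_\mathcal{G}] \leq C_3 \alpha^\gamma L(p)^2$; since $|\hole(p)| \geq \rho L(p)^2$ on $\mathcal{G}$, Markov's inequality yields
$$\PP\bigl(\mathcal{G} \cap \{|\mathcal{B}_l| > \eta |\hole(p)|\}\bigr) \leq \frac{C_3\, \alpha^\gamma}{\eta\, \rho},$$
which is at most $\ve/3$ once $\alpha = \alpha(\ve, \eta)$ is chosen sufficiently small; the threshold $\delta = \delta(\ve)$ is fixed so that the arm estimates invoked above apply uniformly for $p \in (p_c, p_c + \delta)$.

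The main obstacle will be the control of $\mathcal{U}_2$: translating the purely combinatorial condition ``$Q$ is separated from $b_l$ through interior blocks'' into a clean four-arm (or half-plane three-arm) event at a definite scale, uniformly in the location and scale of the neck, is precisely the ``ruling out of bottlenecks'' flagged in the introduction. By contrast, the $\mathcal{U}_1$ bound and the reduction to Markov are essentially scaling computations driven by the fact that the polychromatic three-arm exponent is strictly positive.
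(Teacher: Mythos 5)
Your overall skeleton matches the paper's proof: the same split of the block boundary layer into ``true boundary'' blocks and ``pocket'' blocks, a polychromatic three-arm ($wwb$) estimate for the former, and a Markov-inequality conclusion using an a priori lower bound on $|\hole(p)|$. The bound on $\mathcal{U}_1$ and the reduction to Markov are fine. The gap is in $\mathcal{U}_2$, and it is not just a matter of missing details: the event you propose at the neck is quantitatively too weak. A pocket block is cut off from $b_{\alpha L(p)}$ by a bottleneck at the block scale $\alpha L(p)$, and the entropy of possible neck locations inside $\hole(p)$ is of order $\alpha^{-2}$, while the pocket volume that a single neck can seal off can be as large as order $L(p)^2$. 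With a four-arm event ($wbwb$, exponent $5/4$) from scale $\alpha L(p)$ outward, the natural accounting gives at best $\alpha^{-2}\cdot\alpha^{5/4}\cdot L(p)^2=\alpha^{-3/4}L(p)^2$, which diverges rather than vanishes as $\alpha\to 0$; more refined summation over the distance from the pocket vertex to the neck runs into the same problem, since the exponent $5/4$ is below the critical value $2$ needed to beat the two-dimensional entropy/volume count. So the bound $\EE[|\mathcal{U}_2|\ind_{\mathcal G}]\le C_2\alpha^{\beta'}L(p)^2$ cannot be reached along the route you sketch.

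The missing geometric input is that a bottleneck actually forces \emph{six} disjoint arms, not four: on each of the two sides of the separating box one finds a vertex of $\dout\hole(p)$, and by Remark \ref{rem:three_arms} each such vertex emits three arms ($wwb$ -- two white along the boundary circuit, one black to $\Cinf$), giving a $wwbwwb$ event in the annulus around the neck. The six-arm exponent is $2+\delta>2$ (see \eqref{eq:6arm_bound}), and this strict excess over $2$ is exactly what makes the sum over neck positions and pocket vertices converge. The paper combines this with a further three-arm ($wwb$) factor from scale $\min(d(v,z_B),c_1L(p))$ to $\max(d(v,z_B),c_1L(p))$ to handle pocket vertices far from the neck, and chooses the three-arm exponent parameter $\mu<\delta$ so that the resulting series $\sum_r r^{-1-\delta+\mu}$ converges, yielding $\EE[|\Lambda_2|\ind]\le C\alpha^\mu L(p)^2$. (One also needs, as the paper does, to first discard the event that some block separates a vertex at distance $\ge c_1L(p)$ from the origin, again via the six-arm bound.) Without upgrading your four-arm event to this six-arm picture, the step you yourself flag as the main obstacle does not close.
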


\begin{proof}[Proof of Lemma \ref{lem:approx_H}]
Let us fix $\ve > 0$. First, we can deduce from Lemma \ref{lem:apriori_hole} the existence of $\tilde{\delta}>0$ and $0 < c_1 < c_2$ such that: for all $p \in (p_c, p_c + \tilde{\delta})$,
\begin{equation} \label{eq:apriori_H}
\PP \left( \Ball_{c_1 L(p)} \subseteq \hole(p) \subseteq \Ball_{c_2 L(p)} \right) \geq 1 - \frac{\ve}{4}.
\end{equation}
In what follows, we consider $p \in (p_c, p_c + \tilde{\delta})$, and we assume that the event in \eqref{eq:apriori_H}, that we denote by $A$, holds. We also take some small $\alpha > 0$, explaining later how to choose it appropriately. We want to derive upper bounds on the volume of $\Dext{\hole(p)}{\alpha L(p)} \setminus \Dint{\hole(p)}{\alpha L(p)}$, which is a union of $(\alpha L(p))$-blocks. By definition of the inner and outer approximations, it can be decomposed as
$$\Dext{\hole(p)}{\alpha L(p)} \setminus \Dint{\hole(p)}{\alpha L(p)} = \Lambda_1 \cup \Lambda_2,$$
where $\Lambda_1$ is the union of blocks that intersect $\dout \hole(p)$, and $\Lambda_2$ is the union of blocks which are entirely contained in $\hole(p)$, but not connected to $b_{\alpha L(p)}$ inside $\hole(p)$.

To handle $\Lambda_1$, note that each block $b = z + b_{\alpha L(p)}$ in $\Lambda_1$ is connected to $z + \partial \Ball_{c_1 L(p)}$ by three arms: two white arms and one black arm (see Remark \ref{rem:three_arms}, and use that $\Ball_{c_1 L(p)} \subseteq \hole(p)$). Denoting $\sigma = (wwb)$, this has a probability
$$\PP_p \left( \arm_{\sigma} \left( \alpha L(p), c_1 L(p) \right) \right) \leq \left( \frac{\alpha L(p)}{c_1 L(p)} \right)^{\mu} = \left( \frac{\alpha}{c_1} \right)^{\mu},$$
for some $\mu > 0$ (using \eqref{eq:gps}). Using that on the event $A$, there are at most $\Big( \frac{2 c_2 L(p)}{\alpha L(p)} \Big)^2$ possible choices for a block in $\Lambda_1$, we deduce
\begin{equation} \label{eq:upper_Lambda1}
\EE_p \big[ |\Lambda_1| \ind_A \big] \leq c_3 \left( \frac{2 c_2 L(p)}{\alpha L(p)} \right)^2 \left( \frac{\alpha}{c_1} \right)^{\mu} (\alpha L(p))^2 = c_4 \alpha^{\mu} L(p)^2.
\end{equation}

As to $\Lambda_2$, it follows from a max-flow min-cut argument, and the fact that $\hole(p)$ is simply connected, that for every $v \in \Lambda_2$, there is a (not necessarily unique) $(\alpha L(p))$-block $B$ such that every path in $\hole(p)$ from $v$ to $0$ intersects $\tilde{B} := \Ball_{\alpha L(p)}(z_B)$, where $z_B$ denotes the center of $B$. In other words, $v$ is separated from $0$ (and so, on the event $A$, from the whole set $\Ball_{c_1 L(p)}$) by the box $\tilde{B}$. With some abuse of terminology, we say that ``$B$ separates $v$ from $0$''.

\begin{figure}
\begin{center}
\includegraphics[width=13cm]{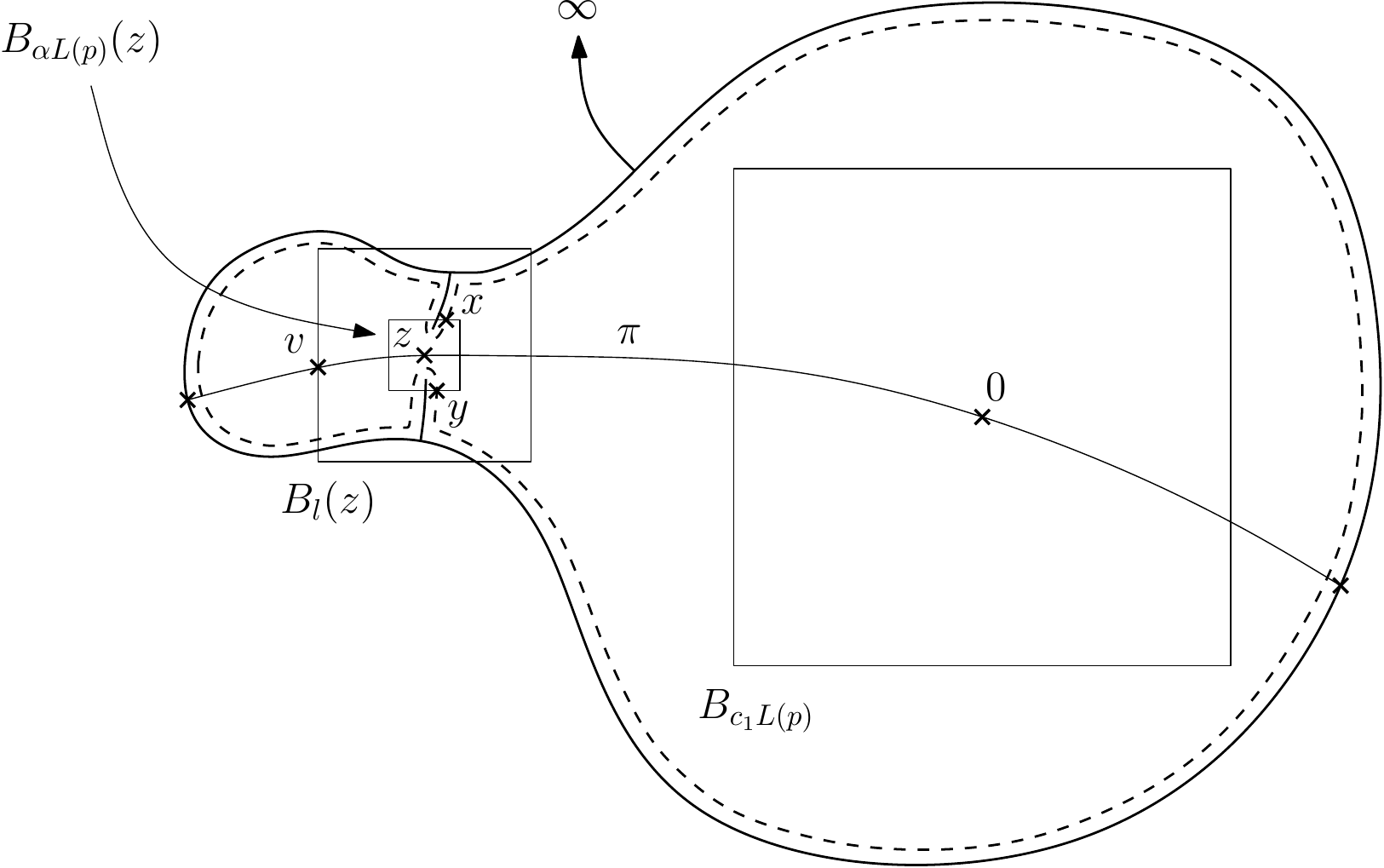}
\caption{\label{fig:bubble} In order to obtain an upper bound on $\EE_p \big[ |\Lambda_2| \ind_A \big]$, we use that around a well-chosen $(\alpha L(p))$-block, there exist locally six arms: four white arms, and two black ones.}
\end{center}
\end{figure}

We now point out some geometric consequences of the fact that $B$ separates $v$ from $0$. Let us denote by $l$ the distance from $z$ to $v$. First, it is clear that $\tilde{B}$ must intersect $\dout \hole(p)$. Also, we can construct a path as follows, starting with a path $\pi_0$ from $v$ to $0$ in $\hole(p)$. We know that if we remove $\tilde{B}$, $v$ and $0$ end up in different connected components of $\hole(p)$. In the component of $0$, we extend $\pi_0$ from $0$ to some point which is on the boundary of that component at a distance at least $c_1 L(p)$ from $z$, and in the component of $v$, we extend $\pi_0$ from $v$ to some point on the boundary of that component  at a distance at least $l$ from $z$. Let $\pi$ denote the resulting path (see Figure \ref{fig:bubble}). We can then fix two vertices $x$ and $y$ in $\partial \tilde{B} \cap \dout \hole(p)$ which are on different sides of $\pi$.

By Remark \ref{rem:three_arms}, there exist three arms (with colors $\sigma = (wwb)$) starting from neighbors of $x$, and reaching distance $\min(l, c_1 L(p))$ from $z$. Note that these arms lie on the same side of $\pi$. Similarly, there are also three such arms from neighbors of $y$, and these arms lie on the opposite side of $\pi$ (compared to the previous arms): we have thus six arms (with colors $\tilde{\sigma} = (wwbwwb)$) in the annulus $\Ann_{\alpha L(p), \min(l, c_1 L(p))}(z)$. Using again Remark \ref{rem:three_arms}, we also know that there exist three arms (with colors $\sigma$) from neighbors of $x$ to distance $\max(l, c_1 L(p))$ from $z$, and the last part of these arms, from distance $\min(l, c_1 L(p))$ on, is clearly disjoint from the six arms mentioned earlier. Hence, we get that for given vertex $v$ and $(\alpha L(p))$-block $B$,
\begin{align}
\PP(A, \: v \in \hole(p), & \: B \text{ separates } v \text{ from } 0) \nonumber\\
& \leq \PP_p \left( \arm_{\tilde{\sigma}} \left( \alpha L(p), \min(l, c_1 L(p)) \right) \right) \PP_p \left( \arm_{\sigma} \left( \min(l, c_1 L(p)), \max(l, c_1 L(p)) \right) \right), \label{eq:six_three_arms}
\end{align}
where $l = d(v, z_B)$.

We then use \eqref{eq:six_three_arms} to derive an upper bound on $\EE_p \big[ |\Lambda_2| \ind_A \big]$. It is convenient to first rule out the case when $v$ and $B$ are far apart: we introduce $\tilde{A} := \{$there exist an $(\alpha L(p))$-block $B$, and a vertex $v$ with $d(v, z_B) \geq c_1 L(p)$, such that $B$ separates $v$ from $0\}^c$. By \eqref{eq:six_three_arms}, we have
$$\PP(A \cap \tilde{A}^c) \leq c_5 \left( \frac{2 c_2 L(p)}{\alpha L(p)} \right)^2 \PP_p \left( \arm_{\tilde{\sigma}} \left( \alpha L(p), c_1 L(p) \right) \right),$$
where $\Big( \frac{2 c_2 L(p)}{\alpha L(p)} \Big)^2$ counts the possible choices for $B$. Using \eqref{eq:6arm_bound}, we obtain
$$\PP(A \cap \tilde{A}^c) \leq c_6 \left( \frac{c_2}{\alpha} \right)^2  \left( \frac{\alpha}{c_1} \right)^{2+\delta} = c_7 \alpha^{\delta},$$
which can be made $\leq \frac{\ve}{4}$ by taking $\alpha$ sufficiently small.

Now, we can write
$$\EE_p \big[ |\Lambda_2| \ind_{A \cap \tilde{A}} \big] \leq \sum_B \sum_{\substack{v \text{ s.t.}\\ d(v, z_B) \leq c_1 L(p)}} \PP_p(B \text{ separates } v \text{ from } 0),$$
where the first sum is over all $(\alpha L(p))$-blocks $B \subseteq \Ball_{c_2 L(p)}$. Using again \eqref{eq:six_three_arms}, we obtain
\begin{align*}
\EE_p \big[ |\Lambda_2| \ind_{A \cap \tilde{A}} \big] & \leq c_8 \left( \frac{c_2}{\alpha} \right)^2 \sum_{r=0}^{c_1/\alpha} \sum_{\substack{v \text{ s.t. } d(v, z_B) \text{ in}\\ [r \alpha L(p), (r+1) \alpha L(p))}} \PP_p(B \text{ separates } v \text{ from } 0)\\
& \leq c_9 \left( \frac{c_2}{\alpha} \right)^2 \left( \alpha^2 L(p)^2 \left( \frac{\alpha}{c_1} \right)^{\mu} + \sum_{r=1}^{c_1/\alpha} r \alpha^2 L(p)^2 \left( \frac{\alpha}{r \alpha} \right)^{2 + \delta} \left( \frac{r \alpha}{c_1} \right)^{\mu} \right)\\
& \leq c_{10} \alpha^{\mu} L(p)^2 \left( 1 + \sum_{r=1}^{\infty} r^{-1-\delta+\mu} \right).
\end{align*}
Since we may choose $\mu < \delta$ in the beginning, without loss of generality, we obtain
\begin{equation} \label{eq:upper_Lambda2}
\EE_p \big[ |\Lambda_2| \ind_{A \cap \tilde{A}} \big] \leq c_{11} \alpha^{\mu} L(p)^2.
\end{equation}

Finally, we can write
\begin{align}
\PP_p \big( |\Dext{\hole(p)}{\alpha L(p)} & \setminus \Dint{\hole(p)}{\alpha L(p)}| \geq \eta |\hole(p)| \big) \nonumber \\
& \leq \PP_p \big( A \cap \tilde{A} \cap \{|\Lambda_1| + |\Lambda_2| \geq \eta |\hole(p)|\} \big) + \PP_p \big( (A \cap \tilde{A})^c \big) \nonumber\\
& \leq \PP_p \big( A \cap \tilde{A} \cap \{|\Lambda_1| + |\Lambda_2| \geq \eta (2 c_1)^2 L(p)^2\} \big) + \frac{\ve}{2}. \label{eq:upper_bd_lambda12}
\end{align}
Applying Markov's inequality to the probability in the right-hand side of \eqref{eq:upper_bd_lambda12} (using \eqref{eq:upper_Lambda1} and \eqref{eq:upper_Lambda2}) shows that this probability is at most $(c_4 + c_{11}) \alpha^{\mu} (4 \eta c_1^2)^{-1}$. By taking $\alpha$ sufficiently small, this last quantity is smaller than $\frac{\ve}{2}$, which completes the proof of Lemma \ref{lem:approx_H}.
\end{proof}

\subsection{Continuity property for $\hole(p)$} \label{sec:approx_cont}

We now establish a continuity property for the volume of $\hole(p)$, based on the approximability property.

\begin{lemma} \label{lem:cont_vol}
For all $\ve > 0$, there exist $\alpha, \delta > 0$ such that: for all $p, p' \in (p_c, p_c + \delta)$ with $p < p'$ and $\frac{L(p)}{L(p')} < 1 + \delta$, one has with probability $> 1 - \ve$,
\begin{itemize}
\item[(i)] $\hole(p)$ is $(\alpha L(p), \ve)$-approximable

\item[(ii)] and $(\Dint{\hole(p)}{\alpha L(p)})_{(\ve)} \subseteq \hole(p') \subseteq \hole(p)$.
\end{itemize}
\end{lemma}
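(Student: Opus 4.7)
The inclusion $\hole(p') \subseteq \hole(p)$ is immediate from the monotonicity noted just after Definition \ref{def:hole_cinf}, and part (i) follows directly from Lemma \ref{lem:approx_H} by taking $\alpha = \alpha(\ve/3, \ve)$ (as produced by that lemma) and $\delta$ small enough to fall within its applicable range; the event in (i) then holds with probability at least $1 - \ve/3$. The nontrivial content is the inclusion $(\Dint{\hole(p)}{\alpha L(p)})_{(\ve)} \subseteq \hole(p')$.

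Let $F$ denote the event that this inclusion fails, so that there exists $v \in (\Dint{\hole(p)}{\alpha L(p)})_{(\ve)}$ with $v \notin \hole(p')$. Since $\hole(p') \subseteq \hole(p)$ and the central block contains $0 \in \hole(p')$, the occurrence of $F$ forces a portion of $\Cinf(p') \cup \dout \Cinf(p')$ to penetrate deeply into $\hole(p)$. Concretely, one extracts a $p'$-black path from a neighbor of $v$ (or from some other deep $\Cinf(p')$-vertex in $\hole(p)$, in the case where $v$ is merely separated from $0$) to an endpoint $w \in \dout \hole(p)$, lying in $\hole(p) \cup \dout \hole(p)$, of length at least $\ve \alpha L(p) - 1$ (using $\dout \hole(p) \subseteq \CC \setminus \Dint{\hole(p)}{\alpha L(p)}$). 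The point $w$ is $p$-white (as $\dout \hole(p)$ consists of $p$-white vertices) but $p'$-black, so $\tau_w \in (p, p']$. Around $w$, on the annulus $\Ann_{1, r}(w)$ with $r := \ve \alpha L(p)/4$, one reads off a mixed-color four-arm structure: two $p$-white arms along $\dout \hole(p)$ and one $p$-black arm through a neighbor of $w$ into $\Cinf(p)$ (cf.\ Remark \ref{rem:three_arms} and the a-priori estimate in Lemma \ref{lem:apriori_hole} guaranteeing that $\hole(p)$ has diameter of order $L(p)$), together with the $p'$-black invading arm. Because the $p$-black arm lies outside $\hole(p)$ while the $p'$-black arm lies strictly inside, these arms alternate around $w$ with the two $p$-white arms, so the four arms are genuinely disjoint at scale $r$.

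By the near-critical stability \eqref{eq:Kesten1}, whose hypothesis $r \leq C(L(p) \wedge L(p'))$ is guaranteed by $L(p)/L(p') < 1 + \delta$, the mixed four-arm event has probability at most $C \pi_4(r)$; combined with the independent factor $\PP(\tau_w \in (p,p']) = p'-p$, the probability that a given $w$ serves as such an entry point is at most $C(p'-p)\pi_4(r)$. Summing over $w \in \Ball_{c_2 L(p)}$ (a set containing $\hole(p)$ with probability $\geq 1 - \ve/3$ by Lemma \ref{lem:apriori_hole}) and using Kesten's relation \eqref{eq:Kesten_L}, Lemma \ref{lem:p-L(p)}, and the ratio-limit theorem (Lemma \ref{lem:ratio_lim}) for $\pi_4$ yields
\[
\PP(F) \,\leq\, \frac{\ve}{3} + C\,L(p)^2\,(p'-p)\,\pi_4(r) \,\lesssim\, \frac{\ve}{3} + \delta\,(\ve\alpha)^{-5/4+o(1)}.
\]
Choosing $\delta$ small enough in terms of $\alpha$ and $\ve$ makes $\PP(F) \leq 2\ve/3$, which combined with part (i) gives the lemma with probability $\geq 1 - \ve$.

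The main obstacle is the clean identification of the mixed-color four-arm structure at $w$, in particular ensuring the $p$-black and $p'$-black arms are genuinely disjoint at scale $r$ (handled by working in $\Ann_{1,r}(w)$ and exploiting the separation provided by $\dout \hole(p)$), and verifying that \eqref{eq:Kesten1} applies with the expected $4$-arm exponent to such mixed arm events. A secondary point is the case where $v$ is not itself in $\Cinf(p') \cup \dout \Cinf(p')$ but merely separated from $0$ by it; there, the same invading-arm argument is applied at a separating $\Cinf(p')$-vertex which, since $\hole(p)$ is simply connected, must likewise lie at depth of order $\ve\alpha L(p)$ inside $\hole(p)$.
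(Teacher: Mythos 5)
Your argument is correct, and its probabilistic core coincides with the paper's: both reduce the inclusion to the existence of a vertex $w$ on $\dout \hole(p)$ that is $p$-white but $p'$-black and carries a mixed near-critical four-arm event to distance of order $\ve\alpha L(p)$, then take a union bound over $\asymp L(p)^2$ candidates and convert the bound into something small via \eqref{eq:Kesten_L} together with Lemma \ref{lem:p-L(p)} (the paper uses the a-priori bound \eqref{eq:4arm_bound} where you invoke the ratio-limit theorem; both work). The organization differs, though. The paper proceeds in two steps: it first proves the quantitative proximity statement that, with high probability, every $v \in \dout\hole(p')$ satisfies $d(v,\din\hole(p)) < \ve\alpha L(p)$ (its four arms at $w$ being the two halves of the infinite $p'$-black path through $w$ plus the two $p$-white arms along $\dout\hole(p)$), and then concludes by a purely deterministic topological argument: the circuit $\dout\hole(p')$ surrounds the central block, hence either surrounds the connected set $(\Dint{\hole(p)}{\alpha L(p)})_{(\ve)}$ or meets it, and the latter contradicts the proximity statement. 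You instead bound the failure event directly by locating a deep vertex of $\Cinf(p')\cup\dout\Cinf(p')$ inside $\hole(p)$ and following its invading arm out to $\dout\hole(p)$, using the three arms of Remark \ref{rem:three_arms} plus the invading $p'$-black arm; this is a legitimate alternative and avoids a separate Step 2, at the cost of having to handle the ``separated'' case yourself. On that point, one justification should be repaired: in the case $v \notin \Cinf(p')\cup\dout\Cinf(p')$, simple connectivity of $\hole(p)$ is not what produces a deep separating vertex. What does is that $0$ and $v$ both lie in the connected set $(\Dint{\hole(p)}{\alpha L(p)})_{(\ve)}$ (note $0$ belongs to it for $\ve<\tfrac12$ since $b_{\alpha L(p)} \subseteq \Dint{\hole(p)}{\alpha L(p)}$, and connectivity of the shrinking is the same fact the paper uses in its Step 2), so a path from $v$ to $0$ inside this set must meet $\Cinf(p')\cup\dout\Cinf(p')$ at a vertex whose distance to $\dout\hole(p)$ is at least $\ve\alpha L(p)$; the same argument also covers the degenerate case $\hole(p')=\emptyset$, where $0$ itself plays the role of the deep vertex. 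With this one-line fix your proof is complete; the application of \eqref{eq:Kesten1} to the mixed event (both black arms may be taken $p'$-black, the white arms $p$-white, at scale $r \le C L(p')$) and the independence of $\tau_w$ from the arm event are exactly as in the paper.
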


This lemma implies directly the following continuity result for $|\hole(p)|$.

\begin{corollary} \label{cor:cont_vol}
For all $\ve > 0$, there exists $\delta > 0$ such that: for all $p, p' \in (p_c, p_c + \delta)$ with $p < p'$ and $\frac{L(p)}{L(p')} < 1 + \delta$, one has
\begin{equation}
\PP \bigg( \frac{|\hole(p)|}{|\hole(p')|} < 1 + \ve \bigg) > 1 - \ve.
\end{equation}
\end{corollary}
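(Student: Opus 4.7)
\begin{proof}[Proof of Corollary \ref{cor:cont_vol}]
The idea is to apply Lemma \ref{lem:cont_vol} with a parameter $\ve'$ smaller than $\ve$, and then read off the volume comparison directly from the two inclusions it provides together with the volume estimates for approximable sets and their shrinkings.

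Given $\ve > 0$, choose $\ve' \in (0, \tfrac{1}{4})$ small enough so that $\frac{1}{(1 - \ve')(1 - 4 \ve')} < 1 + \ve$ and $\ve' \leq \ve$. Apply Lemma \ref{lem:cont_vol} with this $\ve'$ to get corresponding $\alpha, \delta > 0$: for all $p, p' \in (p_c, p_c + \delta)$ with $p < p'$ and $\frac{L(p)}{L(p')} < 1 + \delta$, on an event $E$ of probability $> 1 - \ve' \geq 1 - \ve$, the set $\hole(p)$ is $(\alpha L(p), \ve')$-approximable, and $(\Dint{\hole(p)}{\alpha L(p)})_{(\ve')} \subseteq \hole(p') \subseteq \hole(p)$.

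On $E$, the approximability property combined with $\Dint{\hole(p)}{\alpha L(p)} \subseteq \hole(p) \subseteq \Dext{\hole(p)}{\alpha L(p)}$ yields
\begin{equation*}
|\Dint{\hole(p)}{\alpha L(p)}| > (1 - \ve') |\hole(p)|.
\end{equation*}
Applying \eqref{eq:shrinking_vol} to $\Lambda = \Dint{\hole(p)}{\alpha L(p)}$ (with $l = \alpha L(p)$) gives
\begin{equation*}
|(\Dint{\hole(p)}{\alpha L(p)})_{(\ve')}| \geq (1 - 4 \ve') |\Dint{\hole(p)}{\alpha L(p)}| \geq (1 - 4 \ve')(1 - \ve') |\hole(p)|.
\end{equation*}
Since $(\Dint{\hole(p)}{\alpha L(p)})_{(\ve')} \subseteq \hole(p')$ on $E$, this gives $|\hole(p')| \geq (1 - 4 \ve')(1 - \ve') |\hole(p)|$, and hence, by the choice of $\ve'$,
\begin{equation*}
\frac{|\hole(p)|}{|\hole(p')|} \leq \frac{1}{(1 - \ve')(1 - 4 \ve')} < 1 + \ve,
\end{equation*}
which completes the proof.
\end{proof}

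The only potentially delicate point is bookkeeping between the two small parameters: one must choose the $\ve'$ fed into Lemma \ref{lem:cont_vol} small enough that both the probability loss ($\ve'$) and the multiplicative loss from approximability and shrinking (of order $1 - O(\ve')$) fit inside the single $\ve$ in the statement. No new probabilistic input is needed beyond Lemma \ref{lem:cont_vol} and the deterministic volume bound \eqref{eq:shrinking_vol}.
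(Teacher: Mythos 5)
Your proof is correct and follows essentially the same route as the paper: apply Lemma \ref{lem:cont_vol}, then combine the inclusion $(\Dint{\hole(p)}{\alpha L(p)})_{(\ve')} \subseteq \hole(p')$ with the shrinking bound \eqref{eq:shrinking_vol} and the approximability of $\hole(p)$ to get $|\hole(p')| \geq (1-4\ve')(1-\ve')|\hole(p)|$. Your explicit choice of a smaller auxiliary parameter $\ve'$ is just a slightly more careful version of the bookkeeping the paper leaves implicit.
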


\begin{proof}[Proof of Corollary \ref{cor:cont_vol} from Lemma \ref{lem:cont_vol}]
For any $\ve>0$, let us assume that properties (i) and (ii) from Lemma \ref{lem:cont_vol} are satisfied for some $\alpha, p, p'$. Using successively (ii) and \eqref{eq:shrinking_vol}, we obtain
$$|\hole(p')| \geq \big| (\Dint{\hole(p)}{\alpha L(p)})_{(\ve)} \big| \geq (1 - 4 \ve) \big| \Dint{\hole(p)}{\alpha L(p)} \big|.$$
From the $(\alpha L(p), \ve)$-approximability of $\hole (p)$ (property (i)), we can then deduce
$$|\hole(p')| \geq (1 - 4 \ve) (1 - \ve) |\hole(p)|,$$
which concludes the proof.
\end{proof}

\begin{proof}[Proof of Lemma \ref{lem:cont_vol}]
\textbf{Step 1.} We first show that every vertex on $\dout \hole(p')$ is close to $\dout \hole(p)$. We establish the following claim: for all $\ve, \eta > 0$, there exists $\delta > 0$ such that: for all $p, p' \in (p_c, p_c + \delta)$ with $p < p'$ and $\frac{L(p)}{L(p')} < 1 + \delta$, one has
\begin{equation} \label{eq:cont_H}
\PP \left( \exists v \in \dout \hole(p') \text{ with } d(v, \din \hole(p)) \geq \eta L(p) \right) < \ve.
\end{equation} 
Let us fix $\ve > 0$. It is enough to prove the claim for all $\eta$ smaller than some value (depending on $\ve$). Hence, we may assume (using the a-priori bounds provided by Lemma \ref{lem:apriori_hole}) that $\eta$ satisfies: for all $p$ close enough to $p_c$,
\begin{equation} \label{eq:apriori_H2}
\PP \left( \Ball_{\eta L(p)} \subseteq \hole(p) \subseteq \Ball_{\eta^{-1} L(p)} \right) > 1 - \frac{\ve}{2}.
\end{equation}
Now, suppose there is a vertex $v \in \dout \hole(p')$ with $d(v, \din \hole(p)) \geq \eta L(p)$. It follows from the definition of $\hole(p')$ that there exists an infinite $p'$-black path starting from a neighbor of $v$. Since $\hole(p') \subseteq \hole(p)$, this path intersects the circuit $\dout \hole(p)$, and we call $w$ the first such intersection point, which is thus $p'$-black and $p$-white.

\begin{figure}
\begin{center}
\includegraphics[width=12cm]{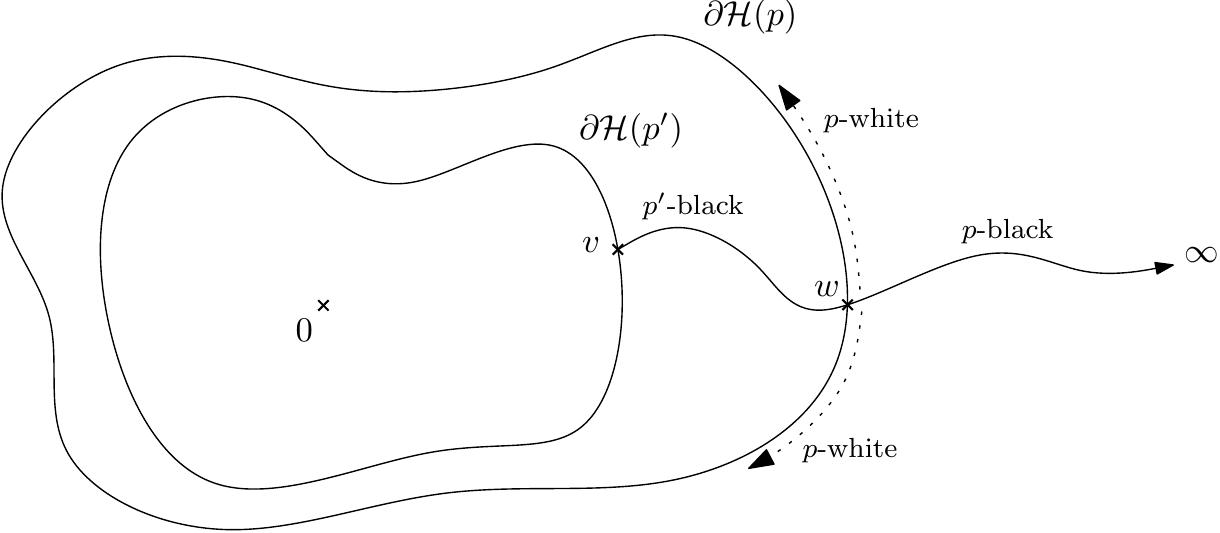}
\caption{\label{fig:cont_hole} The four arm configuration appearing in the proof of Lemma \ref{lem:cont_vol}.}
\end{center}
\end{figure}

If we now assume that the event in \eqref{eq:apriori_H2} holds, we can find four arms starting from neighbors of $w$ (see Figure \ref{fig:cont_hole}) to $w + \partial \Ball_{\eta L(p)}$: two $p'$-black arms (using the infinite path starting from $v$), and two $p$-white ones (following $\dout \hole(p)$ in two directions). For each vertex $w$, these two properties (being $p'$-black and $p$-white, and having four arms to distance $\eta L(p)$) have a probability $\leq c_1 (p'-p) \pi_4(\eta L(p))$ (using \eqref{eq:Kesten1}). Since there are at most $c_2 (\eta^{-1} L(p))^2$ choices for $w$, we deduce (combined with \eqref{eq:apriori_H2}) that
\begin{align*}
\PP \big( \exists v \in \dout & \hole(p') \text{ with } d(v, \din \hole(p)) \geq \eta L(p) \big)\\
& < \frac{\ve}{2} + c_3 \eta^{-2} L(p)^2 (p'-p) \pi_4(\eta L(p))\\
& < \frac{\ve}{2} + c_4 \eta^{-2} (\pi_4(\eta L(p), L(p)))^{-1} \frac{p'-p}{p-p_c} \left[ L(p)^2 (p-p_c) \pi_4(L(p)) \right]
\end{align*}
(using \eqref{eq:qmult}). We know that $\eta^{-2} (\pi_4(\eta L(p), L(p)))^{-1} \leq c_5 \eta^{-4}$ (using \eqref{eq:4arm_bound}), and $L(p)^2 (p-p_c) \pi_4(L(p)) \asymp 1$ (see \eqref{eq:Kesten_L}), so the desired probability is $< \ve$ for $\frac{p'-p}{p-p_c}$ small enough (depending on $\eta$), i.e. $\frac{L(p)}{L(p')}$ sufficiently close to $1$ (using now Lemma \ref{lem:p-L(p)}).

\textbf{Step 2.} We now complete the proof. Let us fix $\ve > 0$. It follows from Step 1 and Lemma \ref{lem:approx_H} that we can find $\alpha, \delta > 0$ small enough such that: for all $p, p' \in (p_c, p_c + \delta)$ with $p < p'$ and $\frac{L(p)}{L(p')} < 1 + \delta$, one has
\begin{itemize}
\item[(i)] $\Ball_{\frac{\alpha}{2} L(p)} \subseteq \hole(p')$,

\item[(ii)] for all $v \in \dout \hole(p')$, $d(v, \din \hole(p)) < \ve \alpha L(p)$,

\item[(iii)] and $\hole(p)$ is $(\alpha L(p), \ve)$-approximable
\end{itemize}
with probability $> 1 - \ve$. It then suffices to observe that
$$(\Dint{\hole(p)}{\alpha L(p)})_{(\ve)} \subseteq \hole(p')$$
follows from properties (i) and (ii). First, $\Dint{\hole(p)}{\alpha L(p)}$ is a connected component of $(\alpha L(p))$-blocks, and it is easy to convince oneself that its $(\ve \alpha L(p))$-shrinking is connected as well (as in the example of Figure \ref{fig:shrinking}), e.g. by induction (we may assume that $\ve < \frac{1}{2}$). We know from (i) that the block $b_{\alpha L(p)}$ is in $\Dint{\hole(p)}{\alpha L(p)}$, and that it is contained in $\hole(p')$, so it is surrounded by the circuit $\dout \hole(p')$. Hence, this circuit either completely surrounds the connected set $(\Dint{\hole(p)}{\alpha L(p)})_{(\ve)}$, in which case the desired conclusion follows, or it intersects $(\Dint{\hole(p)}{\alpha L(p)})_{(\ve)}$. But this second possibility cannot occur, since a vertex $v \in \dout \hole(p') \cap (\Dint{\hole(p)}{\alpha L(p)})_{(\ve)}$ would satisfy
$$d(v,\din \hole(p)) \geq d(v,(\Dint{\hole(p)}{\alpha L(p)})^c) \geq \ve \alpha L(p)$$
(by definition of the shrinking), which contradicts (ii). This completes the proof of Lemma \ref{lem:cont_vol}.
\end{proof}

\section{Volume estimates for independent percolation} \label{sec:volume}

\subsection{Largest clusters in an approximable domain} \label{sec:approx_vol}

In order to study volume-frozen percolation, we need estimates on the volume of the largest black cluster inside a connected subset $\Lambda \subseteq \TT$ (typically, these estimates are used in the case when $\Lambda$ is a hole left by earlier freezing events). More precisely, we can look at the configuration for independent percolation in such a $\Lambda$, at time $p$: among all the black connected components, we denote by $\lclus_{\Lambda}(p)$ the one with largest volume. There may be several such components, but we can just choose one of them according to some deterministic rule.

Several properties of $\lclus_{\Ball_n}(p)$ were established in \cite{BCKS01}, in particular that it has a volume $\approx |\Ball_n| \theta(p)$ if $n \gg L(p)$. We now explain how to extend this property to more general domains. The approximability property turns out to play an important role here.

\begin{lemma} \label{lem:largest_vol}
For all $\ve > 0$ and $C \geq 1$, there exists $\mu > 0$ such that: for all $p > p_c$, all $n$ with $\frac{L(p)}{n} < \mu$, and all sets $\Lambda$ of one of the two types
\begin{itemize}
\item $\Lambda = (\tilde{\Lambda})_{(\beta)}$, where $\beta \in [0,\frac{1}{3}]$ and $\tilde{\Lambda}$ is a connected component of $\leq C$ $n$-blocks containing $b_n$,
\item or $\Lambda$ is an $(n,\frac{\ve}{2})$-approximable set with $\Ball_n \subseteq \Lambda \subseteq \Ball_{C n}$,
\end{itemize}
the following three properties are satisfied, with probability $> 1 - \ve$:
\begin{itemize}
\item[(i)] the largest $p$-black cluster in $\Lambda$, i.e. $\lclus_{\Lambda}(p)$, satisfies
$$\bigg| \frac{|\lclus_{\Lambda}(p)|}{\theta(p) |\Lambda|} - 1 \bigg| < \ve,$$

\item[(ii)] this cluster contains a circuit in $\Ann_{\frac{n}{8}, \frac{n}{4}}$ which is connected to $\infty$ by a $p$-black path,

\item[(iii)] and all other $p$-black clusters in $\Lambda$ have a volume at most $\ve \theta(p) |\Lambda|$.
\end{itemize}
\end{lemma}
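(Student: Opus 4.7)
The plan is to reduce Lemma~\ref{lem:largest_vol} to the box estimates of \cite{BCKS01} (for $\Ball_n$ with $n \gg L(p)$) via a net-based gluing argument. Fix $\ve' = \ve'(\ve, C) > 0$ small and set $a = \ve' n$. Lemma~\ref{lem:net} ensures that, for $\mu = \mu(\ve, C)$ small enough, the net event $\net_p(a, 2Cn)$ holds with probability $\geq 1 - \ve/5$; on this event, every $p$-black cluster of diameter exceeding $4a$ that meets $\Ball_{2Cn} \supseteq \Lambda$ lies in a single ``net cluster'' $\net$. In addition, RSW~\eqref{eq:RSW}, \eqref{eq:connection_expdecay} and the FKG inequality yield, with probability $\geq 1 - \ve/5$, a $p$-black circuit in $\Ann_{n/8, n/4} \subseteq \Lambda$ connected to $\infty$; for $\ve'$ small this circuit has diameter $> 4a$ and hence belongs to $\net$.

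Next, I would exhibit a finite family of ``good'' sub-regions $R^{(1)}, \ldots, R^{(N)}$ of $\Lambda$ (with $N \leq N(C)$) on which one can apply box estimates: in the approximable case, the $n$-blocks composing $\Dint{\Lambda}{n}$; in the shrinking case, the intersections $b \cap \Lambda$ for $b$ ranging over the $n$-blocks of $\tilde\Lambda$, each of which contains a sub-box of side at least $n/3$ since $\beta \leq 1/3$. In both cases, these sub-regions cover $\Lambda$ up to a set of volume $\leq (\ve/4)|\Lambda|$, by $(n, \ve/2)$-approximability or by direct computation using $L(p)/n < \mu$. Applying the box version of the present lemma (from \cite{BCKS01}) inside each $R^{(i)}$ yields, with probability $\geq 1 - \ve/(5N)$, that the largest $p$-black cluster in $R^{(i)}$ has volume $(1 \pm \ve/4) \theta(p) |R^{(i)}|$ and diameter $\gtrsim n > 4a$, and that every other $p$-black cluster in $R^{(i)}$ has volume at most $\ve \theta(p) |R^{(i)}| / (5N)$.

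On the intersection of all of the above events (total probability $> 1 - \ve$), the block-largest clusters from the various $R^{(i)}$'s are each absorbed by $\net$ thanks to their large diameter, so they all merge into a single connected component of $\Lambda$, which must therefore be $\lclus_\Lambda(p)$. Summing their volumes and using that $\sum_i |R^{(i)}| = (1 + O(\ve)) |\Lambda|$ gives property~(i); the circuit from the first paragraph now lies inside $\lclus_\Lambda(p)$, giving~(ii); and any other $p$-black cluster in $\Lambda$ has diameter $\leq 4a < n$, is therefore contained in a single $R^{(i)}$ (after a slight enlargement, which one absorbs into the choice of $\ve'$), and its volume is controlled by the $R^{(i)}$-level second-largest-cluster bound, giving~(iii).

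The main obstacle is the second-largest-cluster estimate used in the second paragraph: while \cite{BCKS01} establishes this for standard boxes $\Ball_n$, we need it uniformly over all relevant sub-region positions within $\Lambda$, and in particular near $\partial \Lambda$ (especially in the shrinking case, where the $R^{(i)}$'s are truncated $n$-blocks). This is precisely the role of the approximability and shrinking hypotheses: they guarantee that each $R^{(i)}$ sits safely inside $\Lambda$ at size $\gg L(p)$, with only negligible volume loss at the boundary, so the box estimates transfer uniformly without pathological boundary contributions.
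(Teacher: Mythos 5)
Your gluing strategy has two genuine gaps. The first concerns the upper bound in (i): on your good event you control, inside each $R^{(i)}$, only the largest cluster and the volume of each \emph{individual} other cluster, but the global giant $\lclus_{\Lambda}(p)$ may absorb, through paths leaving $R^{(i)}$, an unbounded number of non-largest clusters of the configuration restricted to $R^{(i)}$ (boundary clusters that hook onto the net outside the block). Summing the block-largest volumes therefore only gives a lower bound; nothing in your argument bounds $|\lclus_{\Lambda}(p)|$ from above, and a per-cluster bound of $\ve\theta(p)|R^{(i)}|/(5N)$ is useless without a bound on how many such clusters attach to the giant. Relatedly, your argument for (iii) rests on a second-largest-cluster estimate for truncated, non-box subregions near $\partial \Lambda$; you flag this yourself as the ``main obstacle'' and then assert that approximability/shrinking makes the box estimate transfer ``uniformly'' --- but that is precisely the statement requiring proof (it is essentially the lemma itself for a smaller irregular domain), so the argument is circular at that point. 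A smaller, fixable issue: the event $\net_p(a,2Cn)$ only merges large clusters of the full configuration in $\Ball_{2Cn}$, with connections possibly running outside $\Lambda$; to conclude that the block-largest clusters form a single connected component \emph{of} $\Lambda$, and that the circuit in $\Ann_{n/8,n/4}$ lies in $\lclus_{\Lambda}(p)$, you need crossings of rectangles contained in $\Lambda$, i.e.\ a net built inside $\Lambda$.

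The paper's proof (Appendix B.2) avoids all per-block bookkeeping: it splits the clusters of $\Lambda$ into those contained in $\Cinf(p)$ and the finite ones, bounds the largest finite cluster by Markov's inequality using $\chifin(p) \leq c\, L(p)^2 \theta(p)^2$, and obtains matching upper and lower bounds on $|\Cinf(p) \cap \Lambda|$ (and on $|\Cinf(p) \cap \Lambda'|$ for a slightly shrunk $\Lambda'$) by Chebyshev's inequality using $\chicov(p) \leq c\, L(p)^2 \theta(p)^2$. A net of crossings of rectangles of side $\mu^{1/2} n$ \emph{contained in} $\Lambda$ then shows that all of $\Cinf \cap \Lambda'$ lies in one cluster of $\Lambda$, which simultaneously yields the lower bound in (i), property (ii), and --- by subtracting the largest cluster from $|\Cinf \cap \Lambda|$ --- property (iii). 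If you want to salvage your scheme, you would additionally need an estimate on the total volume of vertices of a block connected to its boundary but not to its block-giant; the second-moment route above is the simpler repair.
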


Note that property (ii) ensures that the hole around the origin in $\lclus_{\Lambda}(p)$ coincides with $\hole(p)$.

\begin{proof}[Proof of Lemma \ref{lem:largest_vol}]
We can follow essentially the proofs of \cite{BCKS01}. For the convenience of the reader, we explain in Appendix \ref{sec:app_BCKS} which adaptations are needed in our particular setting.
\end{proof}

%
%
%

We now define stopping sets. They play the role of stopping times in our situation, allowing us to study iteratively the frozen percolation process.

\begin{definition} \label{def:stopping_set}
Consider a countable set $\absset$, and a process $X = (X(s))_{s\in \absset}$ indexed by $\absset$. A random subset $\calS \subseteq \absset$ is called a \emph{stopping set} for $X$ if it satisfies the following property:
$$\text{for all } S \subseteq \absset, \quad \{\calS = S\} \in \sigma(X(s) \: : \: s \in \absset \setminus S).$$
\end{definition}

Stopping sets can be seen as a generalization of stopping times. When studying percolation, the following stopping set is often used. If $A$ is an annulus, and $\circuit$ is the outermost black circuit in $A$, then the set $\inter(\circuit)$ of vertices inside it, i.e. in the finite connected component of $\TT \setminus \circuit$, is a stopping set (if such a circuit does not exist, we simply take $\inter(\circuit) = \emptyset$).

For a simply connected domain $\Lambda$, the following stopping set turns out to be very useful in our proofs. We consider the percolation process with parameter $p$ in $\Lambda$, and we look at the set $\cluster_{\Lambda}(p)$ of all black vertices inside $\Lambda$ which are connected to $\din \Lambda$: if we remove this set, together with its outer boundary (which consists of white sites) and $\din \Lambda$, we obtain as a ``hole of the origin'' the set
\begin{equation} \label{eq:def_hole_domain}
\hole^{(\Lambda)}(p) := \text{connected component of $0$ in $\TT \setminus (\din \Lambda \cup \cluster_{\Lambda}(p) \cup \dout \cluster_{\Lambda}(p))$},
\end{equation}
which we take $ = \emptyset$ if $0$ belongs to $\cluster_{\Lambda}(p) \cup \dout \cluster_{\Lambda}(p)$. Note that $\hole^{(\Lambda)}(p)$ is a stopping set if $\Lambda$ is given, or if $\Lambda$ itself is a stopping set. This property of ``explorability from outside'' makes it a useful substitute of $\hole(p)$.

\begin{remark} \label{rem:H_HLambda}
Let us observe that there exists $\alpha > 0$ with the following property: for all $p > p_c$ and $\lambda \geq 1$, for all simply connected domains $\Lambda$,
\begin{equation} \label{eq:H_HLambda}
\Ball_{\lambda L(p)} \subseteq \Lambda \: \Longrightarrow \: \PP \big( \hole^{(\Lambda)}(p) = \hole(p) \big) \geq 1 - e^{-\alpha \lambda}.
\end{equation}
Indeed, we note that $\hole(p) \subseteq \Ball_{\lambda L(p)}$ implies the existence of a $p$-black circuit in $\Ball_{\lambda L(p)}$ which surrounds $0$, and which is connected to $\infty$. In particular, if $\Ball_{\lambda L(p)} \subseteq \Lambda$, this circuit is contained in $\Lambda$, and it is connected to its boundary, so that $\hole^{(\Lambda)}(p)$ and $\hole(p)$ coincide in this case. Finally, Lemma \ref{lem:apriori_hole} (i) implies that for some $\alpha > 0$,
$$\PP \big( \hole(p) \subseteq \Ball_{\lambda L(p)} \big) \geq 1 - e^{-\alpha \lambda}.$$
\end{remark}


We use this remark later to apply Lemma \ref{lem:largest_vol} in holes $\hole(p)$ created by the infinite cluster, which allows us to analyze successive frozen clusters.

\subsection{Tail estimates and moment bounds}

We first mention a tail estimate on $\big| \lclus_{\Lambda} \big|$, which follows easily from a result in \cite{BCKS01}.

\begin{lemma} \label{lem:BCKS}
There exist universal constants $c_1, c_2, X > 0$ such that for all $p > p_c$, $n \geq L(p)$, and $x \geq X$,
\begin{equation} \label{eq:BCKS}
\PP_p \left( \big| \lclus_{\Ball_n} \big| \geq x n^2 \theta(p) \right) \leq c_1 e^{- c_2 x \frac{n^2}{L(p)^2}}.
\end{equation}
\end{lemma}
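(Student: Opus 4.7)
The plan is to deduce \eqref{eq:BCKS} as an essentially direct consequence of a large-deviation estimate already present in \cite{BCKS01}. The first step is to observe that the statement is trivial whenever $x \theta(p) > 1$: in that case, $x n^2 \theta(p) > n^2 \geq |\lclus_{\Ball_n}|$, so the probability in \eqref{eq:BCKS} vanishes and the bound holds for any $c_1 \geq 1$. Hence, we may restrict attention to the regime $x \theta(p) \leq 1$, where the bound is a genuine upper-tail estimate.

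The main step is then to invoke the upper-tail bound for $|\lclus_{\Ball_n}|$ in \cite{BCKS01}, which (in the form that we need) asserts the existence of constants $c_1', c_2', y_0 > 0$ such that for every $p > p_c$, $n \geq L(p)$ and $y \geq y_0$,
$$\PP_p\bigl(|\lclus_{\Ball_n}| \geq y\, \theta(p) n^2\bigr) \leq c_1' \exp\!\Bigl(-c_2' (y-1)\, \tfrac{n^2}{L(p)^2}\Bigr).$$
Setting $y = x$, and taking $X := \max(y_0, 2)$ so that $x - 1 \geq x/2$ for $x \geq X$, yields \eqref{eq:BCKS} after replacing $c_2'$ by $c_2 := c_2'/2$. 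Heuristically, the linear-in-$x$ scaling of the exponent is the correct one in this regime: when $x \theta(p)$ is of order $1$ (the extremal case), the event in question forces the existence of a black cluster of volume comparable to $n^2$ in $\Ball_n$, which by a standard renormalisation at scale $L(p)$ costs a probability $\exp(-c\, n^2/L(p)^2)$, matching the right-hand side of \eqref{eq:BCKS}.

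The only delicate point, and the step I would expect to require care, is to confirm that the version of the BCKS01 bound being quoted really has the exponent linear in $x$ (rather than, say, $(x-1)^2$, which would come from a Gaussian-type concentration statement). This linear scaling is the natural one for upper deviations of the largest cluster: at scale $L(p)$ one can tile $\Ball_n$ by order $(n/L(p))^2$ essentially independent sub-boxes, and by the exponential decay \eqref{eq:exp_decay} together with the RSW bound \eqref{eq:RSW}, each sub-box contributes a bounded amount to any given cluster with overwhelming probability, while deviations of total volume by an amount of order $x n^2 \theta(p)$ require anomalous behaviour on a fraction of order $x$ of the $(n/L(p))^2$ blocks. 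Tracking this through the proof in \cite{BCKS01} (rather than only its statement) is what produces the desired exponent $-c_2 x n^2/L(p)^2$, and once this is in hand \eqref{eq:BCKS} is immediate.
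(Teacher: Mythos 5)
Your proposal is correct and follows essentially the same route as the paper: the paper also deduces \eqref{eq:BCKS} directly from the upper-tail bound of \cite{BCKS01} (specifically Proposition 4.3 (iii), whose exponent is indeed linear in $x$, up to an additive term $c'_3\, n^2/L(p)^2$ and a polynomial prefactor $n^2/L(p)^2$), and then absorbs these lower-order corrections exactly as you do, by requiring $x \geq X$ with $X$ a suitable universal constant. The only detail your idealized quotation glosses over is the normalization: the BCKS01 statement is phrased in terms of their scale function, and one uses $s(L(p)) = (2L(p))^2\pi_1(L(p)) \asymp L(p)^2\theta(p)$ via \eqref{eq:Kesten_theta_pi} to rewrite it with $\theta(p)n^2$, after which your constant-juggling goes through verbatim.
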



\begin{proof}[Proof of Lemma \ref{lem:BCKS}]
It follows from Proposition 4.3 (iii) in \cite{BCKS01} that for all $p > p_c$, $n \geq L(p)$ and $x \geq 0$,
$$\PP_p \left( \big| \lclus_{\Ball_n} \big| \geq x n^2 \theta(p) \right) \leq c'_1 \frac{n^2}{L(p)^2} e^{- c'_2 x \frac{n^2}{L(p)^2} + c'_3 \frac{n^2}{L(p)^2}}$$
(in that result, we have $s(L(p)) = (2 L(p))^2 \pi_1(L(p)) \asymp L(p)^2 \theta(p)$, using \eqref{eq:Kesten_theta_pi}). Hence, if we take $X > 0$ so that $- c'_2 X + c'_3 = - \frac{c'_2}{2} X$, we obtain: for all $x \geq X$,
$$\PP_p \left( \big| \lclus_{\Ball_n} \big| \geq x n^2 \theta(p) \right) \leq c'_1 \frac{n^2}{L(p)^2} e^{- \frac{c'_2}{2} x \frac{n^2}{L(p)^2}},$$
which is $\leq c'_4 e^{- \frac{c'_2}{4} x \frac{n^2}{L(p)^2}}$ for some universal constant $c'_4$ large enough.
\end{proof}

%

We now derive moment bounds for the random variables
\begin{equation} \label{eq:def_Vn}
\calV_n(z) := \big| \{v \in \Ball_n(z) \: : \: v \lra{} \partial \Ball_{2n}(z) \} \big|
\end{equation}
(when $z=0$, we simply write $\calV_n$).

\begin{lemma} \label{lem:moment_bd}
There exists a universal constant $C > 0$ such that for all $p > p_c$ and $n \geq L(p)$,
\begin{equation} \label{eq:moment_bd}
\text{for all } m \geq 1, \quad \EE_p \big[ ( \calV_n )^m \big] \leq m! \big( C n^2 \theta(p) \big)^m.
\end{equation}
\end{lemma}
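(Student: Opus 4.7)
My strategy is to reduce the moment bound to an exponential tail estimate of the form
\[
\PP_p(\calV_n \geq s) \leq C_1 \exp\!\bigl(-c_1 s/(n^2 \theta(p))\bigr) \qquad \text{for all } s \geq 0,
\]
and then integrate $\EE_p[(\calV_n)^m] = m \int_0^\infty s^{m-1} \PP_p(\calV_n \geq s)\, ds$ to obtain $\EE_p[(\calV_n)^m] \leq m! (C_1/c_1)^m (n^2\theta(p))^m$, which is the desired bound up to adjusting $C$. The first-moment estimate is immediate: for $v \in \Ball_n$, the event $\{v \lra{} \partial \Ball_{2n}\}$ forces a $p$-black arm from $v$ to $\partial \Ball_n(v)$, whose probability is $\asymp \theta(p)$ by \eqref{eq:equiv_expdecay} (since $n \geq L(p)$). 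Summing over $|\Ball_n| \asymp n^2$ vertices yields $\EE_p[\calV_n] \leq C_0 n^2 \theta(p)$, and this in particular makes the tail estimate trivial (via $\PP \leq 1$) in the regime $s \leq K_0 n^2 \theta(p)$ for any fixed $K_0$.

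For the tail in the regime $s \geq K_0 n^2 \theta(p)$, the idea is to compare $\calV_n$ with $|\lclus_{\Ball_{2n}}|$ and invoke Lemma \ref{lem:BCKS}. Every vertex counted in $\calV_n$ lies in a $p$-black cluster that crosses $\Ann_{n,2n}$; applying Lemma \ref{lem:net} at scale $a = L(p)$ inside $\Ball_{2n}$, together with \eqref{eq:exp_decay}, produces a ``good event'' $\mathcal{G}$ of probability at least $1 - c_2 (n/L(p))^2 \exp(-c_3 n/L(p))$ on which every such crossing cluster is glued into the macroscopic cluster of $\Ball_{2n}$, so that $\calV_n \leq |\lclus_{\Ball_{2n}}|$ on $\mathcal{G}$. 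Lemma \ref{lem:BCKS} (applied to $\Ball_{2n}$) then yields $\PP_p(|\lclus_{\Ball_{2n}}| \geq s) \leq c_4 \exp(-c_5 s/(\theta(p) L(p)^2))$ for $s$ large, which is dominated by $\exp(-c s/(n^2\theta(p)))$ because $L(p) \leq n$. Combining the two contributions gives the tail bound on $\mathcal{G}$.

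The main obstacle is the contribution from $\mathcal{G}^c$: the bound $(n/L(p))^2 \exp(-c_3 n/L(p))$ is a fixed constant when $n \asymp L(p)$, while the deterministic upper bound $\calV_n \leq |\Ball_n| \asymp n^2$ means one needs the tail estimate to hold up to $s \asymp n^2$, where $s/(n^2\theta(p)) \asymp 1/\theta(p)$ can be arbitrarily large as $p \searrow p_c$. To handle this I would refine the net construction by choosing the scale $a$ adaptively in $s$ (taking $a$ of order $L(p) \cdot \sqrt{s/(n^2\theta(p))}$, up to logarithmic factors), so that the failure probability of the net becomes a stretched exponential in $s/(\theta(p) L(p)^2)$ that matches (and is absorbed in) the target tail. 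An arguably cleaner alternative is to imitate the divide-and-conquer proof of Proposition 4.3 in \cite{BCKS01} directly on $\calV_n$: partition $\Ball_n$ into $L(p)$-scale subboxes, dominate the contribution of each subbox $B_i$ by the analogous quantity $\calV^{(i)} := |\{v \in B_i : v \lra{} \partial B_i(v)\}|$ (times the indicator that $B_i$ is in a crossing cluster), and use independence at scale $L(p)$ together with a union bound to control large deviations. Either route produces the tail bound stated above, and the integration step then delivers the claimed factorial moment bound with a universal constant $C$.
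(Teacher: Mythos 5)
There is a genuine gap, and it sits exactly where the real content of the lemma lies: the single scale $n \asymp L(p)$. Your net-plus-largest-cluster route needs a black net at some scale $a$ with $4a < n$, but when $n \asymp L(p)$ no net below scale $L(p)$ exists with high probability (at that scale the crossing probabilities in \eqref{eq:exp_decay} are only bounded below by constants), so the failure probability of $\mathcal{G}$ is of order one uniformly in $s$ and cannot be absorbed into any tail. Your adaptive fix does not repair this: with $a \asymp L(p)\sqrt{s/(n^2\theta(p))}$ the constraint $4a<n$ already fails in the range $s \gtrsim n^2\theta(p)\,(n/L(p))^2$, which for $n \asymp L(p)$ excludes almost the whole relevant range $s \leq |\Ball_n|$; and even where it applies, the net cost $e^{-c\,a/L(p)}$ is only a stretched exponential $e^{-c\sqrt{s/(n^2\theta(p))}}$. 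That is not merely a cosmetic loss: integrating a tail $\exp(-c\sqrt{s/(n^2\theta(p))})$ gives $\EE_p[(\calV_n)^m] \lesssim (2m)!\,(C n^2\theta(p))^m \asymp (m!)^2 (C' n^2\theta(p))^m$, which is strictly weaker than the claimed $m!\,(Cn^2\theta(p))^m$ in \eqref{eq:moment_bd} (and the $m!$ rate is what is actually used later, via Bernstein's inequality, in Section \ref{sec:full_plane}). The same loss occurs if one tries to substitute Lemma \ref{lem:BCKS} (largest cluster) for the missing base case by writing $\calV_{L(p)} \leq (\text{number of crossing clusters}) \times |\lclus_{\Ball_{2L(p)}}|$: products of exponential-tailed variables again give $(m!)^2$-type moments.

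Your second, ``divide-and-conquer'' alternative is essentially the paper's argument -- the paper covers $\Ball_n$ by $\lceil n/L(p)\rceil^2$ boxes of side $L(p)$, uses $\calV_n \leq \sum_i \calV_{L(p)}(z_i)$ and Minkowski's inequality -- but it is incomplete as you state it: ``independence at scale $L(p)$ together with a union bound'' gives no large-deviation control unless you already have exponential moments (or tails) for the single-box quantity $\calV_{L(p)}$, and your only input at that scale is the first-moment bound $\EE_p[\calV_{L(p)}]\lesssim L(p)^2\theta(p)$, which via Markov yields a tail of order $1/T$ only. The paper supplies exactly this missing base case by citing Lemma 4.2 of \cite{BCKS01} (a moment bound for $\calV_{L(p)}$ with the $m!$ rate, proved there by a direct multi-point estimate, not deducible from Proposition 4.3 about $\lclus$ by the comparisons you propose), and then the covering/Minkowski step is immediate. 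So the reduction steps in your plan (tail $\Rightarrow$ moments; covering by $L(p)$-boxes) are fine, but the core estimate at scale $L(p)$ with a genuinely exponential rate is neither proved nor correctly sourced in your proposal, and both of your suggested routes degrade it to a stretched exponential, which does not yield the stated lemma.
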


\begin{proof}[Proof of Lemma \ref{lem:moment_bd}]
It follows from Lemma 4.2 in \cite{BCKS01} (with $d=2$, and using also that $\pi_1(L(p)) \asymp \theta(p)$, from \eqref{eq:Kesten_theta_pi}) that for all $p > p_c$,
\begin{equation} \label{eq:moment_bd_BCKS}
\text{for all } m \geq 1, \quad \EE_p \big[ ( \calV_{L(p)} )^m \big] \leq m! \big( C' L(p)^2 \theta(p) \big)^m,
\end{equation}
where $C' > 0$ is a universal constant. For $n \geq L(p)$, we can cover $\Ball_n$ with $k = \big\lceil \frac{n}{L(p)} \big\rceil^2 \leq 4 \frac{n^2}{L(p)^2}$ (possibly overlapping) boxes of the form $z_i + \Ball_{L(p)} \subseteq \Ball_n$ ($1 \leq i \leq k$). We clearly have $\calV_n \leq \sum_{i=1}^k \calV_{L(p)}(z_i)$, and Minkowski's inequality implies that for all $m \geq 1$,
$$\EE_p \big[ ( \calV_n )^m \big] \leq \EE_p \Bigg[ \bigg( \sum_{i=1}^k \calV_{L(p)}(z_i) \bigg)^m \Bigg] \leq k^m \EE_p \big[ ( \calV_{L(p)} )^m \big],$$
since each $\calV_{L(p)}(z_i) \stackrel{(d)}{=} \calV_{L(p)}$. Combined with \eqref{eq:moment_bd_BCKS}, this yields the desired result.
\end{proof}

These bounds are used in Section \ref{sec:full_plane}, in combination with the following form of Bernstein's inequality.

\begin{lemma} \label{lem:Bernstein}
Let $(X_i)_{1 \leq i \leq n}$ ($n \geq 1$) be independent real-valued random variables, satisfying: for all $1 \leq i \leq n$,
$$\EE[ X_i ] = 0 \quad \text{and} \quad \EE \big[ |X_i|^m \big] \leq m! M^{m-2} \frac{\sigma_i^2}{2} \quad \text{for all } m \geq 2,$$
for some $M > 0$ and $(\sigma_i)_{1 \leq i \leq n}$. Then for all $y \geq 0$,
\begin{equation}
\PP \bigg( \bigg| \sum_{i=1}^n X_i \bigg| \geq y \bigg) \leq 2 e^{-\frac{1}{2} \frac{y^2}{\sigma^2 + My}}, \quad \text{where } \sigma^2 = \sum_{i=1}^n \sigma_i^2.
\end{equation}
\end{lemma}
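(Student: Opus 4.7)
The plan is to apply the classical Cram\'er--Chernoff exponential moment method; the argument is by now standard, so I will only indicate the main steps.

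First, I would bound the moment generating function of each $X_i$ for $t \in (0, 1/M)$. Expanding the exponential, using $\EE[X_i] = 0$, and plugging in the assumed moment bounds,
$$\EE[e^{t X_i}] = 1 + \sum_{m \geq 2} \frac{t^m \EE[X_i^m]}{m!} \leq 1 + \frac{\sigma_i^2}{2} \sum_{m \geq 2} t^m M^{m-2} = 1 + \frac{\sigma_i^2 t^2}{2(1 - Mt)},$$
which via $1 + x \leq e^x$ gives $\EE[e^{t X_i}] \leq \exp\!\big( \frac{\sigma_i^2 t^2}{2(1 - Mt)} \big)$.

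Next, by independence, $\EE[e^{t \sum_i X_i}] \leq \exp\!\big( \frac{\sigma^2 t^2}{2(1 - Mt)} \big)$. Applying Markov's inequality to the nonnegative random variable $e^{t \sum_i X_i}$ yields, for any $t \in (0, 1/M)$,
$$\PP\bigg( \sum_{i=1}^n X_i \geq y \bigg) \leq \exp\!\bigg( \frac{\sigma^2 t^2}{2(1 - Mt)} - t y \bigg).$$
The key step is then to optimize: choosing $t = y/(\sigma^2 + M y)$ (which indeed lies in $(0, 1/M)$) makes $1 - Mt = \sigma^2/(\sigma^2 + My)$, and a direct computation shows that the exponent reduces to $-\tfrac{1}{2} y^2/(\sigma^2 + M y)$.

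Finally, applying exactly the same argument to the family $(-X_i)_{1 \leq i \leq n}$, which obviously satisfies the identical moment hypothesis, gives the matching lower-tail bound; a union bound over the two one-sided events produces the factor $2$ in the stated inequality. I do not anticipate any real obstacle, since each step is routine; the only ``trick'' is the choice of the optimal $t$, which is dictated by setting the derivative of the exponent to zero.
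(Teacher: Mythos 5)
Your proposal is correct and follows essentially the same route as the paper, which likewise applies Markov's inequality to $e^{\lambda \sum_i X_i}$ with the very same choice $\lambda = y/(\sigma^2 + My)$ and refers to Bennett's paper for the details you spell out. The MGF bound, the optimization, and the symmetric argument for the lower tail are all exactly the standard steps intended there, so there is nothing to add.
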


\begin{proof}[Proof of Lemma \ref{lem:Bernstein}]
This follows from an application of Markov's inequality to the random variable $e^{\lambda \sum_{i=1}^n X_i}$, for a well-chosen value of the parameter $\lambda > 0$ (here, $\lambda = \frac{y}{\sigma^2 + My}$). We refer the reader to the proof of (7) in \cite{Benett1962} for more details.
\end{proof}

Finally, we state a consequence of Lemma \ref{lem:moment_bd}, which is needed in Section \ref{sec:nice_circuits}.

\begin{lemma} \label{lem:calV_quant}
There exists a constant $c_0 > 0$ satisfying the following property. For all $\ve > 0$, there exists $X \geq 1$ (depending only on $\ve$) such that for all $p > p_c$ and $n \geq X L(p)$,
$$\PP_p \big( \calV_n \geq c_0 n^2 \theta(p) \big) \geq 1 - \ve.$$
\end{lemma}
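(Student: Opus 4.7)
The plan is to derive Lemma~\ref{lem:calV_quant} directly from Lemma~\ref{lem:largest_vol}, applied to the single $n$-block $\Lambda = b_n$ centered at the origin. This $\Lambda$ falls under the first class of admissible domains in Lemma~\ref{lem:largest_vol}: take $\tilde\Lambda = b_n$ (a connected component consisting of a single $n$-block that contains $b_n$) with $\beta = 0$ and $C = 1$. With this choice no approximability hypothesis needs to be checked.

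Concretely, I would fix once and for all $c_0 := 1/2$. Given $\ve > 0$, set $\ve' := \min(\ve, 1/2)$ and apply Lemma~\ref{lem:largest_vol} with parameters $\ve'$ and $C = 1$, obtaining some $\mu = \mu(\ve') > 0$. Taking $X := 1/\mu$, which depends only on $\ve$, the hypothesis $n \geq X L(p)$ forces $L(p)/n < \mu$, so conclusions (i)--(iii) of that lemma hold for $\Lambda = b_n$ with probability at least $1 - \ve' \geq 1 - \ve$. In particular, conclusion (i) gives
$$|\lclus_{b_n}(p)| \geq (1-\ve')\,\theta(p)\,|b_n| \geq \tfrac{1}{2}\,\theta(p)\, n^2 = c_0\, n^2\, \theta(p).$$

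The second step is to upgrade this lower bound on $|\lclus_{b_n}(p)|$ into the same lower bound on $\calV_n$. Here I use conclusion (ii): the largest cluster contains a circuit in $\Ann_{n/8,n/4}$, which sits inside $b_n$, and this circuit is itself connected to $\infty$ by a $p$-black path. Consequently, every vertex of $\lclus_{b_n}(p)$ is connected inside $b_n$ to that circuit, and then along the $p$-black path all the way to $\infty$, which necessarily crosses $\partial \Ball_{2n}$. Since $b_n \subseteq \Ball_n$, each such vertex lies in $\Ball_n$ and is counted by $\calV_n$; hence $\calV_n \geq |\lclus_{b_n}(p)| \geq c_0\, n^2\, \theta(p)$ on the same event.

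No genuine obstacle arises: the argument is essentially a repackaging of Lemma~\ref{lem:largest_vol} tailored to the observable $\calV_n$. The only points to verify are syntactic, namely that $b_n$ qualifies as an admissible $\Lambda$ (trivially, as a single $n$-block with $\beta = 0$), and that the circuit sits inside $b_n$ (which holds because $\Ann_{n/8,n/4} \subseteq \Ball_{n/4} \subseteq b_n$). Note that the independence of $c_0$ from $\ve$ is automatic: shrinking $\ve'$ is absorbed into the event of probability $1-\ve$, while the factor $(1-\ve')$ in front of $\theta(p)|b_n|$ is bounded below by $1/2 = c_0$.
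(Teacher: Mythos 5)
Your proof is correct, but it takes a genuinely different route from the paper's. The paper proves the lemma from scratch at the intermediate scale $\sqrt{x}\,L(p)$ (writing $n = x L(p)$): it builds a net via Lemma \ref{lem:net}, runs a second-moment argument on $\calV_{\sqrt{x}L(p)}(z_i)$ over $\asymp x$ disjoint sub-boxes using Lemma \ref{lem:moment_bd}, and uses independence of these boxes to guarantee that a positive fraction of them contribute, so that the net collects $\geq c\, n^2\theta(p)$ vertices connected to $\partial \Ball_{2n}$. You instead invoke Lemma \ref{lem:largest_vol} for the single block $\Lambda = b_n$ (first admissible type, $C=1$, $\beta=0$), take conclusion (i) with $\ve' \leq \tfrac12$ to get $|\lclus_{b_n}(p)| \geq \tfrac12 \theta(p) n^2$, and use conclusion (ii) --- the circuit in $\Ann_{n/8,n/4} \subseteq b_n$ connected to infinity --- to conclude that every vertex of the giant cluster in $b_n$ is connected to $\partial\Ball_{2n}$, whence $\calV_n \geq |\lclus_{b_n}(p)|$. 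This is legitimate and shorter: there is no circularity, since Lemma \ref{lem:largest_vol} is proved in Appendix \ref{sec:app_BCKS} from \eqref{eq:equiv_chis}, \eqref{eq:exp_decay} and a net/Chebyshev argument, without any reference to Lemma \ref{lem:calV_quant}; in effect you outsource the second-moment work to that appendix, whereas the paper's direct argument keeps this estimate self-contained within the section. Only cosmetic points remain to tidy: take $X = \max(1, 2/\mu)$ so that the strict inequality $L(p)/n < \mu$ required by Lemma \ref{lem:largest_vol} holds; read $(\tilde\Lambda)_{(0)}$ as $\tilde\Lambda$ itself (the $0$-shrinking is degenerate under the literal definition, though $\beta = 0$ is allowed in the statement), or take $\beta$ small positive and adjust $c_0$ accordingly; and justify $|b_n| \geq n^2$ as a count of lattice vertices (the vertex density of $\TT$ exceeds $1$), or simply decrease $c_0$ to absorb that constant.
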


\begin{proof}[Proof of Lemma \ref{lem:calV_quant}]
Let us consider $n = x L(p)$, with $x \geq 1$. If we denote $E_1 := \net \big(\frac{\sqrt{x} L(p)}{8}, 2n \big)$ (recall Definition \ref{def:net} for nets), it follows from Lemma \ref{lem:net} that
$$\PP_p ( E_1 ) \geq 1 - c_1 \bigg( \frac{2n \cdot 8}{\sqrt{x} L(p)} \bigg)^2 e^{- c_2 \frac{\sqrt{x} L(p)}{8 L(p)} } = 1 - c_3 x e^{-c_4 \sqrt{x}},$$
which is $\geq 1 - \frac{\ve}{2}$ for all $x \geq X_1 = X_1(\ve)$.

Now, let us consider $k = \big\lfloor \frac{n}{3 \sqrt{x} L(p)} \big \rfloor^2 \geq c_1 x$ disjoint boxes of the form $z_i + \Ball_{2 \sqrt{x} L(p)} \subseteq \Ball_n$ ($1 \leq i \leq k$). In each of them, we have (since $\PP_{p}(0 \lra{} \partial \Ball_n) \asymp \theta(p)$ for $n \geq L(p)$, by \eqref{eq:equiv_expdecay})
$$\EE_p [ \calV_{\sqrt{x} L(p)}(z_i) ] \asymp x L(p)^2 \theta(p),$$
and using Lemma \ref{lem:moment_bd} with $m=2$,
$$\EE_p [ (\calV_{\sqrt{x} L(p)}(z_i))^2 ] \leq c_2 (x L(p)^2 \theta(p))^2.$$
We can thus deduce from a second-moment argument that
$$\PP_p ( \calV_{\sqrt{x} L(p)}(z_i) \geq c_3 x L(p)^2 \theta(p)) \geq c_3,$$
for some universal constant $c_3 > 0$ small enough. If we call
$$E_2 := \Big\{ \big| \big\{ 1 \leq i \leq k \: : \: \calV_{\sqrt{x} L(p)}(z_i) \geq c_3 x L(p)^2 \theta(p) \big\} \big| \geq \frac{c_3}{2} k \Big\},$$
then $\PP_p(E_2) \geq 1 - \frac{\ve}{2}$ for all $x \geq X_2 = X_2(\ve)$ (using that the $k \geq c_1 x$ boxes are disjoint). We observe
$$\calV_n \geq \ind_{E_1} \bigg( \sum_{1 \leq i \leq k} \calV_{\sqrt{x} L(p)}(z_i) \bigg),$$
so on the event $E_1 \cap E_2$ (which occurs with probability $\geq 1 - \ve$ if $x \geq \max(X_1,X_2)$), we have
$$\calV_n \geq \bigg(\frac{c_3}{2} c_1 x \bigg) \cdot \big( c_3 x L(p)^2 \theta(p) \big) = c_7 n^2 \theta(p).$$
\end{proof}

\subsection{Nice circuits} \label{sec:nice_circuits}

In Section \ref{sec:full_plane}, when we explain how to relate the full-plane process to the process in finite domains, the following quantity plays an important role. Recall that for a circuit $\circuit$, we denote by $\inter(\circuit)$ the set of vertices inside it. We introduce
$$X^{\circuit}_p := \big| \circuitinside^{\circuit}(p) \big|, \quad \text{where } \circuitinside^{\circuit}(p) := \{ v \in \inter(\circuit) \: : \: v \lra{p} \circuit \}.$$

We can obtain good estimates on this quantity when $\circuit$ is well-behaved, which occurs with high probability if $\circuit$ is obtained as the outermost black circuit $\cout_A$ in an annulus $A$. Before stating precise results, Lemmas \ref{lem:nice_circ} and \ref{lem:nice_circ2} below, we introduce a notation for quantiles.

\begin{definition} \label{def:quantile}
For a real-valued random variable $X$ and $\ve \in (0,1)$, we denote by $\ul Q_\ve(X)$ and $\ol Q_\ve(X)$ the (resp.) \emph{lower} and \emph{upper $\ve$-quantiles} of $X$, defined as
\begin{equation} \label{eq:lower_quant}
\ul Q_\ve(X) := \inf \{ x \in \RR \: : \: \PP(X \leq x) \geq \ve \}
\end{equation}
\begin{equation} \label{eq:upper_quant}
\hspace{-0.9cm} \text{and} \quad \ol Q_\ve(X) := \sup \{ x \in \RR \: : \: \PP(X \geq x) \geq \ve \}
\end{equation}
(so that $\PP(X < \ul Q_\ve(X)) \le \ve$ and $\PP(X > \ol Q_\ve(X)) \le \ve$).
\end{definition}

\begin{definition} \label{def:nice_circ}
For $p > p_c$ and $C > 0$, we say that a circuit $\circuit$ is \emph{$(p, C)$-nice} if $f_p(\circuit) \leq C m^2 \theta(p)$, where $m = \diam(\circuit)$ and
$$f_p(\circuit) := \sum_{i=1}^{\lceil \log_2 L(p) \rceil - 1} \big| \{ v \in \inter(\circuit) \: : \: 2^{i-1} \leq d(v,\circuit) < 2^i \} \big| \cdot \pi_1(2^{i-1}).$$
\end{definition}

\begin{lemma} \label{lem:nice_circ}
For all $\ve > 0$, there exists a constant $C > 0$ (depending only on $\ve$) such that: for all $p > p_c$ and $n \geq L(p)$,
$$\PP_p \Big( \cout_{\Ann_{n/2, n}} \text{ exists and is not } (p, C) \text{-nice} \Big) \leq \ve.$$
\end{lemma}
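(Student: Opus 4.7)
My plan is to establish the expected bound
\begin{equation*}
 \EE_p\!\left[ f_p(\cout)\,\ind_{\cout \text{ exists}} \right] \;\leq\; C_1\, n^2\, \theta(p)
\end{equation*}
for a universal constant $C_1$, and then to apply Markov's inequality with $C := C_1/\ve$ (using $m \geq n$, hence $m^2 \geq n^2$, to pass from $n^2\theta(p)$ to $m^2\theta(p)$). The key structural fact is that $\cout$ is measurable with respect to the configuration \emph{outside} $\inter(\cout)$: it can be found by exploring $\Ann_{n/2,n}$ from $\partial\Ball_n$ inward, and conditionally on $\cout = C$ the configuration inside $\inter(C)$ has the unbiased law $\PP_p$.

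First I would rewrite each weight in $f_p(\cout)$ as the conditional probability of an arm event. For $v \in \inter(C)$ with $r := d(v,C) < L(p)/2$, the ball $\Ball_{r/2}(v)$ is contained in $\inter(C)$, so \eqref{eq:Kesten1} gives $\pi_1(r) \asymp \PP_p(v \lra{p} \partial\Ball_{r/2}(v)\text{ in }\Ball_{r/2}(v) \mid \cout=C)$, and combining with \eqref{eq:ext} to replace the dyadic $\pi_1(2^{i-1})$ by $\pi_1(d(v,C))$ yields
\begin{equation*}
 f_p(C) \;\asymp\; \sum_{\substack{v \in \inter(C)\\ d(v,C) < L(p)/2}} \pi_1(d(v,C)).
\end{equation*}
Extending the local arm from $v$ up to a segment of $C$ by an FKG/RSW gluing in the annulus $\Ann_{r/2,r}(v)$ (exploiting that $C$, being a circuit surrounding $\Ball_{n/2}$, occupies a constant fraction of $\partial\Ball_r(u)$ near the closest point $u \in C$ to $v$) then upgrades this into the pointwise inequality $f_p(C) \leq C_2\,\EE_p[|X^{\cout}_p| \mid \cout = C]$, and hence $\EE_p[f_p(\cout)\,\ind_{\cout\text{ exists}}] \leq C_2\,\EE_p[|X^{\cout}_p|\,\ind_{\cout\text{ exists}}]$.

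The second step bounds $\EE_p[|X^{\cout}_p|\,\ind_{\cout\text{ exists}}]$ by splitting on whether $\cout$ lies in $\Cinf$. On $\{\cout \in \Cinf\}$, $|X^{\cout}_p| \leq |\Ball_n \cap \Cinf|$, whose expectation equals $|\Ball_n|\theta(p) \leq C_3 n^2 \theta(p)$. On $\{\cout\text{ exists}, \cout \notin \Cinf\}$, the cluster of $\cout$ is finite with diameter at least $n/2$; for any fixed $v$, the event ``$v$ lies in a finite cluster of diameter $\geq n/2$'' is contained in the intersection of the arm event $\{v \lra{p} \partial\Ball_{n/4}(v)\}$ (depending on the configuration in $\Ball_{n/4}(v)$) and $\{\partial\Ball_{n/4}(v) \not\lra{p} \infty\}$ (depending on the configuration outside). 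These two events live on disjoint vertex sets, hence are independent, and by \eqref{eq:equiv_expdecay} and \eqref{eq:connection_expdecay} their product probability is $\leq C\,\theta(p)\,e^{-cn/L(p)}$. Summing over $v \in \Ball_n$ and using $n \geq L(p)$ gives a contribution $\leq C_4\, n^2\, \theta(p)$. Combining both cases yields $\EE_p[|X^{\cout}_p|\,\ind_{\cout\text{ exists}}] \leq C_5\, n^2\, \theta(p)$, and Markov closes the argument.

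The main obstacle is the FKG/RSW gluing in the first step: the lower bound $\PP_p(v \lra{p} C \mid \cout = C) \geq c\,\pi_1(d(v,C))$ must hold \emph{uniformly} over every deterministic circuit $C$ arising as $\cout$, whereas a priori a ``tangentially thin'' $C$ that approaches $v$ only at a single point before receding could force the two-point function scale $\pi_1(d(v,C))^2$. Resolving this uses the global constraint that $C$ is a genuine circuit around $\Ball_{n/2}$: at scales $r \leq L(p)/2$ well below the characteristic length, $C \cap \partial\Ball_r(u)$ retains a constant fraction of length compared to $\partial\Ball_r(u)$ (for $u \in C$ closest to $v$), which is what enables the RSW crossing in $\Ann_{r/2,r}(v)$ to reach $C$ with a universal conditional probability and close the gluing.
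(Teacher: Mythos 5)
Your overall strategy (bound $\EE_p\big[f_p(\cout_{\Ann_{n/2,n}})\ind_{\cout \text{ exists}}\big]$ by $C n^2\theta(p)$ and apply Markov) is the same as the paper's, but your route to the expectation bound passes through the circuit-conditioned quantity $\EE_p\big[X^{\cout}_p \mid \cout = C\big]$, and that is where there is a genuine gap. The crux is the pointwise claim $\PP_p\big(v \lra{} C \mid \cout = C\big) \geq c\,\pi_1(d(v,C))$, uniformly over every deterministic circuit $C$, and the geometric input you invoke to justify the RSW/FKG gluing is false: it is not true that $C \cap \partial\Ball_r(u)$ occupies a constant fraction of $\partial\Ball_r(u)$ near the closest point $u$. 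The circuit can approach $v$ through a ``hairpin'' excursion of width $O(1)$ whose tip is the only nearby piece of $C$; then $C$ meets $\Ball_{2r}(v)$ in a set of bounded width, there is no macroscopic boundary arc of $C$ for a crossing rectangle to aim at, and naive gluing degenerates towards a point-to-point (two-arm) cost. The lower bound you need may well be true (one can try a duality argument in the slit annulus $\Ann_{r,2r}(v)$, exploiting that the slit is itself black and that failure forces a white radial crossing of the annulus among the unconditioned sites), but this is a delicate estimate uniform over arbitrary circuit geometries, and your sketch does not supply it; as written, the central step of your proof is unproved. There is also a smaller error in your second step: ``$v$ lies in a finite cluster of diameter $\geq n/2$'' does \emph{not} imply $\partial\Ball_{n/4}(v) \nlra{} \infty$ (other clusters meeting that sphere may be infinite), so the claimed containment and the independence-based product bound fail. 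That part is easily repaired: whenever $\cout$ exists and $v \lra{} \cout$, the cluster of $v$ has diameter $\geq n$, so $v \lra{} \partial\Ball_{n/4}(v)$, and $\PP_p(v \lra{} \partial\Ball_{n/4}(v)) \leq C\theta(p)$ (by \eqref{eq:equiv_expdecay}, \eqref{eq:Kesten1} and \eqref{eq:ext}) already gives the $Cn^2\theta(p)$ contribution with no exponential factor needed. The gluing gap, however, is essential to your argument.

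For comparison, the paper avoids conditioning on the circuit altogether and bounds $\EE\big[f_p(\cout)\big]$ directly: if $d(v,\cout) < 2^i < L(p)$, then the circuit, having diameter $\geq n \geq L(p)$, creates two disjoint black arms across an annulus of the form $\Ann_{2^i, cL(p)}(v)$, so $\PP\big(2^{i-1} \leq d(v,\cout) < 2^i\big) \leq C\,\pi_2(2^i, L(p))$ for the corresponding two-arm probability; quasi-multiplicativity \eqref{eq:qmult} turns the weight $\pi_1(2^{i-1})$ into $\asymp \pi_1(L(p))/\pi_1(2^i,L(p)) \asymp \theta(p)/\pi_1(2^i,L(p))$ (using \eqref{eq:Kesten_theta_pi}), and the BK-type bound $\pi_2(2^i,L(p))/\pi_1(2^i,L(p)) \leq (2^i/L(p))^{\eta}$ makes the dyadic sum converge, yielding $\EE\big[f_p(\cout)\big] \leq C n^2\theta(p)$ in a few lines before Markov. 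If you want to keep your write-up, the cleanest fix is to replace your conditional-gluing step by this unconditional two-arm computation.
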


\begin{proof}[Proof of Lemma \ref{lem:nice_circ}]
We denote by $E_0$ the event that there exists a $p$-black circuit in $A = \Ann_{n/2, n}$, and by $\circuit = \cout_A$ the outermost such circuit when it exists (otherwise, $\circuit = \emptyset$ by convention). We have
$$\EE \big[ f_p(\circuit) \big] = \sum_{i=1}^{\lceil \log_2 L(p) \rceil - 1} \EE \Big[ \big| \{ v \in \inter(\circuit) \: : \: 2^{i-1} \leq d(v,\circuit) < 2^i \} \big| \Big] \cdot \pi_1(2^{i-1}),$$
and if $v$ is within a distance $2^i < L(p)$ from $\circuit$, then there exist two black arms in the annulus $\Ann_{2^i,L(p)}(v)$ (coming from the black circuit $\circuit$). Hence, if we denote $\pi_2 = \pi_{bw}$,
\begin{equation}
\EE \big[ f_p(\circuit) \big] \leq C'_1 n^2 \sum_{i=1}^{\lceil \log_2 L(p) \rceil - 1} \pi_2(2^i, L(p)) \pi_1(2^{i-1})
\end{equation}
(using \eqref{eq:Kesten1}). We have $\pi_1(L(p)) \asymp \pi_1(2^i) \pi_1(2^i, L(p))$ (from quasi-multiplicativity \eqref{eq:qmult}), and $\pi_1(2^i) \asymp \pi_1(2^{i-1})$ (from \eqref{eq:ext}), so
\begin{equation}
\EE \big[ f_p(\circuit) \big] \leq C'_2 n^2 \pi_1(L(p)) \sum_{i=1}^{\lceil \log_2 L(p) \rceil - 1} \frac{\pi_2(2^i, L(p))}{\pi_1(2^i, L(p))}.
\end{equation}
Since $\pi_1(L(p)) \asymp \theta(p)$ (from \eqref{eq:Kesten_theta_pi}), and
$$\frac{\pi_2(2^i, L(p))}{\pi_1(2^i, L(p))} \leq \bigg( \frac{2^i}{L(p)} \bigg)^{\eta}$$
for some $\eta > 0$ (this follows from the BK inequality, and the a-priori bound \eqref{eq:1arm_bound}), we finally obtain
\begin{equation}
\EE \big[ f_p(\circuit) \big] \leq C'_3 n^2 \theta(p).
\end{equation}
It then follows from Markov's inequality that there exists a constant $C'_4 = C'_4(\ve)$ such that for all $p > p_c$ and $n \geq L(p)$,
$$\PP \big( f_p(\circuit) \geq C'_4 n^2 \theta(p) \big) \leq \ve.$$
\end{proof}

\begin{lemma} \label{lem:nice_circ2}
For all $\ve > 0$ and $C > 0$, there exist constants $\ul c, \ol c > 0$ and $Y \geq 1$ (depending only on $\ve$ and $C$) such that the following property holds. For all $p > p_c$ and $n \geq Y L(p)$, every finite $Z \subseteq V(\TT)$, every collection $(\circuit^z)_{z \in Z}$ of $(p, C)$-nice circuits with disjoint interiors ($\inter(\circuit^z) \cap \inter(\circuit^{z'}) = \emptyset$ for all $z \neq z'$), and such that $\circuit^z \subseteq \Ann_{n/2, n}(z)$, we have: for all $p' \geq p$,
$$\ul c |Z| n^2 \theta(p') \leq \ul Q_\ve \Bigg( \sum_{z \in Z} X_{p'}^{\circuit^z} \Bigg) \leq \ol Q_\ve \Bigg( \sum_{z \in Z} X_{p'}^{\circuit^z} \Bigg) \leq \ol c |Z| n^2 \theta(p').$$
\end{lemma}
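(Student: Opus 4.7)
The plan is to estimate each $X^{\circuit^z}_{p'}$ individually and assemble the quantile bounds via Markov's inequality, avoiding any need for sharp concentration. First, I control $\EE[X^{\circuit^z}_{p'}]$ in both directions. For the upper bound, I split $v \in \inter(\circuit^z)$ according to $d := d(v, \circuit^z)$: using $\PP_{p'}(v \lra{p'} \circuit^z) \leq \PP_{p'}(v \lra{p'} \partial \Ball_d(v))$ together with \eqref{eq:equiv_expdecay} for $d \geq L(p')$ and \eqref{eq:Kesten1} for $d < L(p')$, the contribution from far vertices is $O(n^2 \theta(p'))$ and that from near vertices is $O(f_{p'}(\circuit^z))$. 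The niceness of $\circuit^z$ at parameter $p$ transfers to $p'$: since $p' \geq p$ implies $L(p') \leq L(p)$ and $\theta(p') \geq \theta(p)$, one has $f_{p'}(\circuit^z) \leq f_p(\circuit^z) \leq C m^2 \theta(p) \leq C m^2 \theta(p')$ with $m = \diam(\circuit^z) \leq 2n$, whence $\EE[X^{\circuit^z}_{p'}] \leq \ol{c}_0 n^2 \theta(p')$ for some $\ol{c}_0 = \ol{c}_0(C)$. For the lower bound on the mean, I exploit that $\circuit^z$ separates every $v \in \inter(\circuit^z)$ from $\infty$, giving the key inclusion $\{v \lra{p'} \infty\} \subseteq \{v \lra{p'} \circuit^z\}$; summing $\PP_{p'}(v \lra{p'} \circuit^z) \geq \theta(p')$ over $v \in \Ball_{n/4}(z) \subseteq \inter(\circuit^z)$ yields $\EE[X^{\circuit^z}_{p'}] \geq c_3 n^2 \theta(p')$.

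The second step is to upgrade this to a pointwise, high-probability lower bound: for every $\ve > 0$, there exist $\ul{c}_0 > 0$ and $Y \geq 1$ (depending only on $\ve$ and $C$) such that for $n \geq Y L(p)$ and every $z \in Z$,
\[
\PP\bigl(X^{\circuit^z}_{p'} \geq \ul{c}_0 n^2 \theta(p')\bigr) \geq 1 - \ve/4.
\]
I would apply Lemma~\ref{lem:largest_vol} (after translating $z$ to the origin, by invariance) at parameter $p'$ to $\Ball_{n/4}$, which coincides up to a null set with the single $(n/2)$-block $b_{n/2}$ and thus qualifies as a domain of the first type in the lemma (one block, $\beta = 0$); this is legitimate once $Y$ is large enough that $L(p')/(n/2) < \mu$. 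With probability $\geq 1 - \ve/4$, there is a $p'$-black cluster $K \subseteq \Ball_{n/4}(z)$ of volume $\geq \ul{c}_0 n^2 \theta(p')$ which (by part~(ii) of the lemma) contains a circuit at scale $\asymp n$ connected to $\infty$ by a $p'$-black path. Since $\circuit^z$ surrounds $\Ball_{n/2}(z) \supseteq \Ball_{n/4}(z)$, this infinite $p'$-black arm must cross $\circuit^z$, pinning a $p'$-black vertex on $\circuit^z$ that is $p'$-connected to $K$. Consequently, every vertex of $K$ lies in $\circuitinside^{\circuit^z}(p')$, yielding $X^{\circuit^z}_{p'} \geq |K| \geq \ul{c}_0 n^2 \theta(p')$.

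Finally, the quantile bounds for $S := \sum_z X^{\circuit^z}_{p'}$ follow by two applications of Markov's inequality. By linearity and the mean upper bound, $\EE[S] \leq \ol{c}_0 |Z| n^2 \theta(p')$, so $\PP(S > (\ol{c}_0/\ve)|Z| n^2 \theta(p')) \leq \ve$, allowing $\ol{c} := \ol{c}_0/\ve$. For the lower quantile, let $N_{\mathrm{bad}} := |\{z \in Z : X^{\circuit^z}_{p'} < \ul{c}_0 n^2 \theta(p')\}|$; by the pointwise bound, $\EE[N_{\mathrm{bad}}] \leq (\ve/4)|Z|$, so Markov gives $\PP(N_{\mathrm{bad}} > |Z|/2) \leq \ve/2 \leq \ve$, and on the complementary event $S \geq (\ul{c}_0/2) |Z| n^2 \theta(p')$, allowing $\ul{c} := \ul{c}_0/2$. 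Notably, these Markov-type arguments do not require independence of the $X^{\circuit^z}_{p'}$'s, so the disjointness hypothesis on the $\inter(\circuit^z)$ is not strictly needed for the claimed bounds (though combined with a moment estimate like Lemma~\ref{lem:moment_bd} and Bernstein's inequality (Lemma~\ref{lem:Bernstein}), it would yield sharper $\ve$-independent constants for large $|Z|$). The main obstacle is the pointwise lower bound step --- specifically, correctly calibrating Lemma~\ref{lem:largest_vol} to the relevant ball and exploiting its part~(ii) to force the interior cluster to reach the given nice circuit $\circuit^z$ via an infinite $p'$-black arm.
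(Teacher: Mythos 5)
Your proof is correct, and its skeleton matches the paper's: estimate each $X^{\circuit^z}_{p'}$ separately, then pass to quantiles of the sum. The upper bound is essentially the paper's argument verbatim — dyadic decomposition of $\inter(\circuit^z)$ by distance to the circuit, transfer of niceness via $f_{p'}(\circuit^z)\le f_p(\circuit^z)\le C m^2\theta(p)\le C m^2\theta(p')$ (valid since $p'\ge p$ gives $L(p')\le L(p)$ and $\theta(p)\le\theta(p')$), and Markov's inequality; compare \eqref{eq:upper_bd_X_p'}. The lower bound is where you genuinely diverge. The paper's per-circuit input is the one-line inclusion $X^{\circuit^z}_{p'}\ge \calV_{n/2}(z)$ (any $p'$-black path from $\Ball_{n/2}(z)$ to $\partial\Ball_n(z)$ must contain a black vertex of $\circuit^z$), fed into Lemma \ref{lem:calV_quant}; you instead apply Lemma \ref{lem:largest_vol} to a single $(n/2)$-block around $z$ and use its part (ii) (a circuit in the giant cluster connected to $\infty$ by a $p'$-black path) to force the connection to $\circuit^z$. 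This is valid — the hypotheses are met once $Y$ is large, since $L(p')\le L(p)\le n/Y$ — though slightly heavier machinery than needed, as the infinite arm is only used to hit $\circuit^z$, which the crossing argument underlying $X^{\circuit^z}_{p'}\ge\calV_{n/2}(z)$ already provides more cheaply. For the aggregation over $z\in Z$, the paper uses independence of the $X^{\circuit^z}_{p'}$ (from the disjoint interiors) together with a case distinction on $|Z|$; your Markov bound on the number of bad sites bypasses independence entirely and is arguably cleaner, and your remark that disjointness is then not needed for the stated ($\ve$-dependent) constants is correct — it only matters if one wants sharper concentration, e.g. via Lemma \ref{lem:moment_bd} and Lemma \ref{lem:Bernstein}, as the paper does elsewhere. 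Both routes produce constants depending on $\ve$ and $C$, which the statement permits.
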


\begin{proof}[Proof of Lemma \ref{lem:nice_circ2}]
By Lemma \ref{lem:calV_quant}, there exists a universal constant $c_0 > 0$ and $Y = Y(\ve) \geq 1$ such that for all $p' > p_c$ and $n \geq Y L(p')$,
$$\PP_{p'} \big( \calV_n \geq c_0 n^2 \theta(p') \big) \geq 1 - \frac{\ve}{2}.$$
In the remainder of this proof, we consider $p' \geq p$, so that in particular, $L(p') \leq L(p)$. Since $X^{\circuit^z} \geq \calV_{n/2}(z)$ for every $z \in Z$ (from the definition \eqref{eq:def_Vn} of $\calV$), we have: for all $n \geq 2 Y L(p)$,
$$\PP \big( X_{p'}^{\circuit^z} \geq c_0 n^2 \theta(p') \big) \geq 1 - \frac{\ve}{2}.$$
Since the random variables $(X_{p'}^{\circuit^z})_{z \in Z}$ are independent (the circuits have disjoint interiors), we deduce the existence of $\ul c = \ul c (\ve)$ such that
$$\PP \Bigg( \sum_{z \in Z} X_{p'}^{\circuit^z} \geq \ul c |Z| n^2 \theta(p') \Bigg) > 1 - \ve$$
(by distinguishing the two cases $|Z|$ small, and $|Z|$ large enough). This finally implies
$$\ul Q_{\ve} \Bigg( \sum_{z \in Z} X_{p'}^{\circuit^z} \Bigg) \geq \ul c |Z| n^2 \theta(p').$$



In order to estimate the upper quantile of $\sum_{z \in Z} X_{p'}^{\circuit^z}$, let us fix some $z \in Z$, and write $\circuit = \circuit^z$. We subdivide the vertices in $\inter(\circuit)$ according to their distance to $\circuit$: if we denote $i_{\textrm{max}} := \lceil \log_2 L(p) \rceil$ and $i'_{\textrm{max}} := \lceil \log_2 L(p') \rceil$, we have
\begin{align*}
\EE \big[ X_{p'}^\circuit \big]  & \leq \sum_{i=1}^{i'_{\textrm{max}}-1} \big| \{v \in \inter(\circuit) \: : \: 2^{i-1} \leq d(v,\circuit) < 2^i \} \big| \cdot \PP_{p'}(0 \lra{} \partial \Ball_{2^{i-1}})\\[-1mm]
& \hspace{4cm} + C_2 n^2 \PP_{p'}(0 \lra{} \partial \Ball_{2^{i'_{\textrm{max}}-1}}),
\end{align*}
for some universal constant $C_2 > 0$. Using that $\PP_{p'}(0 \lra{} \partial \Ball_{2^{i-1}}) \asymp \pi_1(2^{i-1})$ (by \eqref{eq:Kesten1}) and $\PP_{p'}(0 \lra{} \partial \Ball_{2^{i'_{\textrm{max}}-1}}) \asymp \PP_{p'}(0 \lra{} \partial \Ball_{L(p')}) \asymp \theta(p')$ (by \eqref{eq:ext} and \eqref{eq:equiv_expdecay}), we obtain
\begin{align}
\EE \big[ X_{p'}^\circuit \big]  & \leq C_3 \sum_{i=1}^{i'_{\textrm{max}}-1} \big| \{v \in \inter(\circuit) \: : \: 2^{i-1} \leq d(v,\circuit) < 2^i \} \big| \cdot \pi_1(2^{i-1}) + C_4 n^2 \theta(p') \nonumber\\
& \leq C_3 f_p(\circuit) + C_4 n^2 \theta(p'). \label{eq:upper_bd_X_p'}
\end{align}
For the last inequality, we replaced $i'_{\textrm{max}}$ by $i_{\textrm{max}}$ in the summation, which we can do since $L(p') \leq L(p)$. Using that $f_p(\circuit) \leq C (2n)^2 \theta(p)$ (since $\circuit$ is $(p, C)$-nice), we deduce from \eqref{eq:upper_bd_X_p'} that $\EE \big[ X_{p'}^\circuit \big] \leq C_5 n^2 (\theta(p) + \theta(p')) \leq C_6 n^2 \theta(p')$. Hence,
$$\EE \Bigg[ \sum_{z \in Z} X_{p'}^{\circuit^z} \Bigg] \leq C_6 |Z| n^2 \theta(p').$$
An application of Markov's inequality now completes the proof of Lemma \ref{lem:nice_circ2}.
\end{proof}

\section{Deconcentration argument} \label{sec:deconcentration}

\subsection{Frozen percolation: notations}

We now go back to frozen percolation. Recall from Section \ref{sec:intro_fp} that $\PP_N^{(G)}$ refers to volume-frozen percolation with parameter $N \geq 1$ on a graph $G=(V,E)$. The set of frozen sites at time $p$ is denoted by $\frozen^{(G)}(p)$, and we simply write $\frozen(p)$ when $G$ is clear from the context. Let us also stress that $(\tau_v)_{v \in V(\TT)}$ provides a natural coupling of the processes on various subgraphs of $\TT$.

In a similar way as for the hole $\hole(p)$ in $\Cinf(p)$ (Definition \ref{def:hole_cinf}), we define the hole of the origin in the frozen percolation process, replacing $\Cinf(p)$ by the set of frozen sites at time $p$:

\begin{definition} \label{def:hole_fp}
For a subgraph $G$ of $\TT$, we denote by $\holeF^{(G)}(p)$ the connected component of the origin in $G \setminus (\frozen(p) \cup \dout \frozen(p))$ (and we take $\holeF^{(G)}(p) = \emptyset$ if $0$ belongs to $\frozen(p) \cup \dout \frozen(p)$).
\end{definition}

By analogy, $\holeF^{(G)}(p)$ is also called hole of the origin, in the frozen percolation process.


\subsection{Exceptional scales} \label{sec:exceptional_scales}

Heuristically, for ordinary percolation in a box with volume $\simeq K^2$, Lemma \ref{lem:largest_vol} implies that for $p > p_c$, a giant connected component arises, with volume $\simeq \theta(p) K^2$ (and all the other components are tiny). Hence, for volume-frozen percolation in this box, we expect the first freezing event to occur at a time $p$ such that $\theta(p) K^2 \simeq N$, i.e. (with $c_{\theta}$ as in Proposition \ref{prop:theta_over_pi})
$$c_{\theta} \pi_1(L(p)) K^2 \simeq N.$$
This freezing event then leaves holes with diameter of order $L(p)$, so that $L(p)$ can be seen as the next scale in the process.

This informal explanation leads us to define $\nextN(K) := K'$ via the equation $c_{\theta} \pi_1(K') K^2 \simeq N$. More precisely, for all $N \geq 1$ and $K$ large enough (so that $c_{\theta} K^2 > N$), we introduce
\begin{equation} \label{eq:def_next}
\nextN(K) := \sup \{ K' \, : \, c_{\theta} \pi_1(K') K^2 \geq N \}.
\end{equation}
We also use $\nextN^{-1}(K') := \inf \{ K \, : \, \nextN(K) \geq K' \}$.

We can now define inductively the sequence of exceptional scales $(m_k(N))_{k \geq 0}$ by: $m_0 = 1$, and for all $k \geq 0$,
\begin{equation} \label{def_mk}
m_{k+1}(N) = \nextN^{-1}(m_k(N)).
\end{equation}
It follows easily from the definitions and the monotonicity of $\pi_1$ that
\begin{equation} 
m_{k+1} = \left\lceil \left(\frac{N}{c_{\theta} \pi_1(m_k)}\right)^{1/2} \right\rceil,
\end{equation}
and that $(m_k(N))_{k \geq 0}$ is non-decreasing for every fixed $N \geq 1$. Note also that
$m_1(N) \sim c_0 \sqrt{N}$ as $N \to \infty$, for some constant $c_0>0$.

By using Lemma \ref{lem:arm_exp}, we can see that each $m_k$ follows a power law: $m_k(N) = N^{\delta_k + o(1)}$ as $N \to \infty$, where the sequence of exponents $(\delta_k)_{k \geq 0}$ satisfies
\begin{equation} \label{eq:rec_delta}
\delta_0 = 0, \quad \text{and } \: \delta_{k+1} = \frac{1}{2} + \frac{5}{96} \delta_k \:\:\: (k \geq 0).
\end{equation}
Note that this sequence is strictly increasing, and that it converges to $\delta_{\infty} = \frac{48}{91}$.

It is natural to introduce the (approximate) fixed point of $\nextN$:
\begin{equation} \label{eq:def_minf}
m_{\infty}(N) := \sup \{ m \, : \, c_{\theta} \pi_1(m) m^2 \leq N \}
\end{equation}
(note that if we consider critical percolation in a box of volume $m^2$, the quantity $\pi_1(m) m^2$ gives the order of magnitude for the volume of the largest connected components). Lemma \ref{lem:arm_exp} implies that $m_{\infty}(N) = N^{\delta_{\infty} + o(1)}$ as $N \to \infty$, where $\delta_{\infty} = \frac{48}{91}$ is the exponent found below \eqref{eq:rec_delta}. The following observation is useful later.

\begin{lemma} \label{lem:nextN}
There exist universal constants $c, \eta > 0$ such that: for all $N \geq 1$, all $K \leq m_{\infty}(N)$,
\begin{equation}
\frac{\nextN(K)}{K} \leq c \left( \frac{K}{m_{\infty}} \right)^{\eta}.
\end{equation}
\end{lemma}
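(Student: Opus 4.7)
The plan is to use the defining equations of $\nextN$ and $m_\infty$ to re-express the ratio $\nextN(K)/K$ as a comparable ratio of one-arm probabilities, and then convert this into a ratio of scales via quasi-multiplicativity \eqref{eq:qmult} together with the polynomial bounds \eqref{eq:1arm_bound}.

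Set $K' := \nextN(K)$. The definitions \eqref{eq:def_next} and \eqref{eq:def_minf} give $c_\theta \pi_1(K') K^2 \asymp N \asymp c_\theta \pi_1(m_\infty) m_\infty^2$, so
\[
\pi_1(K') \asymp \pi_1(m_\infty) \, (m_\infty/K)^2.
\]
Because $K \le m_\infty$, this forces $\pi_1(K') \gtrsim \pi_1(m_\infty)$, hence $K' \le m_\infty$ (up to a multiplicative constant that is absorbed below); similarly, $c_\theta \pi_1(K) K^2 \le C N$ yields $K' \le C' K$. With $K' \le K \le m_\infty$, two applications of quasi-multiplicativity \eqref{eq:qmult} (to the triples $(1,K',K)$ and $(1,K,m_\infty)$) produce
\[
\pi_1(K',K) \asymp \frac{\pi_1(K)}{\pi_1(K')} \asymp \frac{(K/m_\infty)^2}{\pi_1(K,m_\infty)}.
\]

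The lower bound $\pi_1(K,m_\infty) \ge c_1 (K/m_\infty)^{1/2}$ from \eqref{eq:1arm_bound} then turns the previous display into $\pi_1(K',K) \le C (K/m_\infty)^{3/2}$. Applied instead to the annulus from $K'$ to $K$, the same bound gives $\pi_1(K',K) \ge c_1 (K'/K)^{1/2}$. Combining these two inequalities,
\[
(K'/K)^{1/2} \le c_1^{-1} C \, (K/m_\infty)^{3/2},
\]
which yields the lemma with $\eta = 3$. (Using the sharp arm exponent $\alpha_1 = 5/48$ from Lemma \ref{lem:arm_exp} would sharpen this to the optimal $\eta = 91/5$, but any positive $\eta$ suffices for the intended applications.)

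The only delicate point is the sup-based nature of the two definitions: because $\pi_1$ takes integer arguments, the defining relations hold only up to bounded multiplicative constants, and likewise the inequality $K' \le K$ above holds only up to such a constant. All these constants absorb harmlessly into the final $c$, so there is no real obstacle beyond routine bookkeeping on top of the near-critical toolbox already developed in Section \ref{sec:near_critical}.
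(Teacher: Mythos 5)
Your argument is correct and follows essentially the same route as the paper's proof: compare $c_{\theta}\pi_1(\nextN(K))K^2 \geq N \geq c_{\theta}\pi_1(m_{\infty})m_{\infty}^2$, then convert the resulting ratio $\pi_1(K)/\pi_1(\nextN(K))$ into a ratio of scales via quasi-multiplicativity \eqref{eq:qmult} and the a priori bound \eqref{eq:1arm_bound}, arriving at the same exponent $\eta = 3$. Your extra remarks (that $\nextN(K)\lesssim K$, and that the sup-based definitions only hold up to constants) are correct and only make explicit what the paper leaves implicit.
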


\begin{proof}[Proof of Lemma \ref{lem:nextN}]
We know from the definitions of $\nextN(K)$ \eqref{eq:def_next} and $m_{\infty}$ \eqref{eq:def_minf} that $c_{\theta} \pi_1(\nextN(K)) K^2 \geq N \geq c_{\theta} \pi_1(m_{\infty}) m_{\infty}^2$, so
\begin{equation} \label{eq:nextN_pf1}
\frac{\pi_1(K)}{\pi_1(\nextN(K))} = \frac{c_{\theta} \pi_1(K) K^2}{c_{\theta} \pi_1(\nextN(K)) K^2} \leq \frac{c_{\theta} \pi_1(K) K^2}{c_{\theta} \pi_1(m_{\infty}) m_{\infty}^2}.
\end{equation}
It follows from \eqref{eq:1arm_bound} that
\begin{equation} \label{eq:nextN_pf2}
\frac{\pi_1(K)}{\pi_1(\nextN(K))} \geq c_1 \left( \frac{\nextN(K)}{K} \right)^{1/2}
\end{equation}
and
\begin{equation} \label{eq:nextN_pf3}
\frac{c_{\theta} \pi_1(K) K^2}{c_{\theta} \pi_1(m_{\infty}) m_{\infty}^2} \leq c_2 \left( \frac{K}{m_{\infty}} \right)^{3/2}
\end{equation}
for some $c_1, c_2 > 0$. The desired result then follows by combining \eqref{eq:nextN_pf1}, \eqref{eq:nextN_pf2}, and \eqref{eq:nextN_pf3}.
\end{proof}

This lemma implies that if $\tilde{m}(N) \ll m_{\infty}(N)$ as $N \to \infty$, then $\nextN(\tilde{m}) \ll \tilde{m}$. It holds in particular for $\tilde{m} = m_k$ ($k \geq 0$), since $m_k(N) = N^{\delta_k + o(1)}$, with $\delta_k < \delta_{\infty}$.

\begin{remark}
Although our definition of exceptional scales and the one in \cite{BN15} (call it $(m'_k(N))_{k \geq 0}$) differ slightly, they are equivalent in the following sense: for every $k \geq 1$, $m_k(N) \asymp m'_k(N)$ as $N \to \infty$. In particular, the results from \cite{BN15} mentioned below also apply with this modified definition.
\end{remark}

Finally, we define the corresponding times $q_k = q_k(N)$ by:
\begin{equation} \label{eq:def_qk}
L(q_k) = m_k, \quad \text{with } \: q_k \in (p_c,1) \:\:\: (k \in \NN \cup \{\infty\})
\end{equation}
(recall that $L$ is continuous and strictly decreasing on $(p_c,1)$, see the end of Section \ref{sec:notations}). Our analysis focuses on the time window $[q_{\infty},q_1]$, $q_1$ being roughly the time when the last frozen clusters may appear.

\bigskip

Let us now recall the main results from \cite{BN15} about the scales $(m_k)_{k \geq 1}$, showing that they indeed play a particular role. The first theorem corresponds to the case when one starts with a box of side length of order $m_k$, for some fixed $k \geq 1$.

\begin{theorem}[\cite{BN15}, Theorem 1] \label{thm:exceptional_scales1}
Let $k \geq 2$ be fixed. For every $C \geq 1$, every function $\tilde{m}(N)$ that satisfies
\begin{equation}
C^{-1} m_k(N) \leq \tilde{m}(N) \leq C m_k(N)
\end{equation}
for $N$ large enough, we have
\begin{equation}
\liminf_{N \to \infty} \PP_N^{(\Ball_{\tilde{m}(N)})}(\text{$0$ is frozen at time $1$}) > 0.
\end{equation}
\end{theorem}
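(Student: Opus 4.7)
The plan is to prove Theorem \ref{thm:exceptional_scales1} by induction on $k \geq 1$, with an inductive hypothesis slightly stronger than the stated theorem: I would allow $\Lambda$ to range over a class of sufficiently regular domains at scale $m_k$, not just boxes. Concretely, for fixed $C \geq 1$ and small $\alpha_\circ, \eta_\circ > 0$ chosen so that Lemma \ref{lem:largest_vol} applies, let $\grid_k$ denote the class of connected, simply connected $\Lambda$ with $\Ball_{m_k/C} \subseteq \Lambda \subseteq \Ball_{C m_k}$ and (for $k \geq 2$) $(\alpha_\circ m_{k-1}, \eta_\circ)$-approximable. The inductive claim is that there exists $\alpha_k = \alpha_k(C) > 0$ such that
$$\liminf_{N \to \infty} \inf_{\Lambda \in \grid_k} \PP_N^{(\Lambda)}\big(\text{$0$ is frozen at time $1$}\big) \geq \alpha_k,$$
and the theorem is the specialization $\Lambda = \Ball_{\tilde m(N)}$. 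The base case $k=1$ is essentially direct: $|\Lambda| \asymp m_1^2 \asymp N$, so I would pick $p_* \in (p_c,1)$ bounded away from $p_c$ with $\theta(p_*) |\Lambda| > 2N$, and apply Lemma \ref{lem:largest_vol} at $p_*$. With probability $\geq 1-\ve$ the largest $p_*$-cluster in $\Lambda$ has volume $> N$ and contains a circuit in $\Ann_{m_1/(8C), m_1/(4C)}$ surrounding the origin. In the frozen dynamics nothing freezes before $p_*$, so the first cluster to reach volume $N$ is this growing largest cluster; it freezes at some $\tilde p_1 \leq p_*$ and contains the origin with positive probability.

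For the inductive step, fix $\Lambda \in \grid_k$ and let $p_* > q_{k-1}$ be chosen with $\theta(p_*) |\Lambda| > (1+\eta)N$ and $L(p_*) \asymp m_{k-1}$; the existence of such $p_*$ follows from the definition of $m_k$ via \eqref{def_mk} together with Proposition \ref{prop:theta_over_pi}. Lemma \ref{lem:largest_vol} applied at $p_*$ implies that with probability $\geq 1-\ve$, the largest $p_*$-cluster in $\Lambda$ has volume $> N$ and contains a circuit in $\Ann_{m_k/(8C), m_k/(4C)}$. In the frozen process, the first freezing then occurs at some $\tilde p_1 \in (q_{k-1} - o(1), p_*]$ with $L(\tilde p_1) \asymp m_{k-1}$, freezing a cluster $\cluster_{\mathrm{fr}}$ that surrounds $\Ball_{m_k/(8C)}$. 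I would then argue that, with probability bounded away from $0$, the origin lies in a hole $\holeF$ of $\cluster_{\mathrm{fr}}$ with $\holeF \in \grid_{k-1}$: since $\cluster_{\mathrm{fr}}$ contains a circuit at scale $\asymp m_k \gg L(\tilde p_1)$, Remark \ref{rem:H_HLambda} gives $\holeF = \hole(\tilde p_1)$ with high probability; Lemmas \ref{lem:apriori_hole} and \ref{lem:bounds_hole} give $\Ball_{c m_{k-1}} \subseteq \hole(\tilde p_1) \subseteq \Ball_{C' m_{k-1}}$ with positive probability; and Lemma \ref{lem:approx_H} gives $(\alpha_\circ m_{k-2}, \eta_\circ)$-approximability with high probability. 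These combine to place $\holeF$ in $\grid_{k-1}$ with positive probability.

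The induction closes using the Markov-style property of the stopping set $\holeF$: since $\holeF$ is separated from $\cluster_{\mathrm{fr}}$ by its layer of white outer boundary, no site in $\holeF$ is ever blocked from turning black by a frozen neighbor outside, and the $\tau_v$'s inside $\holeF$ are independent of everything already explored. Hence, conditionally on $\holeF$, the restriction of the process to $\holeF$ from time $\tilde p_1$ onward is distributed as volume-frozen percolation in $\holeF$ with parameter $N$. Applying the induction hypothesis to $\holeF \in \grid_{k-1}$ gives probability $\geq \alpha_{k-1}$ that the origin is frozen by time $1$; multiplying by the positive probability of the good first-freezing event yields $\alpha_k > 0$.

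The main obstacle is the ``good hole'' step: verifying that with probability bounded away from $0$, the hole $\holeF$ bequeathed by the first freezing has simultaneously the right diameter $\asymp m_{k-1}$, contains a macroscopic interior ball of the scale required by $\grid_{k-1}$, and is approximable at the coarse-graining scale $\alpha_\circ m_{k-2}$ needed to re-apply Lemma \ref{lem:largest_vol} at the next step. This rests on the simultaneous use of Lemmas \ref{lem:bounds_hole}, \ref{lem:apriori_hole}, \ref{lem:approx_H}, and the identification $\holeF = \hole(\tilde p_1)$ via Remark \ref{rem:H_HLambda}. A related subtlety is pinning down $\tilde p_1$ in a window narrow enough that $L(\tilde p_1) \asymp m_{k-1}$ uniformly as $N \to \infty$, which is why the sharp asymptotic $\theta(p) / \pi_1(L(p)) \to c_\theta$ (Proposition \ref{prop:theta_over_pi}) rather than just the Kesten relation \eqref{eq:Kesten_theta_pi} is needed to align $\theta(p_*) |\Lambda|$ with $N$.
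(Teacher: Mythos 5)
First, a point of comparison: this paper does not prove Theorem \ref{thm:exceptional_scales1} at all — it is imported verbatim from \cite{BN15} (Theorem 1 there), and Section \ref{sec:exceptional_scales} only restates it together with the stronger circuit-versions (Propositions \ref{prop:exceptional_scales1} and \ref{prop:exceptional_scales2}) that are actually used later. So your proposal can only be judged on its own merits, and as it stands it has genuine gaps, even though the induction-on-$k$ skeleton is indeed the right kind of strategy.

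The first gap is the base case. Your class $\grid_1$ contains domains with $\Ball_{m_1/C} \subseteq \Lambda$, and for large $C$ such a $\Lambda$ can have $|\Lambda| < N$ (recall $m_1 \sim c_0 \sqrt{N}$), in which case no cluster ever freezes and the probability that $0$ is frozen is exactly $0$; in particular you cannot choose $p_*$ bounded away from $p_c$ with $\theta(p_*)|\Lambda| > 2N$. This is precisely why Proposition \ref{prop:exceptional_scales1} carries the caveat that for $k=1$ one needs $C_1 > C_0$ for a universal constant $C_0$, and why the theorem itself is stated only for $k \geq 2$. To repair the induction you must use a different class at level $1$ (domains containing $\Ball_{C_0 m_1}$) and then verify, in the step down to level $1$, that the hole contains such a large ball with probability bounded away from $0$ (possible via the lower bounds in Lemma \ref{lem:bounds_hole}, but it has to be built into the scheme).

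The second gap is in the inductive step, in two places. (a) Applying Lemma \ref{lem:largest_vol} at a single time $p_*$ with $\theta(p_*)|\Lambda| > (1+\eta)N$ only bounds the first freezing time from above; it does not show that, at the (earlier, random) time $\tilde p_1$ when the first cluster reaches volume $N$, that cluster is the giant one, already contains a circuit surrounding the origin, and that no other cluster froze before it. The paper's own mechanism for this is the two-sided sandwich of Lemma \ref{lem:iteration_FP}: times $p^- < p^+$ defined by \eqref{eq:def_p-}--\eqref{eq:def_p+}, uniqueness of the giant and smallness of the second-largest cluster on $[p^-,p^+]$ (properties (1)--(8) there), which forces $\tilde p_1 \in [p^-,p^+]$ and forces the frozen cluster to contain the circuit of $\lclus(p^-)$. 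Your sketch asserts these conclusions without the sandwich. (b) Your closing step conditions on $\holeF$ and declares the configuration inside fresh, but $\holeF$ is \emph{not} a stopping set — the identity of the first-freezing cluster and its time depend a priori on the $\tau_v$'s in the hole region (a cluster inside could have reached volume $N$ first). The paper is explicit about this ("the $\Lambda_i$'s are not stopping sets") and circumvents it either by sandwiching the true holes between genuine stopping sets $\Delta_i$ (Lemma \ref{lem:iteration_FP}(ii), used in Section \ref{sec:proof_large_k}), or, in \cite{BN15}, by working with the all-circuit events $\Gamma_N(n_1,n_2)$, which quantify over every circuit in a prescribed annulus and are measurable with respect to the configuration inside it. Some such device is needed to make your "restart in the hole" step legitimate; with it (and with an inductive hypothesis stated uniformly over a monotone class of domains, as you anticipate), the argument can be completed, but as written the conditioning step is exactly the subtle point the existing proofs are organized around.
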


The second theorem deals with the case when one starts far from the exceptional scales.

\begin{theorem}[\cite{BN15}, Theorem 2] \label{thm:exceptional_scales2}
For every integer $k \geq 0$ and every $\ve>0$, there exists a constant $C=C(k,\ve) \geq 1$ such that: for every function $\tilde{m}(N)$ that satisfies
\begin{equation}
C m_k(N) \leq \tilde{m}(N) \leq C^{-1} m_{k+1}(N)
\end{equation}
for $N$ large enough, we have
\begin{equation}
\limsup_{N \to \infty} \PP_N^{(\Ball_{\tilde{m}(N)})}(\text{$0$ is frozen at time $1$}) \leq \ve.
\end{equation}
\end{theorem}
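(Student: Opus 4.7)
The plan is to induct on $k$. The base case $k=0$ is immediate: since $m_1(N) \sim c_0 \sqrt{N}$, choosing $C$ large enough forces $|\Ball_{\tilde m}| < N$ whenever $\tilde m \le C^{-1} m_1$, so no frozen cluster can ever appear in $\Ball_{\tilde m}$.

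For the inductive step, fix $\ve > 0$ and assume the conclusion for $k-1$ with some constant $C(k-1, \ve/2)$. The first step is to locate precisely the first freezing event around $0$. By Lemma \ref{lem:largest_vol} applied in $\Ball_{\tilde m}$, for each $p > p_c$ with $L(p) \ll \tilde m$ the largest $p$-black cluster has volume concentrated around $\theta(p)|\Ball_{\tilde m}|$ and dwarfs every other cluster. Using the sharp asymptotic $\theta(p) \sim c_\theta \pi_1(L(p))$ from Proposition \ref{prop:theta_over_pi}, the first freezing time $p_1$ satisfies $c_\theta \pi_1(L(p_1)) \tilde m^2 \simeq N$, i.e.\ $L(p_1) \simeq \nextN(\tilde m)$. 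Since $\nextN$ obeys an approximate power law with $\nextN(m_{j+1}) = m_j$, and scales roughly like $\nextN(CK) \asymp C^{96/5}\nextN(K)$ (via Lemma \ref{lem:arm_exp}), the hypothesis $C m_k \le \tilde m \le C^{-1} m_{k+1}$ translates into
$$C' m_{k-1}(N) \;\le\; L(p_1) \;\le\; (C')^{-1} m_k(N),$$
with $C' = C'(C) \to \infty$ as $C \to \infty$. Moreover, on this good event Lemma \ref{lem:largest_vol}(ii) combined with Remark \ref{rem:H_HLambda} shows that the hole $\holeF(p_1)$ around $0$ coincides with the percolation hole $\hole(p_1)$, has diameter of order $L(p_1)$, and by Lemma \ref{lem:approx_H} is $(\alpha L(p_1), \eta)$-approximable with high probability.

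Next I would iterate. Conditionally on the realization of $\holeF(p_1)$, the volume-frozen process restricted to this hole after time $p_1$ is a fresh instance of volume-frozen percolation with the same parameter $N$ in a domain of diameter $\simeq L(p_1)$, because the outer boundary of the hole consists of permanently white sites (cf.\ Definition \ref{def:hole_fp}). Using approximability, one sandwiches the hole between inner and outer balls of comparable radii, and applies the inductive hypothesis at level $k-1$ with $C'$ chosen above the threshold $C(k-1, \ve/2)$ (which is achieved by taking $C$ large enough). This bounds the conditional probability that $0$ is frozen at time $1$ by $\ve/2$; together with a $\le \ve/2$ bound on the complement of the good event, this gives the claim.

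The main obstacle is that the iteratively produced domains are random and irregular rather than clean boxes, so Lemma \ref{lem:largest_vol} must be robust to the shape of the enclosing region; this is exactly the purpose of the approximability framework of Section \ref{sec:approximable}, which must propagate through the induction. A secondary, delicate point is quantitative: the recursion $L(p_{j+1}) \simeq \nextN(L(p_j))$ must match the defining recursion $m_{j+1} = \nextN^{-1}(m_j)$ of the exceptional scales with high precision, since multiplicative errors accumulate over $k$ steps. This is why the refined asymptotic $\theta(p)/\pi_1(L(p)) \to c_\theta$ of Proposition \ref{prop:theta_over_pi}, rather than the classical equivalence \eqref{eq:Kesten_theta_pi}, is built into the very definition of $\nextN$ and of the $m_k$.
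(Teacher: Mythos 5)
There is a genuine gap, and it sits exactly where the real work is. Note first that the paper does not prove this statement at all: it is imported from \cite{BN15}, and Section \ref{sec:exceptional_scales} explains that the induction there is carried out not on the box statement you are trying to propagate, but on the strengthened, circuit-uniform statements restated here as Propositions \ref{prop:exceptional_scales1} and \ref{prop:exceptional_scales2} (the events $\Gamma_N$ and $\tilde{\Gamma}_N$, which quantify over \emph{all} circuits $\gamma$ in an annulus and the processes in the enclosed domains $\mathcal{D}(\gamma)$). The reason this strengthening is unavoidable is precisely the step you dispose of by ``sandwiching the hole between inner and outer balls and applying the inductive hypothesis'': volume-frozen percolation is not monotone in the domain (this non-monotonicity is stressed in the introduction), so knowing that the hole $\holeF(p_1)$ is squeezed between $\Ball_{c_1 L(p_1)}$ and $\Ball_{c_2 L(p_1)}$ gives you no control on $\PP_N^{(\holeF(p_1))}(0 \text{ frozen at time } 1)$ from the box statement at level $k-1$. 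Your inductive hypothesis, as formulated, simply does not apply to the random irregular domain produced by the first freezing; one must instead prove a statement that is uniform over a class of domains stable under the iteration (arbitrary circuits in an annulus in \cite{BN15}; approximable domains sandwiched as in Lemma \ref{lem:iteration_FP}(ii), where the conclusion is established simultaneously for \emph{every} $\Lambda$ with $(\Dint{\Delta}{\alpha K})_{(\beta)} \subseteq \Lambda \subseteq \Delta$, in the present paper's Theorem \ref{thm:large_k_strong}).

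A second, related soft spot is the claim that, conditionally on $\holeF(p_1)$, the process inside is ``a fresh instance'' of frozen percolation in that domain. The frozen-percolation holes are not stopping sets (the paper points this out explicitly in the proof of Theorem \ref{thm:large_k_strong}: the $\Lambda_i$'s are not stopping sets, only the auxiliary $\Delta_i$'s are), so the conditional law of the $\tau_v$'s inside the hole is not i.i.d.\ uniform without further argument; this is exactly why the paper works with $\hole^{(\Lambda)}(p)$, nets, and deterministic sandwiching rather than conditioning on $\holeF(p_1)$ directly. Your first step (locating $p_1$ via Lemma \ref{lem:largest_vol} and Proposition \ref{prop:theta_over_pi}, and the scale computation $L(p_1) \simeq \nextN(\tilde m)$ with the $C \mapsto C'$ bookkeeping) is the right heuristic and matches the paper's Section \ref{sec:exceptional_scales} discussion, but as written the induction does not close; repairing it amounts to proving the circuit-uniform (or domain-uniform) version, i.e.\ essentially Proposition \ref{prop:exceptional_scales2} or the present paper's Lemma \ref{lem:iteration_FP} machinery, rather than the box statement alone.
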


These two results were proved by induction, and for that, we established in \cite{BN15} some slightly stronger versions that we now state. For a circuit $\gamma$, we denote by $\mathcal{D}(\gamma) \subseteq \TT$ the domain that it encloses. For any $0 < n_1 < n_2$, we introduce
\begin{itemize}
\item $\Gamma_N(n_1, n_2) = \{$for every circuit $\gamma$ in $\Ann_{n_1, n_2}$, for the process in $\mathcal{D}(\gamma)$ with parameter $N$, $0$ is frozen at time $1\}$,
\item and $\tilde{\Gamma}_N(n_1, n_2) = \{$there exists a circuit $\gamma$ in $\Ann_{n_1, n_2}$ such that for the process in $\mathcal{D}(\gamma)$ with parameter $N$, $0$ is frozen at time $1\}$.
\end{itemize}
Here, we use the natural coupling for the frozen percolation processes in various subgraphs of $\TT$.

\begin{proposition}[\cite{BN15}, Proposition 2] \label{prop:exceptional_scales1}
For any $k \geq 2$, and $0 < C_1 < C_2$, we have
\begin{equation}
\liminf_{N \to \infty} \mathbb{P}(\Gamma_N(C_1 m_k(N), C_2 m_k(N))) > 0.
\end{equation}
This result also holds for $k=1$ under the extra condition that $C_1 > C_0$, where $C_0 > 0$ is a universal constant.
\end{proposition}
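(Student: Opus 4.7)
The plan is to proceed by induction on $k \geq 1$. For the base case $k=1$ (with $C_1 > C_0$), any circuit $\gamma \subseteq \Ann_{C_1 m_1, C_2 m_1}$ encloses $\Ball_{C_1 m_1}$, whose volume is $\asymp C_1^2 N$ since $m_1 \asymp \sqrt{N}$. Choose $p_1$ with $L(p_1) = m_0 = 1$, so that $\theta(p_1) \asymp 1$ by Proposition \ref{prop:theta_over_pi}. Applying Lemma \ref{lem:largest_vol} to $\Lambda = \Ball_{C_1 m_1/2}$ at time $p_1$, with $C_0$ taken large enough that $\theta(p_1)(C_1 m_1/2)^2 > 2N$, gives with positive probability a unique giant cluster of volume $\asymp C_1^2 N \gg N$ carrying a black circuit around $0$ in $\Ann_{C_1 m_1/16, C_1 m_1/8}$, while by the net estimate of Lemma \ref{lem:net} no cluster touching $\Ann_{C_1 m_1, C_2 m_1}$ grows to volume $N$ in an interfering way. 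Combining by FKG with the positive-probability event $\{0 \lra{p_1} \partial \Ball_{C_1 m_1/4}\}$, we get that $0$ lies in this giant cluster. The frozen process on $\mathcal{D}(\gamma)$ agrees with ordinary percolation until the first freezing; the cluster of $0$ reaches volume $N$ at some time $p^{\textrm{froz}} \leq p_1$ and freezes, so $0$ is frozen, and uniformity over $\gamma$ is free since the relevant cluster is supported inside $\Ball_{C_1 m_1/2}$.

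For the inductive step, assume the claim for $k-1$ and fix $C_1 < C_2$. Choose $p^-, p^+$ with $L(p^{\pm}) = (1 \mp \eta) m_{k-1}$, for some small $\eta > 0$. Define a good event $\mathcal{G}_k$, measurable in $(\tau_v)_{v \in \Ball_{C_2 m_k}}$, that requires: (a) at time $p^+$, Lemma \ref{lem:largest_vol} applied to $\Ball_{C_1 m_k/2}$ (with $n = L(p^+) \asymp m_{k-1} \ll m_k$, by Lemma \ref{lem:nextN}) produces a unique giant cluster of volume $\asymp N$ containing a $p^+$-black circuit around $0$ in $\Ann_{C_1 m_k/16, C_1 m_k/8}$, with all other clusters in $\Ball_{C_2 m_k}$ of volume $\ll N$; (b) no cluster in $\Ball_{C_2 m_k}$ has volume $\geq N$ at time $p^-$, by Lemma \ref{lem:BCKS} and a union bound over $\asymp (m_k/m_{k-1})^2$ boxes of side $m_{k-1}$; (c) $\hole(p^+)$ has diameter $\asymp m_{k-1}$ and is $(\alpha m_{k-1}, \eta)$-approximable (Lemmas \ref{lem:apriori_hole}, \ref{lem:approx_H}), with a deterministic annular region $\Ann_{C'_1 m_{k-1}, C'_2 m_{k-1}}$ contained inside it, where $C'_1, C'_2$ match the inductive hypothesis (with $C'_1 > C_0$ when $k-1 = 1$). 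Each condition has $\liminf_{N \to \infty}$ probability positive, so $\PP(\mathcal{G}_k)$ does too. On $\mathcal{G}_k$, for every circuit $\gamma \in \Ann_{C_1 m_k, C_2 m_k}$, the first freezing in $\mathcal{D}(\gamma)$ occurs at some $p^{(1)} \in (p^-, p^+]$, uniformly in $\gamma$, and is triggered by the giant cluster reaching volume $N$; it produces a frozen circuit around $0$, and the resulting hole $\holeF^{(\mathcal{D}(\gamma))}(p^{(1)})$ coincides with $\hole(p^{(1)})$ by an analogue of Remark \ref{rem:H_HLambda} (using that the giant frozen cluster contains the $p^+$-black circuit provided by Lemma \ref{lem:largest_vol}(ii)).

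To close the induction, one compares the continuation of the frozen process in $\holeF^{(\mathcal{D}(\gamma))}(p^{(1)})$, started at time $p^{(1)}$, with a fresh frozen process started at $p_c$ in $\mathcal{D}(\gamma')$ for some circuit $\gamma' \subseteq \Ann_{C'_1 m_{k-1}, C'_2 m_{k-1}}$ inside the hole. Since $L(p^{(1)}) \asymp m_{k-1} \asymp \diam(\hole(p^{(1)}))$, the configuration induced inside the hole at time $p^{(1)}$ is near-critical at the appropriate scale; a monotonicity argument (the pre-existing $p^{(1)}$-black sub-clusters, each of volume $< N$, can only accelerate freezing) yields a coupling under which ``$0$ is frozen'' in the fresh process implies ``$0$ is frozen'' in the continuation. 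The inductive hypothesis then gives positive probability conditional on $\mathcal{G}_k$, completing the induction. The main obstacle is precisely this coupling: one must show that the pre-existing sub-clusters and the non-standard starting time $p^{(1)} > p_c$ do not disrupt the inductive structure (neither triggering a spurious early freezing that captures $0$ with an inductively useless hole, nor destroying the scale matching). This requires both the continuity/approximability of the hole (Lemma \ref{lem:cont_vol} and Corollary \ref{cor:cont_vol}) and the precise near-critical time-to-scale correspondence from Proposition \ref{prop:theta_over_pi} and Lemma \ref{lem:p-L(p)}, so that the effective window $[p^-, p^+]$ and the inner annulus $\Ann_{C'_1 m_{k-1}, C'_2 m_{k-1}}$ are genuinely at the right scale for the inductive hypothesis to apply.
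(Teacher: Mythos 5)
First, note that the paper does not prove this proposition at all: it is quoted verbatim from \cite{BN15} (Proposition 2 there), so there is no in-paper argument to compare against. Your reconstruction does follow the broad strategy one expects (and which \cite{BN15} uses): induction on $k$, a direct volume argument at the base scale $m_1\asymp\sqrt{N}$ with $C_1>C_0$, and an inductive step in which the first freezing in $\mathcal{D}(\gamma)$ leaves a hole at scale $m_{k-1}$ inside which one re-applies the hypothesis. This is also exactly why $\Gamma_N$ is formulated uniformly over \emph{all} circuits in the annulus.

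The genuine gap is in the step you yourself flag as the main obstacle, and your proposed fix does not work. You compare the continuation of the process in the hole (started at $p^{(1)}$ with pre-existing black sites) to a fresh process in $\mathcal{D}(\gamma')$ for a circuit $\gamma'$ chosen \emph{strictly inside} the hole, and you invoke monotonicity (``pre-existing sub-clusters can only accelerate freezing''). Frozen percolation is not monotone --- the paper stresses this repeatedly --- and extra black sites can change \emph{which} cluster freezes first and where the resulting holes lie, so ``$0$ frozen in the fresh process $\Rightarrow$ $0$ frozen in the continuation'' is not a valid deduction. The correct mechanism requires no coupling inequality at all: take $\gamma'$ to be the outer boundary circuit $\dout\holeF(p^{(1)})$ of the hole itself. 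Since every black cluster contained in the hole at time $p^{(1)}$ is a cluster of the original process and hence has volume $<N$, the fresh frozen process in $\mathcal{D}(\gamma')$ (same $\tau_v$'s, run from time $0$) has no freezing before $p^{(1)}$ and coincides \emph{exactly} with the continuation of the original process inside the hole; one then applies the inductive event $\Gamma_N(C'_1 m_{k-1},C'_2 m_{k-1})$ to this \emph{random} circuit, which lies in the prescribed annulus on the good event --- this is the whole point of the uniform-over-circuits formulation. Two secondary gaps: (i) ``each condition has positive liminf probability, so $\PP(\mathcal{G}_k)$ does too'' is a non sequitur; you need the ``probability close to $1$'' versus ``probability bounded below'' bookkeeping, and combining $\mathcal{G}_k$ with the inductive event (which lives on the $\tau_v$'s inside the hole) requires a stopping-set/measurability argument, since FKG is not available for these non-increasing dynamical events; (ii) for the same reason, the base-case use of FKG to force $0$ into the first cluster that freezes is not legitimate as stated --- one needs a deterministic time window $[p^-,p^+]$ plus a net/circuit construction guaranteeing that \emph{any} cluster reaching volume $N$ in that window, in any $\mathcal{D}(\gamma)$, already contains $0$'s cluster.
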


\begin{proposition}[\cite{BN15}, Proposition 3] \label{prop:exceptional_scales2}
Let $k \geq 0$, $\ve>0$, and $0 < C_1 < C_2$. Then there exists a constant $C=C(k,\ve,C_1,C_2)$ such that: for every function $\tilde{m}(N)$ that satisfies
\begin{equation}
C m_k(N) \leq C_1 \tilde{m}(N) \leq C_2 \tilde{m}(N) \leq C^{-1} m_{k+1}(N)
\end{equation}
for $N$ large enough, we have
\begin{equation}
\limsup_{N \to \infty} \mathbb{P}(\tilde{\Gamma}_N(C_1 \tilde{m}(N), C_2 \tilde{m}(N))) \leq \ve.
\end{equation}
\end{proposition}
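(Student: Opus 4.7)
The proof proceeds by induction on $k \ge 0$, in the same spirit as Proposition \ref{prop:exceptional_scales1} and Theorem \ref{thm:exceptional_scales2}. For the \emph{base case $k=0$}, we have $m_0=1$ and $m_1(N)\sim c_0\sqrt N$, so the hypothesis $C_2\tilde m\le C^{-1}m_1(N)$ yields $|\mathcal D(\gamma)|\le |\Ball_{C_2\tilde m}|\le c_1\,C^{-2} c_0^2\, N$ for a universal geometric constant $c_1$ and all large $N$; for $C$ large enough, this is strictly less than $N$, so no cluster in any $\mathcal D(\gamma)$ can reach volume $N$, no freezing occurs, and $\tilde\Gamma_N=\emptyset$ deterministically.

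\emph{Induction step.} Assume the result at level $k$, and fix $\ve>0$, $0<C_1<C_2$. Set $\tilde m':=\nextN(\tilde m)$. The monotonicity of $\nextN$, the identity $\nextN(m_{k+1})=m_k$, and the power-law scaling of $\nextN$ (cf. Lemma \ref{lem:nextN} and $\pi_1(n)=n^{-5/48+o(1)}$) convert the hypothesis $Cm_{k+1}\le C_1\tilde m\le C_2\tilde m\le C^{-1}m_{k+2}$ into $\tilde m'\in[\kappa\, m_k,\kappa^{-1}m_{k+1}]$ with $\kappa=\kappa(C)\to\infty$ as $C\to\infty$. Fix auxiliary $C_1'<C_2'$, invoke the induction hypothesis to produce $C':=C(k,\ve/2,C_1',C_2')$, and take $C$ large enough in terms of $C'$ and the constants from Lemmas \ref{lem:largest_vol}, \ref{lem:apriori_hole}, \ref{lem:approx_H}, Remark \ref{rem:H_HLambda} and Proposition \ref{prop:theta_over_pi}. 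Introduce a ``good event'' $G$ stating that: (a) there is a deterministic window $[p_1^-,p_1^+]$ with $L(p)\asymp\tilde m'$ on it, centered on the predicted first-freezing time $p_1^\star$ solving $c_\theta\pi_1(L(p_1^\star))|\Ball_{C_2\tilde m}|\asymp N$ via Proposition \ref{prop:theta_over_pi}; (b) Lemma \ref{lem:largest_vol} applies at every $p\in[p_1^-,p_1^+]$ inside $\Ball_{C_2\tilde m}$, and the giant $p$-cluster contains a circuit in $\Ann_{\tilde m/8,\tilde m/4}$ connected to infinity; (c) $\hole(p)$ satisfies $\Ball_{\eta_1\tilde m'}\subseteq\hole(p)\subseteq\Ball_{\eta_2\tilde m'}$ for some $0<\eta_1<\eta_2$ (chosen to dominate $C_1',C_2'$) and is $(\alpha L(p),\eta)$-approximable. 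A union bound yields $\PP(G^c)\le\ve/4$. On $G$, for every $\gamma\subseteq\Ann_{C_1\tilde m,C_2\tilde m}$ the first freezing in $\mathcal D(\gamma)$ occurs in the window $[p_1^-,p_1^+]$, is triggered by the \emph{same} giant cluster (its interior circuit encloses every such $\gamma$), and leaves the common hole $\hole^{(\mathcal D(\gamma))}(p_1)=\hole(p_1)$ around the origin (Remark \ref{rem:H_HLambda}), with $\Ann_{C_1'\tilde m',C_2'\tilde m'}$ comfortably inside $\hole(p_1)$.

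\emph{Reduction and main obstacle.} Because $\hole(p_1)$ is a stopping set insulated from the frozen exterior by a layer of permanently white vertices, the dynamics inside it past time $p_1$ is distributed as volume-frozen percolation on the subgraph $\hole(p_1)$ with parameter $N$, conditionally independent of the outside. Hence on $G$ the event ``$0$ is frozen at time $1$ in $\mathcal D(\gamma)$'' coincides with the same event for the subprocess on $\hole(p_1)$, uniformly in $\gamma$. Using the approximability and size bounds in (c), $\hole(p_1)$ can be realized as $\mathcal D(\tilde\gamma)$ for a circuit $\tilde\gamma\in\Ann_{C_1'\tilde m',C_2'\tilde m'}$ extracted from its outer boundary, so ``$0$ frozen in the subprocess on $\hole(p_1)$'' entails $\tilde\Gamma_N(C_1'\tilde m',C_2'\tilde m')$ for the interior subprocess, whose conditional probability is $\le\ve/2$ by the induction hypothesis. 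Summing, $\PP(\tilde\Gamma_N(C_1\tilde m,C_2\tilde m))\le\PP(G^c)+\ve/2\le\ve$. The main technical obstacle is the \emph{$\gamma$-uniformity} of the first-freezing description (same giant cluster, same hole, same time window); this forces one to use the sharper asymptotic $\theta(p)\sim c_\theta\pi_1(L(p))$ of Proposition \ref{prop:theta_over_pi} rather than the cruder $\theta\asymp\pi_1(L)$, together with the regularity and approximability estimates of Section \ref{sec:approximable}, in order to pin $L(p_1)$ to a single scale across all $\gamma$ and to realize $\hole(p_1)$ as a domain with a nice witness circuit in the target inner annulus.
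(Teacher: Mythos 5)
A preliminary remark on the comparison: the paper does not prove Proposition \ref{prop:exceptional_scales2} at all — it is imported verbatim from \cite{BN15} (Proposition 3 there), so there is no internal proof to match against. The closest internal analogue of the argument you sketch is the machinery of Lemma \ref{lem:iteration_FP} and its use in Theorem \ref{thm:large_k_strong}, and your overall architecture (induction on $k$, one freezing per scale, then pass to the hole and re-apply the hypothesis at scale $\tilde m'=\nextN(\tilde m)$) is indeed the intended one; your base case is correct, and the reduction of ``$0$ frozen at time $1$ in $\mathcal{D}(\gamma)$'' to the subprocess in the hole (insulated by the layer of permanently white sites, so that on the good event the hole is $\mathcal{D}(\tilde\gamma)$ for a circuit $\tilde\gamma$ taken from its outer boundary) is sound.

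There is, however, a genuine gap at the step you yourself flag, the $\gamma$-uniformity, and the tools you invoke there do not close it. First, the first freezing time in $\mathcal{D}(\gamma)$ depends on $|\mathcal{D}(\gamma)|$, which varies by a constant factor over circuits $\gamma\subseteq\Ann_{C_1\tilde m,C_2\tilde m}$, so there is no ``common hole'': the identity $\hole^{(\mathcal{D}(\gamma))}(p_1)=\hole(p_1)$ cannot hold simultaneously for all $\gamma$. What is both true and needed is a two-sided sandwich of every such hole between the holes at the endpoints of the window, combined with the continuity/approximability statement (Lemma \ref{lem:cont_vol}), to guarantee that for \emph{every} $\gamma$ the hole's boundary circuit lies in $\Ann_{C_1'\tilde m',C_2'\tilde m'}$; this is exactly what Lemma \ref{lem:iteration_FP}(ii) is designed to deliver. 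Second, and more seriously, knowing that Lemma \ref{lem:largest_vol} applies ``at every $p\in[p_1^-,p_1^+]$ inside $\Ball_{C_2\tilde m}$'' does not control the largest and second-largest clusters in an arbitrary intermediate domain $\mathcal{D}(\gamma)$: these quantities are not monotone in the domain, and this is precisely where the non-monotonicity of frozen percolation bites. One needs the deterministic bridging argument appearing in the proof of Lemma \ref{lem:iteration_FP} (properties (1)--(6) in the two reference domains imply (7)--(8) in every intermediate domain: two disjoint clusters of volume $\geq\frac{3}{4}N$ would either remain disjoint, contradicting the second-largest bound at $p_1^+$, or merge, contradicting the largest-cluster bound at $p_1^+$). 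Without this, the assertion that the first freezing in every $\mathcal{D}(\gamma)$ is ``triggered by the same giant cluster'' is unsupported, and it is the heart of the proof. A smaller point: Proposition \ref{prop:theta_over_pi} is not ``forced'' here — for fixed $k$ the weaker equivalence \eqref{eq:Kesten_theta_pi} suffices, since all constants can be absorbed into $C$, $C_1'$, $C_2'$ (indeed \cite{BN15} predates Proposition \ref{prop:theta_over_pi}); the sharp constant $c_\theta$ is needed in the present paper only for the uniform-in-$k$ deconcentration analysis, not for this proposition.
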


For future use, let us note that Proposition \ref{prop:exceptional_scales2} can be formulated in the following way, which may look stronger at first sight. For all $k \geq 0$, $\ve>0$, and $0 < C_1 < C_2$, there exist $C$ and $N_0$ such that: for all $N \geq N_0$ and all $\tilde{m}$ with $C m_k(N) \leq C_1 \tilde{m} \leq C_2 \tilde{m} \leq C^{-1} m_{k+1}(N)$, we have
$$\mathbb{P}(\tilde{\Gamma}_N(C_1 \tilde{m}, C_2 \tilde{m})) \leq \ve.$$

\begin{remark}
Although we are not using it later, we mention that with small adjustments to the proofs of Propositions \ref{prop:exceptional_scales1} and \ref{prop:exceptional_scales2}, we can also get some information on the size of the final cluster $\cluster_1(0)$ of the origin. For any $0 < n_1 < n_2$ and $M \geq 1$, let us introduce
\begin{itemize}
\item $\Gamma_N^{(M)}(n_1, n_2) = \{$for every circuit $\gamma$ in $\Ann_{n_1, n_2}$, for the process in $\mathcal{D}(\gamma)$ with parameter $N$, $|\cluster_1(0)| \notin \big( M, \frac{N}{M} \big) \}$,

\item and $\tilde{\Gamma}_N^{(M)}(n_1, n_2) = \{$there exists a circuit $\gamma$ in $\Ann_{n_1, n_2}$ such that for the process in $\mathcal{D}(\gamma)$ with parameter $N$, $|\cluster_1(0)| \notin \big( M, \frac{N}{M} \big) \}$.
\end{itemize}
We can then distinguish the same two cases as before.
\begin{itemize}
\item For all $k \geq 2$, $\ve>0$, and $0 < C_1 < C_2$, there exists $M > 1$ such that:
$$\liminf_{N \to \infty} \mathbb{P}(\Gamma_N^{(M)}(C_1 m_k(N), C_2 m_k(N))) \geq 1 - \ve.$$
Moreover, we can also show that each of the three cases $|\cluster_1(0)| \leq M$ ($\cluster_1(0)$ is microscopic), $|\cluster_1(0)| \in \big[ \frac{N}{M}, N)$ (macroscopic and non-frozen), and $|\cluster_1(0)| \geq N$ (macroscopic and frozen) has a probability bounded away from $0$ as $N \to \infty$. 

\item For all $k \geq 0$, $\ve>0$, $0 < C_1 < C_2$, and $M > 1$, there exists a constant $C = C(k, \ve, C_1, C_2, M)$ such that: for every function $\tilde{m}(N)$ that satisfies
\begin{equation}
C m_k(N) \leq C_1 \tilde{m}(N) \leq C_2 \tilde{m}(N) \leq C^{-1} m_{k+1}(N)
\end{equation}
for $N$ large enough, we have
\begin{equation} \label{eq:limsup_Gamma_tilde}
\limsup_{N \to \infty} \mathbb{P}(\tilde{\Gamma}_N^{(M)}(C_1 \tilde{m}(N), C_2 \tilde{m}(N))) \leq \ve.
\end{equation}
\end{itemize}
\end{remark}

\subsection{Some processes associated with frozen percolation} \label{sec:associated_chains}

We now present several random sequences related to frozen percolation in a simply connected, bounded domain $\Lambda \subseteq \TT$. For all $p > p_c$, we denote by $\distvol_p$ the distribution of $\frac{|\hole(p)|}{L(p)^2}$. In the following definitions, some value of the parameter $N \geq 1$ is fixed. Recall that $c_{\theta}$ is the constant in Proposition \ref{prop:theta_over_pi}, and that $\nextN$ is defined in \eqref{eq:def_next}.

\begin{itemize}
\item[(i)] First, we can consider the sequence of successive holes around $0$ for the frozen percolation process in $\Lambda$.
\begin{itemize}
\item We start with $\Lambda_0 = \Lambda$.

\item Given $\Lambda_i$ ($i = 0, \ldots, k-1$), $p_{i+1}$ is the time of the first freezing event for the frozen percolation process in $\Lambda_i$,

\item and $\Lambda_{i+1} = \holeF^{(\Lambda_i)}(p_{i+1})$ (see Definition \ref{def:hole_fp}).
\end{itemize}

\item[(ii)] Given an initial value $K > 0$, we define the (deterministic) sequence $(K_i)_{0 \leq i \leq k}$ by:
$$K_0 = K, \quad \text{and } \: K_{i+1} = \nextN(K_i) \:\:\: (i = 0, \ldots, k-1).$$
Later, $K$ typically depends on $N$. We think of the $K_i$'s as reference scales, at which the successive freezing events occur, as explained in Section \ref{sec:exceptional_scales}.

\item[(iii)] Following the same heuristic explanation as in the beginning of Section \ref{sec:exceptional_scales}, we expect the first freezing event in a domain $\Lambda^*$ to occur at a time $p^*$ such that $c_{\theta} \pi_1(L(p^*)) |\Lambda^*| \simeq N \simeq c_{\theta} \pi_1(\nextN(K)) K^2$, and so (using \eqref{lem:Rob_ratio_lim}) $\frac{L(p^*)}{\nextN(K)} \simeq \big( \frac{K^2}{|\Lambda^*|} \big)^{-48/5}$. Moreover, we expect the frozen percolation hole created in this way to look like $\hole^{(\Lambda^*)}(p^*) \simeq \hole(p^*)$ (see \eqref{eq:def_hole_domain} and Remark \ref{rem:H_HLambda}). This leads us to introduce the following sequences of (random) sets $(\Lambda^*_i)_{0 \leq i \leq k}$ and (random) times $(p^*_i)_{1 \leq i \leq k}$. We expect them to approximate the real process in $\Lambda$, which we prove rigorously in Section \ref{sec:finite_box}.
\begin{itemize}
\item We start with $\Lambda^*_0 = \Lambda$, and some value $K_0 > 0$ (typically, $K_0$ is chosen later of the same order of magnitude as $\diam(\Lambda)$).

\item Given $\Lambda^*_i$ ($i = 0, \ldots, k-1$), $p^*_{i+1}$ is defined by
\begin{equation} \label{eq:def_chain_p*}
\frac{L(p^*_{i+1})}{K_{i+1}} = \left( \frac{|\Lambda^*_i|}{K_i^2} \right)^{48/5},
\end{equation}

\item and then, we take $\Lambda^*_{i+1} = \hole(p^*_{i+1})$.
\end{itemize}

\item[(iv)] We also introduce the chain $(p^{**}_i)_{1 \leq i \leq k}$, defined by taking $\Lambda^{**}_0 = \Lambda$ and 
$$\frac{L(p^{**}_{i+1})}{K_{i+1}} = \left( \frac{|\Lambda^{**}_i|}{K_i^2} \right)^{48/5}$$
(as for $(p^*_i)_{1 \leq i \leq k}$), but $\Lambda^{**}_{i+1} = \hole^{(\Lambda^{**}_i)}(p^{**}_{i+1})$.

\item[(v)] Finally, we use a slight modification of the chain $(p^*_i)_{1 \leq i \leq k}$. The right-hand side of \eqref{eq:def_chain_p*} is clearly equal to $\big( \frac{|\Lambda^*_i|}{L(p^*_i)^2} \big)^{48/5} \big( \frac{L(p^*_i)}{K_i} \big)^{96/5}$, which suggests to define a chain $(\tilde{p}_i)_{0 \leq i \leq k}$ by
\begin{equation} \label{eq:def_ptilde}
\frac{L(\tilde{p}_{i+1})}{K_{i+1}} = \tilde{\alpha}_i^{48/5} \left( \frac{L(\tilde{p}_i)}{K_i} \right)^{96/5},
\end{equation}
where $\tilde{\alpha}_i$ is a fresh random variable with distribution $\distvol_{\tilde{p}_i}$ (in other words, given $\tilde{p}_i$, $\tilde{\alpha}_i$ is independent of the ``past'').
\end{itemize}

This last chain $(\tilde{p}_i)_{0 \leq i \leq k}$ is exactly a Markov chain, which makes it more convenient to work with. In particular, we start by proving deconcentration for this chain, in Section \ref{sec:deconcentration_chain}, based on an abstract result established in Section \ref{sec:abstract_deconcentration}. Moreover, it is easy to see that it behaves, essentially, in the same way as $(p^*_i)_{0 \leq i \leq k}$ and $(p^{**}_i)_{0 \leq i \leq k}$, as we explain now.

We denote by $d_{TV}$ the total variation distance between two distributions, and with a slight abuse of notation, we also talk about the total variation distance between two random variables $X$ and $Y$ (defined as the distance between their respective distributions).

In the following lemma, the chain $(L(\tilde{p}_i))$ starts at step $i=1$, with the value $L(\tilde{p}_1) = L(p^*_1)$ ($= L(p^{**}_1)$), which by definition \eqref{eq:def_chain_p*} is a deterministic function of $\Lambda^*_0$.

\begin{lemma} \label{lem:ptilde_p*}
For all $k \geq 1$, $\ve > 0$, and $c_2 > c_1 > 0$, there exist $M_0, N_0 \geq 1$ such that: for all $N \geq N_0$, $K_0 \in [m_{k+1}(N), m_{\infty}(N) / M_0]$, and $\Lambda^*_0 = \Lambda^{**}_0$ with $\Ball_{c_1 K_0} \subseteq \Lambda^*_0 \subseteq \Ball_{c_2 K_0}$, we have
$$d_{TV} \big( L(p^*_k), L(\tilde{p}_k) \big) \leq \ve \quad \text{and} \quad d_{TV} \big( L(p^{**}_k), L(\tilde{p}_k) \big) \leq \ve.$$
\end{lemma}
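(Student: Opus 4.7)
The plan is to build, on a single probability space, a coupling of the three chains starting from their common deterministic value $L(p^*_1) = L(p^{**}_1) = L(\tilde p_1)$ (a function of $\Lambda^*_0$), and to show that with probability at least $1-\ve$ they all agree at step $k$. Both total-variation bounds then follow. I proceed by induction on $i$, the case $i=1$ being automatic. The central ingredient is an a-priori geometric event $\mathcal{G}$ of probability at least $1-\ve/2$ on which, simultaneously for all $1\le i\le k-1$, one has $\Ball_{\lambda L(p^{**}_{i+1})}\subseteq\Lambda^{**}_i$ and both ratios $L(p^{**}_i)/K_i$ and $|\Lambda^{**}_i|/L(p^{**}_i)^2$ lie in a fixed bounded interval $[\gamma^{-1},\gamma]$. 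The existence of such $\mathcal{G}$ comes from iterating the a-priori hole estimates (Lemmas~\ref{lem:apriori_hole} and~\ref{lem:bounds_hole}) along the recursion~\eqref{eq:def_ptilde}: the upper bound $K_0\le m_\infty(N)/M_0$ combined with Lemma~\ref{lem:nextN} makes $\nextN(K_i)/K_i$ as small as we wish, hence $L(p^{**}_{i+1})\ll L(p^{**}_i)$ throughout, while the lower bound $K_0\ge m_{k+1}(N)$ keeps every $K_i$ in the regime where the near-critical estimates apply.

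For the comparison of $(p^*)$ with $(p^{**})$ at step $i\to i+1$, I condition on $\Lambda^{**}_i$. On $\mathcal{G}$ we have $\Lambda^{**}_i\supseteq\Ball_{\lambda L(p^{**}_{i+1})}$, so Remark~\ref{rem:H_HLambda} applied with $\Lambda=\Lambda^{**}_i$ yields $\hole^{(\Lambda^{**}_i)}(p^{**}_{i+1})=\hole(p^{**}_{i+1})$ with conditional probability at least $1-e^{-\alpha\lambda}$. On this intersection $\Lambda^*_{i+1}=\Lambda^{**}_{i+1}$, and the coupling propagates.

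For the comparison of $(p^{**})$ with $(\tilde p)$, I rewrite
\begin{equation*}
\frac{L(p^{**}_{i+1})}{K_{i+1}}=(\alpha^{**}_i)^{48/5}\left(\frac{L(p^{**}_i)}{K_i}\right)^{96/5},\qquad \alpha^{**}_i:=\frac{|\Lambda^{**}_i|}{L(p^{**}_i)^2},
\end{equation*}
which has exactly the form of the $(\tilde p)$ recursion~\eqref{eq:def_ptilde}. Since $\Lambda^{**}_i=\hole^{(\Lambda^{**}_{i-1})}(p^{**}_i)$ is a stopping set (Definition~\ref{def:stopping_set}) for the configuration inside $\Lambda^{**}_{i-1}$, that configuration is fresh conditionally on $\Lambda^{**}_{i-1}$; and on $\mathcal{G}$ we have $\Ball_{\lambda L(p^{**}_i)}\subseteq\Lambda^{**}_{i-1}$, so Remark~\ref{rem:H_HLambda} (applied to this fresh inside-configuration) forces $\Lambda^{**}_i=\hole(p^{**}_i)$ except on a conditional event of probability at most $e^{-\alpha\lambda}$. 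On the complement $\alpha^{**}_i$ is a genuine draw from $\distvol_{p^{**}_i}=\distvol_{\tilde p_i}$ (using the induction hypothesis $p^{**}_i=\tilde p_i$), and a maximal coupling realizes $\tilde\alpha_i=\alpha^{**}_i$, hence $L(\tilde p_{i+1})=L(p^{**}_{i+1})$. Choosing $\lambda$ so that $2k\,e^{-\alpha\lambda}\le\ve/2$, a union bound over $i\le k-1$ finishes both comparisons.

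The main obstacle is propagating the event $\mathcal{G}$ uniformly through all $k$ steps: a single large excursion of $\alpha^{**}_i$ pushes $L(p^{**}_{i+1})$ away from $K_{i+1}$ and can spoil the containment $\Ball_{\lambda L(p^{**}_{j+1})}\subseteq\Lambda^{**}_j$ at later $j>i$. Making the conditional error probabilities summable to $\ve/2$ requires the two-sided quantile estimates of Lemma~\ref{lem:bounds_hole}, together with Lemma~\ref{lem:p-L(p)} to translate between proximity of parameters $p$ and of characteristic lengths $L(p)$ when setting up the bounded intervals defining $\mathcal{G}$.
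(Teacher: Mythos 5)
Your proposal is correct and follows essentially the same route as the paper's proof: repeated use of Remark \ref{rem:H_HLambda} (made applicable by the scale separation from Lemma \ref{lem:nextN} with $M_0$ large and the a-priori hole estimates), a union bound over the $k$ steps to couple $(p^*_i)$ with $(p^{**}_i)$, and the stopping-set property of the $\Lambda^{**}_i$ to refresh the configuration and couple $\alpha^{**}_i$ with $\tilde\alpha_i$. You simply make explicit (via the event $\mathcal{G}$ and the maximal coupling) details that the paper compresses into its appeal to Remark \ref{rem:H_HLambda}.
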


\begin{proof}[Proof of Lemma \ref{lem:ptilde_p*}]
Lemma \ref{lem:nextN} ensures that by choosing $M_0$ large enough, we are in a position to use Remark \ref{rem:H_HLambda} repeatedly: for each $i = 0, \ldots, k-1$, if $\Lambda^*_i = \Lambda^{**}_i$, then $p^*_{i+1} = p^{**}_{i+1}$ (from the definition) and Remark \ref{rem:H_HLambda} implies that for $N$ large enough, $\Lambda^*_{i+1} = \Lambda^{**}_{i+1}$ with probability at least $1 - \frac{\ve}{2 k}$. We deduce
\begin{equation} \label{eq:p*_p**}
\PP \big( \forall i \in \{1, \ldots, k\}, \: \: p^*_i = p^{**}_i \text{ and } \Lambda^*_i = \Lambda^{**}_i \big) \geq 1 - \frac{\ve}{2}.
\end{equation}
We can then compare $(p^{**}_i)_{1 \leq i \leq k}$ and $(\tilde{p}_i)_{1 \leq i \leq k}$ by using that the $(\Lambda^{**}_i)$ are stopping sets, which allows us to successively ``refresh'' the configuration inside them. Using again Remark \ref{rem:H_HLambda}, we get that for $N$ large enough,
$$d_{TV}(p^{**}_{i+1},\tilde{p}_{i+1}) \leq d_{TV}(p^{**}_i,\tilde{p}_i) + \frac{\ve}{2 k}$$
for all $0 \leq i \leq k-1$. Combined with \eqref{eq:p*_p**}, this yields the desired result.
\end{proof}

The chain $(p^*_i)_{1 \leq i \leq k}$ was defined as a natural introduction to $(p^{**}_i)_{1 \leq i \leq k}$ and $(\tilde{p}_i)_{1 \leq i \leq k}$, but we are not using it in our proofs, and it will not be mentioned in the rest of the paper.

\subsection{Abstract deconcentration result} \label{sec:abstract_deconcentration}

We now establish a general result that provides deconcentration for functions of independent random variables: we give a simple sufficient condition on such functions to ensure that they are spread out, i.e. that they cannot be concentrated on small intervals. This lemma is instrumental in the proofs of our main results, Theorems \ref{thm:full_plane} and \ref{thm:large_k}.

Let us mention that for sums of independent random variables, a result due to Le Cam can be applied (see \cite{LeCam}, and (B) in \cite{Esseen}). This deconcentration result is used in \cite{BC13}, to show that for two-dimensional critical percolation in a box, there exist macroscopic gaps between the sizes of the largest clusters. In some cases, it is even possible to obtain CLT-type results by uncovering a renewal structure. In particular, McLeish's CLT for martingale differences \cite{McLeish} is used in \cite{KestenZhangCLT} (for ``critical'' first-passage percolation in two dimensions -- the proofs also apply for the maximal number of disjoint open circuits surrounding the origin in 2D percolation at criticality), \cite{ZhangCLT} (for the number of open clusters in a box with side length $n$, for bond percolation on $\ZZ^d$) and \cite{YaoCLT} (for winding angles of arms in 2D critical percolation). CLT-type results are also obtained in \cite{DS12} for 2D invasion percolation, based on mixing properties. However, none of these techniques seems to be directly applicable in our setting.

We first introduce some notations. We use $\Omega_n=\{0,1\}^n$, and for $\omega \in \Omega_n$ and $1 \leq i \leq n$, we denote by $\omega^{(i)}$ (resp. $\omega_{(i)}$) the configuration that coincides with $\omega$ except possibly at index $i$, where it is equal to $1$ (resp. $0$). We also write $|\omega| = |\{ i\in\{1,\ldots,n\}   \: : \: \omega_i=1 \}|$.

Let us consider a family of independent Bernoulli-distributed random variables $(Y_i)_{i \geq 1}$, with corresponding parameters $r_i \in (0,1)$ (i.e. for each $i$, $\PP(Y_i = 1) = r_i$ and $\PP(Y_i = 0) = 1-r_i$), and a sequence of functions $f_n: \Omega_n \to \RR$. For $1 \leq i \leq n$ and $\omega \in \Omega_n$, we denote $\nabla_i f_n(\omega) = f_n(\omega^{(i)}) - f_n(\omega_{(i)})$.

\begin{lemma} \label{lem:deconcentration}
Assume that there exists $\ve>0$ such that $r_i \in (\ve,1-\ve)$ for all $i \geq 1$, and that for all $n \geq 1$, $f_n$ satisfies
\begin{equation}
\text{for all $1 \leq i \leq n$ and $\omega \in \Omega_n$,} \quad \nabla_i f_n(\omega) \geq 1.
\end{equation}
Then there exists a constant $c = c(\ve) \in (0,\infty)$ such that, for all $n \geq 1$ and every interval $I \subseteq \RR$,
$$\PP(f_n(Y_1,\ldots,Y_n) \in I) \leq \frac{c}{n^{1/2}} (|I|+1),$$
where we denote by $|I|$ the length of $I$.
\end{lemma}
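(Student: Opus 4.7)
The plan is to exploit the strong gradient condition $\nabla_i f_n \geq 1$ in order to decompose the pre-image $A_I := f_n^{-1}(I) \subseteq \{0,1\}^n$ as a union of few antichains in the Boolean lattice, and then bound the product-measure of any antichain via a Sperner-type inequality combined with a local central limit theorem for $S_n := Y_1 + \cdots + Y_n$.

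First I would iterate the gradient condition along a monotone path: for $\omega \leq \omega'$ in $\{0,1\}^n$, flipping the coordinates where they differ from $0$ to $1$ one at a time yields
$$f_n(\omega') - f_n(\omega) \geq |\omega'| - |\omega|.$$
Consequently, if $\omega \leq \omega'$ both lie in $A_I$ then $|\omega'| - |\omega| \leq |I|$; since weights strictly increase along any chain of the Boolean lattice, every chain in $A_I$ contains at most $\lfloor |I| \rfloor + 1$ elements. By Mirsky's theorem, $A_I$ is then a disjoint union of at most $\lfloor |I| \rfloor + 1 \leq |I|+1$ antichains. It therefore suffices to show, for independent Bernoullis with parameters in $(\ve, 1-\ve)$, that every antichain $A \subseteq \{0,1\}^n$ satisfies
$$\PP((Y_1,\ldots,Y_n) \in A) \leq C(\ve)/\sqrt{n}.$$

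I would prove this antichain bound in two steps. First, a compression (shifting) argument based on the normalized matching property of the Boolean lattice shows that among antichains the maximum product-measure is attained on some level set $L_k = \{y \in \{0,1\}^n \: : \: |y| = k\}$, giving $\mu(A) \leq \max_k \PP(S_n = k)$. Second, since $\mathrm{Var}(S_n) \geq \ve(1-\ve) n$, the discrete local CLT (or Berry--Esseen in lattice form) yields $\max_k \PP(S_n = k) \leq C(\ve)/\sqrt{n}$. Combining this with the antichain decomposition produces the desired inequality $\PP(f_n(Y_1,\ldots,Y_n) \in I) \leq c(\ve)(|I|+1)/\sqrt{n}$.

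The main obstacle is the Sperner step for non-identical Bernoullis. In the homogeneous case ($r_i \equiv r$), conditioning on $S_n = k$ gives the uniform distribution on $L_k$, so classical LYM applied level-by-level yields $\mu(A) \leq \max_k \PP(S_n=k) \cdot \sum_k |A \cap L_k|/\binom{n}{k} \leq \max_k \PP(S_n=k)$ immediately. For heterogeneous parameters the conditional law on a level is no longer uniform and LYM cannot be quoted verbatim; one needs either a monotone compression reducing to the homogeneous case, or a weighted random-chain argument tailored to the product measure. By contrast, both the monotonicity-based decomposition into antichains and the local CLT for $S_n$ (which only uses $r_i \in (\ve, 1-\ve)$) are essentially routine.
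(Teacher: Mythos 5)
Your skeleton is sound: iterating $\nabla_i f_n\geq 1$ along monotone paths gives $f_n(\omega')-f_n(\omega)\geq|\omega'|-|\omega|$ for $\omega\leq\omega'$, so every chain in $f_n^{-1}(I)$ has at most $\lfloor|I|\rfloor+1$ elements, Mirsky's theorem then splits $f_n^{-1}(I)$ into at most $|I|+1$ antichains, and $\max_k\PP(S_n=k)\leq C(\ve)n^{-1/2}$ is standard for Bernoulli parameters in $(\ve,1-\ve)$. For the homogeneous case $r_i\equiv\frac12$ (which is all that the rest of the paper actually uses) your route is complete once you quote classical Sperner/LYM, and it is a genuinely different packaging of the argument from the paper's.

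However, for the lemma as stated, with arbitrary $r_i\in(\ve,1-\ve)$, there is a genuine gap exactly where you flag "the main obstacle": the claim that every antichain has product measure at most $\max_k\PP(S_n=k)$. This is true, but it does not follow from the normalized matching property of the \emph{unweighted} Boolean lattice by any routine compression, and you neither prove it nor point to a correct weighted version; the "weighted random-chain argument tailored to the product measure" you defer to is precisely the content of the paper's Lemma \ref{lem:coupling}: a random saturated chain $\tilde{\omega}_{[0]}<\tilde{\omega}_{[1]}<\cdots<\tilde{\omega}_{[n]}$ whose level-$i$ element is distributed as $\omega$ conditioned on $|\omega|=i$. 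Constructing this coupling for non-identical $r_i$ is the nontrivial step (the authors call the general case "trickier" and prove it via explicit transition probabilities in Appendix A.3); the case $r_i\equiv\frac12$ is the easy one. Note also that once you have this chain, the antichain bound is immediate (a chain meets an antichain at most once), but the detour through Mirsky becomes unnecessary: the paper applies the chain directly to $f_n$, using that $f_n$ increases by at least $1$ at each step so the chain visits $I$ at most $|I|+1$ times, and then sums against $\PP(|\omega|=i)\leq c\,n^{-1/2}$. So to complete your proof you must either restrict to identical parameters, cite a bona fide weighted Sperner/normalized-matching theorem for heterogeneous product weights, or construct the coupling of Lemma \ref{lem:coupling} yourself — at which point you have reproduced the paper's key lemma.
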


\begin{remark}
Note that this result provides an upper bound on the \emph{L\'evy concentration function} of the random variable $X = f_n(Y_1,\ldots,Y_n)$, defined by $Q_X(\lambda) := \sup_{x \in \RR} \PP(X \in [x,x+\lambda])$.
\end{remark}

The proof of Lemma \ref{lem:deconcentration} is based on the following construction.

\begin{lemma} \label{lem:coupling}
Let $n \geq 1$, and denote $\omega = (Y_1,\ldots,Y_n)$. One can construct a sequence $\tilde{\omega}_{[0]} = (0, \ldots, 0), \tilde{\omega}_{[1]}, \ldots, \tilde{\omega}_{[n]} = (1, \ldots, 1)$ such that
\begin{itemize}
\item for every $i \in \{0,\ldots,n-1\}$, $\tilde{\omega}_{[i+1]}$ can be obtained from $\tilde{\omega}_{[i]}$ by switching exactly one coordinate from $0$ to $1$ (so that for each $i$, $|\tilde{\omega}_{[i]}| = i$),

\item for every $i \in \{0,\ldots,n\}$, $\tilde{\omega}_{[i]}$ has the same distribution as $\omega$ conditioned on $|\omega| = i$.
\end{itemize}
\end{lemma}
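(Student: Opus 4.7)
My plan is to build the chain sequentially as a non-homogeneous Markov process on subsets of $[n]$. Starting from $\tilde{\omega}_{[0]} = \emptyset$, at each step $i$ I would add a single element according to a stochastic kernel $K_i(S, \cdot)$ supported on $[n] \setminus S$ chosen so that $K_i$ pushes the level-$i$ conditional law $\pi_i := \mathcal{L}(\omega \mid |\omega| = i)$ forward to $\pi_{i+1}$ (via $(S,j) \mapsto S \cup \{j\}$). Once such kernels are in hand, monotonicity is built in by construction and the marginal distributions come out right automatically.

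The first step is to recast the existence of $K_i$ as a transportation problem on the bipartite graph between $\binom{[n]}{i}$ and $\binom{[n]}{i+1}$ (weighted by $\pi_i$ and $\pi_{i+1}$, with edges given by strict containment). By max-flow/min-cut (equivalently Strassen's theorem, or a Hall-type criterion), a valid $K_i$ exists if and only if
\[
\pi_i(\partial^- B) \ \geq\ \pi_{i+1}(B) \quad \text{for every } B \subseteq \tbinom{[n]}{i+1},
\]
where $\partial^- B := \{S \in \binom{[n]}{i} : \exists\, j \notin S,\ S \cup \{j\} \in B\}$ denotes the lower shadow.

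I expect verifying this shadow inequality to be the main technical obstacle. The plan is to exploit the explicit product form $\pi_i(S) = Z_i^{-1} \prod_{k \in S} r_k \prod_{k \notin S}(1-r_k)$ and the identity $\pi_{i+1}(S \cup \{j\}) = (Z_i/Z_{i+1})\,(r_j/(1-r_j))\,\pi_i(S)$ for $j \notin S$, together with a double-counting that rewrites $(i+1)\,\pi_{i+1}(B)$ as a sum over containment pairs $(S, S')$ with $S' = S \cup \{j\} \in B$. The inequality then reduces to an averaging statement that reflects the fact that $\pi_i$ and $\pi_{i+1}$ are conditionings of the same underlying product Bernoulli measure onto adjacent level sets.

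Should this combinatorics prove unwieldy, an alternative route is to bypass the abstract LP step and exhibit a coupling explicitly via additional randomization: sample $\omega \sim \prod_i \mathrm{Ber}(r_i)$ together with independent uniforms, declare $\tilde{\omega}_{[|\omega|]} := \omega$, and define the remaining levels by explicit biased insertion/deletion rules with weights proportional to $r_j/(1-r_j)$. Throughout, it is worth emphasizing that we never need to control the joint law of the chain with $\omega$ itself; only the marginal of each $\tilde{\omega}_{[i]}$ is required to match $\pi_i$, which leaves substantial flexibility in the construction.
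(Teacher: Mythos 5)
Your reduction is set up correctly: building the chain from one-step kernels $K_i$ with $\pi_i K_i = \pi_{i+1}$, supported on single-element additions, does give the lemma (all $\pi_i(S)>0$ since $r_j\in(0,1)$, so the kernels are everywhere defined), and by max-flow/Strassen the existence of such a kernel is indeed equivalent to the shadow inequality $\pi_{i+1}(B)\le\pi_i(\partial^-B)$ for all $B\subseteq\binom{[n]}{i+1}$. The trouble is that this inequality \emph{is} the whole content of the lemma for general parameters — it is exactly the weighted normalized matching property of the Boolean lattice between adjacent levels — and you never prove it; you only say you "expect" it to follow from a double count. The natural implementation of that double count fails. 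Writing $\sigma_S=\prod_{k\in S}x_k$ with $x_k=r_k/(1-r_k)$, the identity $(i+1)\,\Sigma_{i+1}\,\pi_{i+1}(B)=\sum_{S\in\partial^-B}\sigma_S\sum_{j\notin S,\ S\cup\{j\}\in B}x_j$ is correct, but bounding the inner sum by $\sum_{j\notin S}x_j$ reduces the claim to showing that the $\pi_i$-average of $g(S):=\sum_{j\notin S}x_j$ over $\partial^-B$ is at most its global average $(i+1)\Sigma_{i+1}/\Sigma_i$, and this restricted-versus-global comparison is false in general: with $n=3$, $x_1=100$, $x_2=x_3=1$, $i=1$ and $B=\{\{2,3\}\}$, the average of $g$ over $\partial^-B=\{\{2\},\{3\}\}$ is $101$ while the global average is about $3.94$ (the shadow inequality itself still holds there, but not via this route). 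So the key step is asserted, not established, and your fallback ("explicit biased insertion/deletion rules") is likewise only named, not constructed or verified.

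For comparison, the paper avoids the Hall/Strassen criterion altogether: it writes down an explicit transition kernel,
$p_{S,S\cup\{j\}}=\Sigma_{i+1}^{-1}\sum_{T\ni j,\,|T|=i+1}\sigma_T/(i+1-|S\cap T|)$,
checks directly that these are probabilities summing to $1$ over $j\notin S$, and verifies $\pi_iK_i=\pi_{i+1}$ by a short induction on $i$. If you want to keep your LP framework, you would need to actually prove the shadow inequality, e.g.\ by invoking the Harper--Kleitman theorem that a product of normalized matching posets with log-concave rank weights (here, two-element chains with weights $1,x_k$) is normalized matching — a real theorem, not an averaging triviality. Note also that the application in the paper only uses $r_j\equiv\tfrac12$, where "flip a uniformly chosen $0$-coordinate" works immediately, but the lemma as stated concerns general $r_j\in(0,1)$, so a complete proof must supply one of the above arguments.
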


\begin{proof}[Proof of Lemma \ref{lem:coupling}]
As we explained, Lemma \ref{lem:deconcentration} is used in Section \ref{sec:deconcentration_chain} to show deconcentration for the chain $(L(\tilde{p}_i))_{0 \leq i \leq k}$, and for this application, we only need the case where $r_i = \frac{1}{2}$ for all $i \in \{1, \ldots, n\}$.

When all the parameters $(r_i)_{1 \leq i \leq n}$ are equal, the construction of $(\tilde{\omega}_{[i]})_{0 \leq i \leq n}$ is straightforward: indeed, given $\tilde{\omega}_{[i]}$, we can produce $\tilde{\omega}_{[i+1]}$ by considering the $n-i$ coordinates which are equal to $0$, choose one of them uniformly at random, and switch it to $1$. In this paper, we do not need the general case, which seems to be trickier. Nevertheless, since we find Lemma \ref{lem:deconcentration} interesting in itself, we provide a proof of Lemma \ref{lem:coupling} in Appendix \ref{sec:app_coupling}.
\end{proof}

\begin{proof}[Proof of Lemma \ref{lem:deconcentration}]
We use the coupling provided by Lemma \ref{lem:coupling}: for every interval $I \subseteq \RR$,
$$\PP(f_n(\omega) \in I) = \sum_{i=0}^n \PP(f_n(\omega) \in I \: | \: |\omega|=i) \PP(|\omega| = i) = \sum_{i=0}^n \PP(f_n(\tilde{\omega}_{[i]}) \in I) \PP(|\omega| = i).$$
We now use that $\PP(|\omega| = i) \leq \frac{c}{n^{1/2}}$, where $c = c(\ve) < \infty$ depends only on $\ve$, and obtain
$$\PP(f_n(\omega) \in I) \leq \frac{c}{n^{1/2}} \sum_{i=0}^n \tilde{\EE}\left[\mathbbm{1}_{f_n(\tilde{\omega}_{[i]}) \in I}\right] = \frac{c}{n^{1/2}} \tilde{\EE}\left[ \sum_{i=0}^n \mathbbm{1}_{f_n(\tilde{\omega}_{[i]}) \in I} \right].$$
Finally, we use that $\sum_{i=0}^n \mathbbm{1}_{f_n(\tilde{\omega}_{[i]}) \in I} \leq |I|+ 1$, from our assumption on $f_n$.
\end{proof}

\subsection{Deconcentration for $L(\tilde{p}_k)$ and $L(p^{**}_k)$} \label{sec:deconcentration_chain}

We now obtain deconcentration for the Markov chain $(L(\tilde{p}_i))_{0 \leq i \leq k}$ by applying the abstract result from the previous section, Lemma \ref{lem:deconcentration}.

\begin{proposition} \label{prop:dec_ptilde}
For all $\ve > 0$, $\lambda > 1$, and $c_2 > c_1 > 0$, there exists $k_0 \geq 1$ such that the following holds. For all $k \geq k_0$, there exists $N_0 \geq 1$ such that: for all $N \geq N_0$, $K_0 \geq m_{k+1}(N)$, and $\tilde{p}_0$ with $c_1 K_0 \leq L(\tilde{p}_0) \leq c_2 K_0$,
\begin{equation} \label{eq:dec_ptilde}
\sup_{y > 0} \PP \big( L(\tilde{p}_k) \in (y, \lambda y) \big) < \ve.
\end{equation}
\end{proposition}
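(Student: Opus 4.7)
The plan is to extract an i.i.d.\ Bernoulli$(1/2)$ family from the innovations of the chain $(\tilde{p}_i)$ and to apply the abstract Lemma \ref{lem:deconcentration}. Setting $y_i := \log(L(\tilde{p}_i)/K_i)$, iterating the recursion \eqref{eq:def_ptilde} yields the linear expansion
\[
y_k = \bigl(\tfrac{96}{5}\bigr)^{k} y_0 + \tfrac{48}{5} \sum_{i=0}^{k-1} \bigl(\tfrac{96}{5}\bigr)^{k-1-i}\!\log \tilde{\alpha}_i,
\]
so deconcentration of $L(\tilde{p}_k)$ on a window of multiplicative size $\lambda$ amounts to showing that $y_k$ does not concentrate on any interval of length $\log \lambda$. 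I realize the innovations via the inverse CDF method as $\tilde{\alpha}_i = F_{\tilde{p}_i}^{-1}(U_i)$, where $F_p$ is the CDF of $\distvol_p$ and $(U_i)$ are i.i.d.\ Uniform$[0,1]$, and I decompose $U_i = (Y_i + V_i)/2$ with $Y_i \sim $ Bernoulli$(1/2)$ and $V_i \sim $ Uniform$[0,1]$, all independent. The family $(Y_i)$ is then i.i.d.\ Bernoulli$(1/2)$ and independent of $\tilde{p}_0$ and $(V_i)$.

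Next I isolate a multiplicative gap at each step. Let $\beta > 1$ be the universal constant of Proposition \ref{prop:vol_hole}, and choose $\gamma > 1$ large enough that $\tfrac{48}{5}\log\gamma - \tfrac{48}{91}\log\beta \geq 1$ (this is achievable for some universal $\gamma$). By the tail bounds of Lemma \ref{lem:bounds_hole}, there exist $a \in (0,1/2)$ and $p_0 > p_c$ such that, for every $p \in (p_c, p_0]$ and every $v \in [0,a]\cup[1-a,1]$, the quantile ratio $F_p^{-1}((1+v)/2)/F_p^{-1}(v/2)$ is at least $\gamma$ (the extremes of $v$ make the ratio blow up, by Lemma \ref{lem:bounds_hole}). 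Let $\mathcal{G}_i := \{V_i \in [0,a]\cup[1-a,1]\}$: these events are i.i.d., independent of $(Y_i)$, with common probability $2a$. On $\mathcal{G}_j$, flipping $Y_j$ from $0$ to $1$ multiplies $\tilde{\alpha}_j$ by at least $\gamma$.

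The crux is the propagation estimate. Comparing two trajectories obtained by flipping $Y_j$ in the coupling with common $(U_i)$, I bound the ratio $\tilde{\alpha}_i^{\text{new}}/\tilde{\alpha}_i^{\text{old}}$ at each step $i > j$ using Proposition \ref{prop:vol_hole} applied in both directions (swapping the roles of $p$ and $p'$): both $\tilde{p}_i^{\text{old}}$ and $\tilde{p}_i^{\text{new}}$ remain close to $p_c$ (which uses the assumption $K_0 \geq m_{k+1}(N)$ together with an inductive control of the trajectory), so the ratio lies in $[1/\beta,\beta]$. Summing the contributions, on $\bigcap_i \mathcal{G}_i$ the change in $y_k$ under flipping $Y_j$ satisfies
\[
\Delta y_k \geq \tfrac{48}{5}\bigl(\tfrac{96}{5}\bigr)^{k-1-j}\!\log \gamma - \tfrac{48}{5}\log\beta \sum_{i=j+1}^{k-1} \bigl(\tfrac{96}{5}\bigr)^{k-1-i} \geq \bigl(\tfrac{96}{5}\bigr)^{k-1-j}\bigl[\tfrac{48}{5}\log\gamma - \tfrac{48}{91}\log\beta\bigr] \geq 1.
\]

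Finally, let $S := \{i : \mathcal{G}_i \text{ holds}\}$. Conditioning on $(V_i)_{1 \leq i \leq k}$ (which determines $S$) and on $(Y_i)_{i \notin S}$, the family $(Y_i)_{i \in S}$ remains i.i.d.\ Bernoulli$(1/2)$, and by the previous paragraph the map $(Y_i)_{i \in S} \mapsto y_k$ is non-decreasing with coordinate gap at least $1$. Lemma \ref{lem:deconcentration} then gives $\PP(y_k \in J \mid (V_i), (Y_i)_{i \notin S}) \leq c(|J|+1)/\sqrt{|S|}$. Since $|S|$ is $\mathrm{Binomial}(k, 2a)$, a Chernoff bound ensures $|S| \geq ak$ with probability $\geq 1 - \ve/2$ for $k$ large enough, and integration yields $\PP(y_k \in J) \leq \ve$ for any interval $J$ of length $\log\lambda$, as required. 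The main obstacle is the propagation estimate of the third paragraph: one must verify, by an inductive argument along the chain, that the entire perturbed trajectory stays in the regime where Proposition \ref{prop:vol_hole} applies (i.e., all $\tilde{p}_i^{\text{new}}, \tilde{p}_i^{\text{old}}$ remain sufficiently close to $p_c$), and also handle the small-probability ``tails'' of the $U_i$'s where the quantile $F^{-1}(U_i)$ leaves the bounded range $[\bar\lambda^{-1},\bar\lambda]$ in which Proposition \ref{prop:vol_hole} directly provides a $[1/\beta,\beta]$ ratio bound.
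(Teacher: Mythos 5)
Your strategy is the one the paper uses: iterate the recursion \eqref{eq:def_ptilde} to get a linear expansion of $\log\big(L(\tilde p_k)/K_k\big)$ in the $\log\tilde\alpha_i$, realize the innovations as quantiles $\tilde\alpha_i=F_{\tilde p_i}^{-1}(U_i)$ of i.i.d.\ uniforms, extract i.i.d.\ Bernoulli$(1/2)$ flips that produce a definite multiplicative gap at one step, control the downstream effect of a flip through Proposition \ref{prop:vol_hole} together with the geometric decay of the exponents (your bound $\tfrac{48}{5}\log\gamma-\tfrac{48}{91}\log\beta\geq 1$ is exactly the paper's computation with $\delta_i=\tfrac12(96/5)^{k-i}$), and conclude with Lemma \ref{lem:deconcentration} plus a Chernoff bound on the number of good indices. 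So the skeleton is correct and matches the paper; the only structural difference is your symmetric decomposition $U_i=(Y_i+V_i)/2$ versus the paper's shift of the upper tail $(1-\delta,1)$ down to just below the median.

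The genuine gap is the step you yourself call ``the main obstacle,'' and as constructed it is not a routine verification: it fails without modifying your good events. Lemma \ref{lem:deconcentration} is applied conditionally on $(V_i)$ and $(Y_i)_{i\notin S}$, so the coordinate gap $\geq 1$ must hold for \emph{every} configuration of $(Y_i)_{i\in S}$; but $\mathcal{G}_i=\{V_i\in[0,a]\cup[1-a,1]\}$ allows $U_i$ (on one branch of $Y_i$) to be arbitrarily close to $0$ or $1$. The lower tail is fatal as written: $\distvol_p$ has an atom at $0$ (the hole is empty when $0\in\Cinf\cup\dout\Cinf$), so on the branch $Y_i=0$ with $V_i$ near $0$ the innovation $\tilde\alpha_i$ can be $0$ or arbitrarily small; through the exponent $96/5$ the chain then leaves the near-critical window for a set of $V$-values of positive probability, \emph{whatever} $N_0$ is, and Proposition \ref{prop:vol_hole} --- which only compares the two laws on a fixed window $[\bar\lambda^{-1},\bar\lambda]$, for $p,p'$ close to $p_c$ depending on $\bar\lambda$ --- then gives neither the $[\beta^{-1},\beta]$ control of the later ratios $\tilde\alpha_m^{\mathrm{new}}/\tilde\alpha_m^{\mathrm{old}}$ nor the closeness of the $\tilde p_m$'s to $p_c$ (a similar, milder issue occurs in the upper tail). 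The paper deals with precisely this point by a truncation event ($G_1$: all $U_i\in(\tfrac{\ve}{10k},1-\tfrac{\ve}{10k})$, hence all $\tilde\alpha_i\in[C^{-1},C]$ with $C=C(k,\ve)$, and then $N$ large keeps every $\tilde p_i$ near $p_c$), and by designing the flip so that it only moves $U_i$ upward from just below the median. Your construction is fixable in the same way --- replace $\mathcal{G}_i$ by $\{V_i\in[\epsilon,a]\cup[1-a,1-\epsilon]\}$ with $\epsilon=\epsilon(k,\ve)$ and intersect with the analogue of $G_1$; the quantile-ratio gap $\gamma$ survives by Lemma \ref{lem:bounds_hole} --- but as written the hypothesis of Lemma \ref{lem:deconcentration} is not established, so the argument is incomplete at its central step.
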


\begin{proof}[Proof of Proposition \ref{prop:dec_ptilde}]
Let us consider $\ve > 0$ and $\lambda > 1$, and take $k \geq 1$ (we explain later how to choose it). First, note that by iterating the definition \eqref{eq:def_ptilde} of $(\tilde{p}_i)_{0 \leq i \leq k}$, we obtain
\begin{equation} \label{eq:iteration_ptilde}
\frac{L(\tilde{p}_k)}{K_k} = \left( \frac{L(\tilde{p}_0)}{K_0} \right)^{2 \delta_0} \cdot \prod_{i=0}^{k-1} \tilde{\alpha}_i^{\delta_i}, \quad \text{where } \delta_i = \frac{1}{2}\Big( \frac{96}{5} \Big)^{k-i}.
\end{equation}

In order to use Lemma \ref{lem:deconcentration}, we describe the process $(L(\tilde{p}_i))_{0 \leq i \leq k}$ in terms of i.i.d. random variables $(U_i)_{0 \leq i \leq k-1}$ uniformly distributed on the interval $(0,1)$, as we explain now. For $p > p_c$ and $u \in (0,1)$, we introduce the lower $u$-quantile
$$q(p,u) := \ul Q_u \bigg( \frac{|\hole(p)|}{L(p)^2} \bigg)$$
(recall Definition \ref{def:quantile}). It follows from \eqref{eq:lower_quant} that if $U$ is a random variable uniform on $(0,1)$, then $q(p,U)$ has distribution $\distvol_p$, so that in the definition \eqref{eq:def_ptilde} of $(\tilde{p}_i)_{0 \leq i \leq k}$, we can use the representation
\begin{equation} \label{eq:rep_alphai}
\tilde{\alpha}_i = q(\tilde{p}_i,U_i) \quad (0 \leq i \leq k-1).
\end{equation}
Note that $\tilde{\alpha}_i$ is thus a function of $U_0, \ldots, U_i$.

From the upper bounds in \eqref{eq:bounds_hole1} and \eqref{eq:bounds_hole2}, we can find $C$ large enough (depending on $k$) such that if $U$ is uniformly distributed on $(0,1)$, we have: for all $p > p_c$,
\begin{equation}
\PP \left( q(p,U) \notin [C^{-1},C] \right) \leq \frac{\ve}{10 k}.
\end{equation}
In particular, for all $p > p_c$ and $u \in \big( \frac{\ve}{10 k}, 1- \frac{\ve}{10 k} \big)$, $q(p,u) \in [C^{-1},C]$. We thus introduce the event
\begin{equation} \label{eq:def_G1}
G_1 := \Big\{ \forall i \in \{0, \ldots, k-1\}, \: \: U_i \in \Big( \frac{\ve}{10 k}, 1- \frac{\ve}{10 k} \Big) \Big\},
\end{equation}
which satisfies $\PP(G_1) \geq 1 - \frac{\ve}{5}$. We can also take $N$ large enough so that if the event $G_1$ holds (which we assume from now on), then the $(\tilde{p}_i)_{0 \leq i \leq k}$ are sufficiently close to $p_c$ to allow us to apply Proposition \ref{prop:vol_hole} (with $\bar{\lambda} = C$) to each of them (here, we are using \eqref{eq:iteration_ptilde}, the hypothesis $K_0 \geq m_{k+1}(N)$, and $K_k = \nextN^{(k)}(K_0)$). In particular, this implies that in \eqref{eq:rep_alphai}, the combined effect of $U_0, \ldots, U_{i-1}$ on $\tilde{\alpha}_i$ (through $\tilde{p}_i$) is not very large: a multiplicative factor between $\beta^{-1}$ and $\beta$, where $\beta$ is as in Proposition \ref{prop:vol_hole}.

We now make the following key observation. For some given $U_0, \ldots, U_{k-1}$, let us assume that we change exactly one of them, say $U_i$ (in such a way that $G_1$ still holds), so that
\begin{itemize}
\item[(i)] $\tilde{\alpha}_i$ is multiplied by a factor at least $2 \beta$.
\end{itemize}
Note that
\begin{itemize}
\item[(ii)] $\tilde{\alpha}_0, \ldots, \tilde{\alpha}_{i-1}$ are not affected (indeed, they depend only on $U_0, \ldots, U_{i-1}$),

\item[(iii)] and $\tilde{\alpha}_{i+1}, \ldots, \tilde{\alpha}_{k-1}$ are each changed by a factor between $\beta^{-1}$ and $\beta$ (using the property mentioned in the paragraph below \eqref{eq:def_G1}).
\end{itemize}
Since the exponent $\delta_i$ of $\tilde{\alpha}_i$ in \eqref{eq:iteration_ptilde} satisfies (for $i \leq k-2$)
\begin{equation} \label{eq:sum_deltai}
\frac{\delta_i}{\sum_{j=i+1}^{k-1} \delta_j} \geq \frac{1}{\sum_{j=1}^{\infty} (\frac{5}{96})^j} = \frac{91}{5},
\end{equation}
these three properties together imply that $L(\tilde{p}_k)$ gets multiplied by a factor at least $2^{\delta_i} \geq 2^{48/5}$.

We use this observation to apply Lemma \ref{lem:deconcentration}, as follows. First, we can choose $\delta > 0$ small enough so that for all $p > p_c$,
$$q(p,1-\delta) \geq 2 \beta \cdot q \Big( p,\frac{1}{2} \Big)$$
(using the lower bound in \eqref{eq:bounds_hole1}). We also introduce a modification of $(U_i)_{0 \leq i \leq k-1}$: for $i \in \{0, \ldots, k-1\}$,
$$\tilde{U}_i := \left \{
\begin{array}{ll}
U_i - \frac{1}{2} & \quad \text{if } U_i \geq 1 - \delta,\\[1mm]
U_i & \quad \text{otherwise.}
\end{array}
\right.$$
Note that in order to prove our result, we can condition on $(\tilde{U}_i)_{0 \leq i \leq k-1}$, and prove that deconcentration holds in this case.

We can take $k$ large enough so that with probability at least $\frac{\ve}{5}$, the number $l$ of indices $i \in \{0, \ldots, k-1\}$ such that $\tilde{U}_i \in \big( \frac{1}{2} - \delta, \frac{1}{2} \big)$ (i.e. $U_i \in \big( \frac{1}{2} - \delta, \frac{1}{2} \big) \cup (1 - \delta, 1)$) is at least $\delta k$: let us call $G_2$ this event, and assume that it occurs. We can list the corresponding indices as $i_1, \ldots, i_l$. We then define, for all $j \in \{1, \ldots, l\}$,
$$Y_j := \left \{
\begin{array}{ll}
1 & \quad \text{if } U_{i_j} \geq 1 - \delta \text{ \: (so that $U_{i_j} = \tilde{U}_{i_j} + \frac{1}{2}$)},\\[1mm]
0 & \quad \text{otherwise \: (in this case, $U_{i_j} = \tilde{U}_{i_j}$)}.
\end{array}
\right.$$
Since we assumed $(\tilde{U}_i)_{0 \leq i \leq k-1}$ to be given, \eqref{eq:iteration_ptilde} allows us to see $L(\tilde{p}_k)$ as a function $g(Y_1, \ldots, Y_l)$, and the $(Y_j)_{1 \leq j \leq l}$ are independent Bernoulli($\frac{1}{2}$) distributed. We are thus in a position to apply Lemma \ref{lem:deconcentration}, to the function
$$f(Y_1, \ldots, Y_l) := \ln g(Y_1, \ldots, Y_l)$$
(note that the assertion below \eqref{eq:sum_deltai} ensures: for all $j \in \{1, \ldots l\}$, $\nabla_j f \geq \ln (2^{48/5}) > 1$), and we obtain
$$\PP \big( g(Y_1, \ldots, Y_l) \in (y, \lambda y) \big) = \PP \big( f(Y_1, \ldots, Y_l) \in (\ln y, \ln y + \ln \lambda) \big) \leq \frac{c}{l^{1/2}}(\ln \lambda + 1),$$
where $c$ is a universal constant. We deduce, using $l \geq \delta k$: for all $y \in \RR$,
$$\PP \big( L(\tilde{p}_k) \in (y, \lambda y) \big) \leq \frac{c}{(\delta k)^{1/2}}(\ln \lambda + 1) + \PP(G_1^c) + \PP(G_2^c) \leq \frac{c}{(\delta k)^{1/2}}(\ln \lambda + 1) + \frac{\ve}{5} + \frac{\ve}{5},$$
which is smaller than $\ve$ for $k$ large enough. This completes the proof of Proposition \ref{prop:dec_ptilde}.
\end{proof}

By combining Proposition \ref{prop:dec_ptilde} with Lemma \ref{lem:ptilde_p*}, we can deduce immediately a deconcentration result for $L(p^{**}_k)$, which is used in the next section (and we can now forget about the chain $(\tilde{p}_i)_{0 \leq i \leq k}$).

\begin{corollary} \label{cor:dec_p*}
For all $\ve > 0$, $\lambda > 1$, and $c_2 > c_1 > 0$, there exists $k_0 \geq 1$ such that the following property holds. For all $k \geq k_0$, there exist $M_0, N_0 \geq 1$ such that: for all $N \geq N_0$, $K_0 \in [m_{k+1}(N), m_{\infty}(N) / M_0]$, and $\Lambda^{**}_0$ with $\Ball_{c_1 K_0} \subseteq \Lambda^{**}_0 \subseteq \Ball_{c_2 K_0}$,
\begin{equation} \label{eq:dec_p*}
\sup_{y > 0} \PP \big( L(p^{**}_k) \in (y, \lambda y) \big) < \ve.
\end{equation}
\end{corollary}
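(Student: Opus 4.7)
The plan is to chain the two immediately preceding results: Proposition \ref{prop:dec_ptilde} provides deconcentration for the (exact) Markov chain $L(\tilde{p}_k)$, and Lemma \ref{lem:ptilde_p*} shows that $L(p^{**}_k)$ is close in total variation to $L(\tilde{p}_k)$. A simple triangle-type bound for probabilities then yields the conclusion, with a $\frac{\ve}{2} + \frac{\ve}{2}$ split.

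First, I would translate the geometric input $\Ball_{c_1 K_0} \subseteq \Lambda^{**}_0 \subseteq \Ball_{c_2 K_0}$ into two-sided bounds on $L(\tilde{p}_1)/K_1$. By \eqref{eq:def_chain_p*} (and the identification $L(\tilde{p}_1) = L(p^{**}_1)$ made just before Lemma \ref{lem:ptilde_p*}), one has $L(\tilde{p}_1)/K_1 = (|\Lambda^{**}_0|/K_0^2)^{48/5}$, and the volume ratio $|\Lambda^{**}_0|/K_0^2$ lies in an interval $[a_1, a_2]$ depending only on $c_1, c_2$. Hence $L(\tilde{p}_1) \in [c'_1 K_1, c'_2 K_1]$ for explicit constants $c'_1, c'_2 > 0$ depending only on $c_1, c_2$. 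This makes $\tilde{p}_1$ (which is deterministic given $\Lambda^{**}_0$) a valid ``step-zero'' input for Proposition \ref{prop:dec_ptilde}, applied at the scale $K_1 = \nextN(K_0)$.

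Second, I would apply Proposition \ref{prop:dec_ptilde} with parameters $(\ve/2, \lambda, c'_1, c'_2)$ to the sub-chain $\tilde{p}_1 \to \tilde{p}_2 \to \cdots \to \tilde{p}_k$, re-indexed so that it has $k-1$ updates starting from $\tilde{p}_1$ at scale $K_1$. The proposition yields some threshold $k_0$; I set the corresponding threshold in the corollary to be $k_0 + 1$, so that for $k \geq k_0 + 1$ the sub-chain has at least $k_0$ updates. The scale hypothesis required by the proposition reads $K_1 \geq m_k(N)$, which follows from the hypothesis $K_0 \geq m_{k+1}(N)$, the inequality $\nextN(m_{k+1}(N)) \geq m_k(N)$ (an immediate consequence of the definition of $\nextN^{-1}$), and the monotonicity of $\nextN$. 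This gives, for all $N$ large enough,
$$\sup_{y > 0} \PP \big( L(\tilde{p}_k) \in (y, \lambda y) \big) < \frac{\ve}{2}.$$

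Third, I would invoke Lemma \ref{lem:ptilde_p*} with tolerance $\ve/2$ to obtain constants $M_0, N_0$ such that, under exactly the hypotheses of the corollary, one has $d_{TV}(L(p^{**}_k), L(\tilde{p}_k)) \leq \ve/2$. Combining this with the previous step via the elementary bound $\PP(X \in I) \leq \PP(Y \in I) + d_{TV}(X, Y)$ gives, uniformly in $y > 0$,
$$\PP \big( L(p^{**}_k) \in (y, \lambda y) \big) \leq \PP \big( L(\tilde{p}_k) \in (y, \lambda y) \big) + \frac{\ve}{2} < \ve,$$
which is the claim. There is no genuine obstacle here: the entire argument is a triangle inequality combining two already-proved ingredients, and the only bookkeeping is to match the hypothesis on the initial state of the Markov chain after the one-step re-indexing, together with the elementary scale comparison $\nextN(K_0) \geq m_k(N)$.
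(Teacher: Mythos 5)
Your proposal is correct and follows exactly the paper's route: the paper deduces the corollary "immediately" by combining Proposition \ref{prop:dec_ptilde} with Lemma \ref{lem:ptilde_p*}, and your write-up just makes the routine bookkeeping explicit (the re-indexing of the chain from step $1$ at scale $K_1=\nextN(K_0)\geq m_k(N)$, the two-sided bound on $L(\tilde p_1)/K_1$ from the volume of $\Lambda^{**}_0$, and the $\ve/2+\ve/2$ split via $\PP(X\in I)\leq \PP(Y\in I)+d_{TV}(X,Y)$), all of which is sound.
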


\section{Frozen percolation in finite boxes} \label{sec:finite_box}

\subsection{Iteration lemma}

We establish now an iteration lemma for frozen percolation, that allows us to compare the real frozen percolation process to the chain $(p^*_i)$, for which we proved deconcentration in Section \ref{sec:deconcentration_chain}.

Before stating the lemma, we need to indroduce some terminology. Let us consider $n \geq 1$, and a partition $\{1, \ldots, n\} = I \sqcup J \sqcup K$. A function $f: x=(x_1,\ldots,x_n) \in \RR^n \mapsto f(x) \in \RR$ is said to be \emph{small when $(x_j)_{j \in J}$ are small and $(x_k)_{k \in K}$ are large} if: for all $\ve > 0$, all $(x_i)_{i \in I}$, there exists $C > 1$ such that whenever all $j \in J$ satisfy $x_j < C^{-1}$ and all $k \in K$ satisfy $x_k > C$, one has $f(x) < \ve$.

\begin{lemma} \label{lem:iteration_FP}
Let $l \geq 3$, $N \geq 1$, $K \in [m_2(N), m_l(N)]$, $c_2 > c_1 > \alpha > 0$, $\eta >0$, and $\beta \in (0,\frac{1}{10})$. Further, let $\Delta$ be a simply connected $(\alpha K, \eta)$-approximable set, with $\Ball_{c_1 K} \subseteq \Delta \subseteq \Ball_{c_2 K}$. Let $0 < c_{1,\new} < c_{2,\new}$, and $K_{\new} := \nextN(K)$ (with $\nextN$ as defined in \eqref{eq:def_next}). Then there exist $\alpha_{\new} > 0$, $\eta_{\new} > 0$ (small if $N$ is large), $\beta_{\new} > 0$ (small if $\beta$ and $\eta$ are small, and $N$ is large), a simply connected stopping set $\Delta_{\new}$, and $\ve > 0$ (small if $\eta, c_{1,\new}, \beta$ are small, and $N, c_{2,\new}$ are large), such that with probability $> 1 - \ve$, each of the following properties holds.
\begin{itemize}
\item[(i)] $\Delta_{\new}$ is $(\alpha_{\new} K_{\new}, \eta_{\new})$-approximable, and
$$\Ball_{c_{1,\new} K_{\new}} \subseteq \Delta_{\new} \subseteq \Ball_{c_{2,\new} K_{\new}}.$$

\item[(ii)] For every simply connected $\Lambda$ with $(\Dint{\Delta}{\alpha K})_{(\beta)} \subseteq \Lambda \subseteq \Delta$, the first freezing event for the frozen percolation process in $\Lambda$ leaves a hole $\Lambda_{\new}$ around $0$ that satisfies
$$(\Dint{\Delta}{\alpha_{\new} K_{\new}}_{\new})_{(\beta_{\new})} \subseteq \Lambda_{\new} \subseteq \Delta_{\new}.$$

\item[(iii)] For each $\Lambda^{**}$ with $(\Dint{\Delta}{\alpha K})_{(\beta)} \subseteq \Lambda^{**} \subseteq \Delta$, if we define $p^{**}$ by
\begin{equation} \label{eq:def_p*}
\frac{L(p^{**})}{K_{\new}} = \left( \frac{|\Lambda^{**}|}{K^2} \right)^{48/5},
\end{equation}
then $\Lambda^{**}_{\new} := \hole^{(\Lambda^{**})}(p^{**})$ (recall the definition in \eqref{eq:def_hole_domain}) satisfies
$$(\Dint{\Delta}{\alpha_{\new} K_{\new}}_{\new})_{(\beta_{\new})} \subseteq \Lambda^{**}_{\new} \subseteq \Delta_{\new}.$$
\end{itemize}
\end{lemma}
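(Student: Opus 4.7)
The plan is to define two deterministic target times $\tilde p_- < \tilde p_+$ that sandwich the first freezing time $p_f(\Lambda)$ uniformly in the admissible $\Lambda$, and then take $\Delta_{\new}:=\hole(\tilde p_-)$ (which is a stopping set, being explorable from its complement). Using Proposition~\ref{prop:theta_over_pi} and the ratio--limit Lemma~\ref{lem:Rob_ratio_lim}, pick $\rho>0$ small and define $\tilde p_\pm$ by
\[
c_\theta\,\pi_1(L(\tilde p_-))\,|\Delta|=(1-\rho)N,\qquad c_\theta\,\pi_1(L(\tilde p_+))\,(1-5(\eta+\beta))\,|\Delta|=(1+\rho)N.
\]
For any admissible $\Lambda$ one has $(1-5(\eta+\beta))|\Delta|\le|\Lambda|\le|\Delta|$, by combining the $(\alpha K,\eta)$-approximability of $\Delta$ with \eqref{eq:shrinking_vol}. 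Applying Lemma~\ref{lem:largest_vol} to the approximable sets $\Lambda$ at times $\tilde p_\pm$ yields that, with probability $>1-\ve/3$ simultaneously for all admissible $\Lambda$, the largest $p$-black cluster in $\Lambda$ has volume $<N$ at time $\tilde p_-$ and $>N$ at time $\tilde p_+$, so $p_f(\Lambda)\in(\tilde p_-,\tilde p_+)$.

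Next I identify the hole left by the first freezing event. By Lemma~\ref{lem:largest_vol}(ii) applied at time $\tilde p_-$ inside $\Ball_{c_1 K}\subseteq\Lambda$, the largest cluster in $\Lambda$ at that time already contains a $p$-black circuit in an annulus of scale $\asymp c_1 K$ that is connected to $\infty$; the same circuit is part of the cluster that freezes at $p_f$. Consequently this frozen cluster contains, near the origin, the piece of $\Cinf(p_f)$ surrounding $0$, so the hole it leaves coincides with the hole $\hole(p_f)$ of $0$ in the infinite cluster, and (by Remark~\ref{rem:H_HLambda}) with $\hole^{(\Lambda)}(p_f)$. Hence $\Lambda_{\new}=\hole(p_f)$ with high probability. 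Since holes are nested and $p_f>\tilde p_-$, we get $\Lambda_{\new}\subseteq\hole(\tilde p_-)=\Delta_{\new}$. For the reverse inclusion $(\Dint{\Delta_{\new}}{\alpha_{\new}K_{\new}})_{(\beta_{\new})}\subseteq\Lambda_{\new}$ I invoke Lemma~\ref{lem:cont_vol}(ii) with $p=\tilde p_-$ and $p'=p_f$: shrinking $\rho$ forces $L(p_f)/L(\tilde p_-)$ close to $1$, so that lemma applies and yields both the $(\alpha_{\new}K_{\new},\eta_{\new})$-approximability of $\Delta_{\new}$ and the required shrunk inclusion. The diameter bounds $\Ball_{c_{1,\new}K_{\new}}\subseteq\Delta_{\new}\subseteq\Ball_{c_{2,\new}K_{\new}}$ then follow from Lemma~\ref{lem:apriori_hole}, using that $L(\tilde p_-)\asymp K_{\new}$ by the definition \eqref{eq:def_next} of $\nextN$ combined with Lemma~\ref{lem:Rob_ratio_lim}.

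Part (iii) is essentially the same. Since $(1-5(\eta+\beta))|\Delta|\le|\Lambda^{**}|\le|\Delta|$, the deterministic time $p^{**}$ defined by \eqref{eq:def_p*} also lies in $(\tilde p_-,\tilde p_+)$ after (if needed) shrinking $\rho$; Remark~\ref{rem:H_HLambda} gives $\Lambda^{**}_{\new}=\hole^{(\Lambda^{**})}(p^{**})=\hole(p^{**})$ with high probability, and the same monotonicity plus Lemma~\ref{lem:cont_vol} sandwich gives the conclusion. The main technical obstacle is the uniformity over $\Lambda$: the bracketing in Step~1 must hold on a single high-probability event valid for every admissible $\Lambda$ simultaneously. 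This is possible because the relevant quantities (the largest cluster and the surrounding circuit) are essentially determined by the percolation configuration inside $\Ball_{c_1 K}$, which is common to every admissible $\Lambda$, while the volume estimate $|\lclus_{\Lambda}(\tilde p_\pm)|\approx\theta(\tilde p_\pm)|\Lambda|$ from Lemma~\ref{lem:largest_vol} is robust under the $O(\eta+\beta)$ fluctuation of $|\Lambda|$. Finally, the condition $K\le m_l$ together with Lemma~\ref{lem:nextN} guarantees $K_{\new}/K\to 0$, so $\Delta_{\new}$ lives at a vastly smaller scale than $\Delta$ and all the new parameters $\alpha_{\new},\eta_{\new},\beta_{\new},\ve$ can be tuned consistently.
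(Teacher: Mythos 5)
Your skeleton matches the paper's: sandwich the first freezing time between two deterministic times $p^-<p^+$ defined by volume/$\theta$ relations, take $\Delta_{\new}$ to be the hole at the lower time, get approximability and the shrunk inclusion from Lemmas \ref{lem:approx_H} and \ref{lem:cont_vol}, identify holes via Remark \ref{rem:H_HLambda}, and for (iii) check that $p^{**}$ falls inside the window using Proposition \ref{prop:theta_over_pi} and Lemma \ref{lem:Rob_ratio_lim}. However, there is a genuine gap at the core of part (ii): you assert that ``the same circuit is part of the cluster that freezes at $p_f$'' with no argument. Knowing that $p_f(\Lambda)\in(p^-,p^+]$ does not tell you \emph{which} cluster reaches volume $N$ first: a priori the first cluster to freeze could be disjoint from the circuit-carrying giant cluster and sit elsewhere in $\Lambda$, in which case the region it leaves around $0$ has nothing to do with $\hole(p^-)$ and the whole sandwich collapses. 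The paper closes exactly this hole with a second-largest-cluster/merging argument: applying Lemma \ref{lem:largest_vol} at $p^-$ and $p^+$ in the two extreme domains $\Delta$ and $(\Dint{\Delta}{\alpha K})_{(\beta)}$ yields bounds on the largest \emph{and} second-largest clusters (properties (1)--(6) in the paper), from which a purely deterministic claim follows: for every $p'\in[p^-,p^+]$ and every intermediate domain, the largest cluster contains $\lclus_{(\Dint{\Delta}{\alpha K})_{(\beta)}}(p^-)$ (hence the circuit) and every other cluster has volume $<\tfrac34 N$ — otherwise two disjoint clusters of volume $>\tfrac34N$ would either stay disjoint (contradicting the second-largest bound at $p^+$) or merge (contradicting the largest-cluster bound at $p^+$). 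This is the step that makes the conclusion uniform over the random freezing time and over all admissible $\Lambda$, and it is absent from your proposal; your substitute justification (``the relevant quantities are essentially determined by the configuration inside $\Ball_{c_1K}$'') is not correct, since the volumes of clusters in $\Lambda$ depend on all of $\Lambda$.

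Two related points. First, your ``apply Lemma \ref{lem:largest_vol} simultaneously for all admissible $\Lambda$'' cannot be a union bound (and the intermediate $\Lambda$ need not be approximable); the correct fix, which you should state, is monotonicity of the largest-cluster volume in the domain, so that two applications — to $\Delta$ at $p^-$ and to $(\Dint{\Delta}{\alpha K})_{(\beta)}$ at $p^+$ — bracket $p_f(\Lambda)$ for every intermediate $\Lambda$ at once; but this only gives the timing, not the identity of the frozen cluster. Second, you invoke Lemma \ref{lem:cont_vol} with the random time $p'=p_f$, whereas the lemma is stated for deterministic parameters; the standard repair (and what the paper does) is to apply it to the deterministic pair $(p^-,p^+)$ and use the nesting $\hole(p^+)\subseteq\Lambda_{\new}\subseteq\hole(p^-)$, which again requires knowing that the frozen cluster contains a circuit lying in $\Cinf(p^+)$ — i.e.\ the same missing identification step.
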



\begin{proof}[Proof of Lemma \ref{lem:iteration_FP}]
(i) Let us consider $l, N, K, c_1, c_2, \alpha, \eta, \beta$ and $\Delta$ as in the statement, and recall that $K_{\textrm{new}}$ was defined by
\begin{equation} \label{eq:def_Knew}
c_{\theta} K^2 \pi_1(K_{\new}) \simeq N,
\end{equation}
where $c_{\theta}$ is the constant appearing in Proposition \ref{prop:theta_over_pi}. Let $\ve > 0$, and consider some $\delta > 0$ (it will become clear later how to take $\delta$ sufficiently small). Let us define $p^-$ and $p^+$ by
\begin{equation} \label{eq:def_p-}
|\Delta| \cdot \theta(p^-) = N (1 - \delta),
\end{equation}
and
\begin{equation} \label{eq:def_p+}
\big| (\Dint{\Delta}{\alpha K})_{(\beta)} \big| \cdot \theta(p^+) = N (1 + \delta).
\end{equation}
We first make a few observations on some of the scales involved.
\begin{itemize}
\item Since $|\Delta| \asymp K^2$ and $\theta(p^-) \asymp \pi_1(L(p^-))$, it follows from the definitions of $K_{\new}$ \eqref{eq:def_Knew} and $p^-$ \eqref{eq:def_p-} that $\pi_1(K_{\new}) \asymp \pi_1(L(p^-))$, and so
\begin{equation} \label{eq:Lp-}
L(p^-) \asymp K_{\new}.
\end{equation}

\item The assumption that $K \leq m_l(N)$ implies (using Lemma \ref{lem:nextN}) that
\begin{equation} \label{eq:Knew_K}
K_{\new} = \nextN(K) \ll K \quad \text{as $N \to \infty$}.
\end{equation}
\end{itemize}

Now we start with the proof proper. From the definitions of $p^-$ and $p^+$ (\eqref{eq:def_p-} and \eqref{eq:def_p+}), we have
\begin{equation} \label{eq:theta_p+_theta_p-_1}
\frac{\theta(p^-)}{\theta(p^+)} = \frac{1-\delta}{1+\delta} \cdot \frac{\big| (\Dint{\Delta}{\alpha K})_{(\beta)} \big|}{|\Delta|} \geq \frac{1-\delta}{1+\delta} \cdot (1 - 4 \beta) (1 - \eta),
\end{equation}
using \eqref{eq:shrinking_vol} and the $(\alpha K, \eta)$-approximability of $\Delta$. An inequality in the other direction comes from Proposition \ref{prop:theta_over_pi} and Lemma \ref{lem:Rob_ratio_lim}:
\begin{equation} \label{eq:theta_p+_theta_p-_2}
\frac{\theta(p^-)}{\theta(p^+)} \leq (1 + \delta) \frac{\pi_1(L(p^-))}{\pi_1(L(p^+))} \leq (1 + \delta)^2 \left( \frac{L(p^-)}{L(p^+)} \right)^{-5/48}
\end{equation}
if $N$ is large enough so that $p^-$ and $p^+$ are sufficiently close to $p_c$. Combining \eqref{eq:theta_p+_theta_p-_1} with \eqref{eq:theta_p+_theta_p-_2} then gives
\begin{equation} \label{eq:Lp-_Lp+}
1 \leq \frac{L(p^-)}{L(p^+)} \leq \left[ \frac{1-\delta}{(1+\delta)^3} (1 - 4 \beta) (1 - \eta) \right]^{-48/5}.
\end{equation}

For any given $\beta', \eta' > 0$, Lemmas \ref{lem:approx_H} and \ref{lem:cont_vol} imply (using \eqref{eq:Lp-_Lp+}) that if $\delta, \beta, \eta$ are sufficiently small, and $N$ is sufficiently large, there exists $\alpha' > 0$ such that: with probability $> 1 - \ve$,
\begin{equation} \label{eq:approx_hole_p-}
\hole(p^-) \text{ is } (\alpha' L(p^-), \eta') \text{-approximable}
\end{equation}
(in particular, it contains the block $b_{\alpha' L(p^-)}$), and
\begin{equation} \label{eq:inclusion_p-_p+}
(\Dint{\hole(p^-)}{\alpha' L(p^-)})_{(\beta')} \subseteq \hole(p^+).
\end{equation}

Now, we consider the set
\begin{equation} \label{eq:def_delta_new}
\Delta_{\new} := \hole^{(\Delta)}(p^-),
\end{equation}
which is a stopping set. Moreover, since (as observed in \eqref{eq:Lp-} and \eqref{eq:Knew_K}) $L(p^-) \asymp K_{\new} \ll K$,
\begin{equation} \label{eq:hole_delta}
\hole^{(\Delta)}(p^-) = \hole(p^-)
\end{equation}
with probability $> 1 - \ve$.

The a-priori bounds from Lemma \ref{lem:apriori_hole} imply the existence of $0 < c_3 < c_4$ such that: for $N$ large enough,
\begin{equation} \label{eq:a_priori_hole_p-}
\Ball_{c_3 K_{\new}} \subseteq \hole(p^-) \subseteq \Ball_{c_4 K_{\new}}
\end{equation}
with probability $> 1 - \ve$ (using \eqref{eq:Lp-}). Further, \eqref{eq:approx_hole_p-} implies that $\hole(p^-)$ is $(\alpha_{\new} K_{\new}, \eta')$-approximable, with $\alpha_{\new} = \alpha' \frac{L(p^-)}{K_{\new}}$. Also, note that $\alpha_{\new} \asymp \alpha'$ (using again \eqref{eq:Lp-}).

Hence, by \eqref{eq:approx_hole_p-}, \eqref{eq:def_delta_new}, \eqref{eq:hole_delta} and \eqref{eq:a_priori_hole_p-}, our choice of $\Delta_{\new}$ satisfies the desired properties in (i) (with probability $> 1 - 3 \ve$), if we choose $\alpha_{\new}$ as indicated, $\beta_{\new} = \beta'$, $c_{1,\new} = c_3$, $c_{2,\new} = c_4$, and $\eta_{\new} = \eta'$.

(ii) We proceed with property (ii). From now on, we take $\ve < \frac{\delta}{1 + \delta}$ (the reason will become clear soon). In the remainder of this proof, we write $\Di{\Delta}$ for $(\Dint{\Delta}{\alpha K})_{(\beta)}$. Since $\Delta$ is $(\alpha K, \eta)$-approximable, we can apply Lemma \ref{lem:largest_vol} (taking $\eta$ sufficiently small and $N$ sufficiently large, so that $\frac{L(p^-)}{\alpha K}$ is sufficiently small, as required by that lemma). This gives that, with probability $> 1 - \ve$, each of the properties (1)-(6) below occurs.
\begin{itemize}
\item[(1)] $|\lclus_{\Delta}(p^-)| < (1 + \ve) \theta(p^-) |\Delta| = (1 + \ve) (1 - \delta) N < N$, where the equality comes from the definition \eqref{eq:def_p-}, and the last inequality comes from the choice of $\ve$.

\item[(2)] Similarly, $|\lclus_{\Di{\Delta}}(p^-)| > \frac{3}{4} N$.

\item[(3)] $\lclus_{\Di{\Delta}}(p^-)$ contains a $p^-$-black circuit in $\Ann_{\frac{1}{8} \alpha K, \frac{1}{4} \alpha K}$ which is connected to $\infty$ by a $p^-$-black path.

\item[(4)] $|\lclus_{\Delta}(p^+)| < \frac{5}{4} N$.

\item[(5)] $|\lclus_{\Di{\Delta}}(p^+)| > (1 - \ve) (1 + \delta) N > N$, by using the definition \eqref{eq:def_p+}, and again the choice of $\ve$.

\item[(6)] The second-largest $p^+$-black cluster in $\Delta$ has volume $< \frac{N}{4}$.
\end{itemize}

We now claim the following (deterministic) fact: properties (1)-(6) imply together that, for every $p' \in [p^-, p^+]$ and every $\Delta'$ with $\Di{\Delta} \subseteq \Delta' \subseteq \Delta$,
\begin{itemize}
\item[(7)] $\lclus_{\Delta'}(p') \supseteq \lclus_{\Di{\Delta}}(p^-),$

\item[(8)] and the second-largest $p'$-black cluster in $\Delta'$ has volume $< \frac{3}{4} N$.
\end{itemize}
Let us first prove the claim. Since $\Delta' \supseteq \Di{\Delta}$ and $p' \geq p^-$, it is clear that either (7) holds, or: the two clusters in (7) are disjoint and (by (2)) both larger than $\frac{3}{4} N$. However, the latter leads to violation of (6) (if at time $p^+$ they are still disjoint) or violation of (4) (if by time $p^+$ they have merged). This proves (7), and practically the same argument gives (8), which proves the claim.

We now go back to the proof of Lemma \ref{lem:iteration_FP}, and we consider $\Lambda$ as in the statement of (ii). Let also $\tilde{p}$ be the first freezing time for the frozen percolation process in $\Lambda$, and let $\tilde{\cluster}$ be the corresponding cluster that freezes. From the claim above, we get
$$\tilde{\cluster} = \lclus_{\Lambda}(\tilde{p}) \supseteq \lclus_{\Di{\Delta}}(p^-).$$
In particular, $\tilde{\cluster}$ contains the $p^-$-black circuit mentioned in (3). Hence, since $\Lambda_{\new}$ is by definition the hole of the origin in $\tilde{\cluster}$,
$$\Lambda_{\new} \subseteq \hole(p^-) = \hole^{(\Delta)}(p^-) = \Delta_{\new}$$
(using \eqref{eq:hole_delta}, and then \eqref{eq:def_delta_new}), i.e. we have obtained the second inclusion in (ii).

Now we handle the first inclusion in (ii). Since the above-mentioned $p^-$-black circuit is also a $p^+$-black circuit contained in $\Cinf(p^+)$, we have that $\tilde{\cluster}$ contains a $p^+$-black circuit (around $0$) in $\Cinf(p^+)$. Hence, since $\tilde{p} < p^+$ (and $\Lambda_{\new}$ is the hole of $0$ in $\tilde{\cluster}$),
\begin{equation} \label{eq:inclusion_hole_p+}
\Lambda_{\new} \supseteq \hole(p^+) \supseteq (\Dint{\hole(p^-)}{\alpha' L(p^-)})_{(\beta')} = (\Dint{\Delta_{\new}}{\alpha_{\new} K_{\new}})_{(\beta_{\new})}
\end{equation}
(where the second inclusion comes from \eqref{eq:inclusion_p-_p+}, and we use \eqref{eq:hole_delta} and \eqref{eq:def_delta_new} for the equality, as well as the definitions of $\alpha_{\new}$, $\beta_{\new}$ and $K_{\new}$). This completes the argument for (ii).

(iii) Let us consider $\Lambda^{**}$ as in the statement. We pointed out earlier that with high probability ($> 1 - \ve$),
$$\Delta_{\new} = \hole^{(\Delta)}(p^-) = \hole(p^-)$$
(by \eqref{eq:def_delta_new} and \eqref{eq:hole_delta}), and for the same reason, we have, with high probability,
$$\hole^{(\Lambda^{**})}(p^{**}) = \hole(p^{**}).$$
By this and the second inclusion in \eqref{eq:inclusion_hole_p+}, it suffices to prove that $p^- < p^{**} < p^+$.

It follows from Proposition \ref{prop:theta_over_pi} that $\pi_1(L(p^-)) < \big(1+\frac{\delta}{2} \big) (c_{\theta})^{-1} \theta(p^-)$ for $N$ large enough. We then obtain from the definitions of $K_{\new}$ \eqref{eq:def_Knew} and $p^-$ \eqref{eq:def_p-} that
$$\frac{\pi_1(L(p^-))}{\pi_1(K_{\new})} < \left(1-\frac{\delta}{2}\right) \frac{K^2}{|\Delta|}.$$
This implies, with Lemma \ref{lem:Rob_ratio_lim}, that
\begin{equation}
\frac{L(p^-)}{K_{\new}} > \left( \left(1-\frac{\delta}{4}\right) \frac{K^2}{|\Delta|} \right)^{-48/5}
\end{equation}
for $N$ large enough, which yields
\begin{equation}
\frac{L(p^-)}{K_{\new}} > \left( \frac{|\Delta|}{K^2} \right)^{48/5} \geq \left( \frac{|\Lambda^{**}|}{K^2} \right)^{48/5} = \frac{L(p^{**})}{K_{\new}}
\end{equation}
(using \eqref{eq:def_p*}). Hence, $p^- < p^{**}$.

In a similar way, we can get from the definitions of $K_{\new}$ \eqref{eq:def_Knew} and $p^+$ \eqref{eq:def_p+}, combined with Proposition \ref{prop:theta_over_pi} and Lemma \ref{lem:Rob_ratio_lim}, that
\begin{equation}
\frac{L(p^+)}{K_{\new}} < \left( \frac{|(\Dint{\Delta}{\alpha K})_{(\beta)}|}{K^2} \right)^{48/5} \leq \left( \frac{|\Lambda^{**}|}{K^2} \right)^{48/5} = \frac{L(p^{**})}{K_{\new}},
\end{equation}
and so $p^+ > p^{**}$, which completes the proof of (iii).
\end{proof}

\subsection{Proof of Theorem \ref{thm:large_k}} \label{sec:proof_large_k}

In this section, we establish Theorem \ref{thm:large_k}. We actually prove the result below, which clearly implies Theorem \ref{thm:large_k}, and is used later (in Section \ref{sec:full_plane}) to study the full-plane process and to prove Theorem \ref{thm:full_plane}.

\begin{theorem} \label{thm:large_k_strong}
For all $\ve > 0$, there exists $\eta = \eta(\ve) > 0$ such that: for all $c_2 > c_1 > \alpha > 0$, there exists $k_0 \geq 1$ such that for all $k \geq k_0$, the following property holds. For all sufficiently large $N$, all $K \in (m_{k+2}(N), m_{k+5}(N))$, and all simply connected $(\alpha K, \eta)$-approximable sets $\Lambda$ with $\Ball_{c_1 K} \subseteq \Lambda \subseteq \Ball_{c_2 K}$, we have
$$\PP_N^{(\Lambda)}(\text{$0$ is frozen at time $1$}) < \ve.$$
\end{theorem}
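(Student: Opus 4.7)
The proof of Theorem~\ref{thm:large_k_strong} combines the iteration Lemma~\ref{lem:iteration_FP}, the deconcentration Corollary~\ref{cor:dec_p*}, and the absence-of-freezing Proposition~\ref{prop:exceptional_scales2}. The idea is to run the frozen percolation dynamics through $r$ freezing events in $\Lambda$, so that the remaining hole ``lands'' at a scale far from all exceptional $m_l(N)$ (by deconcentration), and then to apply Proposition~\ref{prop:exceptional_scales2} to conclude that no further freezing can reach the origin.

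First, fix $\ve > 0$. The plan begins by selecting an integer $l^*$ and constants $C_1^* < C_2^*$ so that Proposition~\ref{prop:exceptional_scales2} yields, uniformly in $l \in \{l^*+1,\ldots,l^*+4\}$, probability at most $\ve/5$ of the event $\tilde\Gamma_N(C_1^*\tilde m, C_2^*\tilde m)$ whenever $\tilde m(N)$ sits in the ``safe middle'' of $[m_l(N), m_{l+1}(N)]$; such a choice exists because for any fixed $l$ the ratio $m_{l+1}(N)/m_l(N)$ is large. Next, we pick a large $\lambda>1$ (depending on $\ve, l^*, C_1^*, C_2^*$) and obtain from Corollary~\ref{cor:dec_p*} a threshold $k_0^{\textrm{dec}}$. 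The $k_0$ in the theorem is taken to be $k_0^{\textrm{dec}} + l^* + O(1)$, and for $k \geq k_0$ we set $r := k - l^*$, so that the reference scale $K_r = \nextN^{(r)}(K)$ falls in the fixed window $(m_{l^*+2}(N), m_{l^*+5}(N))$.

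Starting from $\Delta_0 = \Lambda^{**}_0 = \Lambda$ and $K_0 = K$, we apply Lemma~\ref{lem:iteration_FP} iteratively $r$ times. At each step, assertions~(ii) and~(iii) guarantee that both the real hole $\Lambda_i$ (produced by the frozen percolation process in $\Lambda$) and the chain hole $\Lambda^{**}_i$ are sandwiched between $(\Dint{\Delta_i}{\alpha_i K_i})_{(\beta_i)}$ and $\Delta_i$, with $\Delta_i$ approximable and annularly sandwiched at scale $K_i$. Choosing the initial $\eta$ small enough (depending on $r$), all $r$ iterations succeed simultaneously with probability at least $1 - \ve/5$. Applying Corollary~\ref{cor:dec_p*} together with a union bound over the finitely many exceptional scales $m_l(N)$ in the plausible support of $L(p^{**}_r)$ (bounded a priori using Lemma~\ref{lem:bounds_hole}) then shows that with probability at least $1-\ve/5$, $L(p^{**}_r)$ lies in the safe middle of some $[m_l(N), m_{l+1}(N)]$ with $l \in \{l^*+1,\ldots,l^*+4\}$. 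On this good event, with $\tilde m \asymp L(p^{**}_r)$, the sandwiching inclusions force $\Lambda^{**}_r$ to contain the annulus $\Ann_{C_1^*\tilde m, C_2^*\tilde m}$ around $0$; applying the complement of $\tilde\Gamma_N(C_1^*\tilde m, C_2^*\tilde m)$ from Proposition~\ref{prop:exceptional_scales2} to every circuit $\gamma$ lying in this annulus yields, with probability at least $1-\ve/5$, that the origin is not frozen at time $1$. Summing the three error contributions gives $\PP_N^{(\Lambda)}(0 \text{ frozen at time }1) < \ve$.

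The hardest part will be the final reduction step: the statement of Proposition~\ref{prop:exceptional_scales2} is phrased in terms of the process in $\mathcal{D}(\gamma)$ for a generic circuit $\gamma$, whereas we need a conclusion about the frozen percolation process inside the (irregular) approximable hole $\Lambda^{**}_r$. The formulation of $\tilde\Gamma_N$ as ``there exists a circuit $\gamma$ in $\Ann_{C_1^*\tilde m, C_2^*\tilde m}$'' is precisely what makes the transfer possible, but it requires a careful coupling argument on the underlying $(\tau_v)$: if the origin were frozen in $\Lambda$, the outer boundary of the frozen cluster containing $0$ (restricted to lie inside $\Lambda^{**}_r$) would produce such a circuit. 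A secondary technical issue is ensuring that a single choice of $\eta$ at the outset suffices for all $r$ applications of the iteration lemma, which requires inspecting how the constants $(\alpha_i, \eta_i, \beta_i)$ compound over $r$ iterations.
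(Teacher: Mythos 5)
Your overall architecture is exactly the paper's: iterate Lemma \ref{lem:iteration_FP} (using that each $\Delta_i$ is a stopping set) to sandwich both the real holes $\Lambda_i$ and the chain holes $\Lambda^{**}_i$, invoke Corollary \ref{cor:dec_p*} to deconcentrate $L(p^{**}_\cdot)$ away from the exceptional scales, and conclude with Proposition \ref{prop:exceptional_scales2}. Your reindexing (stopping after $r=k-l^*$ iterations so the final scale sits near $m_{l^*+2},\dots,m_{l^*+5}$) is only cosmetic: the paper simply runs all $k$ iterations, so that $L(p^{**}_k)$ lands in $(m_1(N),m_6(N))$ at distance a factor $\kappa$ from each of $m_1,\dots,m_6$, and applies Proposition \ref{prop:exceptional_scales2} with $1\le i\le 5$; there is no need to introduce $l^*$ at all, since the relevant constants ($c$ from Lemma \ref{lem:apriori_hole}) are universal and $\kappa$ can be taken arbitrarily large. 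Your point about a single $\eta$ sufficing is handled as you suspect: only the first application of Lemma \ref{lem:iteration_FP} uses the externally given $\eta(\ve)$, while the subsequent $\eta_{\new}$'s can be made small by taking $N$ large (with $k$ fixed before $N\to\infty$), so nothing compounds dangerously.

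The one place where what you wrote would not work as stated is the final transfer to $\tilde{\Gamma}_N$. First, the sandwiching does not force $\Lambda^{**}_r$ (or $\Lambda_r$) to \emph{contain} the annulus $\Ann_{C_1^*\tilde m, C_2^*\tilde m}$; what you need, and what the iteration plus the a priori hole bounds give, is $\Ball_{C_1^*\tilde m}\subseteq \Lambda_r\subseteq \Ball_{C_2^*\tilde m}$, so that the circuit $\dout\Lambda_r$ bounding the hole lies \emph{inside} that annulus. Second, the circuit you feed into $\tilde{\Gamma}_N$ cannot be ``the outer boundary of the frozen cluster containing $0$'': that cluster has volume $\geq N$, need not keep the origin at distance $\geq C_1^*\tilde m$ from its boundary, so its outer boundary is in general not a circuit contained in the annulus, and there is no reason the process in the domain it encloses should freeze the origin (frozen percolation is non-monotone, so you cannot swap domains freely). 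The correct mechanism, which is the paper's, is to take $\gamma$ to be the circuit bounding the hole $\Lambda_r$ left by the last tracked freezing: since $\dout\Lambda_r$ stays white forever, the continuation of the process in $\Lambda$ inside $\Lambda_r$ coincides (under the natural coupling) with the frozen percolation process in $\mathcal{D}(\gamma)$, so $\{0$ frozen at time $1$ in $\Lambda\}$ intersected with the good sandwiching events is contained in $\tilde{\Gamma}_N(C_1^*\tilde m, C_2^*\tilde m)$; one then applies Proposition \ref{prop:exceptional_scales2} conditionally on the stopping set $\Delta_r$, whose interior configuration is fresh. With that correction your argument matches the paper's proof.
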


\begin{proof}[Proof of Theorem \ref{thm:large_k_strong}]
Let $\Lambda$ and $K$ be as in the statement, and let $\Delta_0 = \Lambda_0 = \Lambda^{**}_0 = \Lambda$. Let us consider the various random sequences associated with the domain $\Lambda$ and the initial scale $K_0 = K$, as explained in Section \ref{sec:associated_chains}.
\begin{itemize}
\item $(\Lambda_i)_{0 \leq i \leq k}$ is the sequence of successive holes around $0$ for the frozen percolation process in $\Lambda$, with $(p_i)_{0 \leq i \leq k}$ the corresponding freezing times.

\item $(K_i)_{0 \leq i \leq k}$ is a deterministic sequence of scales.

\item $(\Lambda^{**}_i)_{0 \leq i \leq k}$ and $(p^{**}_i)_{0 \leq i \leq k}$ are two sequences, of random sets and random times, respectively, that are used to approximate the real process (i.e. the $\Lambda_i$'s and the $p_i$'s).

\item $(\Delta_i)_{0 \leq i \leq k}$ is obtained by successively applying Lemma \ref{lem:iteration_FP}. It is used (see below) to show that the $\Lambda^{**}_i$ indeed approximate the real process.
\end{itemize}
For the remainder of the proof, it is important to note that although the $\Lambda_i$'s ($i \geq 1$) are not stopping sets, the $\Delta_i$'s \emph{are}.

Lemma \ref{lem:apriori_hole} implies the existence of a constant $c > 1$ such that
\begin{equation} \label{eq:apriori_Lambda*k}
\PP \left( \Ball_{c^{-1} L(p^{**}_k)} \subseteq \Lambda^{**}_k \subseteq \Ball_{c L(p^{**}_k)} \right) > 1 - \ve.
\end{equation}
The deconcentration result for $L(p^{**}_k)$ (Corollary \ref{cor:dec_p*}) implies the following. For every $\kappa > 1$, we can find $k_0$ large enough so that: for all $k \geq k_0$, for all $N$ sufficiently large (depending on $k$), with probability $> 1 - \ve$,
\begin{equation}
L(p^{**}_k) \in \mathcal{I}_{\kappa} := \big( m_1(N), m_6(N) \big) \setminus \bigg( \bigcup_{i=1}^6 \big( \kappa^{-1} m_i(N), \kappa m_i(N) \big) \bigg)
\end{equation}
(i.e. $L(p^{**}_k)$ is between $m_1(N)$ and $m_6(N)$, but at least a factor $\kappa$ different from each of the exceptional scales $m_1(N), m_2(N), \ldots, m_6(N)$). Together with \eqref{eq:apriori_Lambda*k}, this implies (for the same $k$, and for all $N$ large enough):
\begin{equation} \label{eq:deconcentration_Lambda*k}
\PP \big( \exists L \in \mathcal{I}_{\kappa} \text{ such that } \Ball_{c^{-1} L} \subseteq \Lambda^{**}_k \subseteq \Ball_{c L} \big) > 1 - \ve.
\end{equation}

By applying Lemma \ref{lem:iteration_FP} $k$ times iteratively (which we can do since after each application, we obtain a new set $\Delta$ which is a stopping set: we can thus condition on it, and treat it as a deterministic set), we get that with high probability $\Lambda_k$ and $\Lambda^{**}_k$ are contained in, and close to, $\Delta_k$. Combining this with \eqref{eq:deconcentration_Lambda*k}, we conclude that for $\Delta_k$ the analog of \eqref{eq:deconcentration_Lambda*k} holds, with $1 - \ve$, $\kappa$ and $c$ replaced by $1 - 2 \ve$, $\frac{\kappa}{2}$ and $2 c$, respectively.

We are now in a position to use the results from \cite{BN15} about the behavior of frozen percolation in finite boxes, recalled in Section \ref{sec:exceptional_scales}. More precisely, using again that $\Delta_k$ is a stopping set, and recalling that w.h.p. $\Lambda_k$ is contained in, and close to, $\Delta_k$, we can apply Proposition \ref{prop:exceptional_scales2} (see also the reformulated version, just below it), corresponding to the case when we start between two consecutive exceptional scales $m_i$ and $m_{i+1}$ (with $1 \leq i \leq 5$) but far away from both. Indeed, $c$ is a universal constant, and we can take $\kappa$ as large as we want. This completes the proof.
\end{proof}

\section{Full-plane process} \label{sec:full_plane}

In order to obtain Theorem \ref{thm:full_plane} from Theorem \ref{thm:large_k_strong}, we have to show that, informally speaking, the hole around $0$ in the full-plane process satisfies (at some suitable random time) the conditions on the domains $\Lambda$ in Theorem \ref{thm:large_k_strong}. To do this (see Proposition \ref{prop:coupling_full_plane} below) turns out to be far from obvious: almost all of this section is dedicated to this.

\subsection{Connection with approximable domains}


The following notation is used repeatedly in this section. For $N \geq 1$ and $p \in (p_c,1]$ with $L(p) \geq \sqrt{N}$, we set $\widehat p = \widehat p (p,N) \in (p_c,1]$ to be the solution of
\begin{equation} \label{eq:p_hat}
L(p)^2 \theta(\widehat p) = N,
\end{equation}
and we extend this notation recursively by setting $\widehat p_1 = \widehat p$, and $\widehat p_k = \widehat{(\widehat p_{k-1})}$.

As we explained in the beginning of Section \ref{sec:exceptional_scales}, we expect $\widehat p$ to be roughly the time when the first frozen cluster appears for the frozen percolation process in $\Ball_{L(p)}$. Note that Lemma \ref{lem:nextN} can be rephrased in the following way (with $q_k$ as defined in \eqref{eq:def_qk}).

\begin{lemma} \label{lem:ratio_L}
There exist constants $c, \alpha > 0$ such that: for all $N \geq 1$ and $p\in (q_\infty, q_1]$,
$$\frac{L(p)}{L(\widehat p)}\geq c \left(\frac{m_\infty}{L(p)}\right)^\alpha.$$
\end{lemma}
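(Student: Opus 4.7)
The plan is to show that $L(\widehat p)$ and $\next_N(L(p))$ are of the same order of magnitude, and then invoke Lemma \ref{lem:nextN} directly.

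First I would observe that the hypothesis $p \in (q_\infty, q_1]$ translates (using that $L$ is continuous and strictly decreasing on $(p_c,1]$, with $L(q_k) = m_k$) into $L(p) \in [m_1, m_\infty)$. In particular $L(p) \leq m_\infty$, so Lemma \ref{lem:nextN} is applicable to $K := L(p)$ and gives
\begin{equation} \label{eq:ratio_L_pf1}
\frac{\next_N(L(p))}{L(p)} \leq c \left( \frac{L(p)}{m_\infty} \right)^\eta,
\end{equation}
for universal constants $c, \eta > 0$.

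Next I would compare $L(\widehat p)$ with $\next_N(L(p))$. By the definition \eqref{eq:p_hat} of $\widehat p$, we have $L(p)^2 \theta(\widehat p) = N$, and Kesten's asymptotic \eqref{eq:Kesten_theta_pi} gives $\theta(\widehat p) \asymp \pi_1(L(\widehat p))$ uniformly in $\widehat p > p_c$. Combining these, we obtain $L(p)^2 \pi_1(L(\widehat p)) \asymp N$. On the other hand, by the definition \eqref{eq:def_next} of $\next_N$ and monotonicity of $\pi_1$,
$$c_\theta \pi_1(\next_N(L(p))) L(p)^2 \asymp N$$
(up to universal multiplicative constants, noting that $\pi_1$ is piecewise constant and that the sup is attained up to an $O(1)$ factor). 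Thus $\pi_1(L(\widehat p)) \asymp \pi_1(\next_N(L(p)))$. Since by \eqref{eq:1arm_bound}, $\pi_1$ satisfies two-sided polynomial bounds ($m \mapsto \pi_1(m)$ decays like a power law with strictly positive exponent between $\eta$ and $1/2$), having $\pi_1$ values comparable up to constants forces the arguments to be comparable up to constants. Therefore
\begin{equation} \label{eq:ratio_L_pf2}
L(\widehat p) \asymp \next_N(L(p)).
\end{equation}

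Finally, combining \eqref{eq:ratio_L_pf1} and \eqref{eq:ratio_L_pf2}, and rearranging, yields
$$\frac{L(p)}{L(\widehat p)} \geq c' \frac{L(p)}{\next_N(L(p))} \geq c'' \left( \frac{m_\infty}{L(p)} \right)^\eta$$
for suitable constants $c', c'' > 0$, which is the desired bound. The only slightly delicate point is the comparison \eqref{eq:ratio_L_pf2}, but this is routine given the available tools (Kesten's equivalence and the polynomial two-sided bounds on $\pi_1$).
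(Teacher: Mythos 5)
Your proposal is correct and follows exactly the route the paper intends: the paper gives no separate proof, simply remarking that Lemma \ref{lem:ratio_L} is a rephrasing of Lemma \ref{lem:nextN}, and your argument is precisely that translation, using \eqref{eq:p_hat}, \eqref{eq:def_next}, Kesten's relation \eqref{eq:Kesten_theta_pi}, and the two-sided polynomial bounds \eqref{eq:1arm_bound} (with quasi-multiplicativity) to identify $L(\widehat p)$ with $\nextN(L(p))$ up to constants. The details you supply (comparable $\pi_1$-values forcing comparable scales, and $p\in(q_\infty,q_1]$ giving $L(p)<m_\infty$) are exactly what is needed and are carried out correctly.
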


For later use, we also note that there exist constants $\tilde{c}, \tilde{\alpha} > 0$ such that: for all $p > q_\infty$ for which $\widehat{\widehat{p}}$ is well-defined, we have
\begin{equation} \label{eq:L_phat_phathat}
\frac{L(\widehat{p})}{L(\widehat{\widehat{p}})} \leq \tilde{c} \left( \frac{L(p)}{L(\widehat{p})} \right)^{\tilde{\alpha}}.
\end{equation}
Indeed, we can write
$$\bigg( \frac{L(p)}{L(\widehat{p})} \bigg)^2 = \frac{\theta(\widehat{\widehat{p}})}{\theta(\widehat{p})} \geq c_1 \frac{\pi_1(L(\widehat{\widehat{p}}))}{\pi_1(L(\widehat{p}))} \geq c_2 \big( \pi_1(L(\widehat{\widehat{p}}), L(\widehat{p})) \big)^{-1} \geq c_3 \bigg( \frac{L(\widehat{p})}{L(\widehat{\widehat{p}})} \bigg)^{\alpha'},$$
using successively \eqref{eq:p_hat}, \eqref{eq:Kesten_theta_pi}, \eqref{eq:qmult} and \eqref{eq:1arm_bound}.

\begin{proposition} \label{prop:coupling_full_plane}
For all $\ve, \eta > 0$, there exist $c_2 > c_1 > \alpha > 0$, $M_0 > 0$ and $N_0 \geq 1$ such that: for all $N \geq N_0$ and $p \in (p_c, q_3(N))$ with $L(p) \leq m_\infty(N) / M_0$, there exists a simply connected stopping set $\Lambda^{\#}$ such that with probability at least $1 - \ve$, the following two properties hold. 
\begin{itemize}
\item[(i)] $\Lambda^{\#} = \holeT(p^{\#})$ for some $p^{\#} \in (p, \widehat{\widehat{\widehat{p}}})$.

\item[(ii)] $\Lambda^{\#}$ is $(\alpha L(p^{\#}), \eta)$-approximable, and $\Ball_{c_1 L(p^{\#})} \subseteq \Lambda^{\#} \subseteq \Ball_{c_2 L(p^{\#})}$.
\end{itemize}
\end{proposition}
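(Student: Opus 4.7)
The strategy is to take $p^{\#}$ to be a deterministic time strictly between $\widehat{p}$ and $\widehat{\widehat{\widehat{p}}}$ --- say $p^{\#} := \widehat{\widehat{p}}$ --- and to take $\Lambda^{\#}$ to be the near-critical hole $\hole(p^{\#})$. Since $\hole(p^{\#})$ is determined by the exploration of $\Cinf(p^{\#})$ from infinity (via its outer boundary), it is a stopping set in the sense of Definition \ref{def:stopping_set}. Because $p < q_3(N)$ and $L(p) \le m_\infty(N)/M_0$ with $M_0$ large, Lemma \ref{lem:ratio_L} (applied twice) together with \eqref{eq:L_phat_phathat} give $L(p) \gg L(\widehat{p}) \gg L(\widehat{\widehat{p}}) \gg L(\widehat{\widehat{\widehat{p}}})$, with successive ratios as large as we wish, and all four times remain close enough to $p_c$ to be in the near-critical window where the tools of Sections \ref{sec:near_critical}--\ref{sec:volume} apply.

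With this choice, the approximability and size requirements in (ii) for $\Lambda^{\#}$ follow from the a-priori bounds of Lemma \ref{lem:apriori_hole} and the approximability result of Lemma \ref{lem:approx_H}, both applied to $\hole(p^{\#})$. For $N$ large enough one finds constants $\alpha, c_1, c_2$ (depending only on $\ve, \eta$) such that, with probability at least $1-\ve/2$, $\hole(p^{\#})$ is $(\alpha L(p^{\#}), \eta)$-approximable and $\Ball_{c_1 L(p^{\#})}\subseteq \hole(p^{\#})\subseteq \Ball_{c_2 L(p^{\#})}$.

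The core of the proof is to verify that $\Lambda^{\#}=\hole(p^{\#})$ coincides with the frozen-percolation hole $\holeT(p^{\#})$ with probability at least $1-\ve/2$. The underlying mechanism is ``automatic freezing along $\Cinf(p^{\#})$'': by the defining equation \eqref{eq:p_hat} of $\widehat{p}$ one has $L(p)^2\theta(\widehat{p})=N$, and since $p^{\#}\ge\widehat{p}$, Lemma \ref{lem:largest_vol} applied to any sufficiently ``nice'' $L(p)$-box $B$ meeting $\Cinf(p^{\#})$ produces a $p^{\#}$-black cluster of volume $\asymp L(p)^2\theta(p^{\#})\gtrsim N$. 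By the very definition of volume-frozen percolation, such a sub-cluster of $\Cinf(p^{\#})$ must reach size $N$ and freeze at some time $\leq p^{\#}$. I would tile a tubular $L(p)$-neighborhood of $\dout\hole(p^{\#})$ by $O\bigl((L(p^{\#})/L(p))^2\bigr)$ overlapping $L(p)$-boxes anchored along $\dout\hole(p^{\#})$ (using a net-style construction as in Definition \ref{def:net} and Lemma \ref{lem:net}); combining the volume moment bound of Lemma \ref{lem:moment_bd}, Bernstein's inequality (Lemma \ref{lem:Bernstein}) and the tail bound of Lemma \ref{lem:BCKS} via a union bound then shows that, with probability at least $1-\ve/2$, every vertex of $\dout\hole(p^{\#})$ lies within distance $L(p)$ of a frozen cluster. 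These frozen clusters collectively encircle the origin and produce the same component of $0$ in the complement as $\Cinf(p^{\#})\cup\dout\Cinf(p^{\#})$ does, so $\holeT(p^{\#})=\hole(p^{\#})$ on this event.

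\textbf{Main obstacle.} The delicate point is the uniform coverage argument: the perimeter $\dout\hole(p^{\#})$ has diameter of order $L(p^{\#})$, much larger than $L(p)$, so the tiling contains many boxes and the per-box failure probability must beat a large union bound while remaining compatible with the near-critical arm estimates of Section \ref{sec:classical_results}. This is precisely what the triple-hat buffer $p^{\#}\in(p,\widehat{\widehat{\widehat{p}}})$ is designed for: it provides enough polynomial separation between $L(p)$ and $L(p^{\#})$ for the second-moment and Bernstein estimates to close, while giving enough room between $p^{\#}$ and $p$ for the lower bound of Lemma \ref{lem:diff_theta_like} on $\PP(0\lra{p^{\#}}\infty,\, 0\nlra{p}\partial\Ball_n)$ to force the requisite density of frozen sub-clusters all along the interface.
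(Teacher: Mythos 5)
Your proposal breaks at its central step: you fix a \emph{deterministic} time $p^{\#}=\widehat{\widehat{p}}$ and claim that $\holeT(p^{\#})=\hole(p^{\#})$ with probability $1-\ve$. This cannot work. In the full-plane frozen percolation process, the frozen clusters surrounding the origin appear at \emph{random} times, and the paper's own deconcentration results show these times are spread over many orders of magnitude of $L(\cdot)$. At a fixed time $p^{\#}$, the set $\holeT(p^{\#})$ is the hole left by the most recent freezing before $p^{\#}$, which occurred at some random $p'<p^{\#}$ with $L(p')$ typically much larger than $L(p^{\#})$ (or, if a further freezing has already happened inside, at yet another unpredictable scale). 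Hence neither the identity $\Lambda^{\#}=\holeT(p^{\#})$ with $\Lambda^{\#}=\hole(p^{\#})$ nor the two-sided inclusion $\Ball_{c_1 L(p^{\#})}\subseteq\Lambda^{\#}\subseteq\Ball_{c_2 L(p^{\#})}$ can hold with probability close to $1$ for a deterministic $p^{\#}$: the proposition is only true because $p^{\#}$ is allowed to be random, and the whole difficulty is to locate that random time while keeping $\Lambda^{\#}$ a stopping set. Your ``automatic freezing along $\Cinf(p^{\#})$'' mechanism also conflates the two processes: frozen-percolation clusters are not sub-clusters of $\Cinf(p^{\#})$ (growth is blocked by the white boundary layer of earlier frozen clusters), so an ordinary cluster of volume $\geq N$ in an $L(p)$-box only tells you \emph{something} froze nearby by time $p^{\#}$, not that the innermost black circuit of $\Cinf(p^{\#})$ around $\hole(p^{\#})$ is frozen and that nothing frozen sits inside it. Worse, your coverage conclusion (``every vertex of $\dout\hole(p^{\#})$ lies within distance $L(p)$ of a frozen cluster'') is vacuous for the purpose at hand, since $L(p)\gg L(\widehat{\widehat{p}})$, i.e.\ the tolerance is much larger than the diameter of the hole you are trying to identify.

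For comparison, the paper's proof does not pick a time in advance: it first builds a protecting circuit $\circuit$ and a \emph{modified} process to define, in a measurable (stopping-set) way, the first time $p'$ at which $\circuit$ freezes; it then splits into two cases according to whether $p'$ is early or late relative to intermediate scales $p_1,\dots,p_4$ (this two-stage structure is why the buffer must go up to $\widehat{\widehat{\widehat{p}}}$, not just one or two hats), producing a large hole $\Lambda^*$ at a time $p^*$ with $\Lambda^*=\holeT(p^*)$ and no frozen vertices inside. Only then does it follow the \emph{next} freezing inside $\Lambda^*$, via nice circuits $(\circuit^x)$, quantile-defined times $\ul p^+\leq p^+\leq \ol p^+$, and a Bernstein-type concentration argument showing $L(\ul p^+)/L(\ol p^+)\to 1$, so that the continuity and approximability lemmas for holes can be applied \emph{at the random freezing time} to get (i) and (ii). None of these steps—measurability of the random time, control of the scale at that time, and the identification of the frozen hole with a near-critical hole at a comparable parameter—is addressed by your deterministic-time shortcut, and the first of them is precisely what your choice of $\hole(\widehat{\widehat{p}})$ was meant to avoid but cannot.
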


Before proving this result in Section \ref{sec:proof_coupling_full_plane}, we explain how to combine it with Theorem \ref{thm:large_k_strong} and obtain Theorem \ref{thm:full_plane}.

\subsection{Proof of Theorem \ref{thm:full_plane} from Proposition \ref{prop:coupling_full_plane} and Theorem \ref{thm:large_k_strong}}

\begin{proof}[Proof of Theorem \ref{thm:full_plane}]
Let us consider some $\ve > 0$ arbitrary, and $\eta = \eta(\ve) > 0$ associated with it by Theorem \ref{thm:large_k_strong}. For this choice of $\ve$ and $\eta$, Proposition \ref{prop:coupling_full_plane} then produces $c_2 > c_1 > \alpha > 0$, $M_0 > 0$ and $N_0 \geq 1$. We know from Theorem \ref{thm:large_k_strong} that for these specific values $\alpha$, $c_1$ and $c_2$, we can find $k \geq 1$ and $N_1 \geq N_0$ large enough such that: for all $N \geq N_1$, all $K \in (m_{k+2}(N),m_{k+5}(N))$, and all simply connected $(\alpha K,\eta)$-approximable stopping sets $\Lambda$ with $\Ball_{c_1 K} \subseteq \Lambda \subseteq \Ball_{c_2 K}$, we have
\begin{equation} \label{eq:end_thm_full_plane}
 \PP_N^{(\Lambda)}(\text{$0$ is frozen at time $1$}) < \ve.
\end{equation}
In particular, for $p = q_{k+5}(N)$, we have $L(p) \leq m_\infty(N) / M_0$ for all $N \geq N_2$ (for some $N_2 \geq N_1$ large enough), so Proposition \ref{prop:coupling_full_plane} provides us with $\Lambda^{\#}$ and $p^{\#}$ which satisfy, with probability $> 1 - \ve$:
\begin{itemize}
\item[(i)] $\Lambda^{\#} = \holeT(p^{\#})$,

\item[(ii)] $K = L(p^{\#}) \in (m_{k+2}(N), m_{k+5}(N))$ (since $p^{\#} \in (p, \widehat{\widehat{\widehat{p}}})$),

\item[(iii)] and $\Lambda^{\#}$ is a simply connected $(\alpha K, \eta)$-approximable stopping set, with $\Ball_{c_1 K} \subseteq \Lambda \subseteq \Ball_{c_2 K}$.
\end{itemize}
Now, take $N \geq N_2$. For each pair $(\Lambda^{\#},K)$ satisfying (i), (ii) and (iii), we have from \eqref{eq:end_thm_full_plane} that
$$\PP_N^{(\Lambda^{\#})}(\text{$0$ is frozen at time $1$}) < \ve,$$
which we can then use for the full-plane process, since $\Lambda^{\#} = \holeT(p^{\#})$. We thus obtain
$$\PP_N^{(\TT)}(\text{$0$ is frozen at time $1$}) < 2 \ve,$$
completing the proof.
\end{proof}

\subsection{Proof of Proposition \ref{prop:coupling_full_plane}} \label{sec:proof_coupling_full_plane}

\begin{proof}[Proof of Proposition \ref{prop:coupling_full_plane}]
Let us consider some $\ve \in (0,1)$. We also consider some large constant $M > 0$, that we specify later, and $p > p_c$ such that $L(p) \leq m_\infty / M$. We use this control over $\frac{L(p)}{m_\infty}$ only via Lemma \ref{lem:ratio_L}, which implies that the ratio $L(p) / L(\widehat p)$ can be made arbitrarily large by choosing $M$ large enough.

By \eqref{eq:1arm_bound}, we can set $\mu = \mu(\ve) \in (0,1)$ small enough so that: for all $p > p_c$,
\begin{equation} \label{eq:existence_circuits}
\PP \left( E(p) \right) \geq 1 - \frac{\ve}{100}, \quad \text{where } E(p) := \big\{ \circuitevent^* (\mu L(p), L(p)) \cap \circuitevent (\mu^2 L(p), \mu L(p)) \text{ holds at time $p$} \big\}.
\end{equation}

\bigskip

\textbf{Step 1.} Let us fix some large $K \geq 1$ (we explain later how to choose it). We first prove that ``soon'' after time $p$ (we have to wait for at most one freezing), we can find a time $p^*$ at which the hole of the origin is large compared to the correlation length $L(p^*)$ at that time. Intuitively, if we take $q_1 < q_2$ so that $q_2$ is before $\widehat{q_1}$, but not ``much'' before, then a frozen circuit surrounding the origin cannot emerge close to both times $q_1$ and $q_2$. This implies that at time $q_1$ or $q_2$, the hole of the origin is large compared to the correlation length. In this step and the next one, we turn this intuition into a precise proof. 

We first introduce the outermost $p$-black circuit $\circuit$ in $\Ann_{\mu^2 L(p), \mu L(p)}$, taking $\circuit = \partial B_{\mu^2 L(p)}$ if such a circuit does not exist. Let $p'$ be the first time that a vertex on $\circuit$ freezes for the modified frozen percolation process in $\TT$ where clusters still freeze as soon as they reach volume $\geq N$, \emph{unless} they are included in $\inter(\circuit)$, in which case they keep growing as long as they do not contain a vertex of $\circuit$ (i.e. the clusters which are strictly inside $\circuit$ are allowed to grow after they reach volume $N$, until they intersect $\circuit$). Let us stress that this modified process is used only here (and not later in the proof). We use it to ensure that $p'$ has the right measurability property.

We define the (deterministic) times $p_1$ and $p_2$ by
\begin{equation} \label{eq:def_p1_p2}
L(p_1) = L(p)^{1/3} L(\widehat p)^{2/3} \quad \text{and} \quad L(p_2) = L(p)^{1/6} L(\widehat p)^{5/6}.
\end{equation}
Note that they satisfy $p < p_1 < p_2 < \widehat p$ (since $L(\widehat p) < L(p)$). In a similar way, we also introduce, for $\beta = \frac{1}{2 \tilde{\alpha}} > 0$ (where $\tilde{\alpha}$ is the universal constant from \eqref{eq:L_phat_phathat}), the times $p_3$ and $p_4$ such that
\begin{equation} \label{eq:def_p3_p4}
L(p_3) = L(\widehat p)^{1 - \beta} L(\widehat{\widehat{p}})^{\beta} \quad \text{and} \quad L(p_4) = L(\widehat p)^{1 - 2 \beta} L(\widehat{\widehat{p}})^{2 \beta},
\end{equation}
which satisfy $\widehat p < p_3 < p_4 < \widehat{\widehat p}$. For the convenience of the reader, we summarize on Figure \ref{fig:scales} the different scales that we use.

We now consider $\Lambda^* = \hole^{(\ol {\inter{\circuit}})}(p')$, where $\ol {\inter(\circuit)} = \circuit \cup \inter(\circuit)$. We distinguish two cases, depending on whether $p' \leq p_1$ or $p' > p_1$.
\begin{itemize}
\item Case a: if $p' \leq p_1$, we take $p^* = p_2$.

\item Case b: if $p' > p_1$, we take $p^* = p_4$.
\end{itemize}
In this way, we have produced a pair $(p^*, \Lambda^*)$ such that $\Lambda^*$ is a stopping set: we can thus condition on it, and treat it as a deterministic set.

\begin{figure}[t]
\begin{center}
\includegraphics[width=12cm]{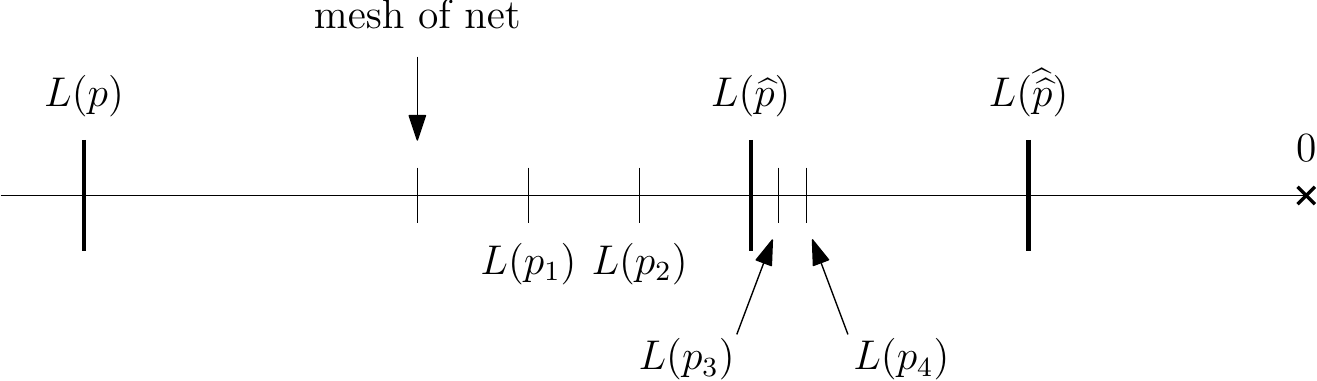}
\caption{\label{fig:scales} This figure presents, in a schematic way, the various scales involved in the proof of Proposition \ref{prop:coupling_full_plane} (Steps 1 and 2).}
\end{center}
\end{figure}

\bigskip

\textbf{Step 2.} We now prove that there exist $M$ and $N$ large enough such that with probability $\geq 1 - \frac{\ve}{10}$, the following three properties hold (recall that $M$ was introduced in the beginning of the proof, so that $L(p) \leq m_\infty / M$).
\begin{itemize}
\item[(i)] $\Ball_{K L(p^*)} \subseteq \Lambda^*$,

\item[(ii)] $\Lambda^* = \holeT(p^*)$,

\item[(iii)] and $\Lambda^* \cap \frozen(p^*) = \emptyset$ (i.e. $\Lambda^*$ does not contain any frozen cluster at time $p^*$).
\end{itemize}
Let us stress that in property (ii) above, the notation $\holeT$ refers to the original frozen percolation process (not the modified one).

First, let us note that
$$\PP_{p_2} \left( \big| \lclus_{\Ball_{L(p)}} \big| \geq N \right) = \PP_{p_2} \left( \big| \lclus_{\Ball_{L(p)}} \big| \geq x L(p)^2 \theta(p_2) \right),$$
with
$$x = \frac{N}{L(p)^2 \theta(p_2)} = \frac{\theta(\widehat p)}{\theta(p_2)} \geq c_1 \frac{\pi_1(L(\widehat p))}{\pi_1(L(p_2))} \geq c_2 \left( \frac{L(p)}{L(\widehat p)} \right)^{\alpha/6},$$
for some universal $\alpha > 0$ (using successively the definition of $\widehat p$ \eqref{eq:p_hat}, \eqref{eq:Kesten_theta_pi}, \eqref{eq:1arm_bound}, and the definition of $p_2$ \eqref{eq:def_p1_p2}). By Lemma \ref{lem:ratio_L}, this last lower bound can be made arbitrarily large by choosing $M$ large enough, so there exist constants $M_1 = M_1(\ve)$ and $N_1 = N_1(\ve)$ such that: for all $M \geq M_1$ and $N \geq N_1$, $x$ is large enough so that we can apply \eqref{eq:BCKS}, and obtain
\begin{equation} \label{eq:no_frozen}
\PP_{p_2} \left( \big| \lclus_{\Ball_{L(p)}} \big| \geq N \right) \leq c_3 e^{-c_4 \left( \frac{L(p)}{L(\widehat p)} \right)^{\alpha/6}} \leq \frac{\ve}{100}.
\end{equation}
We now assume that the event $E_1 := \{| \lclus_{\Ball_{L(p)}}(p_2) | < N \}$ occurs.

Let us assume that $E_2 := E(p)$ also holds, which has a probability at least $1 - \frac{\ve}{100}$ (using \eqref{eq:existence_circuits}). In particular, it implies that $\circuit$ exists. We claim that at time $p$, neither $\circuit$ nor anything inside it is frozen, with high probability. For that, let us set
$$E_3 := \big\{ \frozen(p) \cap \ol {\inter(\circuit)} = \emptyset \big\}.$$
Note that $\circuit$ is ``protected'' by the $p$-white circuit in $\Ann_{\mu L(p), L(p)}$ from frozen clusters outside it: we thus have
$$\PP_N(E_2 \cap E_3^c) \leq \PP_{p} \left( \big| \lclus_{\Ball_{L(p)}} \big| \geq N \right) \leq \frac{\ve}{100}$$
(using \eqref{eq:no_frozen}, since $p < p_2$).

It follows that $\PP_N(E_2 \cap E_3) \geq 1 - 2 \cdot \frac{\ve}{100}$: we now assume that this event holds, so that in particular $p' > p$, and we examine the two cases introduced earlier.
\begin{itemize}
\item \textbf{Case a:} $p < p' \leq p_1$ and $p^* = p_2$. First, we have
\begin{equation} \label{eq:K_p1_p2}
\mu^2 L(p) > L(p_1) > \mu L(p_1) > K L(p_2)
\end{equation}
for all $M \geq M_2 = M_2(\ve,K)$ and $N \geq N_2 = N_2(\ve,K)$ (using again Lemma \ref{lem:ratio_L}, and \eqref{eq:def_p1_p2}). Since we assumed that $E_1$ occurs, no cluster with volume $\geq N$ emerges before time $p_2$ in $\ol {\inter(\circuit)}$. Hence, $\circuit$ freezes at time $p'$ in the frozen percolation process (which coincides with the modified process). Moreover, let us assume that the event $E_4 := E(p_1)$ occurs (note that from \eqref{eq:existence_circuits}, $\PP(E_4) \geq 1 - \frac{\ve}{100}$). Since $p' \leq p_1$, the white circuit in $\Ann_{\mu L(p_1), L(p_1)}$ (from the definition of $E_4$) is also present at time $p'$. Hence, no vertex in $\Ball_{\mu L(p_1)}$ can freeze at time $p'$, and the freezing at time $p'$ leaves a hole $\holeT(p') \subseteq \Ball_{L(p)}$ in which no cluster with volume $\geq N$ emerges before time $p_2$. This implies that
$$\holeT(p_2) = \holeT(p') = \Lambda^* \supseteq \Ball_{\mu L(p_1)}$$
on the intersection of the events above. Using \eqref{eq:K_p1_p2}, we obtain that
$$\PP_N( \Ball_{K L(p^*)} \nsubseteq \holeT(p^*), \: p' \leq p_1) \leq 4 \cdot \frac{\ve}{100} $$
for all $M \geq \max(M_1, M_2)$ and $N \geq \max(N_1, N_2)$. We have thus checked properties (i), (ii) and (iii) in this case.

\item \textbf{Case b:} $p' > p_1$ and $p^* = p_4$. In this case, we use the intermediate scale
$$\lambda = L(p)^{1/2} L(\widehat p)^{1/2}$$
(which, intuitively, corresponds to a time strictly between $p$ and $p_1$), and the event
$$E_5 := \net_{p_1}( \lambda/4, L(p) )$$
(recall Definition \ref{def:net} for nets). We know from Lemma \ref{lem:net} that
$$\PP(E_5) \geq 1 - C_1 \bigg( \frac{L(p)}{\lambda/4} \bigg)^2 e^{- C_2 \frac{\lambda/4}{L(p_1)}} \geq 1 - C_3 \frac{L(p)}{L(\widehat p)} e^{- C_4 \big( \frac{L(p)}{L(\widehat p)} \big)^{1/6}},$$
for some universal constants $C_i > 0$ ($1 \leq i \leq 4$). Hence (using Lemma \ref{lem:ratio_L} once again), there exist $M_3 = M_3(\ve)$ and $N_3 = N_3(\ve)$ such that: for all $M \geq M_3$ and $N \geq N_3$, $\PP(E_5) \geq 1 - \frac{\ve}{100}$. In particular, it follows that with a probability $\geq 1- \frac{\ve}{100}$, there exists a $p_1$-black net inside $\ol {\inter(\circuit)}$ which is connected to $\circuit$, and which leaves holes with diameter $\leq \lambda$. Let us denote by $E_6$ the event that there exists such a net, and that in the time interval $(p_1,p_4]$ a cluster with volume $\geq N$ emerges which is not connected to $\circuit$. Because of the existence of a net at time $p_1$, any such cluster has to appear in one of the $k \leq C_5 \big( \frac{L(p)}{\lambda} \big)^2$ holes, each having a diameter $\leq \lambda$. We deduce
$$\PP(E_6) \leq C_5 \Big( \frac{L(p)}{\lambda} \Big)^2 \PP_{p_4} \left( \big| \lclus_{\Ball_{\lambda}} \big| \geq N \right),$$
which is $\leq \frac{\ve}{100}$ for all $M \geq M_4$ and $N \geq N_4$: indeed, we can proceed as for \eqref{eq:no_frozen}, as we explain now. For that, we write
$$\PP_{p_4} \left( \big| \lclus_{\Ball_{\lambda}} \big| \geq N \right) = \PP_{p_4} \left( \big| \lclus_{\Ball_{\lambda}} \big| \geq x \lambda^2 \theta(p_4) \right),$$
with
$$x = \frac{N}{\lambda^2 \theta(p_4)} = \frac{L(p)}{L(\widehat p)} \cdot \frac{\theta(\widehat p)}{\theta(p_4)},$$
and there exist universal constants $c_i > 0$ ($1 \leq i \leq 4$) such that
$$\frac{\theta(\widehat p)}{\theta(p_4)} \geq c_1 \frac{\pi_1(L(\widehat p))}{\pi_1(L(p_4))} \geq c_2 \pi_1(L(p_4), L(\widehat p)) \geq c_3 \bigg( \frac{L(p_4)}{L(\widehat{p})} \bigg)^{1/2} = c_3 \bigg( \frac{L(\widehat{\widehat{p}})}{L(\widehat{p})} \bigg)^{\beta}$$
(using \eqref{eq:Kesten_theta_pi}, \eqref{eq:qmult}, \eqref{eq:1arm_bound} and the definition of $p_4$ \eqref{eq:def_p3_p4}), which yields
$$x \geq c_4 \frac{L(p)}{L(\widehat p)} \cdot \bigg( \frac{L(p)}{L(\widehat{p})} \bigg)^{- \beta \tilde{\alpha}} = c_4 \bigg( \frac{L(p)}{L(\widehat{p})} \bigg)^{1/2}$$
(this follows from \eqref{eq:L_phat_phathat} and our particular choice of $\beta$). We are thus in a position to combine Lemmas \ref{lem:ratio_L} and \ref{lem:BCKS}.

On the other hand, with high probability, something has to freeze before time $p_3$ in $\ol {\inter(\circuit)}$. Indeed, we know from Lemma \ref{lem:largest_vol} that
$$\PP_{p_3} \left( \big| \lclus_{\Ball_{\mu^2 L(p)}} \big| \geq \Big( 1 - \frac{\ve}{100} \Big) \theta(p_3) \big| \Ball_{\mu^2 L(p)} \big| \right) \geq 1 - \frac{\ve}{100}$$
as soon as $\frac{L(p_3)}{\mu^2 L(p)}$ is small enough, and we have
$$\frac{\big( 1 - \frac{\ve}{100} \big) \theta(p_3) \big| \Ball_{\mu^2 L(p)} \big|}{N} \geq C_6 \frac{\theta(p_3) \big( \mu^2 L(p) \big)^2}{N} = C_6 \mu^4 \frac{\theta(p_3)}{\theta(\widehat p)}$$
for some universal constant $C_6 > 0$ (using the definition of $\widehat p$ \eqref{eq:p_hat}), which is $\geq 1$ for all $M \geq M_5$ and $N \geq N_5$ (thanks to Lemma \ref{lem:ratio_L} again). Hence, the only possible scenario is as follows: the connected component that contains $\circuit$ and the net at time $p_1$ freezes at time $p' \leq p_3$, and when it freezes, it leaves holes in which no other clusters with volume $\geq N$ emerge before time $p_4$. In particular, $\Lambda^* = \holeT(p') = \holeT(p^*)$. We can then conclude the claims announced in the beginning of Step 2 by using the event $E_7 = E(p_3)$, which has a probability $\geq 1 - \frac{\ve}{100}$, and ensures that $\Ball_{\mu L(p_3)} \subseteq \Lambda^*$: indeed, $\mu L(p_3) > K L(p_4)$ for all $M \geq M_6$ and $N \geq N_6$.
\end{itemize}

\bigskip

\textbf{Step 3.} We now use the big hole around $0$ at time $p^*$. We consider
\begin{itemize}
\item $\tilde{p}^+ := \inf \{t \geq p^* \: : \:$ there exists a $t$-black cluster in $\Lambda^*$ which has $\geq N$ vertices, intersects $\partial B_{K L(p^*) / 2}$, and contains a circuit surrounding $0$ that is included in $B_{K L(p^*) / 2}\}$,

\item and $\Lambda^+ := \hole^{(B_{K L(p^*) / 2})}(\tilde{p}^+)$ (i.e. $\Lambda^+$ is the hole of the origin in the cluster from the definition of $\tilde{p}^+$).
\end{itemize}
By construction, $\Lambda^+$ is a stopping set. We have to prove that it has the properties in the statement of Proposition \ref{prop:coupling_full_plane}. Throughout the proof, we use the intermediate scale
\begin{equation} \label{eq:def_gamma}
\gamma := \sqrt{K} L(p^*).
\end{equation}
If we set
$$E'_1 := \net_{p^*}( \gamma/4, K L(p^*) ),$$
Lemma \ref{lem:net} implies that 
$$\PP(E'_1) \geq 1 - C_1 K e^{- C_2 \sqrt{K}}$$
for some suitable universal constants $C_1, C_2> 0$. In particular, there exists a constant $K_1 = K_1(\ve)$ such that for all $K \geq K_1$, this lower bound is at least $1 - \frac{\ve}{100}$. We now assume that this event $E'_1$ holds.

For each $x \in \grid_K$, there exists a $p^*$-black circuit in $\Ann_{\gamma/2, \gamma}(2 \gamma x)$, where (with $\ZZ[i] = \ZZ + \ZZ i$)
$$\grid_K := \ZZ[i] \cap \Ball_{KL(p^*) / 4\gamma} = \ZZ[i] \cap \Ball_{\sqrt{K}/4}.$$
Let $\circuit^x$ denote the outermost such circuit. Note that all these circuits are connected by $p^*$-black paths (inside the net), as illustrated on Figure \ref{fig:construction_big_hole}.

\begin{figure}[t]
\begin{center}
\includegraphics[width=13cm]{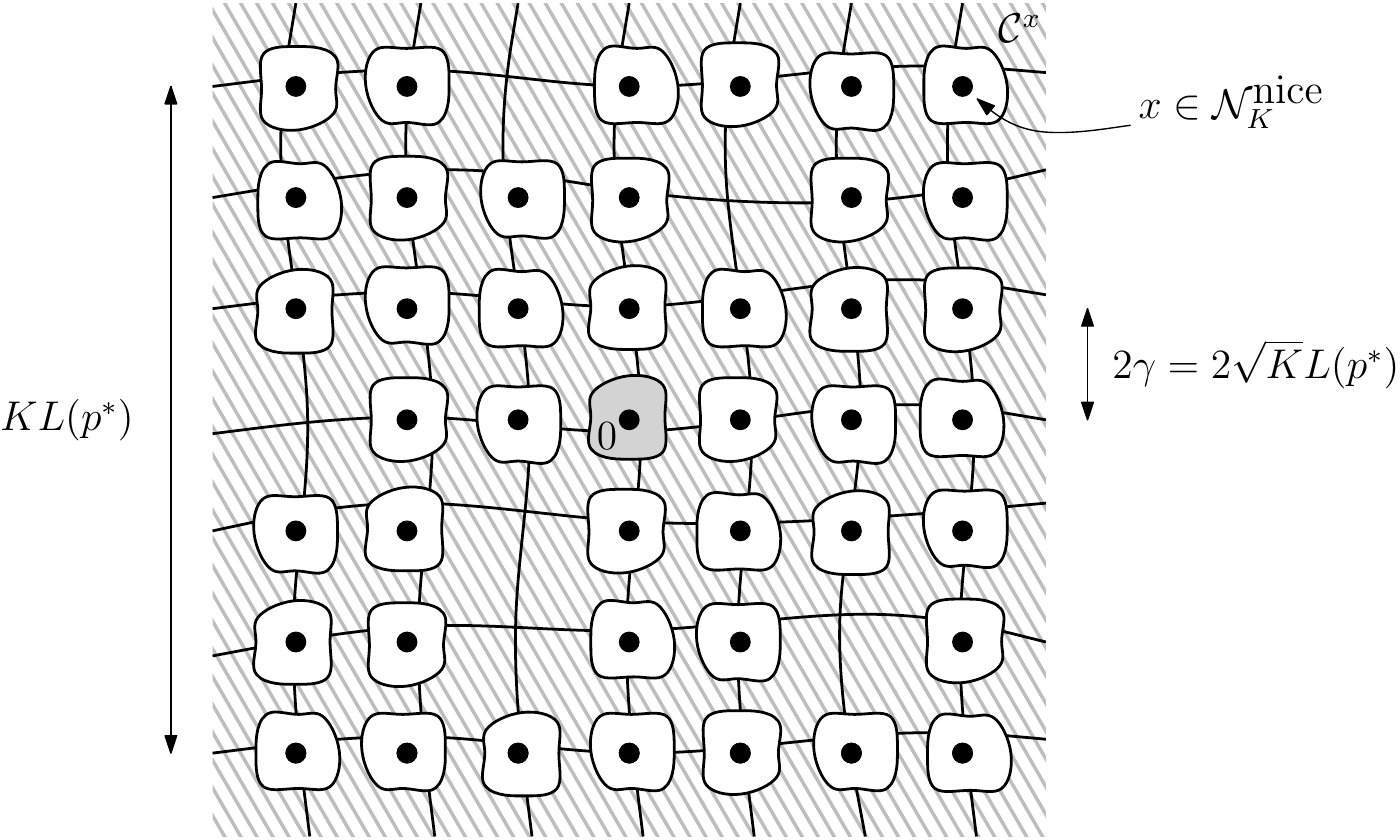}
\caption{\label{fig:construction_big_hole} This figure depicts the construction used in the proof of Proposition \ref{prop:coupling_full_plane}. We consider the independent random variables $X^x_t = X^{\circuit^x}_t$, for $x \in \gridn_K$ (i.e. such that the corresponding circuit $\circuit^x$ is nice), and we condition on the configuration outside: $Y_t$ counts the number of vertices connected to at least one $(\circuit^x)_{x \in \gridn_K}$ (including the vertices of the circuits themselves).}
\end{center}
\end{figure}

For $x \in \grid_K$ and $t \geq p^*$, we consider (recall the notation from Section \ref{sec:nice_circuits})
$$X^x_t = X^{\circuit^x}_t = \big| \{ v \in \inter(\circuit^x) \: : \: v \lra{t} \circuit^x \} \big|.$$
In the following, we define several random times in terms of $(X^x_t)_{x \in \grid_K}$, we thus restrict ourselves to the $x$ for which we have a good control on the quantiles. More precisely, we know from Lemma \ref{lem:nice_circ} that there exists a constant $C_3 > 0$ such that for each $x \in \grid_K$,
\begin{equation} \label{eq:calc_nice}
\PP \big( \circuit^x \text{ is not } (p^*, C_3) \text{-nice}, \: E'_1 \big) \leq \frac{\ve}{100}.
\end{equation}
Note that the events $\big\{ \circuit^x \text{ exists, and it is } (p^*, C_3) \text{-nice} \big\}$ are independent, for $x \in \grid_K$. We define the set
$$\gridn_K := \{ x \in \grid_K \: : \: \circuit^x \text{ is } (p^*, C_3) \text{-nice} \}.$$
Further, we write
$$E'_2 := \bigg\{ 0 \in \gridn_K, \: |\gridn_K| \geq \frac{K}{8} \bigg\} \quad \text{and} \quad E' := E'_1 \cap E'_2.$$
From Hoeffding's inequality and \eqref{eq:calc_nice}, there exists $K_2 = K_2(\ve)$ such that: for all $K \geq K_2$,
$$\PP(E'_1 \cap (E'_2)^c) \leq \frac{\ve}{100}.$$

Finally, for $t \geq p^*$, let $Y_t$ denote the number of vertices in $\Lambda^*$ which are either on one of the circuits $(\circuit^x)_{x \in \gridn_K}$, or outside these circuits and connected to at least one of them. More precisely,
$$Y_t := \bigg| \bigg\{ v \in \Lambda^* \setminus \bigcup_{x\in \gridn_K} \inter(\circuit^x) \: : \: v \lra{t} \bigcup_{x\in \gridn_K} \circuit^x \bigg\} \bigg|.$$
We set
\begin{equation} \label{eq:def_p+_new}
p^+ := \inf \bigg\{ t \geq p^* \: : \: \sum_{x \in \gridn_K} X^x_t + Y_t \geq N \bigg\}.
\end{equation}
We also define the random times
\begin{align}
\ul p^+ & := \inf \bigg\{ t \geq p^* \: : \: \ol Q_{\ve/100}(X^0_t) + \sum_{x \in \gridn_K \setminus \{0\}}  X^x_t + Y_t \geq N \bigg\} \label{eq:def_pinf+}\\[1mm]
\text{and} \quad \ol p^+ & := \inf \bigg\{ t \geq p^* \: : \: \ul Q_{\ve/100}(X^0_t) + \sum_{x \in \gridn_K \setminus \{0\}}  X^x_t + Y_t \geq N \bigg\}, \label{eq:def_psup+}
\end{align}
where we ``isolate'' $X^0_t$ by considering its quantiles (recall the notation for quantiles from Section \ref{sec:nice_circuits}). Further, let
$$\calS_0 := V(\TT) \setminus \inter(\circuit^0) \quad \text{and} \quad \calS_1 := V(\TT) \setminus \bigcup_{x\in \gridn_K} \inter(\circuit^x).$$
For later use, we note that by definition,
\begin{itemize}
\item $(X^x_t)_{x \in \gridn_K \setminus \{0\}}$ are measurable functions of $(\tau_v)_{v \in \calS_0 \setminus \calS_1}$,

\item $Y_t$ is a measurable function of $(\tau_v)_{v \in \calS_1}$,

\item and $\ul p^+$, $\ol p^+$ are measurable functions of $(\tau_v)_{v \in \calS_0}$.
\end{itemize} 
We condition on $\calS_1$ and $(\tau_v)_{v \in \calS_1}$ from now on. Under this conditioning, the function $Y_t$ becomes deterministic, while the processes $(X^x_t)_{t \geq p}$ are independent for  $x \in \gridn_K$.

\bigskip

\textbf{Step 4.} We prove that (with large probability) $p^* \leq \ul p^+ \leq p^+ \leq \ol p^+ \leq \widehat{p^*}$. For that, let us first introduce two rough bounds on $p^+$: we set
\begin{align*}
\ul p & := \inf \Bigg\{ t \geq p^* \: : \: \ol Q_{\ve/100}(X^0_t) + \ol Q_{\ve/100} \Bigg( \sum_{x \in \gridn_K \setminus \{0\}} X^x_t \Bigg) + Y_t \geq N \Bigg\},\\
\text{and} \quad \ol p & := \inf \Bigg\{ t \geq p^* \: : \: \ul Q_{\ve/100}(X^0_t) + \ul Q_{\ve/100} \Bigg( \sum_{x \in \gridn_K \setminus \{0\}} X^x_t \Bigg) + Y_t \geq N \Bigg\}.
\end{align*}
Note that it follows from the definitions that $\ul p$, $\ol p$ are measurable functions of $(\tau_v)_{v \in \calS_1}$.

It is clear that $p^* \leq \ul p$. We prove that $\ul p \leq \ul p^+ \leq p^+ \leq \ol p^+ \leq \ol p$ with probability at least $1 - \ve/5$, and we then show $\ol p \leq \widehat{p^*}$ separately. As to the first four inequalities, we only prove that $p^+ \leq \ol p^+$ with probability at least $1 - \ve/20$; the other inequalities can be established in a similar way. For that, note that if $p^+ > \ol p^+$, then
$$\ul Q_{\ve/100}(X^0_{\ol p^+}) + \sum_{x \in \gridn_K \setminus \{0\}}  X^x_{\ol p^+} + Y_{\ol p^+}\geq N > X^0_{\ol p^+} + \sum_{x \in \gridn_K \setminus \{0\}}  X^x_{\ol p^+} + Y_{\ol p^+},$$
and hence, $\ul Q_{\ve/100}(X^0_{\ol p^+}) > X^0_{\ol p^+}$. Since the process $(X^0_t)_{t \geq p^*}$ is conditionally independent of the process $(\sum_{x \in \gridn_K \setminus \{0\}}  X^x_t + Y_t)_{t \geq p^*}$, and thus of $\ol p^+$, the above has a probability at most $\ve/100$.

We now prove $\ol p \leq \widehat{p^*}$ (with large probability). For that, we define $\widehat{p}^{(K)}$ by
\begin{equation} \label{eq:def_phat_K}
K^{3/2} L(p^*)^2 \theta(\widehat{p}^{(K)}) = N.
\end{equation}
For $K \geq 1$, the monotonicity of $\theta$ implies that $\widehat{p}^{(K)} \leq \widehat{p^*}$; it is thus enough to prove that
\begin{equation} \label{eq:p_phat_K}
\text{for all } K \text{ large enough,} \quad \ol p \leq \widehat{p}^{(K)} \text{ with probability } \geq 1 - \frac{\ve}{20},
\end{equation}
which we do now (this slightly stronger result is used in the next step). Let us also note that for some constants $M_7$ and $N_7$ depending only on $K$, we have: if $N \geq N_7$ and $L(p) \leq m_\infty(N) / M_7$, then
\begin{equation} \label{eq:p_phat_K2}
\widehat{p}^{(K)} \geq p^*
\end{equation}
(indeed, for every fixed $K$, it follows from \eqref{eq:p_hat} and \eqref{eq:def_phat_K} that $\theta(\widehat{p^*}) \asymp \theta(\widehat{p}^{(K)})$, so $L(\widehat{p^*}) \asymp L(\widehat{p}^{(K)})$, by \eqref{eq:Kesten_theta_pi} and \eqref{eq:1arm_bound}, and we can use Lemma \ref{lem:ratio_L}).

Recall that $C_3$ was chosen according to Lemma \ref{lem:nice_circ}, and $\circuit^0$ is $(p^*, C_3)$-nice on $E'$. Since $\circuit^0 \subseteq \Ann_{\gamma/2, \gamma}$, with $\gamma = \sqrt{K} L(p^*)$, we obtain from Lemma \ref{lem:nice_circ2} that for some $\ul c_3 > 0$,
$$\ul Q_{\ve/100}(X^0_{\widehat{p}^{(K)}}) \geq \ul c_3 (\sqrt{K} L(p^*))^2 \theta(\widehat{p}^{(K)}),$$
and similarly,
$$\ul Q_{\ve/100} \bigg( \sum_{x \in \gridn_K \setminus \{0\}} X^x_{\widehat{p}^{(K)}} \bigg) \geq \ul c_3 |\gridn_K \setminus \{0\}| (\sqrt{K} L(p^*))^2 \theta(\widehat{p}^{(K)}).$$
Since $|\gridn_K| \geq \frac{K}{8}$ on the event $E'$, we deduce
$$\ul Q_{\ve/100} (X^0_{\widehat{p}^{(K)}}) + \ul Q_{\ve/100} \bigg( \sum_{x \in \gridn_K \setminus \{0\}} X^x_{\widehat{p}^{(K)}} \bigg) + Y_{\widehat{p}^{(K)}} \geq \frac{\ul c_3}{8} K^2 L(p^*)^2 \theta(\widehat{p}^{(K)}) \geq \frac{\ul c_3}{8} K^{1/2} N$$
(using \eqref{eq:def_phat_K}), which is $\geq N$ for all $K \geq K_3 = K_3(\ve)$. Hence, we get that for all $K \geq K_3$, $\ol p \leq \widehat{p}^{(K)}$, which completes the proof of \eqref{eq:p_phat_K}, and thus of Step 4.

\bigskip

\textbf{Step 5.} Recall the definitions of $\gamma$, $\tilde{p}^+$ and $\Lambda^+$ in the beginning of Step 3. We show that with high probability,
\begin{itemize}
\item[(i)] $\tilde{p}^+ = p^+$ (i.e. $\tilde{p}^+$ is the time when the structure consisting of the circuits $(\circuit^x)_{x \in \gridn_K}$ freezes), 

\item[(ii)] $\Lambda^+ = \holeT(p^+)$,

\item[(iii)] and $\hole^{(B_{\gamma / 8})}(\ol p^+) \subseteq \Lambda^+ \subseteq \hole^{(B_{\gamma / 8})}(\ul p^+)$.
\end{itemize}

Recall from Step 4 that for all $K$ large enough, $p^+ \leq \widehat{p}^{(K)}$ (with large probability). In a similar way as in Step 2 (Case b), we see that if, apart from the net from Step 3, no other cluster in $\Lambda^*$ intersecting $\Ball_{K L(p^*) / 2}$ reaches volume $N$ before time $\widehat{p}^{(K)}$, then $\tilde{p}^+ = p^+$. Hence,
\begin{equation} \label{eq:upper_bd_p_ptilde}
\PP_N \big( \tilde{p}^+ \neq p^+, \: p^+ \leq \widehat{p}^{(K)}, \: E' \big) \leq K \cdot \PP_{\widehat{p}^{(K)}} \Big( \big| \lclus_{\Ball_{\gamma}} \big| \geq N \Big).
\end{equation}
Using \eqref{eq:BCKS} (with $x = K^{1/2}$) and \eqref{eq:def_phat_K}, the probability in the right-hand side can be bounded as follows:
$$\PP_{\widehat{p}^{(K)}} \Big( \big| \lclus_{\Ball_{\gamma}} \big| \geq N \Big) \leq c_1 e^{-c_2 K^{1/2} \frac{\gamma^2}{L(\widehat{p}^{(K)})^2} } = c_1 e^{-c_2 K^{3/2} \frac{L(p^*)^2}{L(\widehat{p}^{(K)})^2} } \leq c_1 e^{-c_2 K^{3/2}},$$
since $L(\widehat{p}^{(K)}) \leq L(p^*)$ (from \eqref{eq:p_phat_K2}). The upper bound in \eqref{eq:upper_bd_p_ptilde} is thus $\leq \frac{\ve}{100}$ for all $K \geq K_4(\ve)$, which shows properties (i) and (ii). Since $\ul p^+ \leq p^+ \leq \ol p^+$, we also have (with large probability)
$$\hole^{(B_{2 \gamma})}(\ol p^+) \subseteq \Lambda^+ \subseteq \hole^{(B_{2 \gamma})}(\ul p^+).$$
Further, let $E'_3:= \big\{ \text{there is a } p^* \text{-black circuit } \circuit \text{ in } \Ann_{\sqrt[4]{K} L(p^*), \sqrt{K} L(p^*)} \text{ s.t. } \circuit \lra{p^*} \infty \big\}$. We have that for all $K \geq K_5 = K_5(\ve)$, $\PP(E'_3) \geq 1 - \frac{\ve}{100}$ (from \eqref{eq:exp_decay}), and $E'_3$ implies in particular that, for all $t \geq p^*$, $\hole^{(B_{2 \gamma})}(t) = \hole^{(B_{\gamma / 8})}(t)$. By using this observation at times $\ol p^+$ and $\ul p^+$, we finally get property (iii).

We are now almost in a position to conclude the proof of Proposition \ref{prop:coupling_full_plane}. Indeed, note that since $\ol p^+$ and $\ul p^+$ are measurable functions of $(\tau_v)_{v \in \TT \setminus \Ball_{\gamma/2}}$, we can apply Lemma \ref{lem:cont_vol} (and Remark \ref{rem:H_HLambda}) to $\hole^{(B_{\gamma / 8})}(\ol p^+)$ and $\hole^{(B_{\gamma / 8})}(\ul p^+)$ to deduce that $\Lambda^+$ has the desired properties, if we prove that $L(\ul p^+) / L(\ol p^+)$ can be made arbitrarily close to $1$. This will be done in the next (and last) step.

\bigskip

\textbf{Step 6.} We now fix an arbitrary $\delta > 0$, and we bound the probability of $\big\{ \frac{L(\ul p^+)}{L(\ol p^+)}>1+\delta \big\}$. For that, we first show that, for the rough lower and upper bounds $\ul p$ and $\ol p$, $L(\ul p)$ and $L(\ol p)$ are comparable. It follows from the definitions of $\ul p$ and $\ol p$ that
\begin{align}
\lim_{t \nearrow \ol p} \Bigg( \ul Q_{\ve/100} & (X^0_t) + \ul Q_{\ve/100} \bigg( \sum_{x \in \gridn_K \setminus \{0\}} X^x_t \bigg) + Y_t \Bigg) \leq N \nonumber\\
& \leq \lim_{t \searrow \ul p} \Bigg( \ol Q_{\ve/100}(X^0_t) + \ol Q_{\ve/100} \bigg( \sum_{x \in \gridn_K \setminus \{0\}} X^x_t \bigg) + Y_t \Bigg). \label{eq:pf_prop_coupling_to_MC}
\end{align}
From the same reasoning as in the end of Step 4, we obtain that in the left-hand side of \eqref{eq:pf_prop_coupling_to_MC},
$$\lim_{t \nearrow \ol p} \Bigg( \ul Q_{\ve/100}(X^0_t) + \ul Q_{\ve/100} \bigg( \sum_{x \in \gridn_K \setminus \{0\}} X^x_t \bigg) \Bigg) \geq \ul c_3 |\gridn_K| K L(p^*)^2 \theta(\ol p)$$
(using the continuity of $\theta$ at $\ol p$), and a similar \emph{upper} bound holds for the right-hand side of \eqref{eq:pf_prop_coupling_to_MC}, with $\ul c_3$ replaced by $\ol c_3$. These bounds, combined with \eqref{eq:pf_prop_coupling_to_MC}, show that $\ul c_3 \theta(\ol p) \leq \ol c_3 \theta(\ul p)$. By \eqref{eq:Kesten_theta_pi}, this shows the existence of a constant $C_4 = C_4(\ve)$ such that
\begin{equation} \label{eq:pf_prop_couplint_to_MC_1}
L(\ul p) \leq C_4L(\ol p).
\end{equation}

For each $i \in \{0, \ldots, 2 \log_{1+\delta} C_4\}$, let $t_i$ be defined by
$$L(t_i) = (1+\delta)^{-i/2}L(\ul p).$$
Note that if $\frac{L(\ul p^+)}{L(\ol p^+)} > 1 + \delta$, then there exists an $i \in \mathcal{I}_{\delta} := \{0, \ldots, 2 \log_{1+\delta} C_4 - 1\}$ for which $\ul p^+ < t_i$ and $t_{i+1} < \ol p^+$ (using the rough bound $\ul p \leq \ul p^+ \leq \ol p^+ \leq \ol p$). For this $i$, the definitions of $\ul p^+$ \eqref{eq:def_pinf+} and $\ol p^+$ \eqref{eq:def_psup+} imply
$$\ol Q_{\ve/100}(X^0_{t_i}) + \sum_{x \in \gridn_K \setminus \{0\}} X^x_{t_i} + Y_{t_i} \geq N > \ul Q_{\ve/100}(X^0_{t_{i+1}}) + \sum_{x \in \gridn_K \setminus \{0\}} X^x_{t_{i+1}} + Y_{t_{i+1}},$$
from which we deduce
\begin{align}
\sum_{x \in \gridn_K \setminus \{0\}} \big( X^x_{t_{i+1}} - X^x_{t_i} \big) & \leq \big( Y_{t_i} - Y_{t_{i+1}} \big) + \big( \ol Q_{\ve/100}(X^0_{t_i}) - \ul Q_{\ve/100}(X^0_{t_{i+1}}) \big) \nonumber\\[-3mm]
& \leq \ol Q_{\ve/100}(X^0_{\ol p}) \leq C_5 K L(p^*)^2 \theta(\ol p), \label{eq:upper_sum_nice}
\end{align}
for some $C_5 = C_5(\ve)>0$.

It turns out to be easier to work with a slightly different collection of random variables. We set
$$Z_i^x := \big| \{ v \in \Ball_{\gamma/10}(2 \gamma x) \: : \: v \lra{t_{i+1}} \circuit^x, \: v \nlra{t_{i}} \partial \Ball_{\gamma/10}(v) \} \big| \leq X^x_{t_{i+1}}- X^x_{t_i}.$$
It follows from \eqref{eq:upper_sum_nice} that it is enough to bound, for each $i \in \mathcal{I}_{\delta}$, the probability of the event that
$$\sum_{x \in \gridn_K \setminus \{0\}} Z_i^x \leq C_{5} K L(p^*)^2 \theta(\ol p).$$

For this purpose, first note that
\begin{align}
\EE \big[ Z_i^x \: | \: \circuit^x \big] & \geq \sum_{v \in \Ball_{\gamma / 10}(2 \gamma x)} \PP \big( v \lra{t_{i+1}} \circuit^x, \: v \nlra{t_i} \partial \Ball_{\gamma / 10}(v) \: | \: \circuit^x \big) \nonumber\\
& \geq \frac{1}{100} K L(p^*)^2 \PP \big( 0 \lra{t_{i+1}} \infty, \: 0 \nlra{t_i} \partial \Ball_{\gamma / 10} \big) \nonumber\\
& \geq C_6 K L(p^*)^2 \frac{|t_{i+1}-t_i|}{|t_i-p_c|} \theta(t_i) \label{eq:pf_coupling_to_MC_1}\\
& \geq C_7 K L(p^*)^2 \theta(\ol p), \label{eq:pf_coupling_to_MC_2}
\end{align}
for some suitable universal constants $C_6$ and $C_7 = C_7(\delta)$ (using Lemma \ref{lem:diff_theta_like} in \eqref{eq:pf_coupling_to_MC_1}, and Lemma \ref{lem:p-L(p)} in \eqref{eq:pf_coupling_to_MC_2}, combined with the definition of $t_i$).

Let us fix $i \in \mathcal{I}_{\delta}$, and consider $(Z^x_i)_{x \in \gridn_K \setminus \{0\}}$. Since $Z^x_i \leq \calV_{\gamma/10}(2 \gamma x)$ (recall the definition of $\calV_n$ in \eqref{eq:def_Vn}), Lemma \ref{lem:moment_bd} provides the moment bound
$$\EE \big[ (Z_i^x)^m \big] \leq \EE \big[ ( \calV_{\gamma/10}(2 \gamma x) )^m \big] \leq C_{8}^m m! (\gamma^2 \theta(t_{i+1}))^m \leq C_{8}^m m! (K L(p^*)^2 \theta(\ol p))^m$$
for some universal constant $C_{8}>0$. This shows that we can apply Bernstein's inequality (Lemma \ref{lem:Bernstein}) to the centered random variables $(Z_i^x - \EE[Z_i^x])_{x \in \gridn_K \setminus \{0\}}$, with
$$n = \big| \gridn_K \setminus \{0\} \big| \asymp K, \: M = C_{9} K L(p^*)^2 \theta(\ol p), \: \sigma_x^2 = M^2, \: \text{and } y = C_5 K L(p^*)^2 \theta(\ol p) - \EE \Bigg[ \sum_{x \in \gridn_K \setminus \{0\}} Z^x_i \Bigg],$$
for some constant $C_9 = C_9(\delta)$ large enough. Note that $|y| \asymp K M$ (since $\EE[Z^x_i] \asymp M$, by \eqref{eq:pf_coupling_to_MC_2}). We obtain that, for each $i \in \mathcal{I}_{\delta}$,
$$\PP \bigg( \sum_{x \in \gridn_K \setminus \{0\}} Z_i^x \leq C_5 K L(p^*)^2 \theta(\ol p) \bigg) \leq 2 e^{-C_{10} K}$$
for some $C_{10} = C_{10}(\delta)$. So, putting things together,
$$\PP \bigg( \frac{L(\ul p^+)}{L(\ol p^+)}>1+\delta \bigg) \leq \PP \bigg(\exists i \in \mathcal{I}_{\delta} \: : \: \sum_{x \in \gridn_K \setminus \{0\}} Z_i^x \leq C_5 K L(p^*)^2 \theta(\ol p) \bigg) \leq 4 (\log_{1+\delta} C_4) e^{-C_{10} K}$$
(using $|\mathcal{I}_{\delta}| \leq 2 \log_{1+\delta} C_4$), which is $\leq \ve/100$ for all $K\geq K_6(\ve)$. Hence, if we set $K = \max_{1 \leq i \leq 6} K_i$, and then $M = \max_{1 \leq i \leq 7} M_i(K, \ve)$ and $N = \max_{1 \leq i \leq 7} N_i(K, \ve)$, all the desired bounds hold, which completes the proof of Proposition \ref{prop:coupling_full_plane}.

\end{proof}

\subsection{Concluding remark: related processes} \label{sec:related_proc}

In this last section, we briefly and informally indicate the robustness of our methods, by considering some other interesting models for which a similar behavior as for volume-frozen percolation, and analogs of Theorems \ref{thm:full_plane} and \ref{thm:large_k}, can be expected. We discuss in particular two closely related processes (the proof of existence requires substantial work: see \cite{Duerre_EJP}). For these two processes, all vertices are initially white, and they can turn black according to some Poisson process of ``births'', with intensity $1$. We also use a second, independent, Poisson process of ``lightnings'', with a small rate $\ve_N > 0$: each vertex is hit by lightning at a rate $\ve_N$, independently of the other vertices. To fix ideas, let us take $\ve_N = N^{-\alpha}$, for some $\alpha > 0$.

We can first introduce a modified volume-frozen percolation process, where a black connected component freezes when one of its vertices is hit by lightning (so that the rate at which a cluster freezes is proportional to its volume). As a starting point, we can look for a similar separation of scales as in our previous volume-frozen percolation process. Here and further in this section, we make the usual translation $p(t) = 1 - e^{-t}$, and we define $t_c$ as the solution of $p(t_c) = p_c$. We also write $L(t)$ for $L(p(t))$, and similarly for $\theta(t)$.

Heuristically, the recursion formula \eqref{eq:p_hat} should be replaced by
$$\ve_N |\widehat t - t_c| L(t)^2 \theta(\widehat t) \asymp 1,$$
where $L(t)^2 \theta(\widehat t)$ corresponds to the volume of the ``giant'' connected component in a box with side length $L(t)$, and $\ve_N |\widehat t - t|$ is the probability for any given vertex to be hit between times $t$ and $\widehat t$, which we replace by $\ve_N |\widehat t - t_c|$ (since we look for the property $|\widehat t - t_c| \gg |t - t_c|$).

A quick computation then yields a sequence of exceptional scales
$$m^{(\alpha)}_k(N) = N^{\delta^{(\alpha)}_k + o(1)} \quad \text{as $N \to \infty$}$$
(and corresponding times $q^{(\alpha)}_k(N) = t_c + N^{-\frac{3}{4} \delta^{(\alpha)}_k + o(1)}$), where the sequence of exponents $(\delta^{(\alpha)}_k)_{k \geq 0}$ satisfies
$$\delta^{(\alpha)}_0 = 0, \quad \text{and } \: \delta^{(\alpha)}_{k+1} = \frac{\alpha}{2} + \frac{41}{96} \delta^{(\alpha)}_k \:\:\: (k \geq 0).$$
This sequence is strictly increasing, and it converges to $\delta^{(\alpha)}_{\infty} = \frac{48}{55} \alpha$. We then have a separation of scales, i.e. $L(\widehat t) \gg L(t)$, for all $t > t_c$ such that $L(t) \ll m_\infty^{(\alpha)}(N)$, as in Lemma \ref{lem:ratio_L} (where $m_\infty^{(\alpha)}(N) = N^{\delta^{(\alpha)}_\infty + o(1)}$ as $N \to \infty$). If we consider a net with mesh $\gg L(\widehat t)$ and $\ll L(t)$ at an intermediate time between $t$ and $\widehat t$ (as we did, for instance, in Step 2 of the proof of Proposition \ref{prop:coupling_full_plane}), we see that the next freezing time coincides with the freezing time of this net (w.h.p.). In particular, the next hole looks like $\hole(t^\#)$, for some random $t^\#$ such that $|t^\# - t_c|$ is comparable to $|\widehat t - t_c|$.

We can also consider the forest fire process obtained from the same Poisson processes (of births and of lightnings), where a black connected component ``burns'', i.e. all its vertices become white, when one of its vertices is hit (and may later become black again according to the Poisson process of births). During a first non-trivial stage of the process (immediately after $t_c$), the sequence of holes should be approximately the same (as $N \to \infty$) as in the previous modified frozen percolation process. Indeed, the recent work \cite{KMS_FF} for self-destructive percolation \cite{BB_SDP} indicates that it takes a positive time $\delta > 0$ for macroscopic connections outside the hole to reappear, and the next burning event occurs much before that time. In particular, this suggests the existence (hinted in \cite{BB_CMP}) of a $\delta > 0$ for which: w.h.p. (as $N \to \infty$), the origin does not burn on the time interval $[0, t_c + \delta]$.

\appendix

\section{Appendix: additional proofs}

\subsection{Proof of Lemma \ref{lem:diff_theta}} \label{sec:app_diff_theta}

\begin{proof}[Proof of Lemma \ref{lem:diff_theta}]
We consider $\kappa$, $p$ and $p'$ as in the statement, and write
$$\theta(p') - \theta(p) = \PP(\calB),$$
where $\calB := \{0 \lra{p'} \infty, \: 0 \nlra{p} \infty\}$. Let us assume that this event occurs, which implies that there exists a $p$-white circuit surrounding $0$, as well as a $p'$-black infinite path starting from $0$. We can thus introduce the closest vertex $v$ from the origin which lies on both a $p$-white circuit surrounding $0$, and a $p'$-black path from $0$ to $\infty$ (when there are multiple choices, we just pick one in some deterministic way). Note that locally around $v$, we see four disjoint arms: two $p$-white arms (coming from the $p$-white circuit), and two $p'$-black arms (from the $p'$-black path to $\infty$). 

We now distinguish two cases, depending on the distance from $v$ to the origin: we introduce the events
 $$\calB_1 := \{ d(0, v) \leq L(p) \} \quad \text{and} \quad \calB_2 := \{ d(0, v) > L(p) \}.$$

We start by bounding the probability of $\calB_1$. Let $i_{\textrm{max}} := \lceil \log_2 L(p) \rceil$: by dividing the annulus $\Ann_{1,L(p)}$ into the dyadic annuli $A_i = \Ann_{2^{i-1},2^i}$ ($1 \leq i \leq i_{\textrm{max}}$), we obtain
\begin{align} 
\PP(\calB_1) = \sum_{i=1}^{i_{\textrm{max}}} \PP (v \in A_i) & \leq |p'-p| \sum_{i=1}^{i_{\textrm{max}}} |A_i| \PP (0 \lra{p'} \partial \Ball_{2^{i-2}}) \PP(\arm_4^{p',p}(1,2^{i-1})) \PP (\partial \Ball_{2^{i+1}} \lra{p'}\infty) \nonumber\\
& \leq C_1 |p'-p| \sum_{i=1}^{i_{\textrm{max}}} 2^{2i+2} \pi_1(2^{i-2}) \pi_4(2^{i-1}) \pi_1(2^{i+1}, 4L(p)) \label{eq:pf_diff_theta_1.1}\\ 
& \leq C_2 |p'-p| \pi_1(L(p)) \sum_{i=0}^{i_{\textrm{max}}-1} 2^{2i} \pi_4(2^i) \label{eq:pf_diff_theta_1.2}\\
& \leq C_3 |p'-p| L(p)^2 \pi_4(L(p)) \theta(p) \label{eq:pf_diff_theta_1}
\end{align}
for some constants $C_j = C_j(\kappa) > 0$ ($j = 1, 2, 3$) (in \eqref{eq:pf_diff_theta_1.1} we used \eqref{eq:gps}, in \eqref{eq:pf_diff_theta_1.2} we used a combination of \eqref{eq:ext}, \eqref{eq:qmult} and \eqref{eq:Kesten_theta_pi}, while we used \eqref{eq:sum_4arm_bound} in \eqref{eq:pf_diff_theta_1}).
  
Let us turn to $\PP(\calB_2)$. If we now divide $\TT \setminus \Ball_{L(p)}$ into the annuli $A'_i = \Ann_{2^i L(p), 2^{i+1} L(p)}$ ($i \geq 0$), we obtain
\begin{align}
\PP(\calB_2) = \sum_{i \geq 0} \PP(v \in A'_i) & \leq |p'-p| \sum_{i \geq 0} |A'_i| \PP(0 \lra{p} \partial \Ball_{L(p)/2}) \PP(\arm_4^{p,p'}(1,L(p)/2)) \nonumber\\[-4mm]
& \hspace{5cm} \PP(\partial\Ball_{L(p)/2}(v)\lra{p-\text{white}} \partial \Ball_{2^i L(p)}(v)) \nonumber\\[1mm]
& \leq C_4 |p'-p| L(p)^2 \pi_4(L(p)/2) \theta(p) \sum_{i \geq 0} 2^{2i} \exp(-c_2 2^i) \label{eq:pf_diff_theta_2.1}\\
& \leq C_5 |p'-p| L(p)^2 \pi_4(L(p)) \theta(p) \sum_{i \geq 0} 2^{2i} \exp(-c_2 2^i) \label{eq:pf_diff_theta_2.2}\\
& \leq C_6 |p'-p| L(p)^2 \pi_4(L(p)) \theta(p) \label{eq:pf_diff_theta_2}
\end{align}    
for some constants $C_j = C_j(\kappa)$ ($j = 4, 5, 6$), and $c_2$ as in \eqref{eq:exp_decay} (in \eqref{eq:pf_diff_theta_2.1}, we used \eqref{eq:ext} combined with \eqref{eq:Kesten_theta_pi} and \eqref{eq:exp_decay}, while we used \eqref{eq:ext} in \eqref{eq:pf_diff_theta_2.2}). Lemma \ref{lem:diff_theta} then follows, by combining \eqref{eq:pf_diff_theta_1}, \eqref{eq:pf_diff_theta_2} and \eqref{eq:Kesten_L}.
\end{proof}

\subsection{Proof of Lemma \ref{lem:largest_vol}} \label{sec:app_BCKS}

We use the fact that $\chifin(p) := \EE_p \big[ |\cluster(0)| \: ; \: |\cluster(0)| < \infty \big]$ and $\chicov(p) := \sum_{v \in \TT} \Cov_p \big( \ind_{0 \lra{} \infty}, \ind_{v \lra{} \infty} \big)$ satisfy
\begin{equation} \label{eq:equiv_chis}
\chifin(p), \: \chicov(p) \leq c_1 L(p)^2 \theta(p)^2
\end{equation}
for all $p > p_c$ (where $c_1 > 0$ is a universal constant), which is a consequence of \eqref{eq:exp_decay} (see Section 6.4 in \cite{BCKS01}).

Let us introduce some more notation, used only in this section. For a connected subset $\Lambda$ of $\TT$, the connected components inside $\Lambda$ (at time $p$) can be listed by decreasing volume as $(\cluster^{(i)}_{\Lambda,\infty})_{i \geq 1}$ and $(\cluster^{(i)}_{\Lambda,<\infty})_{i \geq 1}$, according to whether they are included in the infinite cluster $\Cinf(p)$ or not, respectively. Clearly, $\lclus_{\Lambda}$ coincides with either $\cluster^{(1)}_{\Lambda,\infty}$ or $\cluster^{(1)}_{\Lambda,<\infty}$, so in particular $\big| \lclus_{\Lambda} \big| \leq \big| \cluster^{(1)}_{\Lambda,\infty} \big| + \big| \cluster^{(1)}_{\Lambda,<\infty} \big|$. Note also that
\begin{equation} \label{eq:sum_clusters}
\big| \Cinf \cap \Lambda \big| = \sum_{i \geq 1} \big| \cluster^{(i)}_{\Lambda,\infty} \big|.
\end{equation}

\begin{lemma} \label{lem:expectation_size}
For some universal constant $c_1 > 0$, we have
\begin{itemize}
\item[(i)] $\EE_p \Big[ \big| \cluster^{(1)}_{\Lambda,\infty} \big| \Big] \leq | \Lambda | \theta(p)$,

\item[(ii)] and $\EE_p \Big[ \big| \cluster^{(1)}_{\Lambda,<\infty} \big| \Big] \leq c_1 | \Lambda |^{1/2} L(p) \theta(p)$.
\end{itemize}
\end{lemma}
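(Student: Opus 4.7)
Part (i) is essentially immediate. By definition, $\cluster^{(1)}_{\Lambda,\infty}$ consists of $p$-black vertices of $\Lambda$ which lie in $\Cinf(p)$, so
$$\big| \cluster^{(1)}_{\Lambda,\infty} \big| \leq \big| \Cinf(p) \cap \Lambda \big| = \sum_{v \in \Lambda} \ind_{v \lra{} \infty}.$$
Taking expectations under $\PP_p$ gives $\EE_p \big[ | \cluster^{(1)}_{\Lambda,\infty} | \big] \leq |\Lambda|\, \theta(p)$.

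For part (ii), the plan is to control $M := | \cluster^{(1)}_{\Lambda,<\infty} |$ via a second moment argument, using the estimate on $\chifin(p)$ from \eqref{eq:equiv_chis}. Since $M$ is the largest term in a sum of non-negative terms, one has the pointwise bound
$$M^2 \leq \sum_{i \geq 1} \big| \cluster^{(i)}_{\Lambda,<\infty} \big|^2 = \sum_{\substack{v \in \Lambda \\ |\cluster(v)| < \infty}} \big| \cluster^{(\cdot)}_{\Lambda,<\infty}(v) \big|,$$
where $\cluster^{(\cdot)}_{\Lambda,<\infty}(v)$ denotes the restricted cluster of $v$ (i.e.\ its black connected component in the induced subgraph on $\Lambda$), which by definition contains only vertices with finite $\TT$-cluster. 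Since a restricted cluster is a subset of the full cluster in $\TT$, we bound $| \cluster^{(\cdot)}_{\Lambda,<\infty}(v) | \leq | \cluster(v) |$ and obtain
$$M^2 \leq \sum_{v \in \Lambda} |\cluster(v)|\, \ind_{|\cluster(v)| < \infty}.$$
Taking expectations and using \eqref{eq:equiv_chis},
$$\EE_p[M^2] \leq |\Lambda|\, \chifin(p) \leq c_1 |\Lambda|\, L(p)^2 \theta(p)^2.$$
An application of the Cauchy--Schwarz (Jensen) inequality, $\EE_p[M] \leq \sqrt{\EE_p[M^2]}$, then yields the bound
$$\EE_p[M] \leq \sqrt{c_1}\, |\Lambda|^{1/2} L(p)\, \theta(p),$$
which is (ii) with a new universal constant (renaming $\sqrt{c_1}$ to $c_1$).

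There is no real obstacle here; the only point that requires care is the combinatorial identity $\sum_i | \cluster^{(i)}_{\Lambda,<\infty} |^2 = \sum_{v \in \Lambda,\, |\cluster(v)|<\infty} | \cluster^{(\cdot)}_{\Lambda,<\infty}(v) |$, which relies on the fact that the type ($\infty$ vs.\ $<\infty$) of the restricted cluster of $v$ is determined by whether the full $\TT$-cluster of $v$ is infinite, so the restricted components of ``$<\infty$'' type partition exactly the set $\{v \in \Lambda : |\cluster(v)|<\infty\}$.
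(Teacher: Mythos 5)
Your part (i) is exactly the paper's argument. For part (ii) your proof is correct, but it takes a genuinely different route from the paper. You bound the largest finite-type cluster $M$ by a second moment: $M^2 \leq \sum_i |\cluster^{(i)}_{\Lambda,<\infty}|^2 = \sum_v |\cluster_\Lambda(v)|$ (sum over vertices in finite-type restricted clusters), then dominate the restricted cluster by the full cluster, use translation invariance to get $\EE_p[M^2] \leq |\Lambda|\,\chifin(p) \leq c_1 |\Lambda| L(p)^2\theta(p)^2$, and finish with Cauchy--Schwarz. The combinatorial point you flag — that the type ($\infty$ vs.\ $<\infty$) of the restricted component of $v$ is determined by whether $v \lra{} \infty$, so the finite-type components partition exactly the black vertices of $\Lambda$ with finite $\TT$-cluster — is indeed the only thing to check, and it holds since a restricted component is included in $\Cinf(p)$ as soon as one of its vertices is. The paper instead argues at the level of the first moment: it truncates at $t_\Lambda = |\Lambda|^{1/2} L(p)\theta(p)$, writes $\EE_p[M] \leq t_\Lambda + \sum_{v\in\Lambda}\PP_p(|\cluster(v)|\geq t_\Lambda,\, v \nlra{} \infty)$, and applies Markov's inequality with the same input $\chifin(p) \leq c_1 L(p)^2\theta(p)^2$. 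Both arguments use only \eqref{eq:equiv_chis}; yours is slightly more compact and gives as a by-product the second-moment bound $\EE_p[M^2] \lesssim |\Lambda| L(p)^2\theta(p)^2$, while the paper's truncation argument avoids squaring and makes the choice of the threshold $t_\Lambda$ (hence the form of the final bound) transparent. Either is perfectly adequate for the way the lemma is used.
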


\begin{proof}[Proof of Lemma \ref{lem:expectation_size}]
(i) It follows immediately from \eqref{eq:sum_clusters} that
$$\big| \cluster^{(1)}_{\Lambda,\infty} \big| \leq \big| \Cinf \cap \Lambda \big| = \sum_{v \in \Lambda} \ind_{v \lra{} \infty},$$
and we can conclude by taking the expectation of both sides.

(ii) If we introduce $t_{\Lambda} := |\Lambda|^{1/2} L(p) \theta(p)$, we can write
\begin{align*}
\EE_p \Big[ \big| \cluster^{(1)}_{\Lambda,<\infty} \big| \Big] & \leq t_{\Lambda} + \EE_p \Big[ \big| \cluster^{(1)}_{\Lambda,<\infty} \big| \: ; \: \big| \cluster^{(1)}_{\Lambda,<\infty} \big| \geq t_{\Lambda} \Big]\\
& \leq t_{\Lambda} + \sum_{v \in \Lambda} \PP_p \Big( \big| \cluster(v) \big| = \big| \cluster^{(1)}_{\Lambda,<\infty} \big|, \: \big| \cluster(v) \big| \geq t_{\Lambda}, \: v \nlra{} \infty \Big)\\
& \leq t_{\Lambda} + |\Lambda| \PP_p \Big( \big| \cluster(0) \big| \geq t_{\Lambda}, \: 0 \nlra{} \infty \Big).
\end{align*}
We can then conclude by noting that
$$\PP_p \Big( \big| \cluster(0) \big| \geq t_{\Lambda}, \: 0 \nlra{} \infty \Big) \leq \frac{\chifin(p)}{t_{\Lambda}} \leq \frac{c_1 L(p)^2 \theta(p)^2}{|\Lambda|^{1/2} L(p) \theta(p)},$$
using successively the definition of $\chifin$, and \eqref{eq:equiv_chis}.
\end{proof}

We are now in a position to prove the main lemma.

\begin{proof}[Proof of Lemma \ref{lem:largest_vol}]
First, we observe that Lemma \ref{lem:expectation_size} implies that
$$\EE_p \Big[ \big| \cluster^{(1)}_{\Lambda,<\infty} \big| \Big] \leq c_1 (| \Lambda | \theta(p)) \frac{L(p)}{| \Lambda |^{1/2}}.$$
Further, note that for both types in the statement of the lemma, $| \Lambda | \geq n^2$ (since $\Lambda$ contains $b_n$), so
\begin{equation} \label{eq:ineq_vol_Lambda}
\frac{L(p)}{| \Lambda |^{1/2}} \leq \frac{L(p)}{n} \leq \mu.
\end{equation}
Using Markov's inequality, we obtain that
$$\PP_p \Big( \big| \cluster^{(1)}_{\Lambda,<\infty} \big| \geq \ve | \Lambda | \theta(p) \Big) \leq \frac{c_1 \mu}{\ve} \leq \frac{\ve}{10}$$
for $\mu$ small enough. We can thus restrict our attention to $\cluster^{(1)}_{\Lambda,\infty}$ and $\cluster^{(2)}_{\Lambda,\infty}$.

Let us consider $\big| \Cinf \cap \Lambda \big|$: we already noted that $\EE_p \Big[ \big| \Cinf \cap \Lambda \big| \Big] = | \Lambda | \theta(p)$, and we have
\begin{align*}
\Var_p \Big( \big| \Cinf \cap \Lambda \big| \Big) & = \sum_{v,w \in \Lambda} \Cov_p \big( \ind_{v \lra{} \infty}, \ind_{w \lra{} \infty} \big)\\
& \leq \sum_{v \in \Lambda} \sum_{w \in \TT} \Cov_p \big( \ind_{v \lra{} \infty}, \ind_{w \lra{} \infty} \big) = | \Lambda | \chicov(p) \leq c_1 (| \Lambda | \theta(p))^2 \mu^2,
\end{align*}
where the last inequality comes from \eqref{eq:equiv_chis} and \eqref{eq:ineq_vol_Lambda}. Chebyshev's inequality then implies that for $\mu$ small enough,
\begin{equation} \label{eq:vol_cinf}
\PP_p \left( \left( 1 - \frac{\ve}{10} \right) | \Lambda | \theta(p) \leq \big| \Cinf \cap \Lambda \big| \leq \left( 1 + \frac{\ve}{10} \right) | \Lambda | \theta(p) \right) \geq 1 - \frac{\ve}{10},
\end{equation}
which gives the desired upper bound for $\big| \cluster^{(1)}_{\Lambda,\infty} \big|$ (using \eqref{eq:sum_clusters}).

Now, we need to distinguish the two types for $\Lambda$. We first consider $\Lambda = (\tilde{\Lambda})_{(\beta)}$, where $\beta \in (0,\frac{1}{3})$ and $\tilde{\Lambda}$ is a connected component of $\leq C$ $n$-blocks that contains $b_n$. We consider all the horizontal rectangles of the form $[i \mu^{1/2} n, (i+2) \mu^{1/2} n] \times [j \mu^{1/2} n, (j+1) \mu^{1/2} n]$, and all the vertical rectangles of the form $[i \mu^{1/2} n, (i+1) \mu^{1/2} n] \times [j \mu^{1/2} n, (j+2) \mu^{1/2} n]$ ($i$, $j$ integers), which are entirely contained in $\Lambda$. The probability of the event that all of them have a $p$-black crossing in the long direction is at least
$$1 - c_1 (\mu^{-1/2})^2 C e^{-c_2 \mu^{1/2} n / L(p)} \geq 1 - c_1 C \mu^{-1} e^{-c_2 \mu^{-1/2}}$$
for some constants $c_1, c_2 > 0$ (using \eqref{eq:exp_decay}), which is at least $1 - \frac{\ve}{10}$ for $\mu$ small enough. Let us assume that this event indeed occurs, so that the crossings form a net that covers the sub-domain $\Lambda' = (\tilde{\Lambda})_{(\beta + 3\mu^{1/2})}$. We note that all vertices in $\Cinf \cap \Lambda'$ are connected by the net inside $\Lambda$, so that $\big| \cluster^{(1)}_{\Lambda,\infty} \big| \geq \big| \Cinf \cap \Lambda' \big|$. Moreover, for the same reason as for \eqref{eq:vol_cinf}, with probability at least $1 - \frac{\ve}{10}$,
\begin{equation} \label{eq:vol_cinf2}
\big| \Cinf \cap \Lambda' \big| \geq \left( 1 - \frac{\ve}{10} \right) | \Lambda' | \theta(p) \geq \left( 1 - \frac{\ve}{10} \right) (1 - 12 \cdot 3\mu^{1/2}) | \Lambda | \theta(p) \geq \left( 1 - \frac{\ve}{5} \right) | \Lambda | \theta(p)
\end{equation}
(for $\mu$ small enough). This gives the desired lower bound for $\big| \cluster^{(1)}_{\Lambda,\infty} \big|$, and we can then get an upper bound on $\big| \cluster^{(2)}_{\Lambda,\infty} \big|$ from \eqref{eq:sum_clusters}:
$$\big| \cluster^{(2)}_{\Lambda,\infty} \big| \leq \big| \Cinf \cap \Lambda \big| - \big| \cluster^{(1)}_{\Lambda,\infty} \big| \leq \left( 1 + \frac{\ve}{10} \right) | \Lambda | \theta(p) - \left( 1 - \frac{\ve}{5} \right) | \Lambda | \theta(p) = \frac{3 \ve}{10} | \Lambda | \theta(p),$$
with probability at least $1 - \frac{\ve}{5}$ (using \eqref{eq:vol_cinf} and \eqref{eq:vol_cinf2}). Finally, the net provides a circuit as desired, which is connected to $\infty$ with high probability (using once again \eqref{eq:exp_decay}).

In the case when $\Lambda$ is an $(n,\frac{\ve}{2})$-approximable set with $\Ball_n \subseteq \Lambda \subseteq \Ball_{C n}$, we proceed in the same way, by introducing $\Lambda' = (\Dint{\Lambda}{n})_{(3 \mu^{1/2})}$ (note that $\Dint{\Lambda}{n}$ consists of at most $C^2$ $n$-blocks).
\end{proof}

\subsection{Proof of Lemma \ref{lem:coupling}} \label{sec:app_coupling}

\begin{proof}[Proof of Lemma \ref{lem:coupling}]
Let us denote $x_i = \frac{r_i}{1-r_i}$. For notational convenience, we identify $\omega \in \Omega_n$ with the subset $\{ i\in\{1,\ldots,n\}   \: : \: \omega_i=1 \}$.
For every $S \subseteq \{1,\ldots,n\}$,
$$\PP(\omega = S) = \prod_{i \in S} r_i \cdot \prod_{i \in S^c} (1-r_i) = \prod_{i=1}^n (1-r_i) \cdot \sigma_S,$$
with $\sigma_S := \prod_{i \in S} x_i$. Hence, we want to ensure that for every $S$ with $|S|=i$,
$$\PP(\tilde{\omega}_{[i]} = S) = \frac{\sigma_S}{\Sigma_i}, \quad \text{with } \: \Sigma_i = \sum_{\substack{S \subseteq \{1,\ldots,n\}\\ |S|=i}} \sigma_S$$
(where $\Sigma_0 = 1$ by convention). We claim that the desired coupling can be obtained with the following transition probabilities: for every $S$ with $|S| = i <n$, every $j \in S^c$,
$$p_{S,S\cup\{j\}} = \frac{x_j}{\Sigma_{i+1}} \sum_{\substack{T: j \notin T\\ |T| = i}} \frac{\sigma_T}{i+1-|S \cap T|} = \frac{1}{\Sigma_{i+1}} \sum_{\substack{T: j \in T\\ |T| = i+1}} \frac{\sigma_T}{i+1-|S \cap T|}.$$
Since the summand in the last expression is $\leq \sigma_T$, it is clear that $p_{S,S\cup\{j\}} \in [0,1]$. One also has
\begin{align*}
\sum_{j \in S^c} p_{S,S\cup\{j\}} & = \frac{1}{\Sigma_{i+1}} \sum_{j \in S^c} \sum_{\substack{T: j \in T\\ |T| = i+1}} \frac{\sigma_T}{i+1-|S \cap T|}\\
& = \frac{1}{\Sigma_{i+1}} \sum_{T: |T| = i+1} \sum_{j \in S^c \cap T} \frac{\sigma_T}{i+1-|S \cap T|} = \frac{\Sigma_{i+1}}{\Sigma_{i+1}} = 1,
\end{align*}
as desired. Finally, it only remains to be checked that for every $0 \leq i \leq n$, we obtain the right distribution for $\tilde{\omega}_{[i]}$. We proceed by induction over $i$: this clearly holds for $i=0$, and let us assume that it holds for some $0 \leq i <n$. Then for every $T \subseteq \{1,\ldots,n\}$ with $|T| = i+1$,
\begin{align*}
\PP(\tilde{\omega}_{[i+1]} = T) & = \sum_{j \in T} \PP(\tilde{\omega}_{[i]} = T \setminus \{j\}) p_{T \setminus \{j\},T}\\
& = \sum_{j \in T} \frac{\sigma_{T \setminus \{j\}}}{\Sigma_i} \frac{x_j}{\Sigma_{i+1}} \sum_{\substack{U: j \notin U\\ |U| = i}} \frac{\sigma_U}{i+1-|(T \setminus \{j\}) \cap U|},
\end{align*}
using the induction hypothesis. Since $\sigma_{T \setminus \{j\}} x_j = \sigma_T$, we obtain
\begin{align*}
\PP(\tilde{\omega}_{[i+1]} = T) & = \frac{\sigma_T}{\Sigma_{i+1} \Sigma_i} \sum_{U : |U|=i} \sum_{j \in T \cap U^c} \frac{\sigma_U}{i+1-|T\cap U|} = \frac{\sigma_T}{\Sigma_{i+1} \Sigma_i} \Sigma_i = \frac{\sigma_T}{\Sigma_{i+1}},
\end{align*}
which completes the proof of Lemma \ref{lem:coupling}.
\end{proof}

\subsection*{Acknowledgements}

We thank G\'abor Pete for his help with Proposition \ref{prop:theta_over_pi}. We also thank him and Christophe Garban for an early access to an expanded version of \cite{GPS13b}. Part of this work was done at NYU Abu Dhabi, the Institut Henri Poincar\'e in Paris, and the Isaac Newton Institute in Cambridge, and we thank these institutions for their support and their hospitality.

\bibliographystyle{plain}
\bibliography{frozen_perc_volume}

\end{document}